\newtheorem{theorem}{Theorem}[section]
\newtheorem{lemma}[theorem]{Lemma}
\newtheorem{corollary}[theorem]{Corollary}
\newtheorem{proposition}[theorem]{Proposition}
\theoremstyle{definition}
\newtheorem{definition}[theorem]{Definition}
\theoremstyle{notation}
\newtheorem{terminology}[theorem]{Terminology}
\newtheorem{examples}[theorem]{Examples}
\theoremstyle{remark}
\newtheorem{remark}[theorem]{Remark}
\newtheorem{remarks}[theorem]{Remarks}
\numberwithin{equation}{section}
\numberwithin{equation}{subsection}
\newcommand{\be}%
  {\protect\setcounter{equation}{\value{subsubsection}}}
  \newcommand{\ee}%
   {\protect\setcounter{subsubsection}{\value{equation}}}
\def \A{{\mathcal A}}
\def \B{\mathcal B}
\def \rmB{\rm B}
\def \BG{\rm {BG}}
\def \rmBW{\rm {BW}}
\def \rmBWT{\rm {BWT}}
\def \compl{\, \, {\widehat {}}}
\def \rmC{\rm C}
\def \bC{\mathbf C}
\def \Cl{\mathbb C}
\def \colimn{\underset {n \rightarrow \infty}  {\hbox {lim}}}
\def \colimI{\underset I {\hbox {colim}}}
\def \colimJ{\underset J {\hbox {colim}}}
\def \colimK.{\underset {\underset K^.  \rightarrow}  {\hbox {lim}}}
\def \colimU.{\underset {\underset U_.  \rightarrow}  {\hbox {lim}}}
\def \codim{\rm {codim}}
\def \rmD{\rm D}
\def \diag{{\rm diag}}
\def \diagNT{{\rm {diag}}({\rm {N}})}
\def \diagT{{\rm {diag}}({\rmT})}
\def \rmEG{\rm {EG}}
\def \rmEK{\rm {EK}}
\def \rmE{{\rm E}}
\def \rmET{\rm {ET}}
\def \EGx{{\rm {EG}}{\underset {\rm G}  \times}}
\def \ETx{{\rm {ET}}{\underset {\rm T}  \times}}
\def \EWx{\rmEW_{\underset {\rmW} \times}}
\def \EH\rmX{EH{\underset H  \times}\rmX}
\def \EM\rmX{EM{\underset M  \times}\rmX}
\def \EG\rmXx{EG{\underset \rmX  \times}}
\def \EG1{E{(G \times {\mathbb C}^*)}{\underset {G\times {\mathbb C}^*} 
\times}}
\def \EZ(s)1{E{(Z(s) \times {\mathbb C}^*)}{\underset {(Z(s)\times {\mathbb
C}^*)}  \times}}
\def \ET\rmX{{\rm {ET}}{\underset T  \times }\rmX}
\def \EG\rmX{{\rm {EG}}{\underset G  \times}\rmX}
\def \EGX{{\rm EG}{\underset {\rmG} \times} \rmX}
\def \eps{\in}
\def \EM(u){EM(u){\underset {M(u)}  \times}}
\def \EM(us){EM(u,s){\underset {M(u, s)}  \times}}
\def \EGm{{\rm E}{\rm G}^{{\rm {gm}},{\it m}}}
\def \EGm+1{{{\rm E}{\rm G}^{{\rm {gm}},{\it m+1}}{\underset {\rm G} \times}}}
\def \rmEW{\rm {EW}}
\def \rmEWT{\rm {EWT}}
\def \exp{{\rm exp}}
\def \itF{\it F}
\def \F{\mathbf F}
\def \bF{\mathbb F}
\def \gm{\rm {gm}}
\def \rmG{{\rm G}}
\def \GL{\rm {GL}}
\def\rmGL{\rm {GL}}
\def \rmH{\rm  H}
\def \H{\mathbb H}
\def \Hom{\it {Hom}}
\def \ICG\rmXp{{\mathcalI}{\mathcal C}^{G,\rmX}({\underline {KU}})}
\def \invlim1{\underset {\infty \leftarrow q}  {\hbox {lim}}^1}
\def \rmIC{{\rm I}{\rm C}}
\def \rmI{\rm I}
\def \rmJ{\rm J}
\def \rmK{\rm K}
\def \oK{\rm K}
\def \itK{\it K}
\def \rmL{\rm L}
\def \itL{\it L}
\def \L3{\Lambda \times \Lambda \times \Lambda}
\def \L2{\Lambda \times \Lambda}
\def \limi{\underset i  {\hbox {lim}}}
\def \lim{\underset \leftarrow  {\hbox {lim}}}
\def \limm{\underset {\infty \leftarrow m}  {\hbox {lim}}}
\def \longright2arrow{{\overset \longrightarrow  {\overset {} 
\longrightarrow}}}
\def \limnu{\mathop{\textrm{lim}}\limits_{ \infty \leftarrow \nu}}
\def \L{\mathcal L}
\def \rmres{{\rm res}}
\def \rmL{\rm L}
\def \Lie{{\rm Lie}}
\def \rmN{\rm N}
\def\rmNT{\rm {N(T)}}
\def \O{{\mathcal O}}
\def \rmp{\rm p}
\def \rmPGL{\rm {PGL}}
\def \Q{{\mathbb  Q}}
\def \ra{\rightarrow}
\def \RG^{R(G)^{\hat {}}\ }
\def \res{respectively}
\def \RHom{\it {RHom}}
\def \rmT{\rm T}
\def \rmR{\rm R}
\def \Sh{\rm {Sh}}
\def \rmS{\rm S}
\def \S{\mathcal S}
\def \oSpec{\rm {Spec}}
\def \STop{\rm {STop}}
\def \rmSL{\rm {SL}}
\def \topGcoh*{^{top, *} _{\rmG}}
\def \topGho*{ _{top,*} ^{\rmG}}
\def \bT{{\mathbf T}}
\def \T{\mathbf T}
\def \sT{\mathcal T}
\def \bT{{\mathbf T}}
\def \Top{\rm {Top}}
\def \rmU{\rm U}
\def \rmV{\rm V}
\def \rmWT{{\rm W}{\rm T}}
\def \rmW{\rm W}
\def \wY{\rm {wY}}
\def \wY '{\rm {wY '}}
\def \itw{\it w}
\def \itx{\it x}
\def \rmX{\rm X}
\def \rmY{\rm Y}
\def \Z(s){Z(s) \times {\mathbb C}^*}
\def \Z{\mathbb Z}
\def \rmZ{\rm Z}
\begin{document}
\title{Equivariant  derived categories for Toroidal Group Imbeddings}
\author{Roy Joshua}
\address{Department of Mathematics, Ohio State University, Columbus, Ohio,
43210, USA.}
\email{joshua.1@math.osu.edu}
\thanks{The author was supported by a grant
from the NSF. \\ \indent {AMS subject classification (2010). Primary: 14M27, 14L30, 14F05. Secondary: 14F43.}}
\begin{abstract} 
Let $\rmX$ denote a projective variety over an algebraically closed field on which a linear algebraic group acts with finitely
many orbits. Then,
a conjecture of Soergel and Lunts in the setting of Koszul duality and Langlands' philosophy, postulates that the equivariant
derived category of bounded complexes with constructible equivariant cohomology  sheaves on $\rmX$ is 
equivalent to a full subcategory of 
the derived category of modules over a graded ring defined as a suitable graded $Ext$. Only special cases
of this conjecture have been proven so far.
{\it The purpose of this paper is to provide a proof of  this conjecture for all projective toroidal imbeddings of complex reductive groups.} In fact, we show that the methods used by Lunts for a proof in the case of
toric varieties can be extended with suitable modifications to handle the toroidal imbedding case.
{\it Since every equivariant imbedding of a complex reductive group is dominated by a toroidal imbedding, the class of varieties for which our proof
applies is quite large.} 
\newline\indent
We also show that, in general, there exist a countable number of obstructions for this conjecture
to be true and that half of these vanish when the odd dimensional equivariant intersection cohomology sheaves on the
orbit closures vanish. This last vanishing  condition had been proven to be true in many cases of spherical varieties by Michel Brion and 
the author in prior work.

\end{abstract}
\maketitle 
\centerline{\bf Table of contents}
\vskip .2cm 
1. Introduction and statement of results.
\vskip .2cm
2. Toroidal Group Imbeddings and the Proof of Theorem ~\ref{mainthm.2.1.0}.
\vskip .2cm
3. Proof of Theorem ~\ref{mainthm.2}
\vskip .2cm
4. A  General Obstruction Theory and Conclusions: Proof of Theorem ~\ref{mainthm.4}.
\vskip .2cm
5. Comparison of Equivariant Derived Categories. 
\vskip .2cm
6. Appendix.
\markboth{Roy Joshua}{{Equivariant  Derived Categories for Toroidal Group Imbeddings}}
\input xypic
\vfill \eject

\section{\bf Introduction} 
This paper concerns a variant of a conjecture attributed to Soergel and Lunts (see \cite{So98}, \cite{So01}, \cite{Lu95}) for the action of linear algebraic groups on
projective varieties with finitely many orbits and over any algebraically closed field $k$. 
Let $\rmX$ denote such a projective variety provided with the action of a linear algebraic group $\rmG$. Recall
that a sheaf $F$ on $\rmX$ is equivariant, if it satisfies the following condition: let $\mu, pr_2:\rmG \times \rmX \ra \rmX$
denote the group-action and projection to the second factor, \res. Then 
there is  given an  isomorphism 
$\phi: \mu^*(F) \ra pr_2^*(F)$ satisfying a co-cycle condition on further pull-back to $\rmG \times \rmG \times \rmX$,
and which reduces to the identity on pull-back to $\rmX$ by the degeneracy map $x \mapsto (e, x)$. 
The conjecture asserts that the equivariant derived category $\rmD_{\rmG,c}^b(\rmX)$ of complexes of sheaves with bounded, 
equivariant and constructible
cohomology sheaves is equivalent to a full subcategory of the derived 
category of differential graded-modules over a certain differential graded algebra defined as a graded $Ext$ ring. 
\vskip .2cm
More precisely, the conjecture states
the following. 
Let ${\rm C}_{\rmG.c}^b(\rmX)$ denote the
category of complexes on a suitable Borel construction $\EG\rmX$ associated to $\rmX$, and with bounded constructible 
$\rmG$-equivariant cohomology sheaves. Let $L_i$ denote the  the equivariant intersection cohomology complex on
the  closure of a $\rmG$-orbit on $\rmX$, obtained by perverse  extension of an irreducible $\rmG$-equivariant local systems on the corresponding orbit. 
{\it We will next replace each such  $L_i$ by a complex of injective sheaves in ${\rm C}_{\rmG.c}^b(\rmX)$ upto quasi-isomorphism and fixed throughout the paper, but
will continue to denote the corresponding complex of injective sheaves by $L_i$ itself.} 
Let $L=\oplus_iL_i$ and let $\B=\B_{\rmG}(\rmX) = \Hom(\itL, \itL)$ ($=\RHom(\itL, \itL)$) denote the differential graded algebra where the multiplication is given by composition, and where the $Hom$ is computed in ${\rm C}_{\rmG,c}^b(\rmX)$.  (It is straightforward to verify that a different 
choice of a complex of injective sheaves replacing the $L_i$ up to quasi-isomorphism will provide a quasi-isomorphic differential graded algebra.) Let $Mod(\B)$
denote the category of differential graded modules over the differential graded algebra $\B$.
Then each $E_i = Hom(L, L_i)$ is an object of $Mod(\B)$, and the conjecture 
says that $\rmD_{\rmG,c}^b(\rmX)$ is equivalent to the full subcategory of $\rmD Mod(\B)$ generated by the  $E_i$ and 
that $\B$ is formal as a differential graded algebra, that is, it is quasi-isomorphic
as a differential graded algebra to its cohomology algebra. 
\vskip .2cm
When $k$ is the field of complex numbers, one can take the equivariant derived category to be made up of complexes of sheaves of 
$\Q$-vector spaces and in positive characteristics, one could consider instead the $\ell$-adic derived category considered in \cite{Jo93}.
Only special cases of this conjecture have been proven so far, all of them for certain classes of complex spherical varieties: 
the case of toric varieties was considered in \cite{Lu95}, the case of
smooth complete symmetric $\rmG$-varieties for the action of a  semi-simple adjoint group $\rmG$ was considered
in \cite{Gu05} and \cite{Sc11} considered the case of complex flag varieties.  
\vskip .2cm
{\it The main goal of this paper is 
to  prove this conjecture 
for an important sub-class of  complex projective spherical varieties, namely, toroidal imbeddings of complex reductive groups.}
\vskip .1cm \noindent
$\bullet$ Spherical varieties associated to reductive groups, forms a large class of algebraic varieties that includes as special cases,
both flag varieties as well as toric varieties, as two extreme cases. Moreover, spherical varieties are constructed using the combinatorial data of colored fans, similar to
how toric varieties are constructed from fans. As pointed out above, the conjecture has been already verified for both projective toric varieties and
also for flag varieties. Therefore, as the next step, it is important to consider the validity of the conjecture for other important sub-classes of
spherical varieties.
\vskip .1cm \noindent
$\bullet$ 
 Recall,  (see [BK05, Prop. 6.2.5]), that given a complex connected reductive group $\rmG$ and any projective $\rmG \times \rmG$-equivariant imbedding $X$ of $\rmG$, there exists a projective 
toroidal imbedding $\tilde{\rm X}$ of $\rmG$ together with a $\rmG \times \rmG$-equivariant birational map $\tilde{\rmX} \to \rmX$.
The above observation shows that the class of projective toroidal imbeddings is, in fact, an important subclass of spherical varieties.
\vskip .1cm \noindent
$\bullet$   Moreover,
toroidal imbeddings of reductive groups forms a sub-class of spherical varieties that is closest to toric varieties, in the
sense that they are also classified by fans and the local structure is that of toric varieties: see \cite[29.1]{Ti11} or \cite[Sec. 6.2.2]{BK05}. 
(More details on toroidal imbeddings, along with some examples may be found in section 2.2.) Thus, our main result, that is, Theorem ~\ref{mainthm.2}, shows that the above mentioned conjecture is indeed true for an important large class of spherical varieties.
\vskip .2cm
Along the way, the same techniques allow us to prove part of the conjecture for horospherical varieties. However, we have decided
it may be preferable, for a variety of reasons, to discuss the case of horospherical varieties separately elsewhere.
It may be also important to point out that the above conjecture also holds in positive characteristics, though hardly any results are known in this
case at present. The reason we restrict to complex spherical varieties in this paper, is largely because we make use of a reduction to
the action by a compact group as in Lemma ~\ref{phiK.acyclic.fibers}, though most of our remaining arguments seem to extend to positive 
characteristics using $\ell$-adic \'etale cohomology in the place of singular cohomology. Moreover, a proof of the conjecture for
projective toric varieties in positive characteristics seems to be in place, using certain stack-theoretic machinery in the place of the arguments in \cite{Lu95}.
 {\it In view of these,  we provide a discussion of equivariant derived categories in section 5 that works in all characteristics, but will restrict to complex
 algebraic varieties for the rest of the paper.}
\vskip .2cm
Next assume that  $\rmX$ is a variety on which a linear algebraic group $\rmG$ acts with only finitely many orbits. 
For each orbit $\O$, let $\L_{\O}$ denote an irreducible
$\rmG$-equivariant local system on $\O$. One may identify each $\L_{\O}$ with its extension by zero to an equivariant sheaf on all of $\rmX$. 
Since the cohomology sheaves of any complex $K \eps \rmD^b_{\rmG,c}(\rmX)$
are $\rmG$-equivariant, it is clear that the set $\{\L_{\O}|\O\}$ of all such $\rmG$-equivariant local systems as one varies over the $\rmG$-orbits, forms a set of generators for the derived category $\rmD^b_{\rmG,c}(\rmX)$ in the
sense that the smallest triangulated subcategory containing all of $\{\L_{\O}|\O\}$ and closed under finite sums is $\rmD^b_{\rmG,c}(\rmX)$.\footnote{The equivariant derived
 categories we consider in this paper will always be defined making use of the simplicial Borel construction. The reason for this choice,
 as well as a comparison with other models of equivariant derived categories, such as those considered in \cite{BL94} is discussed
  towards the end of this introduction.} 
  Let $\rmIC^{\rmG}(\L_{\O}[dim \O])$ denote the equivariant intersection cohomology complex on $\bar \O$ obtained by starting with the 
$\rmG$-equivariant local system $\L_{\O}$ on $\O$. Since the restriction of each $\rmIC^{\rmG}(\L_{\O}[dim \O])$ to the orbit $\O$ is
the local system $\L_{\O}$,  the set $\{\rmIC^{\rmG}(\L_{\O}[dim \O])|\O, \L_{\O}\}$  also  generates
 $\rmD^b_{\rmG,c}(\rmX)$.  Let $L_1, \cdots, L_n$ denote the above collection of equivariant intersection 
cohomology complexes. {\it We will next replace each such  $L_i$ by a complex of injective sheaves in ${\rm C}_{\rmG.c}^b(\rmX)$, but
will continue to denote the corresponding complex of injective sheaves by $L_i$ itself.} Let $L= \oplus _i L_i$.
\vskip .2cm
Next let $\B_{\rmG}(\rmX) = {\it {Hom}}(\itL, \itL)$. Let $\rmD(Mod(\B_{\rmG}(\rmX)))$  denote the
derived category of differential graded modules over the differential graded algebra $\B_{\rmG}(\rmX)$. 
Then we begin with the following theorem, which may be deduced readily using derived Morita theory. (See also 
\cite[(0.3.1) Proposition]{Lu95} for a variant.) Therefore, we skip its proof.
\begin{theorem}
 \label{mainthm.1}
Let $\rmX$ denote a variety on which a linear algebraic group acts with finitely many orbits.
 Then, sending $K \eps \rmD^b_{\rmG, c}(\rmX) \mapsto \RHom(\itL, \itK)$ defines a fully-faithful imbedding of  the equivariant 
derived category $\rmD^b_{\rmG, c}(\rmX)$  into the derived category $\rmD(Mod(\B_{\rmG}(\rmX)))$.
If $E_i$ denotes the image of $L_i$ under this imbedding and if $\rmD^f(Mod(\B_{\rmG}(\rmX)))$ denotes the triangulated
full sub-category of $\rmD(Mod(\B_{\rmG}(\rmX)))$ generated by the $E_i$, $i=1, \cdots, n$, then $\rmD^b_{\rmG, c}(\rmX)$ is
equivalent to $\rmD^f(Mod(\B_{\rmG}(\rmX)))$.
\end{theorem}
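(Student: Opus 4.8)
The plan is to deduce the statement from derived Morita theory, by upgrading the assignment $K\mapsto\RHom(\itL,\itK)$ to a strict differential graded functor and then performing a d\'evissage over the finite generating set $\{L_1,\dots,L_n\}$. First I would retain the fixed complexes of injective sheaves representing the $L_i$, so that $\B=\B_{\rmG}(\rmX)=\Hom(\itL,\itL)$ is a genuine differential graded algebra, with unit $1=\sum_i e_i$, where the $e_i\in\B^0$ are the orthogonal degree-zero cocycles given by the identity endomorphisms of the fixed complexes $L_i$ (these are cocycles because the identity of a complex is a chain map). Then $\Phi:=\mathrm{Hom}^{\bullet}_{{\rm C}^b_{\rmG,c}(\rmX)}(\itL,-)$, computed termwise, is an honest differential graded functor into $Mod(\B)$, the module structure coming from precomposition; it preserves quasi-isomorphisms and mapping cones, hence descends to an exact functor $\Phi\colon\rmD^b_{\rmG,c}(\rmX)\to\rmD(Mod(\B))$ lifting the functor of the theorem, with $\Phi(L_i)=E_i$.

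Next I would run the Morita computation. By construction $E_i=\mathrm{Hom}^{\bullet}(\itL,L_i)=e_i\B$ as a $\B$-module, so $\B=\bigoplus_i E_i$ and each $E_i$ is a direct summand of the free rank-one module, hence homotopically projective. Therefore, for every $M\in Mod(\B)$ the derived Hom is already the naive Hom-complex and $\mathrm{RHom}_{Mod(\B)}(E_i,M)=\mathrm{Hom}^{\bullet}_{Mod(\B)}(e_i\B,M)\cong Me_i$ as complexes, a $\B$-linear map out of $e_i\B$ being determined by the image of the cocycle $e_i$. Taking $M=E_j=e_j\B$ gives $\mathrm{RHom}_{Mod(\B)}(E_i,E_j)\cong e_j\B e_i$, which is precisely the Hom-complex $\mathrm{Hom}^{\bullet}(L_i,L_j)$ computing $\mathrm{RHom}_{\rmD^b_{\rmG,c}(\rmX)}(L_i,L_j)$ --- here one uses that the $L_i$ are complexes of injectives. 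Chasing a morphism $L_i\to L_j$ through $\Phi$ shows that $\Phi$ realises exactly this identification, so $\Phi$ yields isomorphisms $\mathrm{RHom}_{\rmD^b_{\rmG,c}(\rmX)}(L_i,L_j[m])\xrightarrow{\ \sim\ }\mathrm{RHom}_{Mod(\B)}(E_i,E_j[m])$ for all $i,j$ and all $m\in\Z$.

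Then the d\'evissage finishes the proof. The full subcategory of those $X\in\rmD^b_{\rmG,c}(\rmX)$ for which $\Phi$ induces an isomorphism $\mathrm{RHom}(X,L_j[m])\to\mathrm{RHom}(\Phi X,\Phi L_j[m])$ for all $j,m$ is triangulated --- a five-lemma argument on the long exact sequences, using that $\Phi$ is exact --- and contains every $L_i$; as recalled just before the theorem, the equivariant intersection cohomology complexes $\rmIC^{\rmG}(\L_{\O}[dim \O])$ generate $\rmD^b_{\rmG,c}(\rmX)$ as a triangulated category (via recollement for the stratification into $\rmG$-orbits), and only closure under triangles and finite sums is needed, exactly the available hypothesis; hence that subcategory is all of $\rmD^b_{\rmG,c}(\rmX)$. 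Fixing such an $X$ and running the same argument in the second variable shows that $\Phi$ induces isomorphisms on all Hom-complexes, i.e. $\Phi$ is fully faithful. Its essential image is then a triangulated subcategory of $\rmD(Mod(\B))$ containing each $E_i$; since $\Phi$ is exact and the $L_i$ generate the source, this essential image is contained in --- hence equals --- the triangulated subcategory $\rmD^f(Mod(\B))$ generated by the $E_i$. Thus $\Phi$ restricts to an equivalence $\rmD^b_{\rmG,c}(\rmX)\simeq\rmD^f(Mod(\B))$.

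Conceptually the whole argument is formal; the one point demanding genuine care is the first step --- arranging $\B$ and $\Phi$ strictly enough that $\mathrm{RHom}_{Mod(\B)}(e_i\B,e_j\B)\simeq e_j\B e_i$ holds as a literal isomorphism of complexes rather than only after a further resolution --- together with the routine check that a different choice of injective models replaces $\B$ by a quasi-isomorphic differential graded algebra and $\Phi$ by a compatible functor, so that the resulting equivalence is canonical. Alternatively one may bypass the explicit d\'evissage and invoke the general comparison theorem of derived Morita theory for the classical generator $\itL$ of $\rmD^b_{\rmG,c}(\rmX)$ (compare \cite[(0.3.1) Proposition]{Lu95} and the references therein), which is the route the phrase ``deduced readily using derived Morita theory'' points to.
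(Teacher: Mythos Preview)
Your proof is correct and is exactly the derived Morita argument the paper has in mind: the paper itself gives no proof, merely stating that the theorem ``may be deduced readily using derived Morita theory'' and pointing to \cite[(0.3.1) Proposition]{Lu95}. Your explicit identification $E_i\cong e_i\B$, the computation $\mathrm{RHom}_{\B}(e_i\B,e_j\B)\cong e_j\B e_i\cong\Hom^{\bullet}(L_i,L_j)$, and the two-variable d\'evissage over the generators are precisely the standard steps of that theory, so there is nothing to compare---you have written out what the paper leaves implicit.
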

 With the above theorem in place, it remains to prove that $\B_{\rmG}(\rmX)$ is formal as a differential graded algebra, when $\rmX$ is a projective
 variety satisfying further assumptions. 
Sections 2 and  3 are devoted to a detailed discussion of toroidal group imbeddings in characteristic $0$, where we prove 
the formality of the
corresponding differential graded algebra thereby settling the conjecture  in this case and resulting in the following theorem.
\begin{theorem}
 \label{mainthm.2}
Let $\bar \rmG$ denote
a projective toroidal imbedding of the complex connected reductive group  $\rmG$ and let $\B_{\rmG \times \rmG}(\bar \rmG)$ denote the differential graded algebra
 $Hom( L, L)$ ($=\RHom(L, L)$) considered above.  Then $\B_{ \rmG \times \rmG}(\bar \rmG)$ is formal as a differential graded algebra.
\end{theorem}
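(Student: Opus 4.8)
The plan is to follow the strategy that Lunts used for toric varieties (see \cite{Lu95}), adapting it to the toroidal case by exploiting the fact that a toroidal imbedding $\bar\rmG$ is locally, in the \'etale or analytic topology, a product of a toric variety with a flag-variety-type factor. Since formality is a statement that can be checked after passing to a suitable cover and is invariant under quasi-isomorphism of differential graded algebras, the idea is to build a model for $\B_{\rmG\times\rmG}(\bar\rmG) = \Hom(L,L)$ out of simpler pieces whose formality is already known (toric varieties by \cite{Lu95}, flag varieties by \cite{Sc11}) and then to assemble these pieces along the combinatorics of the fan.

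First I would set up the geometric reduction. By [BK05, Sec.~6.2.2] and \cite[29.1]{Ti11}, a projective toroidal imbedding $\bar\rmG$ of $\rmG$ carries a stratification by $\rmG\times\rmG$-orbits indexed by the cones of a fan, and the local structure along each orbit closure is modeled on that of an affine toric variety (for the action of a maximal torus) fibered over a partial flag variety $\rmG/\rmP$. The $\rmG\times\rmG$-orbits, their closures, and the equivariant intersection cohomology complexes $L_i$ on them should be described explicitly in these local charts. The key point to establish here is a ``local-to-global'' description: the dg algebra $\Hom(L,L)$ should be reconstructible, up to quasi-isomorphism, from its restrictions to the pieces of an open cover adapted to the fan, via a homotopy limit (\v{C}ech-type) construction over the poset of cones, using the acyclicity input of Lemma~\ref{phiK.acyclic.fibers} to pass to the compact-group Borel construction where the sheaf-theoretic computations are tractable.

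Next I would prove formality locally. On each affine chart, $\Hom(L,L)$ splits (again up to quasi-isomorphism) as a tensor product of a dg algebra of toric type and one of flag-variety type; the former is formal by \cite{Lu95} and the latter by \cite{Sc11}, and a tensor product of formal dg algebras is formal, so each local piece is formal. The subtle part is to make these local formality quasi-isomorphisms \emph{compatible} on overlaps, i.e.\ to choose the quasi-isomorphisms $\Hom(L,L)|_\sigma \simeq H^*(\Hom(L,L)|_\sigma)$ coherently across the diagram indexed by the fan. This is where the argument really has teeth: one needs either a functorial/canonical choice of formality quasi-isomorphism (e.g.\ via a $\phi_\infty$-type argument, purity/weight considerations on the equivariant cohomology, or a Massey-product vanishing argument as in Lunts' treatment), or an obstruction-theoretic patching showing the higher coherence obstructions vanish. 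I expect \textbf{this coherent patching step to be the main obstacle}: individual formality is cheap, but gluing formality data over the fan without introducing nontrivial $A_\infty$-operations is exactly the technical heart of the matter, and it is presumably what forces the restriction to the toroidal (rather than general spherical) case and connects to the countable family of obstructions alluded to in Theorem~\ref{mainthm.4}.

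Finally, with a formal model constructed on each chart and glued compatibly, I would conclude that the global homotopy limit computing $\B_{\rmG\times\rmG}(\bar\rmG)$ is quasi-isomorphic to the corresponding homotopy limit of cohomology algebras, and then identify that limit with $H^*(\B_{\rmG\times\rmG}(\bar\rmG))$ itself --- using a collapse/degeneration argument for the relevant descent spectral sequence, which in turn rests on purity of the equivariant intersection cohomology of orbit closures in the toroidal setting. Combined with Theorem~\ref{mainthm.1}, which already gives the fully faithful embedding of $\rmD^b_{\rmG\times\rmG,c}(\bar\rmG)$ into $\rmD(Mod(\B_{\rmG\times\rmG}(\bar\rmG)))$ with image the subcategory generated by the $E_i$, this yields the stated formality and hence the Soergel--Lunts conjecture for projective toroidal group imbeddings.
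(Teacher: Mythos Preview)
Your proposal takes a fundamentally different route from the paper, and the approach you sketch has a genuine gap precisely at the point you yourself flag as ``the main obstacle.''

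The paper does \emph{not} proceed by covering $\bar\rmG$ with affine charts and gluing local formality data over the fan. Instead it makes a single \emph{global} reduction: the restriction functor
\[
\rmD^b_{\rmG\times\rmG,c}(\bar\rmG)\;\overset{\sim}{\longrightarrow}\; \rmD^{b,o}_{(\rmT\times\rmT)\diag(\rmN),c}(\bar\rmT)
\]
is an equivalence (this is the content of Step~1 and Theorem~\ref{eq.toroidal.1}, and Lemma~\ref{phiK.acyclic.fibers} is used here, not for any \v{C}ech descent). One then quotients by the trivially-acting diagonal torus (Step~2), separates the $\rmW$ and $\rmT$ actions (Step~3), and arrives at a dga $\B_{\rmW,\rmT}(\bar\rmT)$ quasi-isomorphic to $\B_{\rmG\times\rmG}(\bar\rmG)$. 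The punchline (Theorem~\ref{main.thm.5}) is the closed formula
\[
\B_{\rmW,\rmT}(\bar\rmT)\;\simeq\;\bigl(\B_{\rmT}(\bar\rmT)\otimes R\tilde\psi_*(\Q)\bigr)^{\rmW},
\]
where $\B_{\rmT}(\bar\rmT)$ is Lunts' toric dga for the projective toric variety $\bar\rmT$ and $R\tilde\psi_*(\Q)$ is the cohomology of $\rmB\diag(\rmT)$ with its $\rmW$-action. Both tensor factors are formal \emph{$\rmW$-equivariantly} (the first by running Lunts' argument compatibly with the $\rmW$-action permuting $\rmT$-orbits, the second because it is a polynomial ring on degree-$2$ generators), and taking $\rmW$-invariants is exact over $\Q$. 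No flag-variety formality from \cite{Sc11} is invoked anywhere.

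Your local picture is also not quite right: the local structure of a toroidal imbedding along an orbit is ${\rm R_u}(\rmB)\times{\rm R_u}(\rmB^-)\times(\text{toric chart})$, i.e.\ affine space times toric, not a flag variety times toric (see the proof of Theorem~\ref{eq.toroidal.1}(iii)). So there is no ``flag-variety type'' tensor factor in the local $\RHom$, and the role of \cite{Sc11} in your plan is unclear. More seriously, you correctly identify that choosing local formality quasi-isomorphisms \emph{coherently} across the fan is the crux, but you offer no mechanism to achieve it; purity or Massey-vanishing heuristics do not by themselves produce functorial zig-zags, and this is exactly the kind of obstruction that Theorem~\ref{mainthm.4} shows does \emph{not} vanish automatically (only the odd $m_i$ do). The paper's global reduction to $\bar\rmT$ sidesteps this entirely: there is only one object to make formal, and the $\rmW$-equivariance is built in from the start rather than reconstructed by gluing.
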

 Let $\bar \rmG$ denote such a projective toroidal imbedding of $\rmG$  and let $\bar \rmT$ denote the closure of a
maximal torus $\rmT$ in $\bar \rmG$.  Our strategy here is to reduce to the case of the
 toric variety $\bar \rmT$: the picture is however, much more involved than the toric case considered in \cite{Lu95}, since one also has an action of the Weyl group
 on $\bar \rmT$ that needs to be considered, and considerable effort is needed to separate out the Weyl group action from the
torus action. We break the entire argument into the following key steps:
\vskip .2cm

\subsection {\it Step 1}
\label{step1}
Let $\rmN$ denote the normalizer of $\rmT$ in $\rmG$ and let $\diagT$ (resp. $\diagNT$) denote the image of $\rmT$ (resp. $\rmN$) in $\rmG \times \rmG$ under the diagonal imbedding
of $\rmG$.  Then, the $\rmG \times \rmG$-action on $\bar \rmG$ induces  an action of $(\rmT \times \rmT)\diagNT$ on $\bar \rmT$. Restricting the $\rmG \times \rmG$-action on $\bar \rmG$ to
an action by $(\rmT \times \rmT)\diagNT$ on $\bar \rmT$ induces a fully-faithful functor
\be \begin{equation}
 \label{eq.toroidal.0}
\rmD_{\rmG \times \rmG, c}^b(\bar \rmG) {\overset {\rmres} \ra}  \rmD_{(\rmT \times \rmT)\diagNT ,c}^b(\bar \rmT).
\end{equation} \ee
 Moreover, if ${\rmD}_{(\rmT \times \rmT)\diagNT ,c}^{b,o}(\bar \rmT)$ denotes the full subcategory of $\rmD_{(\rmT \times \rmT)\diagNT ,c}^{b}(\bar \rmT)$
generated by the  
\newline \noindent
$(\rmT \times \rmT)\diagNT$-equivariant sheaves that are constant along the orbits of $(\rmT \times \rmT)\diagNT $ on $\bar \rmT$, then 
the same functor induces an equivalence 
\be \begin{equation}
 \label{eq.toroidal.0.1}
\rmD_{\rmG \times \rmG, c}^b(\bar \rmG) {\overset {\rmres} \ra}  { \rmD}_{(\rmT \times \rmT)\diagNT ,c}^{b,o}(\bar \rmT). 
\end{equation} \ee
(See Theorem ~\ref{eq.toroidal.1} for further details.)
\vskip .2cm
\subsection {\it Step 2} 
\label{step2}
Next we observe that the subgroup $\diagT \subseteq (\rmT \times \rmT) \diagNT$ acts trivially on $\bar \rmT$. The quotient
$(\rmT \times \rmT)\diagNT/\diagT$ identifies with $\rmWT$, which is the semi-direct product of $\rmW$ and $\rmT$, where the $\rmW$ action on $\rmT$ is
induced from the action of $\rmNT$ on $\rmT$. For the following discussion, we will abbreviate $(\rmT \times \rmT) \diagNT)$ to $\tilde \rmN$.
Since $\diagT$ acts trivially on $\bar \rmT$, one obtains an induced action of $\rmWT$ on $\bar \rmT$. Let 
\be \begin{equation}
    \label{psi}
\psi: \rmE\tilde \rmN{\underset {\tilde \rmN} \times} \bar \rmT \ra \rmE \rmWT {\underset {\rmWT} \times}\bar \rmT
    \end{equation} \ee
denote the map induced by the identity on $\bar \rmT$ and the quotient map $\tilde \rmN \ra \rmWT$. Since $\diagT$ acts trivially on $\bar \rmT$, the fibers of the map at every point on $\bar \rmT$ can be identified
with $\rmB \diagT$ . Let $\rmG^{\bullet}$ denote the canonical Godement resolution. 
Then, clearly the functor $R\psi_*=\psi_*\rmG^{\bullet}$ sends complexes in ${ \rmD}^{b,o}_{\tilde \rmN, c}(\bar \rmT )$ to complexes of dg-modules over the sheaf of dg-algebras
$R\psi_*(\Q)$, that is, to objects in the derived category $\rm D_{\rmWT }(\bar \rmT, R\psi_*(\Q))$. (Here we observe that any complex $K \eps { \rmD}^{b,o}_{\tilde \rmN, c}(\bar \rmT )$
 comes equipped with a pairing $\Q \otimes K \ra K$, which induces the pairing $R\psi_*\Q) \otimes R\psi_*(K) \ra R\psi_*(K)$, and hence the structure of a dg-module over $R\psi_*(\Q)$ on $R\psi_*(K)$.)
 \vskip .2cm
 We let ${ \rmD}^{+,o}_{\rmWT, c}(\bar \rmT, R\psi_*(\Q))$
denote the full subcategory generated by the objects $R\psi_*(j_!\Q)$ where $j:\O \ra \bar \rmT$ denotes the immersion associated with an 
$\tilde \rmN$-orbit and we vary over such $\tilde \rmN$-orbits. Let
\[L\psi^*: { \rmD}^{+,o}_{\rmWT,c}(\bar \rmT,R\psi_*(\Q)) \ra { \rmD}^{b,o}_{\tilde \rmN, _c}(\bar \rmT)\]
denote the functor defined by sending a dg-module $M$ to $\Q{\overset L {\underset {\psi^{-1}R\psi_*(\Q)} \otimes}} \psi^{-1}(M)$.
Then the next key step is that the functor
\[R\psi_*: { \rmD}^{b,o}_{\tilde \rmN, c}(\bar \rmT) \ra {\rmD}^{+,o}_{\rmWT, c}(\bar \rmT, R\psi_*(\Q))\]
 is an equivalence of categories
with  $L\psi^*$ its inverse. (See Proposition ~\ref{Rpsi*} for further details.)
\vskip .2cm
\subsection {\it Step 3}
\label{step3}
The next step is to separate out the $\rmW$ and $\rmT$-actions. Since there is no obvious map between the
simplicial varieties $ \rmE \rmWT{\underset {\rmWT} \times} (\bar \rmT) $ and $\rmE \rmW{\underset {\rmW} \times} (\ETx \bar \rmT)$, we need to consider
the maps $p_1: \rmE \rmWT{\underset {\rmWT} \times} (\rmE \rmT \times \bar \rmT) \ra \rmE \rmW{\underset {\rmW} \times} (\ETx \bar \rmT) \mbox{ and }
p_2:\rmE \rmWT{\underset {\rmWT} \times} (ET \times \bar \rmT) \ra \rmE \rmWT{\underset {\rmWT} \times} (\bar \rmT) $. Let
 $\pi: \rmE \rmT{\underset {\rmT} \times} \bar \rmT \ra \bar \rmT/\rmT$ denote the obvious map as well as the induced map
$\rmE \rmW{\underset {\rmW} \times} (\rmE\rmT{\underset {\rmT} \times} \bar \rmT) \ra \rmE \rmW{\underset {\rmW} \times}(\bar \rmT/\rmT)$. (Here $\bar \rmT/\rmT$ denotes the
set of all $\rmT$-orbits on $\bar \rmT$ provided with the topology, where the closed sets
are the unions of $\rmT$-orbit closures. Then $\pi: \bar \rmT \ra \bar \rmT/\rmT$ is continuous.)
Then we prove the following theorem.
\begin{theorem} 
\label{mainthm.2.1.0}
 The restriction functor $\rmD^b_{\rmG \times \rmG,c}(\bar \rmG) \ra  \rmD^{b,o}_{\tilde \rmN,c}(\bar \rmT)$ and the 
functors $R\psi_*, p_2^*, Rp_{1*}$ and $R\pi_*$ induce equivalences of derived  categories:
\[ \rmD_{\rmG \times \rmG, c}^b(\bar \rmG) \simeq { \rmD}_{cart,c}^{+,o}(\rmE \rmW{\underset {\rmW} \times}(\bar \rmT/\rmT), R\pi_*(\A)), \mbox{ where } \A= Rp_{1*}p_2^*R\psi_*(\Q).
\]
Here ${ \rmD}^{+,o}_{cart,c}(\rmE \rmW{\underset {\rmW} \times} (\bar \rmT/\rmT), R\pi_*(\A))$ denotes the full 
subcategory of ${ \rmD}^+_{}(\rmEW{\underset W \times} (\bar \rmT/{\rmT}), R\pi_*(\A))$ 
generated by applying the functor $R\pi_*Rp_{1*}p_2^*$ 
to the generators of the subcategory ${ \rmD}^{+,o}_{\rmWT,c}(\bar \rmT, R\psi_*(\Q))$ (as in Step 2).
\end{theorem}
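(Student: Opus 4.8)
The plan is to realize the asserted equivalence as a composite of five equivalences of triangulated categories: the restriction equivalence of Step~\ref{step1} (Theorem~\ref{eq.toroidal.1}), the equivalence $R\psi_*$ of Step~\ref{step2} (Proposition~\ref{Rpsi*}), and the equivalences induced by $p_2^*$, $Rp_{1*}$ and $R\pi_*$ onto suitable generated subcategories. Since Steps~\ref{step1} and~\ref{step2} are already in place, composing them identifies $\rmD^b_{\rmG\times\rmG,c}(\bar\rmG)$ with $\rmD^{+,o}_{\rmWT,c}(\bar\rmT, R\psi_*(\Q))$, so it remains to treat the separation of the $\rmW$- and $\rmT$-actions carried by the zig-zag
\[
\rmE\rmWT{\underset {\rmWT} \times}\bar\rmT\ \overset{p_2}{\longleftarrow}\ \rmE\rmWT{\underset {\rmWT} \times}(\rmE\rmT\times\bar\rmT)\ \overset{p_1}{\longrightarrow}\ \rmE\rmW{\underset {\rmW} \times}(\ETx\bar\rmT)\ \overset{\pi}{\longrightarrow}\ \rmE\rmW{\underset {\rmW} \times}(\bar\rmT/\rmT),
\]
together with the sheaves of dg-algebras $R\psi_*(\Q)$, $p_2^*R\psi_*(\Q)$, $\A = Rp_{1*}p_2^*R\psi_*(\Q)$ and $R\pi_*(\A)$ on these four spaces.

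First I would dispose of $p_2$ and $p_1$, which is the formal ``change-of-group'' part. After the standard identification $\rmE\rmWT\simeq\rmE\rmW\times\rmE\rmT$, the map $p_2$ forgets the extra factor $\rmE\rmT$, while $p_1$, after using the $\rmE\rmT$-factor of $\rmE\rmWT$ to split off the normal subgroup $\rmT\subset\rmWT$, again forgets a contractible $\rmE\rmT$-factor; in both cases one has a fibration with contractible fibres. For such an acyclic fibration $f$, $Rf_*\Q=\Q$, so the projection formula shows that the unit $\mathrm{id}\to Rf_*f^*$ is an isomorphism, whence $f^*$ is fully faithful; moreover $f^*$ and $Rf_*$ preserve dg-module structures over the relevant dg-algebra and its (co)image, compatibly with the pairings. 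Checking on the generators --- the images of the sheaves $R\psi_*(j_!\Q)$, $j$ the immersion of an $\tilde\rmN$-orbit --- that the counit is also an isomorphism, one obtains that $p_2^*$ and $Rp_{1*}$ induce equivalences onto the subcategories they generate. We are thereby reduced to showing that
\[
R\pi_*\colon {\rmD}^{+,o}_{c}\big(\rmE\rmW{\underset {\rmW} \times}(\ETx\bar\rmT),\,\A\big)\ \longrightarrow\ {\rmD}^{+,o}_{cart,c}\big(\rmE\rmW{\underset {\rmW} \times}(\bar\rmT/\rmT),\,R\pi_*(\A)\big)
\]
restricts to an equivalence on the subcategory generated by the images of the $R\psi_*(j_!\Q)$.

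Essential surjectivity here is automatic, since the target is by definition generated by $R\pi_*Rp_{1*}p_2^*$ applied to those same generators (and these images are precisely the ``Cartesian'' sheaves supported on the images in $\bar\rmT/\rmT$ of the $\tilde\rmN$-orbit closures). For full faithfulness I would mimic Step~\ref{step2}: construct the candidate inverse $L\pi^*(M)=\Q{\overset L {\underset {\pi^{-1}R\pi_*(\Q)} \otimes}}\pi^{-1}(M)$ and prove $L\pi^*R\pi_*\cong\mathrm{id}$ and $R\pi_*L\pi^*\cong\mathrm{id}$ on the two subcategories. The non-formal input is that, by the toroidal hypothesis, $\bar\rmT$ is a projective toric variety whose local structure (and that of $\bar\rmG$) is toric, so the equivariant intersection cohomology complexes $L_i$, restricted to $\bar\rmT$, are pure; consequently the objects in play are glued, along the orbit stratification of $\bar\rmT$, from cells whose (co)stalks are free graded modules over the polynomial rings $\oH^*(\rmB\rmT_x)$, $\rmT_x$ the stabilizer of a point of the $\rmT$-orbit collapsed by $\pi$, and on such objects $R\pi_*$ loses no $Ext$-information. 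Concretely one runs the d\'evissage spectral sequence along $\bar\rmT/\rmT$, splits it by purity, and verifies at each stage that the residual $\rmW$-action is respected; the Cartesian condition on the target is exactly what records the surviving gluing data. This is Lunts' computation for toric varieties performed $\rmW$-equivariantly, due to the omnipresent $\rmE\rmW{\underset\rmW\times}(-)$; assembling the five equivalences then yields the theorem. The main obstacle is precisely this last step --- adapting Lunts' toric analysis in the presence of the Weyl group action, keeping the dg-algebra structures and the purity-based splittings simultaneously under control.
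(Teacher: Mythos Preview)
Your overall decomposition into five equivalences, and your handling of $p_1$ and $p_2$ as fibrations with acyclic fibre $\rmET$, match the paper exactly (Lemma~\ref{equiv.tech.5}, Corollary~\ref{equiv.2}). The divergence is entirely in the $R\pi_*$ step.

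You propose to establish the $R\pi_*$ equivalence by invoking purity of the equivariant intersection cohomology complexes and freeness over $\rmH^*(\rmB\rmT_x)$, then running a d\'evissage along the orbit stratification that ``splits by purity.'' The paper's argument (Lemma~\ref{ext.by.zero} and Proposition~\ref{Rpi*.and.Lpi*}) is considerably more elementary and does not touch IC sheaves at all at this stage. The generators of ${\rmD}^{+,o}_{cart,c}(\rmEW{\underset{\rmW}\times}(\ETx\bar\rmT),\A)$ are the objects $\oplus_{w\in\rmW/\rmW_{\rmY'}} j_{w\rmY'!}j_{w\rmY'}^*(\A)$, and the paper shows directly that $R\pi_*$ commutes with $j_!j^*$: each $\rmT$-orbit $\rmY$ in the toric variety $\bar\rmT$ has a unique affine $\rmT$-stable open neighbourhood $\rmU_\rmY$ that deformation-retracts onto $\rmY$, so (since $\A$ has constant cohomology sheaves) the restriction $\rmH^*_\rmT(\rmU_\rmY,\A)\to\rmH^*_\rmT(\rmU_\rmY\setminus\rmU_{\rmY'},\A)$ is an isomorphism whenever $\rmY\nsubseteq\rmU_{\rmY'}$, giving $R\pi_*(j_{w\rmY'!}j_{w\rmY'}^*\A)\cong j_{w\rmY'/\rmT!}j_{w\rmY'/\rmT}^*R\pi_*(\A)$. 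The $\rmW$-action simply permutes these identifications. Since $L\pi^*$ visibly inverts this on generators, the equivalence is immediate. You have effectively imported into Step~3 the purity/freeness machinery that the paper reserves for Step~4 (formality of $\B_{\rmW,\rmT}(\bar\rmT)$); this is not wrong in spirit, but it obscures the fact that the categorical equivalence here is purely a consequence of the attractive-slice geometry and needs nothing about weights. A minor correction: your formula for $L\pi^*$ should be $\A\overset{L}{\underset{\pi^{-1}R\pi_*(\A)}\otimes}\pi^{-1}(M)$, tensoring up to $\A$ rather than to $\Q$.
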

 Observe that the $t$-structure on the derived
category on the left is the $t$-structure whose heart consists of $\rmG \times \rmG$-equivariant perverse sheaves on $\bar \rmG$, while the corresponding $t$-structure on the derived category on the right is obtained by transferring the
$t$-structure from the derived category on the left. (See Proposition ~\ref{transf.t.struct}, for example.)
\vskip .2cm
\subsection {\it Step 4 (Final Step)}
\label{step4}
Our {\it next goal} is to show, making use of the equivalences of derived categories provided by the last three steps, 
that the dg-algebra $\B_{\rmG \times \rmG}(\bar \rmG)$ is quasi-isomorphic as a dg-algebra to
a dg-algebra defined in terms of the toric variety $\bar \rmT$, and provided with a compatible action by $\rmW$. 
We will adopt the following convention henceforth: if $\{K_i|i\}$ denotes a finite collection of complexes in an abelian category (with enough injectives),
$\RHom(\oplus K_i, \oplus _i K_i) = Hom(\oplus _i \hat K_i, \oplus \hat K_i)$ where $\hat K_i$ is a fixed replacement of $K_i$ by a complex of 
injectives up to quasi-isomorphism. Then we observe the following:
\begin{enumerate}[\rm(i)]
\item 
The $\rmG \times \rmG$-equivariant local systems on the $\rmG\times \rmG$-orbits are constant as observed in \cite[Lemma 3.6]{BJ04}.
If $\O$ denotes an orbit for the $\rmG \times \rmG$-action on $\bar \rmG$, and $\rmIC^{\rmG}(\O)$ denotes the corresponding equivariant intersection cohomology
complex extending the constant sheaf $\Q$ on the orbit $\O$, then it corresponds to $\rmIC^{\tilde \rmN}(\O_{\tilde \rmN})$ under 
the equivalence of derived categories provided by Step 1.
 Here $\O_{\tilde \rmN}$ is the $\tilde \rmN$-orbit on $\bar \rmT$ corresponding to the $\rmG \times \rmG$-orbit $\O_{\rmG}$. 
Since the equivalence of derived categories in Step 1 is provided by derived functors as in ~\eqref{eq.toroidal.0.1} relating
the two derived categories, they preserve the corresponding $RHom$ as well as the composition pairing on the $RHom$s. Therefore, the dga 
$\B_{\rmG \times \rmG}$ is quasi-isomorphic to the dga $\B_{\tilde \rmN}(\bar \rmT) = \RHom(\oplus _{\O} \rmIC^{\tilde \rmN}(\O_{\tilde \rmN}), \oplus _{\O} \rmIC^{\tilde \rmN}(\O_{\tilde \rmN}))$, where the sum varies
 over all the $\tilde \rmN$-orbits on $\bar \rmT$.
\item 
Under the equivalence of equivariant derived categories provided by Step 2,
the equivariant intersection cohomology complex $\rmIC^{\tilde \rmN}(\O_{\tilde \rmN})$ corresponds to $\rmIC^{\rmWT}(\O_{\rmWT })  \otimes R\psi_*(\Q)$, where
$\O_{\rmWT }$ denotes the same orbit $\O_{\tilde \rmN}$ of $\tilde \rmN$, but viewed as an orbit for the induced action of $\rmWT$.
Therefore, the equivalence of derived categories in Step 2 similarly provides a quasi-isomorphism between the dgas: $\B_{\tilde \rmN}(\bar \rmT)$ and
$\B_{\rmWT}(\bar \rmT) = \RHom (\oplus _{\O} \rmIC^{\rmWT}(\O_{\rmWT}) \otimes R\psi_*(\Q), \oplus _{\O} \rmIC^{\rmWT}(\O_{\rmWT}) \otimes R\psi_*(\Q))$.
\item 
Under the equivalence of derived categories provided by Step 3, the  complex
$\rmIC^{\rmWT }(\O_{\rmWT})\otimes R\psi_*(\Q)$ corresponds to $R\pi_*(\rmIC^{\rmW, \rmT}(\O_{\rmW,T }) \otimes \A)$. 
Here $\rmIC^{\rmW, \rmT}(\O_{\rmW,T })$ is the same complex as $\rmIC^{\rmWT }(\O_{\rmWT})$, but with the action of $\rmW$ and $\rmT$ separated out: 
see the proof of Corollary ~\ref{equiv.2}.
This similarly provides a quasi-isomorphism between the dgas: $\B_{\rmWT}(\bar \rmT) \simeq \B_{\rmW, \rmT}(\bar \rmT) =
\RHom( \oplus _{\O} R\pi_*(\rmIC^{\rmW, \rmT}(\O_{\rmW, \rmT}) \otimes \A), \oplus _{\O}R\pi_*( \rmIC^{\rmW, \rmT}(\O_{\rmW, \rmT}) \otimes \A))$
\end{enumerate}
The last $RHom$ is
taken in the category $ {\rm C}_{cart,c}^{+,o}(\rmE \rmW{\underset {\rmW} \times}(\bar \rmT/\rmT), R\pi_*(\A))$, which is
the category of complexes whose derived category is ${ \rmD}_{cart,c}^{+,o}(\rmE \rmW{\underset {\rmW} \times}(\bar \rmT/\rmT), R\pi_*(\A))$.
\vskip .1cm
Thus, in order to show the dg-algebra $\B_{ \rmG \times \rmG}(\bar \rmG)$ is formal, it suffices to prove that the
dg-algebra $  \B_{\rmW, \rmT}(\bar \rmT)$ is formal. After having done all of the above work to separate out the $\rmW$ and $\rmT$-actions,
the remaining part of this proof is similar to the proof in the toric case: see \cite{Lu95} and Theorem ~\ref{main.thm.5}. 
A key step in this proof is that the dg-algebra 
$R\pi_*(\A) = R\pi_*(Rp_{1*}p_2^*R\psi_*(\Q))$ is formal. 
\begin{remarks}
1. The toric variety $\bar \rmT$, with the action of $\rm\rmT \times \rmT$, is a non-trivial example of a {\it toric stack}, since the diagonal torus acts
trivially on $\bar \rmT$. Thus, perhaps surprisingly, the theory of toric stacks shows up in the analysis of the equivariant derived categories for toroidal group imbeddings.
\vskip .1cm
2. One may want to compare the above steps with their counterparts in the toric case. There, step 1 is absent and
steps 2 and 3 are combined into one step where one proves the result in Theorem ~\ref{mainthm.2.1.0} for projective toric
varieties: here none of the complications coming from the maps $p_1$ and $p_2$ occur in the toric case, as there is no need to consider $p_1$ or $p_2$ in this case.
Finally in the analogue of Step 4, one produces a dg-algebra $  \B_{\rmT}(\bar \rmT)$ from the complexes $R\pi_*(L_i)$,  and proves that $ \B_{\rmT}(\bar \rmT)$ is formal as a dga.
\end{remarks}
\vskip .2cm
The discussion of the proof of Theorem 1.2 in the body of the paper is also broken up, with each major step occupying a clearly marked subsection. See for example, subsection 2.3, which discusses Step 1
of the proof, subsection 2.4, which discusses Step 2, and subsection 2.5 which discusses Step 3. Finally the last Step (namely Step 4), occupies
all of section 3 and completes the proof of Theorem 1.2. Since this is a rather long section, we have further broken this up into Steps 4.1 through 4.3.
\vskip .1cm
In section 4, we discuss the following general result which sheds some light on the validity  of the conjecture that the dg-algebra $\B_{\rmG}(\rmX)$ is formal in general. 
We recall that a well-known theorem (see \cite{Ka80}) shows that there are a countable number of obstructions $m_i$, $i=3,4, \cdots,$ that
need to vanish for an $A_{\infty}$-dg-algebra to be formal. Implicit in the
 following theorem is the fact that we make use of the cohomology notation for intersection cohomology complexes, where
 we start with a local system $\L$ or the constant sheaf $\Q$ on the smooth part of a stratified variety: in \cite{BBD82}, they start with $\L[d]$ ($\Q[d]$, \res) where $d$ is the dimension of the
 open stratum of the stratified variety.
\begin{theorem}
 \label{mainthm.4}
(i) Suppose the $A_{\infty}$-dg-algebra $\B$ has the property that $H^i(\B) =0$ for all {\it odd } $i$. Then all the obstructions $m_i$, for
$i$ odd and $i \ge 3$ vanish.
\vskip .1cm
(ii) Suppose $\rmX$ is a projective $\rmG$-spherical variety for a connected reductive group $\rmG$ over a field of characteristic $0$.  Assume further
that $\rmX$ is a {\it simply-connected spherical variety} in the sense of \cite{BJ04}, that is, the stabilizers in $\rmG$ and $\rmB$ are connected at all points on $\rmX$, where
$\rmB$ is a Borel subgroup of $\rmG$. Then the conclusions of (i) hold for the dg-algebra $\B_{\rmG}(\rmX)$ considered in Theorem ~\ref{mainthm.1}. 
\end{theorem}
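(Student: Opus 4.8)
The plan is to prove (i) by a parity count on the $A_\infty$-products and to deduce (ii) from (i) together with the parity vanishing of equivariant intersection cohomology on spherical orbit closures. For (i): recall from Kadeishvili's minimal model theorem (\cite{Ka80}) that $H^*(\B)$ carries an $A_\infty$-structure $(m_n)_{n\ge 1}$ with $m_1=0$, $m_2$ the product induced from $\B$, and with the higher products $m_n$, $n\ge 3$, constituting the obstructions to formality; each $m_n\colon H^*(\B)^{\otimes n}\to H^*(\B)$ is homogeneous of degree $2-n$. Assuming $H^i(\B)=0$ for all odd $i$, fix an odd $n$ and homogeneous classes $a_1,\dots,a_n$, which are then necessarily of even degree; the element $m_n(a_1\otimes\cdots\otimes a_n)$ has degree $\big(\sum_r\deg(a_r)\big)+2-n$, which is odd, hence lies in $H^{\mathrm{odd}}(\B)=0$. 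Since such tensors span $H^*(\B)^{\otimes n}$, we get $m_n=0$ for every odd $n\ge 3$; as these are exactly half of the obstructions $m_3,m_4,\dots$, this is the assertion of (i).

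For (ii), by (i) it suffices to show $H^i(\B_{\rmG}(\rmX))=0$ for all odd $i$. With $L=\oplus_iL_i$ as in Theorem~\ref{mainthm.1} (the $L_i$ being the equivariant intersection cohomology complexes of the orbit closures), one has $H^i(\B_{\rmG}(\rmX))=\bigoplus_{j,k}\Hom_{\rmD^b_{\rmG,c}(\rmX)}(L_j,L_k[i])$, so it is enough to show each such group vanishes for $i$ odd. Two inputs enter here. First, since $\rmX$ is a simply-connected spherical variety, the isotropy group $\rmG_x$ of every point is connected, so every $\rmG$-equivariant local system on an orbit $\O\cong\rmG/\rmG_x$ is constant (cf. \cite[Lemma 3.6]{BJ04}); hence each $L_i$ has constant coefficients, and moreover $H^*_{\rmG}(\O;\Q)=H^*(\rmB\rmG_x;\Q)$ is concentrated in even degrees, $\rmB\rmG_x$ being rationally the classifying space of a connected group. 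Second, by the vanishing of the odd-dimensional equivariant intersection cohomology of spherical orbit closures established by M. Brion and the author (\cite{BJ04}), the stalks $i_\O^*L_i$ lie in even degrees in the cohomology normalisation used in this paper; and since $L_i$ is equivariantly Verdier self-dual up to the even shift $[2\dim\bar\O_i]$, the costalks $i_\O^!L_i$ also lie in even degrees. Both evenness properties persist under $*$- and $!$-restriction to any union of orbit closures.

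I would then compute $\Hom_{\rmD^b_{\rmG,c}(\rmX)}(L_j,L_k[i])=H^i_{\rmG}(\rmX,\cRHom(L_j,L_k))$ by the spectral sequence of the filtration of $\rmX$ by unions of orbit closures (equivalently, by induction on the number of orbits via recollement), whose $E_1$-term is $\bigoplus_{\O}H^*_{\rmG}\big(\O,\ \cRHom(i_\O^*L_j,\ i_\O^!L_k)\big)$. On each orbit $\O$, both $i_\O^*L_j$ and $i_\O^!L_k$ split, in $\rmD^b_{\rmG,c}(\O)$, as finite direct sums of even shifts of the constant sheaf $\Q$ --- there being no obstruction to the splitting, as the relevant $\mathrm{Ext}^1$-groups are odd-degree classes in $H^*(\rmB\rmG_x;\Q)=0$ --- so $\cRHom(i_\O^*L_j,i_\O^!L_k)$ is again a direct sum of even shifts of $\Q$, and applying $H^*(\rmB\rmG_x;-)$ yields something concentrated in even degrees. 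Thus every summand of $E_1$ lies in even total degree; the differentials $d_r$ raise total degree by $1$ and hence all vanish, so $E_\infty=E_1$ is evenly graded and $\Hom_{\rmD^b_{\rmG,c}(\rmX)}(L_j,L_k[i])=0$ for odd $i$. Therefore $H^i(\B_{\rmG}(\rmX))=0$ for odd $i$, and part (i) completes the proof.

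The step I expect to be the main obstacle is not the (routine) spectral-sequence bookkeeping --- which uses only that an odd-degree differential between evenly graded groups is zero --- but the even-degree concentration of the stalks and, especially, of the costalks of the equivariant $\rmIC$-complexes $L_i$. One must either invoke the equivariant parity-vanishing for spherical orbit closures in precisely the form established in \cite{BJ04} (for the orbits relevant here), or deduce the costalk statement from the stalk statement via equivariant Verdier duality on the simplicial Borel construction, checking that every dimension shift entering the argument --- those of the finite-dimensional approximations in the Borel construction, together with $2\dim\O$ and $2\dim\bar\O_i$ --- is even; this is one of the few places where the simplicial model of the equivariant derived category is used in an essential way.
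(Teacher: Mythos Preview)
Your proof is correct and takes essentially the same approach as the paper. For (i), the paper gives exactly your parity argument on the degree $2-i$ of $m_i$; for (ii), the paper likewise reduces to $H^{\mathrm{odd}}(\B_{\rmG}(\rmX))=0$ via the constancy of equivariant local systems and the odd-vanishing of intersection cohomology sheaves and groups from \cite{BJ04} and \cite{BJ01}, invoking only ``a spectral sequence argument'' without detail --- your orbit-filtration spectral sequence with $E_1$-terms $H^*_{\rmG}(\O,\cRHom(i_\O^*L_j,i_\O^!L_k))$, together with the costalk parity via equivariant Verdier self-duality, is a correct and fully explicit way to carry this out.
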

\vskip .2cm
We begin section 2, with a quick self-contained review of the simplicial model of equivariant derived categories in 2.1, so that the remainder of the paper
could be read effortlessly. Some of the more technical details are left to 
section 5, which is also devoted a detailed comparison of the two well-known models of equivariant derived categories.
We provide this comparison for the following two reasons:
\vskip .1cm
(i) The simplicial Borel construction sending a $\rmG$-space to $\EG\rmX$ produces a simplicial resolution of $\rmX$, which is
clearly functorial in $X$. Other geometric models of the Borel construction always involve the choice of such a resolution
and can often make the situation quite complicated. For example, in section 3, in the context of the analysis of the toroidal imbedding case, we run into
situations where it becomes necessary to relate the equivariant derived categories associated
to two groups, $\rmG$ and $ \rmH$, where one is provided with a surjective homomorphism $\rmG \ra  \rmH$.
This is quite difficult, and nearly impossible if one uses the geometric approach where one produces
approximations of classifying spaces by starting with representations of the groups. In contrast,
 this is totally effortless in the simplicial setting, so that this forces us to adopt the
simplicial methods for defining classifying spaces and associated equivariant derived categories.
\vskip .1cm
(ii) On the other hand, the geometric models of classifying spaces have certain advantages in that
their approximations are indeed varieties, and therefore, much of the machinery from the non-equivariant
framework adapts easily. In addition, the geometric models seem to be more popular
in the literature. Moreover, the conjecture we are considering in this paper is originally stated in terms
of the geometric models for classifying spaces: see \cite{Lu95}. \footnote{The reason we have left this discussion still in section 5, is that 
the results in this section are  not used anywhere
 else in the paper, except to show that the conjecture in question, which was originally stated in the framework of the geometric model of 
 equivariant derived
 categories carries over to the simplicial model of equivariant derived categories.}
\vskip .2cm
We provide this comparison in as general a context as possible so as to be of use in a wide variety of contexts, that is,
 for schemes of finite type over perfect fields $k$ of finite $\ell$-cohomological dimension for some prime 
$\ell \ne char (k)$. 
Here $\BG ^{\gm, {\it m}}$ is a degree-$m$ approximation to the classifying space for $\rmG$ and 
$\rmEG ^{\gm, {\it m}}$ denotes  the universal principal
$\rmG$-bundle over $\BG^{\gm, {\it m}}$. Recall this means $\rmU_{\it m}=\rmEG ^{\gm, {\it m}}$  is an open $\rmG$-stable
subvariety of a representation $\rmW_{\it m}$ of $\rmG$, so that (i) $\rmG$ acts freely on $\rmU_{\it m}$ and a geometric quotient
$\rmU_{\it m}/\rmG$ exists as a variety and
(ii) so that in the family $\{(\rmW_{\it m}, \rmU_{\it m})|{\it m} \eps {\mathbb N}\}$, the codimension of $\rmW_{\it m} - \rmU_{\it m}$ in $\rmW_{\it m}$ goes to $\infty$ as 
$m$ approaches $\infty$. (See Definition ~\ref{defn:Adm-Gad} for more precise details.)
 $\BG$ will denote a simplicial model for
the classifying space of $\rmG$ and $\rmEG$ will denote its universal principal $\rmG$-bundle.
\begin{theorem}
 \label{mainthm.5}
For each fixed $m \ge 0 $, we obtain  the diagram of simplicial varieties (where $p_1$ is induced by the
projection $\rmEG^{\gm, {\it m}} \times \rmX \ra \rmX$ and $p_2$ is induced by the projection 
$\rmEG \times (EG^{\gm, {\it m}} \times \rmX) \ra \rmEG^{\gm, {\it m}} \times \rmX$):
 \be \begin{equation}
  \xymatrix{&{\rmEG \times_{{\rmG}} (\rmEG^{\gm, {\it m}} \times \rmX)} \ar@<1ex>[ld]_{p_1} \ar@<-1ex>[rd]^{p_2}\\
{\rmEG \times_{{\rmG}} \rmX}   && {\rmEG^{\gm, {\it m}} {\underset {G} \times}\rmX}}
     \end{equation} \ee
\vskip .2cm \noindent
(i) For each finite interval $\rmI=[a, b]$ of the integers, with $2m-2 \ge b-a $, 
\[p_1^*:\rmD^{\rmI}_{\rmG}(\rmEG \times_{\rmG} \rmX) \ra \rmD^{\rmI}_{\rmG}(\rmEG \times_{\rmG} (\rmEG^{\gm, {\it m}} \times \rmX)) \mbox{ and }\]
\[p_2^*: \rmD^{\rmI}_{\rmG}(\rmEG ^{\gm, {\it m}} \times_{\rmG}\rmX) \ra \rmD^{\rmI}_{\rmG}(\rmEG\times_{\rmG} (\rmEG^{\gm, {\it m}} \times \rmX))\]
are equivalences of categories. (Here the superscript $I$ denotes the full subcategory of complexes
whose cohomology sheaves vanish outside of the interval $I$ and subscript $\rmG$ denotes the full subcategories of complexes
 whose cohomology sheaves are $\rmG$-equivariant.) Moreover, for complexes of $\bar \Q_{\ell}$-adic sheaves (where $\bar \Q_{\ell}$ denotes an algebraic closure of $\Q_{\ell}$), both the functors 
$p_1^*$ and $p_2^*$ send complexes that are mixed and pure to complexes that are mixed and pure. \footnote{Recall that a complex of $\bar \Q_{\ell}$-adic sheaves is
mixed and pure if it has a finite increasing filtration whose successive quotients have cohomology sheaves that are pure: see \cite[5.1.5]{BBD82}.}
There exists an equivalence of derived categories:
\[ \rmD^{b}_{\rmG}(\rmEG^{\gm, {\it m}} \times_{\rmG}\rmX) \simeq \rmD^b_{\rmG}(\rmEG \times_{\rmG}\rmX)\]
which is natural in $\rmX$ and $\rmG$. The above equivalences hold in all characteristics with the derived categories of complexes of $\ell$-adic sheaves
on the \'etale site and hold in characteristic $0$ with the derived categories of complexes of sheaves of $\Q$-vector spaces.
\vskip .1cm
(ii) Moreover, both the maps $p_i$, $i=1, 2$, induce isomorphisms on the fundamental groups completed away from the characteristic.
\end{theorem}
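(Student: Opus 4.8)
The plan is to reduce both claims to a purely local analysis of the fibers of $p_1$ and $p_2$, combined with the standard projection-formula/base-change machinery for the simplicial Borel construction reviewed in Section 2.1. First I would examine $p_2$: its source is the simplicial space $\rmEG\times_{\rmG}(\rmEG^{\gm,m}\times\rmX)$ and its target is $\rmEG^{\gm,m}\times_{\rmG}\rmX$; since $\rmEG$ is contractible in the simplicial sense (it is the simplicial resolution $[n]\mapsto \rmG^{n+1}$, which is aspherical), the map $p_2$ is, fiberwise, the map $\rmEG\times_{\rmG}(-)\to (-)$ whose fibers over a point are simplicial models of $B\mathrm{(stabilizer)}$; because $\rmG$ acts freely on $\rmEG^{\gm,m}$, those stabilizers are trivial, so $p_2$ has simplicially contractible fibers and $Rp_{2*}p_2^*\simeq \mathrm{id}$, $p_2^*Rp_{2*}\simeq\mathrm{id}$ on the relevant subcategories. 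This gives that $p_2^*$ is fully faithful; essential surjectivity onto $\rmD^{\rmI}_{\rmG}(\rmEG\times_{\rmG}(\rmEG^{\gm,m}\times\rmX))$ for the bounded-interval version follows because every object there is $\rmG$-equivariant, hence (its cohomology sheaves being pulled back along the contractible-fiber map) lies in the image. For $p_1$, whose fibers are $\rmEG^{\gm,m}$ (an open subvariety of a representation $\rmW_m$ with $\mathrm{codim}(\rmW_m-\rmEG^{\gm,m})\to\infty$), the point is that $\rmEG^{\gm,m}$ is $(2m-2)$-connected — it is an open set in affine space whose complement has high codimension — so $Rp_{1*}p_1^*$ is the identity on complexes with cohomology concentrated in an interval $\rmI=[a,b]$ with $b-a\le 2m-2$, by the usual purity/Gysin estimate for the cohomology of $\rmW_m$ relative to $\rmW_m-\rmEG^{\gm,m}$. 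Thus $p_1^*$ is an equivalence in the same range.

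Next I would assemble the equivalence $\rmD^b_{\rmG}(\rmEG^{\gm,m}\times_{\rmG}\rmX)\simeq \rmD^b_{\rmG}(\rmEG\times_{\rmG}\rmX)$ by composing $(p_1^*)^{-1}\circ p_2^*$ on each bounded interval and passing to the colimit over $m$: given a bounded complex, choose $m$ large enough that its cohomological amplitude fits in an interval with $2m-2\ge b-a$, apply the two equivalences, and check independence of the choice of $m$ using the compatibility of the $\rmEG^{\gm,m}$ for varying $m$ (the transition maps $\rmEG^{\gm,m}\hookrightarrow \rmEG^{\gm,m+1}$ are highly connected). Naturality in $\rmX$ is immediate since every construction is functorial in $\rmX$; naturality in $\rmG$ is where the simplicial model earns its keep, as stressed in the introduction — a homomorphism $\rmG\to\rmH$ induces compatible maps of simplicial bar constructions and of the $\gm$-approximations, and the equivalences commute with the induced restriction functors. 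The mixed-and-pure assertion for $\bar\Q_\ell$-sheaves follows because $p_1$ and $p_2$ are (on each simplicial degree) smooth morphisms with geometrically connected fibers — affine space and $\rmEG^{\gm,m}$ respectively — so $p_i^*$ shifts weights by a fixed amount on each stratum and preserves the existence of a weight filtration; one invokes \cite[5.1.5]{BBD82} together with the smooth-pullback stability of purity.

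For part (ii), I would compute the fundamental group of the fibers: the fiber of $p_2$ is a simplicial model of a point (contractible), so $p_2$ induces an isomorphism on $\pi_1$ outright; the fiber of $p_1$ is $\rmEG^{\gm,m}$, an open subvariety of a representation with complement of codimension $\ge 2$, hence simply connected away from the characteristic by the standard purity statement for the prime-to-$p$ fundamental group (removing a closed subset of codimension $\ge 2$ does not change $\pi_1^{(p')}$, and affine space has trivial such $\pi_1$); a Leray/homotopy-exact-sequence argument for the simplicial fibration then gives that $p_1$ induces an isomorphism on fundamental groups completed away from the characteristic.

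The main obstacle I anticipate is not any single estimate but the bookkeeping needed to make the simplicial-degree-wise statements glue into a statement about the total simplicial spaces and their equivariant derived categories — in particular, verifying that the connectivity bound $2m-2\ge b-a$ interacts correctly with the (co)homological spectral sequence of the simplicial space $\rmEG\times_{\rmG}(\rmEG^{\gm,m}\times\rmX)$, so that the fiberwise contractibility/high-connectivity really does force $Rp_{i*}p_i^*\simeq\mathrm{id}$ on the bounded-interval subcategories rather than merely on each geometric fiber. This is exactly the kind of technical point the author has deliberately deferred to Section 5, and I would follow suit: state the fiberwise facts cleanly, cite the descent/base-change formalism of 2.1, and relegate the spectral-sequence convergence argument to the appendix.
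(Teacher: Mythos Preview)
Your proposal is essentially correct and follows the same overall strategy as the paper: identify the fibers of $p_1$ as $\rmEG^{\gm,m}$ (acyclic through degree $2m-2$) and the fibers of $p_2$ as the simplicial $\rmEG$ (acyclic), deduce full faithfulness from $Rp_{i*}p_i^*\simeq\mathrm{id}$ in the appropriate range, upgrade to an equivalence by checking hearts, and invoke smoothness of each $p_{i,n}$ for the mixed-and-pure statement.

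A few points where the paper differs in execution. For $p_1^*$ the paper is explicit that the inverse is $\tau_{\le b}Rp_{1*}$ rather than $Rp_{1*}$ itself; your formulation ``$Rp_{1*}p_1^*\simeq\mathrm{id}$ on the bounded-interval subcategory'' is only correct after this truncation, since $\rmEG^{\gm,m}$ has cohomology above degree $2m-2$. For part~(ii) the paper does not appeal to Zariski--Nagata purity for $\pi_1^{(p')}$; instead it realizes $\rmEG^{\gm,m}$ (or its $\ell$-completed \'etale homotopy type) as an iterated join $\rmU*\cdots*\rmU$, which is highly connected, and then uses Waldhausen's lemma on diagonals of bisimplicial fibrations to turn the degree-wise fiber identification into an honest fibration statement on completed \'etale topological types. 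Finally, the paper treats the case of a non-algebraically-closed base field separately, via a Galois-equivariance argument for local systems, which your sketch omits. None of these gaps is fatal --- your purity-based route to the simple connectivity of $\rmEG^{\gm,m}$ is a legitimate alternative --- but the passage from fiberwise statements to statements about the total simplicial space is precisely the ``bookkeeping'' you flag at the end, and the paper resolves it with a specific tool (Waldhausen's Lemma~5.2) rather than leaving it as a spectral-sequence exercise.
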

\vskip .2cm
We devote all of section 5 to a detailed discussion of equivariant derived categories leading up to the above theorem.
 The Appendix  provides a supplementary discussion of geometric classifying spaces, concluding with a result that shows how to transfer $t$-structures under
equivalences of triangulated categories.
\vskip .2cm \noindent
{\bf Conventions}. 
\begin{itemize}
\item In view of the reasons explained above, {\it the equivariant derived categories we consider will always be defined using the simplicial
 construction of the classifying spaces of linear algebraic groups}. The equivalence of derived categories
 stated in the last Theorem, then shows that all our results carry over to the equivariant derived categories defined by means of the procedures discussed
in \cite{BL94} or \cite{To99}. 
\item
The derived push-forwards we consider in various contexts in the paper, will almost 
always be defined using a functorial resolution such as that given by the Godement resolution. This
has the advantage of preserving any extra structure that is associated to a given sheaf. 
\item
A dga (short for  differential graded algebra or a sheaf of such algebras) will mean a 
chain complex $A^{\bullet}$, provided with an associative pairing $A^{\bullet}\otimes A^{\bullet} \ra A^{\bullet}$
of chain complexes. The dga will be  commutative if the above pairing is  graded commutative. (A chain complex will always have
differentials of degree $+1$.)
\end{itemize}
\vskip .2cm
 The approach we take is similar in principle to that of
 \cite{Lu95} (and \cite{Gu05}),  where  the formality of the dg-algebra $\B$ is proven for projective complex toric varieties (
 for complex complete symmetric varieties, \res). Both these need 
to  relate the equivariant
derived category $\rmD^b_{\rmG, c}(\rmX)$ with the category of sheaves of dg-modules over a sheaf of
dg-algebras on a {\it space} whose points correspond to all the $\rmG$-orbits on $\rmX$. 
\vskip .1cm
This intermediate equivalence, strongly needs the existence of {\it attractive slices} at each point
of each $\rmG$-orbit. Therefore, such a hypothesis is not always satisfied, in general. In the case of 
 toroidal group imbeddings, we show that
 these hypotheses are met to a large extent.
\vskip .1cm
We define {\it slices} in a somewhat more general context as
follows. 
\begin{definition}
\label{slice.def}
Let a linear algebraic group $\rmG$ act on a variety $\rmX$ and let $x \eps \rmX$. 
A locally closed subvariety ${\mathcal  S}$ of $\rmX$ containing $x$, stable under the action of the isotropy group $\rmG_x$ 
and satisfying the
following two conditions is called a {\it slice} at $x$: 
\vskip .1cm
(i) There exists a ${\mathbb G}_m$-action 
on ${\mathcal S}$ commuting with the action of $\rmG_x$.
\vskip .1cm
(ii) The map $\rmG{\underset {\rmG_{\it x}}  \times} {\mathcal  S} \ra \rmX$ sending $(g, x)$ to $g.x$ is 
an open immersion at $(e,x)$, and the dimension of ${\mathcal  S}$ is
the codimension of the orbit $\rmG\cdot x$ in $\rmX$. 
\vskip .2cm
Let ${\mathbb G}_m$  act on a variety $\rmX$ with a
fixed point $x$. Let $\lambda: {\mathbb G}_m \times \rmX\ra \rmX$ denote this action. 
We say $x$ is {\it attractive} if for all $y$
in a Zariski neighborhood of $x$, we have 
$lim_{t \to  0}ty=x$. Equivalently, all weights of
$\lambda$ acting on the Zariski tangent space at $x$ are contained in an
open half-space. 
\vskip .1cm
 Assume the situation of Definition ~\ref{slice.def}. Let $x \eps \rmX$ and let
${\mathcal  S}$ denote a slice at $x$. We say that ${\mathcal  S}$ is an 
{\it attractive slice}, if $x$ is an attractive fixed point for the
given action of ${\mathbb G}_m$ on ${\mathcal  S}$. (See \cite[appendix]{BJ01} for further details on
attractive fixed points.) 
\end{definition}
{\bf Acknowledgment}. The present paper had its origin in a question posed to the author by Michel Brion several years ago and it
evolved as a joint project between the author and Brion. In fact Brion has contributed substantially to the paper: for example,
Lemma ~\ref{phiK.acyclic.fibers} is entirely due to him. The author would like to thank Michel Brion for his considerable input into
the paper. He would also like to acknowledge the strong influence of \cite{Lu95}: in fact, as one can see, the main contribution of
the present paper is to show that the methods of \cite{Lu95} for toric varieties can be extended suitably to handle the class of all toroidal imbeddings
of connected complex reductive groups. The author also thanks both the referees for their detailed review, and constructive comments that have substantially improved
 the exposition.

\section{\bf Toroidal Group Imbeddings and the Proof of Theorem ~\ref{mainthm.2.1.0}}
As pointed out earlier, it is more convenient for us to make use of the simplicial model of the Borel construction and
the resulting equivariant derived categories. As this may not be that familiar, we begin this section with a quick review of
the simplicial model of the equivariant derived category. A detailed comparison with other geometric models of the
equivariant derived category appears in section ~\ref{equiv.dercat.3}.\footnote{We feel this approach would provide enough information of
the constructions used, so that a reader can easily follow the main arguments in the paper, without having to suffer through all the technical fine points.
The more technical aspects of the simplicial construction, along
 with a detailed comparison with other geometric constructions are left to section 5, which can be used as a reference, if the reader so wishes.}
\subsection{\bf The simplicial model of Equivariant Derived Categories}
\label{simpl.der.cat}
This is the simplicial model discussed in detail in \cite{De74},  and also in \cite[section 6]{Jo93} or \cite{Jo02}. The main 
advantage of this model comes from the functoriality of the simplicial Borel construction. In view of the various applications, we have decided to make the discussion in 
this section general enough
so that it applies to actions of linear algebraic groups defined over fields $k$ that are perfect and of finite $\ell$ cohomological dimension for some $\ell \ne char (k)$.
Therefore, all objects in the following discussion will be defined and of finite type over such a field $k$.
\vskip .1cm
Given a linear algebraic group
$\rmG$ acting on a variety $\rmX$, $\rmEG{\underset {G} \times}\rmX$ will now denote the simplicial variety defined by 
letting $(\rmEG{\underset {\rmG} \times}\rmX)_n = {\rmG}^{\times n} \times \rmX$ with the face maps $d_i:(\rmEG{\underset {\rmG} \times}\rmX)_n \ra (\rmEG{\underset {\rmG} \times}\rmX)_{n-1}$, $i=0, \cdots n$,
induced by the group action 
$\mu: \rmG \times \rmX \ra \rmX$,
the group multiplication $ \rmG \times \rmG \ra \rmG$ and the  projection $\rmG \times \rmX \ra \rmX$. The $i$-th degeneracy 
$s_i:(\rmEG{\underset {\rmG} \times}\rmX)_{n-1} \ra (\rmEG{\underset {\rmG} \times}\rmX)_n$, $i=0, \cdots, n-1$ is induced by inserting
 the identity element of the group $\rmG$ in the $i$-th place. This construction is functorial: if $f: \rmX \ra \rmY$ is any $\rmG$-equivariant map
  between varieties with $\rmG$-action, one obtains an induced map $\rmEG{\underset {G} \times}\rmX \ra \rmEG{\underset {G} \times}\rmY$.
 \vskip .1cm
 Given a Grothendieck topology, $\Top$, on varieties over $k$, one defines an induced
Grothendieck topology $\Top(\EGx \rmX )$ whose objects are $\rmU_n \ra (\EGx \rmX )_n$ in $\Top((\EGx \rmX )_n)$ for some $n \ge 0$. The maps between two such objects and
coverings for this topology are defined as in \cite{De74}. When one chooses the \'etale topology, this site will be denoted $Et(\EGx \rmX)$.
\vskip .2cm
\subsubsection{}
\label{sheaves.simp.1} 
Given a simplicial variety $\rmX_{\bullet}$ (for example, $\rmEG{\underset {G} \times}\rmX$),
a sheaf $F$ on the site  $\Top(\rmX_{\bullet})$ is a collection of
sheaves $\{F_m|m \ge 0\}$ with $F_m $ on the transcendental site (in case $k = \Cl$) or the \'etale site of $\rmX_m$, provided with structure maps $\alpha^*(F_m) \ra F_n$
for each structure map $\alpha: \rmX_n \ra \rmX_m$, and satisfying certain compatibility conditions: see \cite[(5.6.6)]{De74}.
We say that a sheaf $F$ has {\it descent} (or is {\it cartesian}) if the above maps $\alpha^*(F_m) \ra F_n$ are all isomorphisms. 
$\rmD(\EGx \rmX)$ will denote the derived category of complexes of sheaves of $\Q$-vector spaces on the simplicial variety $\EGx \rmX$ when everything is defined
over the complex numbers,  and will denote the derived category of $\ell$-adic sheaves on the \'etale site $Et(\EGx \rmX)$, in general.
In this framework, 
$\rm \rmD_{\rmG}(\rmX)$ will denote the full subcategory of $\rmD(\rmEG{\underset {G} \times}\rmX)$ consisting of complexes of sheaves
so that the cohomology sheaves  have descent. Moreover, for each finite  interval $\rmI=[a,b]$ with $-\infty < a \le b < \infty$, $\rmD^{\rmI}_{\rmG}(\rmX)$ will denote the
full subcategory of $\rmD_{\rmG}(\rmX)$ consisting of complexes $K$ for which ${\mathcal H}^i(K)=0$ for all $ i \notin I$.
\vskip .2cm
\begin{remark}
 It may be important to point out that, in order to construct the equivariant derived category, 
 one needs to begin with the category of all sheaves on the simplicial variety $\EGX$: that is the only way to ensure that the category of sheaves
 have important properties like having enough injectives. Then one restricts to the {\it full subcategory} of all complexes of sheaves whose cohomology sheaves are  equivariant (that is, cartesian) to
  obtain the equivariant derived category: the fact that the full subcategory of equivariant sheaves is closed under extensions is needed to ensure
  the equivariant derived category, so defined, is a triangulated category as shown in \cite[p. 38]{Hart66}. One needs to adopt such a construction in
  the geometric models of equivariant derived categories as well: see \cite[Chapter 1.8]{BL94}, for example. The only exception to this is when the group $\rmG$ is discrete:
   in this case, the category of equivariant sheaves have enough injectives as shown by Grothendieck (see \cite{Groth57}, \cite[Chapter 1.8]{BL94}): therefore, in this case, we may
   work with the category of complexes which are equivariant in each degree. We make use of this observation in {\it Step 4.1} in the proof of Theorem 
   ~\ref{mainthm.2}.
\end{remark}

\subsubsection{}
\label{sheaves.simp.2}
Observe that if $f_{\bullet}: \rmX_{\bullet} \ra \rmY_{\bullet}$ is a map of simplicial varieties, then the induced push-forward $f_{\bullet, *} = \{f_{n,*}|n \ge \}$,
which is not a single functor, but a collection of functors, indexed by $n \ge 0$. This issue is rather technical, and becomes relevant only where
it is important to compute the cohomology of the fibers of the simplicial map $f_{\bullet}$ as a simplicial scheme. The solution is to use the 
simplicial topology as
 in  \cite{Jo02}, and we invoke that in section ~\ref{simpl.der.functors}, as well as in 
the proof of Theorem ~\ref{mainthm.5} in section 5. 

\begin{terminology}
 We will adopt the following terminology throughout the rest of the paper. If $\rmG$ is a linear algebraic group acting on a variety $\rmX$,
$\EGx \rmX$ will always denote the simplicial variety defined above. In particular, $\BG$ will denote the corresponding simplicial variety
when $\rmX = Spec \, k$. The geometric model for $\EGx \rmX$ considered in  ~\eqref{eqdercat.1}, {\it which is an ind-scheme}, will always be denoted $\{\rmEG^{\gm, {\it m}}{\underset {G} \times}\rmX|m\}$.
\end{terminology}

\subsection{\bf Toroidal imbeddings of connected reductive groups: basic definitions and examples}
Throughout the rest of this section we will assume the base field is the field of complex numbers.
\label{tor.grp.imbed}
Let $\rmG$ denote a connected reductive group. Viewing $\rmG$ as a $\rmG \times \rmG$-homogeneous space for the
action of $\rmG \times \rmG$ by left and right multiplication, $\rmG \simeq (\rmG \times \rmG)/ \diag(\rmG)$. (Here
 $\diag(\rmG)$ denotes the group $\rmG$ imbedded diagonally in $\rmG \times \rmG$.)
\vskip .2cm
{\it $\rmG$-spherical varieties} may be defined as normal varieties $\rmX$, equipped with a $\rmG$-action so that 
a Borel subgroup $\rmB$ has an open dense $\rmB$-orbit. It is known (see \cite[Remark 2.2]{Kn91}) that then
$\rmX$ contains only finitely many $\rmB$-orbits as well as $\rmG$-orbits. Moreover, such a spherical variety $\rmX$
may be viewed as a partial compactification of the homogeneous space $\rmG/\rmH$, which denotes the open $\rmG$-orbit on $\rmX$.
When the group $\rmG$ is replaced by $\rmG \times \rmG$ and $\rmH$ by the diagonal imbedding of $\rmG$ in $\rmG \times \rmG$
(with the diagonal $\rmG$ acting on $\rmG \times \rmG$ by both left and right-multiplication), we obtain {\it spherical 
imbeddings of the group $\rmG$}.
\vskip .2cm
Choose a maximal torus $\rmT$ of $\rmG$, and denote by $\rmN$ its normalizer in $\rmG$; the quotient
$\rmN/\rmT$ is the Weyl group $\rmW$. Let $\rmB$ denote a fixed Borel subgroup containing $\rmT$.
$\rmG$-spherical varieties are classified by {\it colored} fans (that is,  fans with the extra structure of colors) in the valuation cone associated to $\rmG/\rmH$ (see \cite{Kn91}).
The {\it colors} of $\rmX$ correspond to $\rmB$-stable prime divisors in $\rmX$ that are  not $\rmG$-stable, and contain a $\rmG$-orbit. Equivalently, the colors correspond to
the closures in $\rmX$ of the $\rmB$-stable prime divisors in the open $\rmG$-orbit, so that the closure in $\rmX$ contains a $\rmG$-orbit. 
Spherical varieties that are {\it $\rmG\times \rmG$-equivariant imbeddings of $\rmG$} are classified by colored fans in $\rmX_*(\rmT) \otimes {\mathbb R}$ with support in the 
negative Weyl chamber: see \cite[6.2.4 Proposition]{BK05}. (Here $\rmX_*(\rmT)$ denotes the weight-lattice.) 

\vskip .1cm
{\it Toroidal imbeddings} form an important  special class
of $\rmG$-spherical varieties, defined as follows. 
\begin{definition} (Toroidal imbeddings) 
An equivariant imbedding of the connected reductive group $\rmG$ is {\it toroidal}, precisely when there are no colors, that is, every $\rmB$-stable prime divisor in $\rmX$ is either $\rmG$-stable or
does not contain a $\rmG$-orbit. 
Toroidal imbeddings of the group $\rmG$ are classified by fans in the
negative Weyl chamber: see \cite[6.2.4 Proposition]{BK05}.
\end{definition}

Therefore, given a $\rmG \times \rmG$ equivariant imbedding $\rmX$ of $\rmG$, one may find a 
$\rmG \times \rmG$ equivariant toroidal imbedding $\tilde {\rm X}$ provided with a birational $\rmG \times \rmG$-equivariant map $\tilde \rmX \ra \rmX$, by replacing the given colored
fan with the fan obtained by removing all the colors.

Moreover, such a toroidal imbedding is {\it complete}, if and only if the corresponding fan has support the
whole Weyl chamber. By considering $\rmW$-translates of the above fans, one obtains a $\rmW$-invariant fan in $\rmX_*(\rmT) \otimes {\mathbb R}$. Then a toroidal imbedding will be {\it smooth} if and only if the
corresponding $\rmW$-invariant fan has the property that every cone is generated by part of a basis of the free abelian group 
$\rmX_*(\rmT)$.
 
 Before proceeding further, we will next discuss a few examples of
toroidal group imbeddings.
\begin{examples}
\begin{enumerate}[\rm(i)]
\item All toric varieties are toroidal imbeddings of the open dense torus.
 \item The simplest example of group imbeddings, other than toric varieties, are that of $\rmGL_2$. In this case there are at least $4$ distinct 
 $\rmGL_2 \times \rmGL_2$-equivariant imbeddings of $\rmGL_2$, namely, ${\mathbb A}^4$, ${\mathbb P}^4$, and
 $\widetilde {\mathbb A}^4$, $\widetilde {\mathbb P}^4$, where the last two are blow-ups of the first two at the
 origin in ${\mathbb A}^4$. Only the latter two are toroidal imbeddings, and clearly  $\widetilde {\mathbb P}^4$
 is the only toroidal imbedding that is projective. (See \cite[8.2]{AKP}.)
 \item Let $\rmG$ be  semi-simple of adjoint type. Then $\rmX_*(\rmT)$ has
a basis consisting of the fundamental weights. Therefore, the fan consisting
of the Weyl chambers and their faces is smooth. The corresponding
(smooth) toroidal imbedding is the {\it wonderful compactification} of $\rmG$. (See \cite[section 6.1]{BK05}.)
In fact, an alternate definition of a toroidal group imbedding of the reductive group $\rmG$ is as a group imbedding $\rmX$ of $\rmG$,
 so that the quotient map $\rmG \ra \rmG_{ad}$ (which denotes the adjoint group) extends to a morphism of schemes
 $\rmX $ to the wonderful compactification of $\rmG_{ad}$: see \cite[Definition 6.2.2]{BK05}.
\item 
If $\rmG$ is semi-simple but no longer of adjoint type, then we may consider
the same fan, but now it is almost never smooth. For example, if
$\rmG = \rmSL_3$ then we get a singular toroidal compactification of $\rmG$. It can
be constructed geometrically as the normalization of the wonderful
compactification of $\rmPGL_3$ in the function field $k(\rmSL_3)$. This
construction works more generally for any semi-simple $\rmG$: the normalization
in $k(\rmG)$ of the wonderful compactification of the adjoint group yields
a canonical imbedding of $\rmG$, which is toroidal and projective but (again)
almost never smooth. (See \cite[section 6.2.A]{BK05} for additional details.)
\end{enumerate}
\end{examples}
For the rest of the paper, we will adopt the terminology introduced in the outline of the proof of Theorem ~\ref{mainthm.2}
in the introduction.
\vskip .1cm
\subsection{\bf Step 1 of the proof of Theorem ~\ref{mainthm.2}.  }
Let $\bar \rmG$ denote a normal $\rmG \times \rmG$-variety which contains $\rmG$ as an open orbit. We will
assume that $\bar{\rmG}$ is a toroidal imbedding of $\rmG$. 
Note that the normalizer (or stabilizer) of $\diagT$ in $\rmG \times \rmG$ 
equals $(\rmT \times \rmT) \diag (\rmN)$; moreover, the centralizer of $\diagT$ in $\rmG \times \rmG$ equals
$\rmT \times \rmT$, since $\rmT$ is its own centralizer in $\rmG$. We also have the exact sequence
\[ 1 \longrightarrow \diagT \longrightarrow (\rmT \times \rmT) \diag(\rmN) \longrightarrow \rmWT
\longrightarrow 1, \]
where $\rmWT$ denotes the semi-direct product of $\rmT$ with $\rmW$ (acting naturally on $\rmT$).

Let $\bar \rmT$ denote the closure of $\rmT$ in $\bar {\rmG}$; then $(\rmT \times \rmT)\diag (\rmN)$ acts on 
$\bar \rmT$ via its quotient $\rmWT$, where $\rmW$ acts by conjugation and $\rmT$ by left multiplication. 
This yields a (restriction) functor
\label{toroidal.1}
\be \begin{equation}
 \label{eq.toroidal.0}
res : \rmD_{\rmG \times \rmG, c}^b(\bar \rmG) \longrightarrow  \rmD_{(\rmT \times \rmT)\diag (\rmN) ,c}^b(\bar \rmT).
\end{equation} \ee
The first main result of this section is the following. 
\begin{theorem}
\label{eq.toroidal.1}
(i) The above functor is fully-faithful. 
\vskip .2cm \noindent
(ii) Moreover, if ${ \rmD}_{(\rmT \times \rmT)\diag (\rmN) ,c}^{{\rm b},o}(\bar \rmT)$ 
denotes the full subcategory of $\rmD_{(\rmT \times \rmT)\diag (\rmN) ,c}^b(\bar \rmT)$
generated by the  $(\rmT \times \rmT)\diag (\rmN)$-equivariant sheaves that are constant along the 
orbits of $(\rmT \times \rmT)\diag (\rmN) $ on $\bar \rmT$, then the above functor  induces an equivalence 
\[\rmD_{\rmG \times \rmG, c}^b(\bar \rmG) {\overset {res} \ra}  { \rmD}_{(\rmT \times \rmT) \diag(N) ,c}^{b,o}(\bar \rmT). \]
\vskip .2cm \noindent
(iii) It sends the $\rmG \times \rmG$-equivariant intersection cohomology complex $\rmIC^{\rmG \times \rmG}(\Q_{\O})$ on
a $\rmG \times \rmG$-orbit $\O$ to the corresponding $(\rmT \times \rmT)\diag (\rmN)$-equivariant intersection cohomology
complex $\rmIC^{(\rmT \times \rmT)\diag (\rmN)}(\Q_{\O'})$ where $\O'$ is the $(\rmT \times \rmT)\diag (\rmN)$-orbit corresponding to
the $\rmG \times \rmG$-orbit $\O$. 
\end{theorem}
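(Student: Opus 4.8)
The plan is to reduce the statement to a known slice-theoretic description of toroidal group imbeddings, and then apply a standard descent/equivalence argument for equivariant derived categories. The key geometric input is the local structure theorem for toroidal imbeddings (see \cite[29.1]{Ti11}, \cite[Sec.~6.2.2]{BK05}): there is an open $\rmG \times \rmG$-stable subvariety $\bar \rmG_0 \subseteq \bar \rmG$ which retracts $\rmG \times \rmG$-equivariantly onto the $(\rmT \times \rmT)\diag(\rmN)$-stable open piece $\bar \rmT_0 = \bar \rmT \cap \bar \rmG_0$, via a map exhibiting $\bar \rmG_0$ as $(\rmG \times \rmG) \times_{(\rmT \times \rmT)\diag(\rmN)} \bar \rmT_0$ (more precisely, as the associated fibre bundle over the flag variety with fibre the affine toric slice). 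First I would make this precise: show that each $\rmG \times \rmG$-orbit $\O$ meets $\bar \rmT$ in a single $(\rmT \times \rmT)\diag(\rmN)$-orbit $\O'$, that $\bar \O \cap \bar \rmT = \bar{\O'}$, and that the slice at a point of $\O'$ inside $\bar \rmT$ is also a slice for $\O$ inside $\bar \rmG$ — this is exactly the content of ``the local structure is that of toric varieties.'' This already gives a bijection between $\rmG \times \rmG$-orbits and $(\rmT \times \rmT)\diag(\rmN)$-orbits, and identifies the stratified structure on the two sides.

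For (i), the fully-faithfulness, I would argue via the associated-bundle presentation $\bar \rmG_0 \cong (\rmG \times \rmG) \times_{(\rmT \times \rmT)\diag(\rmN)} \bar \rmT_0$: on the level of simplicial Borel constructions this means $\rmE(\rmG\times\rmG) \times_{\rmG\times\rmG} \bar \rmG_0$ is canonically isomorphic to $\rmE((\rmT\times\rmT)\diag(\rmN)) \times_{(\rmT\times\rmT)\diag(\rmN)} \bar \rmT_0$, because inducing up the structure group is a standard operation that is transparent in the simplicial model (this is precisely one of the reasons the paper insists on the simplicial construction). Hence restriction along this identification is an equivalence on the open pieces, and for $\RHom$ computations between equivariant complexes one can then run the usual dévissage over the stratification: since every $\rmG \times \rmG$-orbit lies in some such open $\bar \rmG_0$ (as the fans covering $\bar \rmG$ are built from the toric charts), and restriction commutes with the gluing triangles, fully-faithfulness follows from the local statement plus a Mayer–Vietoris / open-cover induction. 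Concretely: for $K, K' \in \rmD^b_{\rmG\times\rmG,c}(\bar\rmG)$ the adjunction unit $K \to R\mathrm{res}_*\,\mathrm{res}\,K$ is checked to be an isomorphism stratum by stratum using the slice identification.

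For (ii), once fully-faithfulness is in hand, the image is clearly contained in ${\rmD}^{b,o}_{(\rmT\times\rmT)\diag(\rmN),c}(\bar\rmT)$ — the restriction of a $\rmG\times\rmG$-equivariant constructible complex has cohomology sheaves constant along the $(\rmT\times\rmT)\diag(\rmN)$-orbits, since these are the traces of the $\rmG\times\rmG$-orbits and the local systems involved are constant (cf.~\cite[Lemma 3.6]{BJ04}). Conversely, I would show essential surjectivity onto ${\rmD}^{b,o}$ by a generation argument: the category ${\rmD}^{b,o}_{(\rmT\times\rmT)\diag(\rmN),c}(\bar\rmT)$ is generated (as a triangulated category) by the $j'_!\,\Q_{\O'}$ for $\O'$ an orbit, each of which is in the essential image (it is the restriction of $j_!\,\Q_{\O}$), and a fully-faithful exact functor whose image contains a generating set and is closed under the triangulated operations is essentially surjective. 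For (iii), the statement about $IC$ complexes follows formally: restriction along the open immersion $\bar\rmG_0 \hookrightarrow \bar\rmG$ commutes with intermediate extension (intersection cohomology is local, and perverse truncation is preserved under the slice identification because the slice is ``attractive'' in the sense of Definition~\ref{slice.def}, so the relevant dimension shifts match up); hence $\mathrm{res}(\rmIC^{\rmG\times\rmG}(\Q_\O))$ is a perverse self-dual extension of $\Q_{\O'}$ from $\O'$, i.e.~it is $\rmIC^{(\rmT\times\rmT)\diag(\rmN)}(\Q_{\O'})$ by uniqueness of the IC extension.

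I expect the main obstacle to be making the open-cover/gluing step fully rigorous for the fully-faithfulness in (i): one must check that the toric charts $\bar\rmG_0$ genuinely cover $\bar\rmG$ in a $\rmG\times\rmG$-equivariant way and that the slice identifications on overlaps are compatible, so that the $\RHom$-comparison glues. The subtlety is that the naive ``fibre'' of the simplicial Borel map need not be a single space (cf.~the remark in \ref{sheaves.simp.2} that $f_{\bullet,*}$ is a collection of functors), so one has to either work with the simplicial topology of \cite{Jo02} or, more cheaply, reduce to the compact group $\rmK \times \rmK$ and use that the relevant fibres of $\psi_\rmK$-type maps are acyclic (Lemma~\ref{phiK.acyclic.fibers}). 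A secondary technical point is verifying that the $\mathbb{G}_m$-action on the toric slice really does make the fixed points attractive, which is what guarantees the IC/perversity compatibility in (iii); this is where the toroidal (no colors) hypothesis is essential, since for a general spherical imbedding the slice need not be attractive.
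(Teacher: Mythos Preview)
Your argument for (i) rests on a claim that is false: the map $(\rmG \times \rmG) \times_{(\rmT \times \rmT)\diag(\rmN)} \bar \rmT \to \bar \rmG$ is \emph{not} an isomorphism over any $\rmG \times \rmG$-stable open, so there is no associated-bundle presentation $\bar \rmG_0 \cong (\rmG \times \rmG) \times_{(\rmT \times \rmT)\diag(\rmN)} \bar \rmT_0$ of the kind you assert. Indeed, the fiber of this map over the identity $e \in \rmG$ is (after passing to compact forms) $\rmK/\rmN_{\rmK}$, which is not a point --- it is only $\Q$-acyclic. You are conflating the local structure theorem, which gives a $\rmB \times \rmB^-$-equivariant product decomposition $\rmX_0 \cong {\rm R_u}(\rmB) \times {\rm R_u}(\rmB^-) \times \bar \rmT_0$ of a $\rmB \times \rmB^-$-stable open, with an associated-bundle statement for the full $\rmG \times \rmG$-action; the latter simply does not hold. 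Consequently your d\'evissage/Mayer--Vietoris strategy for (i) cannot be carried through: there is no local identification of Borel constructions to glue.

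The paper's proof of (i) proceeds exactly through the alternative you relegate to a parenthetical fallback: one replaces $\rmG$ by a maximal compact $\rmK$ (Lemma~\ref{max.compact}), so that the relevant map $\phi_{\rmK}$ becomes \emph{proper}, and then the projection formula reduces fully-faithfulness to showing $R\phi_{\rmK*}(\Q) \simeq \Q$, i.e.\ that the fibers of $\phi_{\rmK}$ are $\Q$-acyclic. This is Lemma~\ref{phiK.acyclic.fibers}, which identifies each fiber with $\rmK^t/\rmN_{\rmK}^t$ via the Cartan decomposition. The point is that without properness you have neither base change nor the projection formula, and the algebraic map $\phi$ is neither proper nor smooth --- so the compact reduction is not a shortcut but the essential step. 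Your sketches for (ii) and (iii) are close to the paper's: (ii) is reduced to the hearts using that the $\rmG \times \rmG$-equivariant local systems on orbits are constant, and (iii) does use the $\rmB \times \rmB^-$-local structure to check that $\phi^*$ commutes with the truncated $Rj_*$ functors defining the IC extension.
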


\begin{proof}
Observe that 
\[ \rmD_{\rmG \times \rmG, c}^b(\bar \rmG) =
\rmD_{\rmG \times \rmG, c}^b(EG \times E G {\underset {\rmG \times \rmG} \times }{\bar \rmG}) \mbox{ while}, \]
\[ \rmD_{(\rmT \times \rmT)\diag (\rmN), c}^b(\bar \rmT) = 
\rmD_{\rmG \times \rmG, c}(E G \times E G {\underset {\rmG \times \rmG} \times}
(\rmG \times \rmG{\underset {(\rmT \times \rmT )\diag (\rmN)} \times} (\bar {\rmT}))). \]
Therefore, the map denoted $res$ is the pull-back map $\phi^*$ induced by the map 
\[\phi : \rmEG \times \rmEG {\underset {\rmG \times \rmG} \times}
(\rmG \times \rmG{\underset {(\rmT \times \rmT )\diag (\rmN)} \times}(\bar \rmT)) \ra 
EG \times E G {\underset {\rmG \times \rmG} \times} \bar \rmG,\]
where $\phi$ itself is induced by the natural map $\rmG \times \rmG {\underset {(\rmT \times \rmT) \diag (\rmN)} \times} \bar {\rmT} \to \bar{\rmG}$. 
\vskip .2cm
In order to prove $\phi^*$ is fully-faithful, it is enough to show that the natural map 
$M \ra R\phi_* \phi^*(M)$ is a quasi-isomorphism for all $M \eps \rmD_{\rmG \times \rmG, c}^b(\bar \rmG)$. 
Unfortunately, the map $\phi$ is neither proper nor smooth, which makes it difficult to prove 
this directly. Therefore, we adopt a rather indirect technique. We first replace all the linear algebraic groups 
by their maximal compact subgroups. We may choose a maximal compact subgroup $\rmK \subset \rmG$ such that
$\rmN_{\rmK} := \rmK \cap \rmN$ is a maximal compact subgroup of $\rmN$; then $\rmT_{\rmK} := \rmT \cap \rmK$ is the largest
compact sub-torus of $\rmT$. Lemma ~\ref{max.compact} below shows then that it suffices to prove that 
the induced restriction 
\[ res_{\rmK} : \rmD_{\rmK \times \rmK, c}^b(\bar \rmG) \longrightarrow 
\rmD_{(T_K \times T_K) diag(N_K),c}^b(\bar \rmT) \]
is fully-faithful. This functor identifies with $\phi_{\rmK}^*$ where $\phi_{\rmK}$ is the natural map
\be \begin{equation}
\label{phi.K}
\rmEK \times \rmEK {\underset {\rmK \times \rmK} \times}
(\rmK \times \rmK{\underset {(\rmT_{\rmK} \times \rmT_{\rmK}) \diag (\rmN_{\rmK})} \times}(\bar \rmT)) \ra 
\rmEK \times \rmEK {\underset {\rmK \times \rmK} \times} \bar{\rmG}.
\end{equation} \ee
Observe that $\phi_{\rmK}$ is proper. Therefore, one has a projection formula 
which provides the (natural) identification 
\[ R\phi_{K*}\phi_K^*(M) \simeq M \otimes R\phi_{K*}\phi_K^*({\underline \Q}), M 
\eps \rmD_{(T_K \times T_K ) \diag(N_K),c}^b(\bar \rmT).\]
Therefore, it suffices to prove that 
\be \begin{equation}
 \label{phiK}
 R\phi_{K*}({\underline \Q}) = R\phi_{K*}(\phi_K^*{\underline \Q}){\overset {\simeq} \ra} {\underline \Q}. 
\end{equation} \ee
\vskip .2cm
Secondly, the properness of $\phi_{\rmK}$ shows that one has proper-base-change, so that 
it suffices to prove that $\phi_{\rmK}$ is surjective and its fibers are $\Q$-acyclic and connected: observe that this will prove ~\eqref{phiK} and, 
 making use of the adjunction between $\phi_{\rmK}^*$ and $R\phi_{\rmK*}$, that the functor $\phi_{\rmK}^*$ is fully-faithful. 
This is worked out in Lemma ~\ref{phiK.acyclic.fibers} below, which will complete the proof that the functor $\phi^*$ is fully-faithful.
\vskip .2cm
 To prove the second statement, 
it suffices to show that $\phi^*$ induces an equivalence on the corresponding hearts,
that is, at the level of the equivariant sheaves. First observe that $\bar{\rmG}$ is an scs variety (that is, a spherical simply connected variety)
in the sense of \cite[section 1]{BJ04}. Thus the 
 isotropy subgroup of any $\rmG \times \rmG$-orbit $\O \subset \bar{\rmG}$ is connected, by [loc. cit., Lemma 3.6], 
which implies that all the $\rmG \times \rmG$-equivariant local systems on $\O$ are constant.
Any constructible $\rmG \times \rmG$-equivariant sheaf on $\bar \rmG$ has a finite filtration by 
$\rmG \times \rmG$-equivariant sheaves that are  extension by zero of the constant sheaves 
on the $\rmG \times \rmG$-orbits. Also, by \cite[Proposition 6.2.3]{BK05}, every such orbit $\O$ intersects 
$\bar {\rmT}$ along a unique orbit of $(\rmT \times \rmT) \diag (\rmN)$, and this sets up a bijection
between $\rmG \times \rmG$-orbits in $\bar{\rmG}$ and $(\rmT \times \rmT) \diag (\rmN)$-orbits in $\bar {\rmT}$,
which preserves the closure relations. 
This completes the proof of the second statement. (Observe that the stabilizers of the 
$(\rmT \times \rmT ) \diag (\rmN)$-orbits in $\bar \rmT$ are not connected in general: for example,
the stabilizer of the open orbit is $\diag (\rmN)$. Thus, one obtains 
$(\rmT \times \rmT ) \diag (\rmN)$-equivariant sheaves on these orbits that are not constant.
This is the need to restrict to the full subcategory 
$ \rmD_{(\rmT \times \rmT) \diag(N) ,c}^{{\rm b},o}(\bar \rmT)$ in order for the restriction functor
 to be an equivalence.)
\vskip .2cm
The third statement follows from the local structure of toroidal group imbeddings: see \cite[Theorem 29.1]{Ti11}. Here are the details to prove it. Let $\O'$ denote a $(\rmT \times \rmT) \diag (\rmN)$-orbit 
on $\bar \rmT$ and let $\O' \ra \bar \O'$ denote the corresponding open immersion of $\O'$  into its closure. We stratify $\bar \O'$ so that one obtains a sequence of
 open immersions $\rmU_0 = \O' {\overset {j_0'} \ra} \rmU_1' {\overset {j_1'} \ra} \rmU_2' \cdots \rmU_{n-1}'{\overset {j_{n-1}'} \ra} \rmU_n' = \bar O'$,
  so that each of the strata $\rmU_i - \rmU_{i-1}$ is a (disjoint) union of  $(\rmT \times \rmT)\diag (\rmN)$-orbits. Let
$j': \rmU _i \ra \rmU _{i+1}$ denote an open immersion of
$(\rmT \times \rmT)\diag (\rmN)$-stable subvarieties appearing in the above factorization of $\O' \ra \bar \O'$. 
Let $j:\rmV _i \ra \rmV _{i+1}$ denote the open immersion of the corresponding $\rmG \times \rmG$-stable subvarieties in the
factorization of $\O \ra \bar \O$, where $\O$ is the $\rmG \times \rmG$-orbit corresponding to $\O'$.  Recalling that the equivariant
 intersection cohomology complexes are  suitable perverse extensions,  now
it suffices to show that $\phi^* ( \sigma _{\le n} Rj_* (L)) \simeq \sigma_{\le n} Rj'_* ( \phi^*(L))$ for
any constant sheaf $L$ on $\rmV_i$, where $\sigma_{\le n}$ denotes the cohomology
truncation that kills cohomology sheaves in degrees greater than $n$. 
\vskip .2cm
For a given point $x \eps \rmV_{i+1}$, the local structure in \cite[Theorem 29.1]{Ti11} shows
that, after possibly replacing $x$ by a translate of $x$ by an element $(g_1, g_2) \eps \rmG \times \rmG$, one may take the intersection with an open neighborhood $\rmV_{\it x}$ so that $j_{|\rmV_{\it x} \cap \rmV_i}: \rmV_{\it x} \cap \rmV_i \ra \rmV_{\it x} \cap \rmV_{i+1}$
identifies with ${\rm id}_{\rm {R_u(B) \times R_u(B^-)}} \times j': {\rm {R_u(B) \times R_u(B^-)}} \times \rmU_i \ra {\rm {R_u(B) \times R_u(B^-)}} \times \rmU_{i+1}$. 
(Here  $\rmB$ is a Borel subgroup of $\rmG$ containing $\rmT$ and
$\rmB^-$ is its opposite Borel subgroup also containing $\rmT$. Then  $ {\rm R_u}(\rmB) \times {\rm R_u}(\rmB^-)$ denotes the product of the unipotent radicals of $\rmB$ and $\rmB^-$.)
Now $\phi^*$ corresponds to pull-back by the inclusion of $\rmU_i$ in ${\rm {R_u(B) \times R_u(B^-)}} \times \rmU_i$ so that $\phi^*(L)$ is the pull-back of
 the sheaf $L$ on $\rmV_{\it x} \cap \rmV_i = {\rm {R_u(B) \times R_u(B^-)}} \times \rmU_i$ to $\rmU_i$. Therefore, on this open neighborhood $\rmV_{\it x}$, $\sigma_{\le n}(Rj_*(L)) =
id\boxtimes \sigma_{\le n}Rj'_*(\phi^*(L))$ and therefore, $\phi^*(\sigma_{\le n}Rj_*(L)) = \sigma_{\le n} Rj'_*(\phi^*(L))$.
 \end{proof}
\begin{lemma}
 \label{max.compact}
 Let $\rmG$ denote a connected reductive group acting on a variety $\rmX$. Let $\rmK$ denote
a maximal compact subgroup of $\rmG$. Then the restriction functor:
\[\rmD^b_{\rmG, c}(\rmX) \ra \rmD^b_{\rmK, c}(\rmX)\]
is fully-faithful.
\end{lemma}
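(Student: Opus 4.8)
The plan is to realize the restriction functor as pull-back along a map of simplicial Borel constructions and to establish full faithfulness by a projection-formula/base-change computation; the sole geometric ingredient will be that $\rmG/\rmK$ is contractible.

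Write $\iota : \rmK\hookrightarrow\rmG$ for the inclusion. In the simplicial model of \ref{simpl.der.cat}, the restriction functor $\rmD^b_{\rmG,c}(\rmX)\to\rmD^b_{\rmK,c}(\rmX)$ is the pull-back $\phi^*$ along the map of simplicial varieties
\[
\phi : \rmEK{\underset {\rmK} \times}\rmX\ \longrightarrow\ \rmEG{\underset {\rmG} \times}\rmX
\]
induced by $\iota$ on the group factors and by the identity on $\rmX$. Since $\phi^*$ has right adjoint $R\phi_*$, and a functor admitting a right adjoint is fully faithful exactly when the unit of the adjunction is an isomorphism, it is enough to show that $M\to R\phi_*\phi^*(M)$ is an isomorphism for every $M\in\rmD^b_{\rmG,c}(\rmX)$. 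As in the proof of Theorem ~\ref{eq.toroidal.1}, I would deduce this, by the projection formula, from the single identity $R\phi_*(\underline{\Q})\simeq\underline{\Q}$.

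To get hold of $R\phi_*$, I would factor $\phi$ through a mixed Borel construction. Let $\rmEG{\underset {\rmK} \times}\rmX$ denote the quotient of $\rmEG\times\rmX$ by the free $\rmK$-action obtained by restriction along $\iota$; then $\phi$ factors as
\[
\rmEK{\underset {\rmK} \times}\rmX\ \overset{a}{\longrightarrow}\ \rmEG{\underset {\rmK} \times}\rmX\ \overset{b}{\longrightarrow}\ \rmEG{\underset {\rmG} \times}\rmX .
\]
The map $a$ is an equivalence on bounded constructible derived categories: $\rmEK$ and $\rmEG$ are both contractible simplicial varieties equipped with a free $\rmK$-action, so $\rmEK{\underset {\rmK} \times}\rmX$ and $\rmEG{\underset {\rmK} \times}\rmX$ both compute $\rmD^b_{\rmK,c}(\rmX)$ and $a$ is the resulting comparison functor; this is the independence of the equivariant derived category from the chosen free resolution of a point, for which I would appeal to the cohomological-descent machinery of \S~\ref{equiv.dercat.3} (cf. \cite{De74}, \cite[\S6]{Jo93}). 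Hence $R\phi_*\phi^*\simeq Rb_*b^*$. The map $b$, on the other hand, is in each simplicial degree the fibre bundle with fibre $\rmG/\rmK$ associated to the principal $\rmG$-bundle $\rmEG\times\rmX\to\rmEG{\underset {\rmG} \times}\rmX$ and the $\rmG$-space $\rmG/\rmK$. By the Cartan (polar) decomposition $\rmG\cong\rmK\times\mathfrak{p}$, the fibre $\rmG/\rmK$ is diffeomorphic to a Euclidean space, hence contractible. Consequently $Rb_*(\underline{\Q})\simeq\underline{\Q}$ — degreewise because a fibre bundle with contractible fibre has trivial higher direct images of the constant sheaf, and in the simplicial direction because the squares coming from the face and degeneracy maps of the bar construction are cartesian, so that $Rb_*$ is computed term by term (here I would invoke the simplicial-topology formalism of \cite{Jo02}, recalled in \S~\ref{simpl.der.functors}). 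The projection formula for the fibre bundle $b$ then gives $Rb_*b^*(M)\simeq M\otimes^{L}\underline{\Q}\simeq M$, and combining with the previous step, $R\phi_*\phi^*\simeq\mathrm{id}$, which is the asserted full faithfulness.

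The real content of the argument lies in the two manipulations of simplicial pushforwards — that $a$ is an equivalence, and that $Rb_*$ is computed degreewise against the bar-construction squares — rather than in any geometry: on geometric realizations the statement is transparent, since $|\phi|$ is (up to homotopy) the fibration $\rmX_{h\rmK}\to\rmX_{h\rmG}$ with contractible fibre $\rmG/\rmK$, and the only delicate point is transporting this fact back to the level of constructible complexes on the simplicial Borel construction, which is exactly what the formalism of Section ~\ref{equiv.dercat.3} is designed to handle.
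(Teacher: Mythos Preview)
Your proof is correct and takes essentially the same approach as the paper: your map $b$ is exactly the paper's map $\psi:\rmEG{\underset {\rmG} \times}(\rmG {\underset {\rmK} \times}\rmX) \ra \rmEG{\underset {\rmG} \times}\rmX$ (once one identifies $\rmEG{\underset{\rmK}\times}\rmX$ with $\rmEG{\underset{\rmG}\times}(\rmG{\underset{\rmK}\times}\rmX)$), and your step $a$ is what the paper packages into the sentence ``the functor $\psi^*$ identifies with the restriction functor.'' Both arguments then conclude via cohomological properness of the fibre bundle, the projection formula, and the $\Q$-acyclicity of the fibre $\rmG/\rmK$.
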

\begin{proof} We consider the map 
$\psi: \rmEG{\underset {\rmG} \times}(\rmG {\underset {\rmK} \times}\rmX) \ra \rmEG{\underset {\rmG} \times}\rmX$ induced by the
left-action of $\rmG$ on $\rmX$. Observe that the functor $\psi^*$ identifies with the restriction functor considered in the lemma.
Using the simplicial models for the Borel construction, one sees that the above map in simplicial degree $n$ is
the map $\psi_n:\rmG^n \times \rmG{\underset {\rmK} \times}\rmX \ra \rmG^n \times \rmX$. This is induced by the map 
$\psi_0:\rmG{\underset {\rmK} \times}\rmX \ra  \rmX$, which being locally trivial in the complex topology, is cohomologically proper.
One may verify therefore that each $\psi_n$  is cohomologically proper, so that proper-base-change and 
the projection formula hold and that therefore, it suffices to verify that the fibers of 
$\psi_0$ are acyclic with $\Q$-coefficients. But the fibers of $\psi_0$ identify with $\rmG/\rmK$, 
which is acyclic with rational coefficients. Now an argument exactly as in the case of the functor $\phi_{\rmK}^*$ in the last Theorem, proves
 that the functor $\psi^*$ is fully-faithful. 
\end{proof}
\begin{lemma}
\label{phiK.acyclic.fibers}
The fibers of $\phi_{\rmK}$ are acyclic with $\Q$-coefficients, where $\phi_{\rmK}$ is the map in ~\eqref{phi.K}. Moreover, $\phi_{\rmK}$ 
restricts to an isomorphism over an open dense subset of $\rmG$ ($\subseteq \bar \rmG$) for the
complex topology.
\end{lemma}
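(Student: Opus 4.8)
The plan is to compute the fibers of $\phi_{\rmK}$ explicitly by unravelling the bar construction and using the local structure theory of toroidal imbeddings, reducing the acyclicity to the fact that $\rmK/\rmT_{\rmK}$ is rationally acyclic in an appropriate sense, or rather that the relevant homogeneous fiber is built out of the cells $\rmR_u(\rmB) \times \rmR_u(\rmB^-)$ and compact tori. First, observe that $\phi_{\rmK}$ is the simplicial map whose $n$-th level is
\[
\rmK^{\times 2n} \times \bigl(\rmK \times \rmK \underset{(\rmT_{\rmK} \times \rmT_{\rmK})\diag(\rmN_{\rmK})}{\times} \bar \rmT\bigr) \longrightarrow \rmK^{\times 2n} \times \bar \rmG,
\]
induced in level $0$ by the natural $\rmK \times \rmK$-equivariant map $\phi_{\rmK,0}: \rmK \times \rmK \underset{(\rmT_{\rmK} \times \rmT_{\rmK})\diag(\rmN_{\rmK})}{\times} \bar \rmT \to \bar \rmG$, coming from $\rmG \times \rmG \underset{(\rmT \times \rmT)\diag(\rmN)}{\times} \bar \rmT \to \bar \rmG$. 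Since $\phi_{\rmK}$ is proper, and using that the homotopy fiber of a simplicial map whose levels are all fibrations with a fixed homotopy fiber type is that common fiber, I would reduce the computation of the fibers of $\phi_{\rmK}$ to the computation of the fibers of $\phi_{\rmK,0}$ together with the (free, hence harmless) $\rmK^{\times 2n}$-factors. So it suffices to show that the fibers of $\phi_{\rmK,0}$ over every point of $\bar\rmG$ are $\Q$-acyclic and connected, and that $\phi_{\rmK,0}$ is surjective and restricts to an isomorphism over an open dense subset of $\rmG$.

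Next I would analyze $\phi_{\rmK,0}$ orbit by orbit, using the bijection (from Theorem \ref{eq.toroidal.1}(ii), via \cite[Prop. 6.2.3]{BK05}) between $\rmG \times \rmG$-orbits $\O$ in $\bar\rmG$ and $(\rmT \times \rmT)\diag(\rmN)$-orbits $\O'$ in $\bar\rmT$, where $\O \cap \bar\rmT = \O'$. Over a point $x \in \O$, the local structure theorem \cite[Theorem 29.1]{Ti11} (in the form already invoked in the proof of Theorem \ref{eq.toroidal.1}) says that, up to translating $x$ by $\rmG \times \rmG$, a neighborhood of $x$ looks like $\rmR_u(\rmB) \times \rmR_u(\rmB^-) \times (\text{slice in }\bar\rmT)$. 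Pulling this back through $\phi_{\rmK,0}$, and matching it against the analogous decomposition of the domain, the fiber $\phi_{\rmK,0}^{-1}(x)$ should be identified with a homogeneous-type space of the form $\rmK \underset{\rmK_x'}{\times}(\cdots)$ whose underlying topology is that of $\rmR_u(\rmB) \times \rmR_u(\rmB^-)$ (contractible) times a homogeneous space of the compact group $\rmK$ by the intersection of two parabolics' compact parts — explicitly the relevant fiber over the open orbit $\rmG$ is $\rmK/\rmT_{\rmK}$ crossed with something contractible, but one checks $\rmG = (\rmG \times \rmG)/\diag(\rmG)$ and the map $\rmK \times \rmK \underset{\diag(\rmN_\rmK)}{\times}(\text{pt}) \to \rmK$ composed appropriately is injective, giving the asserted isomorphism over $\rmG$. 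For the non-open orbits one gets fibers that are towers fibered in affine spaces $\rmR_u(\rmB) \times \rmR_u(\rmB^-)$ over compact homogeneous spaces of the form $\rmN_{\rmK}/\rmT_{\rmK}$-torsors, all of which are $\Q$-acyclic and connected — the $\rmR_u$-factors because they are contractible, and the compact part because the relevant isotropy is connected on $\bar\rmG$ (scs-ness, \cite[Lemma 3.6]{BJ04}) so no disconnectedness of $\rmW$-type survives on the $\bar\rmG$-side.

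The main obstacle I anticipate is the bookkeeping in the second-to-last step: keeping track of exactly which compact subgroups appear as isotropy in the fiber of $\phi_{\rmK,0}$ at a boundary point, and verifying that the Weyl-group-type disconnectedness present on the $\bar\rmT$-side (the stabilizer of the open $(\rmT \times \rmT)\diag(\rmN)$-orbit is $\diag(\rmN)$, not $\diag(\rmT)$, as the proof of Theorem \ref{eq.toroidal.1} emphasizes) genuinely disappears after inducing up along $\rmK \times \rmK \underset{(\rmT_\rmK \times \rmT_\rmK)\diag(\rmN_\rmK)}{\times}(-)$ — i.e. that the fiber is a \emph{connected} space rather than a disjoint union indexed by a Weyl-group coset. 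The clean way to see this is that $\phi_{\rmK,0}$ factors the map $\rmK \times \rmK \underset{(\rmT_\rmK\times\rmT_\rmK)\diag(\rmN_\rmK)}{\times}\bar\rmT \to \bar\rmG$ through $\rmK \times \rmK \underset{(\rmT_\rmK \times \rmT_\rmK)\diag(\rmN_\rmK)}{\times} \bar\rmT \cong (\rmK\times\rmK)\underset{\rmK\times\rmK}{\times}(\rmG\times\rmG \underset{(\rmT\times\rmT)\diag(\rmN)}{\times}\bar\rmT)$ and then invoking that $\rmG \times \rmG \underset{(\rmT\times\rmT)\diag(\rmN)}{\times}\bar\rmT \to \bar\rmG$ is a \emph{locally trivial} fibration in the complex topology over each orbit with connected $\Q$-acyclic fiber, which is exactly the content one extracts from \cite[Theorem 29.1]{Ti11}; connectedness of the total fiber then follows from connectedness of $\rmK/\rmN_{\rmK}^{\circ}$-type pieces together with the fact that the extra $\pi_0$ is killed precisely because $\bar\rmG$ is scs. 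Once the fiber is identified as (affine space) $\times$ (compact connected homogeneous space that is a torus bundle over a point or a flag-type space), $\Q$-acyclicity is immediate: affine space is contractible, and the compact factor that shows up is either a point or $\rmK/\rmT_\rmK$-like, and in the former case we are done directly; in the general boundary case the slice is itself a smaller toroidal/toric object and one finishes by induction on the dimension of the imbedding, or simply by noting the compact fiber is always of the form $\rmH_{\rmK}/\rmH_{\rmK}'$ with $\rmH_{\rmK}'$ containing a maximal compact torus, hence rationally acyclic.
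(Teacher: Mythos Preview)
Your reduction to the level-zero map $\phi_{\rmK,0}$ is fine, and the orbit-by-orbit strategy is the right shape. But the actual fiber computation goes off the rails, and the gap is not just bookkeeping.

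First, the fibers of $\phi_{\rmK,0}$ are \emph{compact}: the source $(\rmK\times\rmK)\underset{(\rmT_\rmK\times\rmT_\rmK)\diag(\rmN_\rmK)}{\times}\bar\rmT$ is compact because $\rmK$ is compact and $\bar\rmT$ is projective. So no description of the fiber as ``$\rmR_u(\rmB)\times\rmR_u(\rmB^-)$ times something'' can be correct; the unipotent radicals simply do not live inside $\rmK$. The local structure theorem \cite[Theorem 29.1]{Ti11} tells you about the \emph{complex} map $\phi$ locally on $\bar\rmG$, not about $\phi_{\rmK}$; and the paper has already warned (in the proof of Theorem~\ref{eq.toroidal.1}) that $\phi$ itself is neither proper nor smooth, so your fallback claim that the $\rmG$-version is a locally trivial fibration with connected $\Q$-acyclic fibers is exactly what fails and is the whole reason one passes to $\rmK$.

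Second, the map is not injective over all of $\rmG$. Over a point $t\in\rmT$ the fiber of $\phi_{\rmK,0}$ turns out to be $\rmK^t/\rmN_\rmK^t$, the quotient of the centralizer of $t$ in $\rmK$ by the normalizer of $\rmT_\rmK$ inside that centralizer; this is a single point only when $t$ has regular real part, which gives the open dense set in the statement, but it is positive-dimensional at singular $t$.

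The argument in the paper is rather different from what you sketch. One uses the Cartan (polar) decomposition $\rmG=\rmK\,\rmT_r\,\rmK$ with $\rmT_r=\exp(i\,\Lie(\rmT_\rmK))$, together with uniqueness in $\rmG=\rmK\exp(i\,\Lie(\rmK))$, to show by a direct manipulation that the fiber over $t\in\rmT_r$ is exactly $\rmK^t/\rmN_\rmK^t$. Since $\rmK^t$ is a compact connected Lie group with maximal torus $\rmT_\rmK$ and $\rmN_\rmK^t$ is the normalizer of that torus, this quotient is $\Q$-acyclic by the classical fact for compact Lie groups. For a boundary orbit $\O$ one invokes the description $\O\cong(\rmK\times\rmK)\underset{\rmL_\rmK\times\rmL_\rmK}{\times}\rmL/\rmL_\O$ with $\rmL$ a Levi and $\rmL_\O$ central, which reduces the fiber computation to the open-orbit case with $\rmG$ replaced by $\rmL$. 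None of the steps you propose --- the $\rmR_u$-trivialization, the claimed fibration property of $\phi$, or the injectivity over $\rmG$ --- survive, and the missing ingredient is precisely this Cartan-decomposition computation identifying the fiber as a $\rmK^t/\rmN_\rmK^t$.
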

\begin{proof}
Recall the Cartan decomposition $\rmG = \rmK \rmT \rmK$; also, $\rmT = T_{\rmK} T_r$, where 
$\rmT_r :=  \exp(i \Lie (\rmT_{\rmK}))$ denotes the real part of $\rmT$. Thus, $\rmG= {\rmK} \rmT_r {\rmK}$. 
We will first consider fibers of $\phi_{\rmK}$ on the open orbit, that is, for the map
\[\phi_{\rmK}^o:({\rmK} \times {\rmK} ){\underset {(\rmT_{\rmK} \times \rmT_{\rmK} ).\diag (\rmN_{\rmK})} \times}\rmT \ra \rmG.\]
Since this map is ${\rmK} \times {\rmK}$-equivariant, it suffices to consider the fiber at a point 
$t \eps \rmT_r$. We claim that this fiber is isomorphic to ${\rmK}^t/\rmN_{\rmK}^t$, 
where ${\rmK}^t$ (resp. $\rmN_{\rmK}^t$) denotes the centralizer of $t$ in ${\rmK}$ (resp. $\rmN_{\rmK}$). 

\vskip .2cm
Let $x,y \in {\rmK}$ and $z \in \rmT$ such that $x z y^{-1} = t$. Write 
$z = z_{\rmK} z_r \in \rmT_{\rmK} \rmT_r = \rmT$. Using the action of 
$1 \times \rmT_{\rmK} \subset (\rmT_{\rmK} \times \rmT_{\rmK}) \diag (N_{\rmK})$, we may assume that
$z_{\rmK} = 1$, that is, $z = z_r$. Then $t = (x y^{-1})(y z y^{-1})$ and $x y^{-1} \in {\rmK}$, 
$y z y^{-1} \in \exp(i \Lie ({\rmK}))$. By the uniqueness in the decomposition
$\rmG = {\rmK} \exp(i \Lie ({\rmK}))$, we obtain $x y^{-1} = 1$ and $t = y z y^{-1}$.
So $y = x$, and $z$ is conjugate to $t$ in ${\rmK}$. As 
$z,t \in \rmT_r$, they are conjugate in $\rmN_{\rmK}$, that is, there exists $n \in 
\rmN_{\rmK}$
such that $t = n z n^{-1}$. Then $t = n x^{-1} t x n^{-1}$, that is,  $x \in {\rmK}^t n$. 
We may replace $(x,y,z)$ with $(xn^{-1},yn^{-1}, n z n^{-1})$ by using the action
of $\diag (\rmN_{\rmK})$, to get $(xn^{-1}, xn^{-1},t)$, where $xn^{-1} \in {\rmK}^t$ is unique
up to multiplication by $N_{\rmK}^t$. As 
$(\rmT_{\rmK} \times \rmT_{\rmK}) \diag( \rmN_{\rmK}) = (1 \times \rmT_{\rmK}) \diag( \rmN_{\rmK})$, this yields the claim.
 
\vskip .2cm
Since ${\rmK}^t$ is a compact connected Lie group with maximal compact torus $\rmT_{\rmK}$, and $\rmN_{\rmK}^t$ is 
its normalizer in ${\rmK}^t$, the claim implies the fibers over $\rmG$ are $\Q$-acyclic. Moreover, 
the fiber at any point $t \in \rmT$ with regular real part $t_r$ just consists of this point. 

 \vskip .2cm
Next we consider the fibers of $\phi_{\rmK}$ over an arbitrary $\rmG \times \rmG$-orbit $\O$ 
in $\bar \rmG$. It suffices to show that any such fiber is again isomorphic to ${\rmK}^t/\rmN_{\rmK}^t$ 
for some $t \in \rmT_r$. For this, we recall the structure of $\O$ (see \cite[(5.1.2), (5.1.3)]{BJ04} which
discusses projective reductive varieties, which includes group imbeddings):
there exists a Levi subgroup $\rmL$ of $\rmG$ containing $\rmT$ such that
$\rmL_{\rmK} := \rmL \cap {\rmK}$ is a maximal compact subgroup of $\rmL$, and 
\[\O = ({\rmK} \times {\rmK}) {\underset {\rmL_{\rmK} \times \rmL_{\rmK}} \times} \rmL/\rmL_{\O}, \]
where $\rmL_{\O} \subseteq \rmL$ is a central subgroup. Moreover 
$\bar \rmT \cap \O = \diag(\rmN_{\rmK}){\underset {({\rm {diag}}(\rmN_{\rmK}) \cap \rmL)} \times} \rmT/\rmL_{\O} $ 
(note that $\rmL_{\O} \subset \rmT$ as $\rmL_{\O}$ is central in $\rmL$). So we obtain 
an isomorphism
\[{({\rmK} \times {\rmK}){\underset {(\rmT_{\rmK} \times \rmT_{\rmK} ) diag(N_{\rmK})} \times}(\bar \rmT \cap \O)} 
\cong {({\rmK} \times {\rmK}){\underset {(\rmT_{\rmK} \times \rmT_{\rmK}) diag(\rmN_{\rmK} \cap \rmL)} \times} \rmT /\rmL_ {\O}}\]
and a commutative triangle
\[\xymatrix{{({\rmK} \times {\rmK}){\underset {(\rmT_{\rmK} \times \rmT_{\rmK} ) diag(\rmN_{\rmK}))} \times}(\bar \rmT \cap \O)} 
\ar@<1ex>[r] \ar@<1ex>[dr] & {\O = ({\rmK} \times {\rmK}){\underset {\rmL_{\rmK} \times \rmL_{\rmK} } \times} \rmL/\rmL_{\O}} 
\ar@<1ex>[d]\\
&{({\rmK} \times {\rmK})/{(\rmL_{\rmK} \times \rmL_{\rmK} )}}}
\] 
where the horizontal arrow is the pull-back of $\varphi_{\rmK}$ over $\O$. 
This identifies the fibers of $\phi_{\rmK}$ over the orbit $\O$ with ${\rmK} \times {\rmK}$-translates of
the fibers of the map
\[(\rmL_{\rmK} \times \rmL_{\rmK}){\underset {(\rmT_{\rmK} \times \rmT_{\rmK} ) diag(\rmN_{\rmK}))} \times}( \rmT /\rmL_{ \O}) \ra \rmL/\rmL_{\O}.\]
Therefore, this reduces to the case of the fibers over the open orbit with the group $\rmG$ replaced 
by $\rmL$. 
\end{proof}

\vskip .2cm
In the remainder of this section, we will need to consider the derived direct images of maps between simplicial varieties
of the form $\EGx\rmX \ra \EGx\rmY$ where $\rmX$ and $\rmY$ are varieties provided with the action of a linear algebraic group $\rmG$ and
$f:\rmX \ra \rmY$ is a $\rmG$-equivariant map. Most of the time, this can be handled as in section ~\ref{sheaves.simp.2}, but occasionally it
is helpful to make use of the methods of \cite{Jo02}, which enables one to consider a single derived functor $Rf_*$.  Since this is technical issue,
we prefer to discuss it later in ~\ref{simpl.der.functors}, rather than here.
\vskip .2cm
\subsection{\bf Step 2 of the proof of Theorem ~\ref{mainthm.2}.  }
Throughout the rest of the paper, we will let 
\be \begin{equation}
\label{tildeN}
\tilde \rmN := (\rmT \times \rmT) \diagNT.  
\end{equation} \ee
Recall that 
$\tilde \rmN$ acts on $\bar {\rmT}$ via its quotient $\tilde{ \rmN }/{\rm {diag}}(\rmT) \simeq \rmWT$. Let
\be \begin{equation}
     \label{psi}
\psi: \rmE\tilde \rmN{\underset {\tilde \rmN} \times} \bar \rmT \ra \rmE \rmWT {\underset {\rmWT} \times}\bar \rmT
    \end{equation} \ee
denote the map induced by the identity on $\bar \rmT$ and the quotient map $\tilde \rmN \ra \rmWT$. 
Since ${\rm {diag}}(\rmT)$ 
acts trivially on $\bar \rmT$, the fibers of the simplicial map $\psi$ at every point can be identified with $\rmB {\rm {diag}}(\rmT)$. Recall ${\rmD}^{b,o}_{{\tilde \rmN},c}(\bar \rmT )$ denotes 
the full subcategory of the derived category ${\rmD}^{b}_{{\tilde \rmN}, c}(\bar \rmT )$ generated by 
the constant sheaves on each orbit of $\tilde \rmN$ on $\bar \rmT$.  Let $\rmG^{\bullet}$ denote the canonical Godement resolution. 
Then, clearly the functor $R\psi_*=\psi_*\rmG^{\bullet}$ sends complexes in ${\rmD}^{b,o}_{\tilde \rmN, c}(\bar \rmT )$ 
to complexes of dg-modules over the sheaf of dg-algebras
$R\psi_*(\Q)$, that is, to objects in the derived category $\rmD_{\rmWT}(\bar \rmT, R\psi_*(\Q))$. 
We let ${\rmD}^{+,o}_{\rmWT, c}(\bar \rmT, R\psi_*(Q))$ denote the full subcategory generated 
by the objects $R\psi_*(j_!\Q)$ where $j:\O \ra \bar \rmT$ denotes the immersion associated 
with an $\tilde \rmN$-orbit and we vary over such $\tilde \rmN$-orbits. Let 
\[L\psi^*: { \rmD}^{+,o}_{\rmWT,c}(\bar \rmT,R\psi_*(\Q)) \ra {\rmD}^{b,o}_{\tilde \rmN, _c}(\bar \rmT)\]
denote the functor defined by sending a dg-module $M$ to 
$\Q{\overset L {\underset {\psi^{-1}R\psi_*(\Q)} \otimes}} \psi^{-1}(M)$.
\vskip .1cm
\begin{remark}
\label{t.struct.remark1}
Observe that the derived category ${\rmD}^{b,o}_{\tilde \rmN, c}(\bar \rmT)$ has the following $t$-structures: 
 either the standard one, where the heart is the full subcategory of complexes whose cohomology sheaves are $\tilde \rmN$-equivariant and vanish in all degrees except  
$0$ (see, for example, \cite[1.3.2]{BBD82}), or the $t$-structure obtained by gluing  as in \cite[2.2]{BBD82}, where the heart is the full subcategory of $\tilde \rmN$-equivariant perverse sheaves.
\end{remark}
\begin{proposition}
\label{Rpsi*}
(i) The functor $R\psi_*: {\rmD}^{b,o}_{\tilde \rmN, c}(\bar \rmT) \ra {\rmD}^{+,o}_{\rmWT, c}(\bar \rmT, R\psi_*(\Q))$ 
is an equivalence of categories with inverse $L\psi^*$. (The derived category on the right is generated by the 
 dg-modules over the dga $R\psi_*(\Q)$ 
of the form $j_!j^*R\psi_*(\Q) = R\psi_*j_! j^*(\Q)$, as $j: \O \ra \bar \rmT$ varies over the 
$\tilde \rmN$-orbits on $\bar \rmT$.)
\vskip .1cm
(ii) As a consequence, the derived category ${\rmD}^{+,o}_{\rmWT, c}(\bar \rmT, R\psi_*(\Q))$ obtains
induced $t$-structures, induced from the $t$-structures on ${\rmD}^{b,o}_{\tilde \rmN, c}(\bar \rmT)$ by the functor $R\psi_*$.

\end{proposition}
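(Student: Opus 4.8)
The plan is to prove part (i) — that $R\psi_*$ and $L\psi^*$ are mutually inverse equivalences — by a Morita-type argument analogous to the toric case in \cite{Lu95}, exploiting the fact that the fibers of $\psi$ are all copies of $\rmB\,\mathrm{diag}(\rmT)$, so that $R\psi_*(\Q)$ is a ``harmless'' coefficient system (the cohomology of $\rmB\,\mathrm{diag}(\rmT)$ being a polynomial ring, concentrated in even degrees). First I would establish the adjunction $(L\psi^*, R\psi_*)$ at the level of the relevant derived categories: since $\psi$ is a map of simplicial varieties with $\mathrm{diag}(\rmT)$ acting trivially on $\bar\rmT$, $\psi^{-1}$ is exact and $R\psi_*$ is its right derived adjoint, and passing to modules over $R\psi_*(\Q)$ (resp. over $\psi^{-1}R\psi_*(\Q) \simeq \Q$, using that the fibers are $\Q$-acyclic up to the polynomial coefficients) turns this into the stated adjoint pair. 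Then I would check that the unit $\mathrm{id} \to R\psi_* L\psi^*$ and counit $L\psi^* R\psi_* \to \mathrm{id}$ are isomorphisms on the generators, and invoke that both categories are generated (as triangulated categories closed under the relevant operations) by those generators, together with the fact that $R\psi_*$ and $L\psi^*$ commute with the triangulated structure.

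The key computational input is the behavior on generators. On one side, for $j\colon \O \to \bar\rmT$ an $\tilde\rmN$-orbit immersion, one has $R\psi_* j_! j^*(\Q) = j_! j^* R\psi_*(\Q)$ (base change for the trivially-acting subgroup $\mathrm{diag}(\rmT)$, or simply because $\psi$ restricted over any $\tilde\rmN$-stable subvariety is again of the same shape), which is by definition a generator of ${\rmD}^{+,o}_{\rmWT,c}(\bar\rmT, R\psi_*(\Q))$. For the counit, one computes $L\psi^* R\psi_*(j_!\Q) = \Q \overset{L}{\underset{\psi^{-1}R\psi_*(\Q)}{\otimes}} \psi^{-1} R\psi_* (j_!\Q)$; using the projection formula along $\psi$ and the acyclicity of the fibers with $\Q$-coefficients (after modding out by the polynomial part, which is precisely what the base change $\Q \overset{L}{\otimes}_{\psi^{-1}R\psi_*\Q}(-)$ does), this collapses back to $j_!\Q$. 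For the unit, one needs $R\psi_* L\psi^*(M) \simeq M$ for $M$ a dg-module of the form $R\psi_*(j_!\Q)$, which follows from the counit computation together with full faithfulness of $R\psi_*$ on the subcategory generated by the $j_!\Q$ — and full faithfulness there is itself a projection-formula computation identifying $\RHom_{R\psi_*\Q}(R\psi_* A, R\psi_* B)$ with $\RHom(A,B)$ when $A, B$ are built from the $j_!\Q$, since $R\psi_*\Q$ acts freely enough (the fibers contribute only the polynomial ring, which is flat and tracked on both sides).

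Part (ii) is then formal: an equivalence of triangulated categories transports a $t$-structure along itself. Concretely, given either of the two $t$-structures on ${\rmD}^{b,o}_{\tilde\rmN,c}(\bar\rmT)$ recorded in Remark~\ref{t.struct.remark1} — the standard one or the perverse one obtained by gluing — one declares a complex in ${\rmD}^{+,o}_{\rmWT,c}(\bar\rmT, R\psi_*(\Q))$ to lie in the nonnegative (resp. nonpositive) part precisely when its image under $L\psi^*$ does; the axioms of a $t$-structure (stability under the shift, the Hom-vanishing between positive and negative parts, existence of truncation triangles) are preserved verbatim because $L\psi^*$ is an equivalence of triangulated categories with inverse $R\psi_*$. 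No independent verification is needed beyond citing this general principle, e.g. as in the appendix's result on transferring $t$-structures under equivalences of triangulated categories.

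The main obstacle I expect is the full-faithfulness computation underlying the unit isomorphism: one must control $\RHom$ in the dg-module category ${\rmD}^{+,o}_{\rmWT,c}(\bar\rmT, R\psi_*(\Q))$ and match it with $\RHom$ in ${\rmD}^{b,o}_{\tilde\rmN,c}(\bar\rmT)$. The subtlety is that $R\psi_*(\Q)$ is not just $\Q$ but carries the cohomology of $\rmB\,\mathrm{diag}(\rmT)$ as coefficients, so one is really doing a relative (parametrized) Morita equivalence over the base $\rmE\rmWT \times_{\rmWT} \bar\rmT$, and one has to be careful that the bar-resolution / Godement-resolution models make the projection formula $\psi^{-1} R\psi_*(M) \overset{L}{\otimes}_{\psi^{-1}R\psi_*\Q} \Q$ behave correctly — in particular that no higher $\mathrm{Tor}$ appears because the polynomial coefficient ring is a regular (hence globally flat in the relevant sense) object, and that the generators $j_!\Q$ are compact/perfect so that $\RHom$ commutes with the relevant (co)limits. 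This is exactly the point where the toric argument of \cite{Lu95} has to be adapted, but the adaptation is mechanical once one notes that $\mathrm{diag}(\rmT)$ acts trivially, so the fiberwise contribution is constant over $\bar\rmT$ and the whole situation is, up to the polynomial-ring coefficients, a trivial fibration.
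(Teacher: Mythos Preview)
Your plan is correct and matches the paper's strategy: set up the adjunction $(L\psi^*, R\psi_*)$, verify that the unit and counit are isomorphisms on the generators via the base-change identity $R\psi_*(j_!j^*\Q) = j_!j^*R\psi_*(\Q)$, and conclude by generation; part (ii) is then the transfer of $t$-structures under an equivalence, exactly as you say.

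Where you diverge from the paper is in overcomplicating the unit computation. You propose to reduce it to full faithfulness of $R\psi_*$, then argue for that via an $\RHom$ comparison, flagging possible higher $\mathrm{Tor}$ and compactness issues as the main obstacle. None of this is needed. The paper proves the key identity $R\psi_*(j_!j^*\Q) = j_!j^*R\psi_*(\Q)$ directly from the explicit product form of the simplicial maps ($\psi_n = p_n \times \mathrm{id}$, $j_n = \mathrm{id} \times j$), and then both composites on generators are immediate: $L\psi^*(j_!j^*R\psi_*\Q)$ is $j_!j^*\bigl(\Q \otimes^L_{\psi^{-1}R\psi_*\Q} \psi^{-1}R\psi_*\Q\bigr) = j_!j^*\Q$, since the derived tensor product of a dga with itself over itself is trivially the unit, irrespective of any flatness or regularity of the polynomial coefficients; and applying $R\psi_*$ again returns $j_!j^*R\psi_*\Q$. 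So the unit and counit are checked symmetrically by the same elementary manipulation, with no separate full-faithfulness step and no $\mathrm{Tor}$ analysis. Your anticipated obstacle does not arise.
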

\begin{proof}
 A key observation is that if $j: \O \ra \bar \rmT$ is the locally-closed immersion associated to an $\tilde \rmN$-orbit on $\bar \rmT$, then
$R\psi_*(j_!j^*(\Q)) = j_!j^*(R\psi_*(\Q))$. This follows readily since the map $\psi$ is a fibration with 
fibers $\rmB \diagT$ and one makes use of the simplicial model of classifying spaces. In the simplicial setting, the
map $\psi_n: (\rmE\tilde \rmN {\underset {\tilde \rmN} \times} \bar \rmT)_n \ra (\rmE \rmWT{\underset {\rmWT} \times} \bar \rmT)_n$ is of the
form $p_n \times id$, where $p_n: (\tilde \rmN)^{n-1} \ra (\rmWT)^{n-1}$ is the surjection induced by the 
surjection $\tilde \rmN \ra \rmWT$. Moreover, if $j_n: (\rmE\tilde \rmN {\underset {\tilde \rmN} \times} \O)_n =(\tilde \rmN)^{n-1} \times \O\ra
(\rmE\tilde \rmN {\underset {\tilde \rmN} \times} \bar \rmT)_n =(\tilde \rmN)^{n-1} \times \bar \rmT$ is the map induced by $j$, then $j_n=id ^{n-1} \times j$,
with a similar assertion holding for the map induced by $j$, $(\rmE \rmWT{\underset {\rmWT} \times} \O)_n \ra (\rmE \rmWT{\underset {\rmWT} \times} \bar \rmT)_n$, which will be also denoted $j_n$.
Therefore (denoting by $j$ also the map $\{j_n|n\}$ of simplicial varieties), 
\[R\psi_*(j_!j^*(\Q) = \{R\psi_{n*}(j_{n!}j_n^*(\Q)|n\} = \{Rp_{n*}(\Q) \boxtimes j_!(j^*(\Q))|n\} = \{j_{n!}j_n^*(Rp_{n*}(\Q) \boxtimes \Q)|n\} = j_!j^*R\psi_*(\Q). \]
\vskip .2cm \noindent
Therefore, one obtains the identifications 
\[R\psi_* \circ L\psi^*(j_!R\psi_*(j^*\Q)) = j_!j^*(R\psi_*(\Q)) \mbox{ and } L\psi^* \circ R\psi_*(j_!j^*\Q) = j_!j^*(\Q). \]
Here we have
denoted the restriction of the map $\psi$ to any of the orbits also by $\psi$. Since the derived category ${\rmD}^{b,o}_{\tilde \rmN, c}(\bar \rmT)$
(${\rmD}^{+,o}_{\rmWT, c}(\bar \rmT, R\psi_*(\Q))$) is generated by the
$j_!j^*(\Q)$ ($j_!(R\psi_*j^*(\Q))$, \res)  as one varies over the $\tilde \rmN$ -orbits, and $L\psi^*$ is left-adjoint to $R\psi_*$, 
the statement (i) in the proposition follows. 
\vskip .2cm
Proposition ~\ref{transf.t.struct} shows that when one has an equivalence of triangulated categories, one may transfer
the $t$-structure on one of the triangulated categories to the other. This proves the statement (ii) and completes the proof of
the Proposition.
\end{proof}
\begin{definition}
 \label{t.struct.gluing}
Henceforth we call the $t$-structure obtained by gluing the standard $t$-structures on a stratified variety, shifted  by the codimensions
of the strata (as in \cite[2.2]{BBD82}), {\it the $t$-structure obtained by gluing}.
\end{definition}

One starts with either the standard $t$-structures or the $t$-structures obtained by gluing on the derived categories $\rmD_{\rmG \times \rmG, c}^b(\bar \rmG)$ and  ${\rmD}_{(\rmT \times \rmT)\diag (N(T) ,c}^{b,o}(\bar \rmT)$. 

\begin{corollary}
\label{equiv.1}
 We obtain the equivalences of derived categories:
\[\rmD_{\rmG \times \rmG, c}^b(\bar \rmG) {\overset {res} \ra}  {\rmD}_{(\rmT \times \rmT)\diag (N(T) ,c}^{b,o}(\bar \rmT)  {\overset {R\psi_*} \ra} \rmD^{+,o}_{WT, c}(\bar \rmT, R\psi_*(\Q)). \]
The derived category ${\rmD}^{+,o}_{\rmWT, c}(\bar \rmT, R\psi_*(\Q))$ obtains
induced $t$-structures, induced from the $t$-structures on ${\rmD}^{b,o}_{\tilde \rmN, c}(\bar \rmT)$ by the functor $R\psi_*$.
\vskip .1cm
Moreover, if $\rmIC^{\tilde \rmN}(\O_{\tilde \rmN})$ denotes the $\tilde \rmN$
-equivariant intersection cohomology complex on the closure of the $\tilde \rmN$-orbit $\O_{\tilde \rmN}$ obtained from the constant local system
on the corresponding orbit, then 
$R\psi_*(\rmIC^{\tilde \rmN}(\O_{\tilde \rmN}) = \rmIC^{\rmWT}(\O_{\rmWT}) \otimes R\psi_*(\Q)$. Here
$\O_{\rmWT}$ is the same $\tilde \rmN$-orbit viewed as an orbit for the action of $\rmWT$ and $\rmIC^{\rmWT}(\O_{\rmWT})$ denotes
the $\rmWT$-equivariant intersection cohomology complex on the closure of the orbit $\O_{\rmWT}$ obtained from the
constant local system on $\O_{\rmWT}$.
\end{corollary}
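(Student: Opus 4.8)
The plan is to read off the displayed chain of equivalences from results already in hand, and then to reduce the intersection–cohomology assertion to a pull-back computation followed by the projection formula. First I would note that the first equivalence $\rmD_{\rmG \times \rmG, c}^b(\bar \rmG) \overset{res}{\ra} \rmD^{b,o}_{\tilde \rmN, c}(\bar \rmT)$ is precisely Theorem~\ref{eq.toroidal.1}(ii) (recall $\tilde \rmN = (\rmT \times \rmT)\diag(\rmN)$, as in \eqref{tildeN}), while the second equivalence $\rmD^{b,o}_{\tilde \rmN, c}(\bar \rmT) \overset{R\psi_*}{\ra} \rmD^{+,o}_{\rmWT, c}(\bar \rmT, R\psi_*(\Q))$ is Proposition~\ref{Rpsi*}(i); composing them gives the asserted equivalence of derived categories. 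The claim about $t$-structures is then just Proposition~\ref{Rpsi*}(ii): using Proposition~\ref{transf.t.struct} one transfers along $R\psi_*$ whichever of the two $t$-structures has been fixed on $\rmD^{b,o}_{\tilde \rmN, c}(\bar \rmT)$, namely the standard one or the one obtained by gluing in the sense of Definition~\ref{t.struct.gluing}. I would also observe that, since $res$ carries $\rmIC^{\rmG \times \rmG}(\Q_\O)$ to $\rmIC^{\tilde \rmN}(\Q_{\O'})$ and preserves closure relations by Theorem~\ref{eq.toroidal.1}(iii), it is $t$-exact, so these $t$-structures also agree with the corresponding ones on $\rmD^b_{\rmG \times \rmG, c}(\bar \rmG)$.

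For the last statement, the key intermediate fact I would establish is $\psi^*\bigl(\rmIC^{\rmWT}(\O_{\rmWT})\bigr) = \rmIC^{\tilde \rmN}(\O_{\tilde \rmN})$. As in the proof of Proposition~\ref{Rpsi*}, in simplicial degree $n$ the map $\psi$ is the product $p_n \times \mathrm{id}_{\bar \rmT}$ with $p_n \colon (\tilde \rmN)^{n-1} \ra (\rmWT)^{n-1}$ the (Zariski locally trivial) surjection induced by $\tilde \rmN \ra \rmWT$. Hence, for each open immersion $j_k \colon \rmU_k \ra \rmU_{k+1}$ of $\tilde \rmN$-stable subvarieties appearing in a stratification of $\bar{\O}$ by $\tilde \rmN$-orbits, the square of simplicial Borel constructions relating $\rmE\tilde \rmN \times_{\tilde \rmN} \rmU_k$ and $\rmE\rmWT \times_{\rmWT} \rmU_k$ is Cartesian in every degree, so $\psi^* \circ Rj_{k*} = Rj_{k*} \circ \psi^*$ by base change along an open immersion; and $\psi^*$, being exact, commutes with the truncations $\sigma_{\le k}$, whose bounds are governed by the codimensions of the strata, which $\psi$ preserves. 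Feeding this into the truncated-pushforward description of the equivariant intersection cohomology complex (as in the proof of Theorem~\ref{eq.toroidal.1}(iii)) and using $\psi^*(\underline{\Q}_{\O_{\rmWT}}) = \underline{\Q}_{\O_{\tilde \rmN}}$ gives the desired identity. The conclusion then follows from the projection formula: $R\psi_*\bigl(\rmIC^{\tilde \rmN}(\O_{\tilde \rmN})\bigr) = R\psi_* \psi^*\bigl(\rmIC^{\rmWT}(\O_{\rmWT})\bigr) \simeq \rmIC^{\rmWT}(\O_{\rmWT}) \otimes R\psi_*(\Q)$.

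The main obstacle is this last step — the projection formula $R\psi_* \psi^*(-) \simeq (-) \otimes R\psi_*(\Q)$ — because it must be applied to $R\psi_*$ regarded as a single derived functor rather than to the family $\{R\psi_{n*}\}_n$; this forces the use of the simplicial topology of \cite{Jo02} recalled in section~\ref{simpl.der.functors}. Once that is set up, the level-wise product form $\psi_n = p_n \times \mathrm{id}$, together with the fact that the simplicial fibre of $\psi$ is $\rmB\diagT$ (whose cohomology is of finite type in each degree), supplies the Künneth isomorphism needed, and the verification runs parallel to the proof of Proposition~\ref{Rpsi*}.
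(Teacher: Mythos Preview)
Your proposal is correct and follows essentially the same approach as the paper: the equivalences and $t$-structure claim are read off from Theorem~\ref{eq.toroidal.1} and Proposition~\ref{Rpsi*}, and the intersection-cohomology identity is obtained from $\psi^*\bigl(\rmIC^{\rmWT}(\O_{\rmWT})\bigr)\simeq\rmIC^{\tilde\rmN}(\O_{\tilde\rmN})$ together with the projection formula. The paper's proof is more terse---it simply invokes that $\psi$ is a locally trivial fibration to justify both the pull-back identity and the projection formula---whereas you spell out the base-change and truncation compatibilities and flag the simplicial-topology point; these are welcome elaborations, not a different argument.
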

\begin{proof} All conclusions but the last are  clear by combining Theorem ~\ref{eq.toroidal.1} and Proposition ~\ref{Rpsi*}. To see the last conclusion, first observe that
$\rmIC^{\tilde \rmN}(\O_{\tilde \rmN}) \simeq \psi^*(\rmIC^{\rmWT}(\O_{\rmWT})$ and the map $\psi$ is a locally trivial fibration. Therefore, the last conclusion 
 follows from the projection formula.
 \end{proof}
\vskip .2cm

\vskip .2cm
\subsection{\bf Step 3 of the proof of Theorem ~\ref{mainthm.2}: proof of Theorem ~\ref{mainthm.2.1.0}.  }

It is important for us to be able to separate the $\rmW$ and $\rmT$ actions on $\bar \rmT$. We proceed to do this.
\vskip .2cm
First we define actions of finite groups on simplicial varieties. 
\begin{definition} ({\it Finite group actions on simplicial varieties})
\label{finite.grp.acts.simpl.spaces}
If $\rmX_{\bullet}$ is a simplicial variety, an action of the finite group $\rmW$ on $\rmX_{\bullet}$ corresponds
to a $\rmW$-action on each $\rmX_n$ so that the structure maps of the simplicial variety $\rmX_{\bullet}$ are all $\rmW$-equivariant.
The two results below, follow readily from the fact that $\rmWT$ is the semi-direct product of $\rmW$ and $\rmT$. 
\end{definition}
\begin{proposition} 
\label{equiv.tech.3}
(i) If $\rmX$ is a variety, then giving a $\rmWT$-action on $\rmX$ is equivalent to providing $\rmT$ and $\rmW$ actions on $\rmX$ so that the $\rmT$-action $\mu: \rmT \times \rmX \ra \rmX$
is $\rmW$-equivariant. Equivalently, the diagram
\be \begin{equation}
     \label{WT.1}
\xymatrix{{\rmW \times \rmT \times \rmX} \ar@<1ex>[r]^{id \times \mu_{\rmT}} \ar@<1ex>[d]^{\mu'_{\rmW}} & {\rmW \times \rmX} \ar@<1ex>[d]^{\mu_{\rmW}}\\
           {\rmT \times \rmX} \ar@<1ex>[r]^{\mu_{\rmT}} & {\rmX}}
    \end{equation} \ee
commutes, where $\mu_W$ ($\mu_{\rmT}$) denote the ${\rmW}$ ($\rmT$) action and $\mu'_{\rmW}$ denotes the diagonal action of ${\rmW}$ on $\rmT \times \rmX$.
Equivalently, the following relations hold (where $\circ$ denotes generically any of the actions):
\be \begin{equation}
     \label{WT.2}
 w\circ(t \circ x) = (w\circ t ) \circ (w \circ x), w \eps {\rmW}, t \eps \rmT, x \eps \rmX.
   \end{equation} \ee
(ii) Moreover, in this case the simplicial variety $\rmET{\underset {\rmT} \times}\rmX$ has an induced action by ${\rmW}$.
\end{proposition}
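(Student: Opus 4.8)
The two assertions are essentially bookkeeping about semidirect products, and the plan is to reduce everything to the defining identity $w\cdot(t\cdot x) = (w\cdot t)\cdot(w\cdot x)$ and then bootstrap it up the simplicial degrees. For part (i), I would argue in both directions. Given a $\rmWT$-action on $\rmX$, restrict along the two inclusions $\rmT \hookrightarrow \rmWT$ and $\rmW \hookrightarrow \rmWT$ to obtain the $\rmT$- and $\rmW$-actions; the compatibility square~\eqref{WT.1} (equivalently the relation~\eqref{WT.2}) is then just the image under the action map of the multiplication relation $w t = (wtw^{-1}) w = (w\cdot t) w$ in $\rmWT$. Conversely, given compatible $\rmT$- and $\rmW$-actions satisfying~\eqref{WT.2}, define the candidate $\rmWT$-action on an element $wt \in \rmWT$ (every element of $\rmWT$ is uniquely of this form) by $(wt)\cdot x := w\cdot(t\cdot x)$; one then checks that this is associative, i.e. that $\bigl((w_1 t_1)(w_2 t_2)\bigr)\cdot x = (w_1 t_1)\cdot\bigl((w_2 t_2)\cdot x\bigr)$, by expanding $(w_1 t_1)(w_2 t_2) = w_1 w_2\,(w_2^{-1}\cdot t_1)\,t_2$ in $\rmWT$ and applying~\eqref{WT.2} to move the $\rmW$-action past the $\rmT$-action. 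Unitality is immediate. This gives the asserted equivalence.

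For part (ii), recall from~\ref{simpl.der.cat} that $(\rmET{\underset{\rmT}\times}\rmX)_n = \rmT^{\times n}\times \rmX$ with face and degeneracy maps built from the $\rmT$-action $\mu_{\rmT}$, the multiplication $\rmT\times\rmT\to\rmT$, and the projection $\rmT\times\rmX\to\rmX$. The plan is to define a $\rmW$-action on each $\rmT^{\times n}\times\rmX$ diagonally: $w\cdot(t_1,\dots,t_n,x) := (w\cdot t_1,\dots,w\cdot t_n, w\cdot x)$, using the conjugation action of $\rmW$ on $\rmT$ and the given $\rmW$-action on $\rmX$. Then I would verify that each structure map is $\rmW$-equivariant, which amounts to three checks, one per generating type of structure map: equivariance of $\rmT\times\rmT\to\rmT$ follows because $\rmW$ acts on $\rmT$ by group automorphisms; equivariance of the projection $\rmT\times\rmX\to\rmX$ is trivial; and equivariance of the face map involving $\mu_{\rmT}\colon\rmT\times\rmX\to\rmX$ is precisely the relation~\eqref{WT.2}. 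By Definition~\ref{finite.grp.acts.simpl.spaces} this is exactly the data of a $\rmW$-action on the simplicial variety.

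Neither step presents a genuine obstacle; the only place to be careful is the convention for writing elements of the semidirect product and the induced conjugation twist in the associativity computation for part (i) — getting the side on which $\rmW$ acts by conjugation consistent with the convention (already fixed in Step~1, where $\rmW$ acts on $\rmT$ via the action of $\rmN/\rmT$) is the one point where a sign-error-type slip could occur. Once that convention is pinned down, both parts "follow readily from the fact that $\rmWT$ is the semi-direct product of $\rmW$ and $\rmT$," as stated, and the proof is short.
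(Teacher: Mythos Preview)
Your proposal is correct and is precisely the standard argument the paper has in mind: the paper itself skips the proof entirely, remarking only that both parts ``may be deduced in a straightforward manner from the fact that $\rmWT$ is the semi-direct product of $\rmW$ and $\rmT$.'' Your plan fills in exactly those details --- the associativity check for part~(i) via the unique factorization $wt$ in $\rmWT$, and the diagonal $\rmW$-action on $\rmT^{\times n}\times\rmX$ for part~(ii) with equivariance of the face maps reducing to~\eqref{WT.2} --- so there is nothing further to compare.
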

\begin{proof}  We skip the proofs as these may be deduced in a straightforward manner from the fact that $\rmWT$
is the semi-direct product of ${\rmW}$ and $\rmT$. 
\end{proof}

\vskip .2cm
\begin{lemma} 
\label{equiv.tech.4}
Assume the situation of Proposition ~\ref{equiv.tech.3}. Given $t_i \eps \rmT $ and $w_i \eps {\rmW}$, $i=1,2$ and $x \eps \rmX$,
 one obtains the following identifications:
\[t_1w_1 \circ (w \circ x) = t_1 \circ (w_1 \circ (w \circ x)), t_2w_2 t_1 w_1 \circ (w \circ x) = t_2w_2t_1 \circ( w_1 \circ (w \circ x))
 =t_2 \circ ( w_2 \circ (t_1 \circ (w_1 \circ w\circ x)))
\]
\end{lemma}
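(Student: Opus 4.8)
The plan is to deduce everything from the single fact, furnished by Proposition~\ref{equiv.tech.3}, that the given $\rmT$- and $\rmW$-actions on $\rmX$ assemble into a genuine left action of the semi-direct product group $\rmWT$ on $\rmX$, in which the restriction to the subgroup $\rmT$ is $\mu_{\rmT}$ and the restriction to the subgroup $\rmW$ is $\mu_{\rmW}$. Granting this, the action axiom $(g_1g_2)\circ x = g_1\circ(g_2\circ x)$ holds for all $g_1, g_2 \eps \rmWT$ and all $x \eps \rmX$, and the two displayed identities are obtained by parsing each compound symbol $t_iw_i$ as the product of $t_i$ and $w_i$ taken inside $\rmWT$ (in that order) and then peeling off the leftmost factor one generator at a time.

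Concretely, for the first identity one writes $t_1w_1\circ(w\circ x) = (t_1\cdot w_1)\circ(w\circ x)$ and applies the action axiom once, with $g_1 = t_1$, $g_2 = w_1$ and $x$ replaced by $w\circ x$, to obtain $t_1\circ\bigl(w_1\circ(w\circ x)\bigr)$. For the second, first use associativity in $\rmWT$ to write $t_2w_2t_1w_1 = (t_2w_2t_1)\cdot w_1$; the action axiom then gives $t_2w_2t_1w_1\circ(w\circ x) = (t_2w_2t_1)\circ\bigl(w_1\circ(w\circ x)\bigr)$, which is the first equality. Writing $t_2w_2t_1 = t_2\cdot w_2\cdot t_1$ and peeling off $t_2$ and then $w_2$ by two further applications of the action axiom yields $t_2\circ\bigl(w_2\circ\bigl(t_1\circ(w_1\circ(w\circ x))\bigr)\bigr)$, completing the second equality. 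Note that nowhere does one need to commute a torus element past a Weyl element; the compatibility relation~\eqref{WT.2} (equivalently, commutativity of the diagram~\eqref{WT.1}) enters only through Proposition~\ref{equiv.tech.3}, which is precisely what makes the combined $\rmWT$-action well defined.

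There is essentially no obstacle here: the one point requiring care is purely notational, namely to fix the convention that in a string such as $t_2w_2t_1w_1\circ(w\circ x)$ the juxtaposed symbols denote multiplication in $\rmWT$ performed in the written order, so that the expression unambiguously equals the iterated single-generator action displayed on the right. Once this convention is in place, the identities are a formal consequence of the associativity of $\rmWT$ together with the action axiom, so the proof amounts to the two lines of symbol manipulation indicated above.
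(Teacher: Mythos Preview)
Your argument is correct and is precisely the ``straight-forward calculation'' the paper alludes to (the paper itself skips the proof). There is nothing to add: once Proposition~\ref{equiv.tech.3} provides a genuine $\rmWT$-action on $\rmX$, the identities are immediate from the action axiom $(g_1g_2)\circ x = g_1\circ(g_2\circ x)$ applied factor by factor, exactly as you wrote.
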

\begin{proof} Again the proof is skipped since this follows from a straight-forward calculation.
\end{proof}
One may now
consider the action of ${\rmW}$ on the simplicial variety $\ETx \bar \rmT$, where we let ${\rmW}$ act diagonally on the two factors $\rmET $  and $\bar \rmT$.
We let $\rmEW{\underset {\rmW} \times} (\ETx \bar \rmT)$ denote the simplicial variety which is the diagonal of the resulting bisimplicial variety. 
\begin{lemma}
 \label{equiv.tech.5}
Now we obtain an equivalence of categories:
\[\rmD^b_{cart,c}(E{\rmW}{\underset {\rmW} \times} (\ETx \bar \rmT), \Q) \simeq \rmD^b_{\rmWT,c}(\rmEWT{\underset {\rmWT} \times}\bar \rmT, \Q) =\rmD^b_{{\rmW}T,c}(\bar \rmT, \Q)\]
where the subscript $cart$ ($c$) denotes the complexes of sheaves having cartesian (constructible, \res)
 cohomology sheaves. (Recall that a sheaf $F=\{F_n|n\}$ on a simplicial variety $\rmX_{\bullet}$ is cartesian, if
all the maps $\phi: \alpha^*(F_m) \ra F_n$ are isomorphisms for each structure map $\alpha: \rmX_n \ra \rmX_m$ of the simplicial 
space $\rmX_{\bullet}$. See ~\ref{sheaves.simp.1} for further details.)
\vskip .2cm \noindent
The above equivalence extends to an equivalence of the corresponding bounded below derived categories:
\[\rmD^+_{cart,c}(\rmEW{\underset {\rmW} \times} (\ETx \bar \rmT), \Q) \simeq \rmD^+_{\rmWT,c}(\rmEWT{\underset {\rmWT} \times}\bar \rmT, \Q) =\rmD^+_{\rmWT,c}(\bar \rmT, \Q).\]
\vskip .2cm \noindent
The above equivalence of derived categories preserves both the standard $t$-structures and the $t$-structures obtained by gluing. 
\end{lemma}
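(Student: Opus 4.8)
\emph{Plan of proof.} The whole statement will be deduced from the fact that $\rmWT = \rmT \rtimes \rmW$ by building the $\rmWT$-Borel construction on $\bar \rmT$ in two stages. First I would choose the model $\rmEWT = \rmET \times \rmEW$ for the universal $\rmWT$-simplicial variety, where $\rmWT$ acts diagonally: on $\rmET$ through left $\rmT$-translation together with the functorial action of $\rmW$ on $\rmET$ induced by the $\rmW$-action on $\rmT$ (legitimate by Proposition ~\ref{equiv.tech.3}, since $\rmW$ acts on $\rmT$), and on $\rmEW$ through the quotient homomorphism $\rmWT \to \rmW$. This is a free, weakly contractible $\rmWT$-simplicial variety, hence a valid model for $\rmEWT$; that the category of cartesian constructible complexes does not depend on the chosen model of the classifying space is exactly the content of the comparison carried out in section ~\ref{equiv.dercat.3} (Theorem ~\ref{mainthm.5}), and for a split extension such as $\rmWT = \rmT \rtimes \rmW$ it is elementary. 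With this model, forming $\rmEWT{\underset {\rmWT} \times}\bar \rmT$ and dividing first by the normal subgroup $\rmT$ --- which acts freely on $\rmET$, trivially on $\rmEW$, and by left translation on $\bar \rmT$ --- yields $(\ETx \bar \rmT) \times \rmEW$ equipped with a residual $\rmW$-action (the induced $\rmW$-action on $\ETx \bar \rmT$ coming from Proposition ~\ref{equiv.tech.3}(ii) on the first factor, the standard free action on $\rmEW$); dividing by $\rmW$ then canonically reproduces $\rmEW{\underset {\rmW} \times}(\ETx \bar \rmT)$. Comparing this staged model with the standard bar-construction model of $\rmEWT{\underset {\rmWT} \times}\bar \rmT$ gives a natural map of simplicial varieties
\[ f : \rmEW{\underset {\rmW} \times}(\ETx \bar \rmT) \longrightarrow \rmEWT{\underset {\rmWT} \times}\bar \rmT, \]
induced by a $\rmWT$-equivariant map $\rmET \times \rmEW \to \rmEWT$ between models of the universal space.

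Next I would show that $f^*$ is fully faithful, by the same device used in Lemma ~\ref{max.compact}. Degreewise, after the identifications above, $f_n$ is the product of the identity on $\bar \rmT$ with a map of simplicial resolutions of a point that is cohomologically proper with weakly contractible (in particular $\Q$-acyclic and connected) fibers; hence proper base change and the projection formula apply and give $Rf_*f^*(M) \simeq M \otimes Rf_*(\Q) \simeq M$ for all $M$, so $f^*$ is fully faithful. It is clear that $f^*$ sends cartesian complexes to cartesian complexes and constructible to constructible. Finally, $f^*$ carries the constant sheaves on the $\rmWT$-orbits of $\bar \rmT$ --- equivalently, the generators $j_!\Q$ of Proposition ~\ref{Rpsi*} --- to the corresponding objects on the left, since the $\rmWT$-orbit stratification of $\bar \rmT$ is the same whichever model of the Borel construction is used; hence $f^*$ restricts to an equivalence between the triangulated subcategories $\rmD^b_{cart,c}$ generated by these objects. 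The bounded-below version follows verbatim, all the functors involved being bounded below and compatible with the truncation functors.

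It then remains to check compatibility with both $t$-structures. For the standard $t$-structure the heart on either side is detected by the vanishing of cohomology sheaves of the underlying complex on $\bar \rmT$; since $f$ is compatible with the forgetful functors to $\rmD^b_c(\bar \rmT)$ and $f^*$ leaves the underlying complex unchanged (only the equivariant descent data is affected), the equivalence is $t$-exact. For the $t$-structure obtained by gluing (Definition ~\ref{t.struct.gluing}) I would use that the strata (the $\rmWT$-orbits of $\bar \rmT$), their codimensions, and the functors $j_!,\,j_*,\,i^*,\,i^!$ along the associated locally closed immersions all correspond under $f^*$, so that the glued $t$-structures agree as well; alternatively, one may transport the glued $t$-structure across the equivalence via Proposition ~\ref{transf.t.struct} and identify it on the generators.

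The step I expect to be the main obstacle is making the comparison map $f$ --- and especially the control of its degreewise homotopy fibers --- precise at the level of simplicial varieties, i.e. the rigorous identification of the two-stage model $\rmET \times \rmEW$ with a standard model of $\rmEWT$ so that the proper-base-change argument applies in each simplicial degree. This is precisely the model-independence statement that section ~\ref{equiv.dercat.3} is designed to supply, and the split structure $\rmWT = \rmT \rtimes \rmW$ together with Proposition ~\ref{equiv.tech.3} are what reduce it to a routine instance.
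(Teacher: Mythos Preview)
Your overall strategy---realize $\rmEWT$ as a product $\rmET\times\rmEW$ via the splitting $\rmWT=\rmT\rtimes\rmW$, then quotient in two stages---is exactly the right intuition, and the paper follows it too. But there is a genuine gap at the step you yourself flag as the obstacle: the direct simplicial map
\[ f : \rmEW{\underset {\rmW}\times}(\ETx\bar\rmT)\ \longrightarrow\ \rmEWT{\underset{\rmWT}\times}\bar\rmT \]
does not exist. In the bar model the paper uses, the face maps of $\rmEWT$ involve the semidirect-product multiplication $(t_1,w_1)(t_2,w_2)=(t_1\cdot w_1(t_2),\,w_1w_2)$, whereas the face maps of the product simplicial variety $\rmET\times\rmEW$ involve the separate multiplications in $\rmT$ and $\rmW$; the degreewise bijection $\rmT^{n+1}\times\rmW^{n+1}\to(\rmWT)^{n+1}$ therefore fails to commute with $d_i$ for $0<i<n$. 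This is exactly why the paper's proof opens with the sentence ``Since there is no direct map between the simplicial varieties\ldots''. Your appeal to section~\ref{equiv.dercat.3} does not fill the gap: Theorem~\ref{mainthm.5} compares the simplicial bar model with the geometric (ind-scheme) model, not two different simplicial models, and in any case that comparison also proceeds through a roof rather than a direct map.

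The paper's fix is precisely to replace your hoped-for $f$ by a span. One introduces the intermediate simplicial variety $\rmEWT{\underset{\rmWT}\times}(\rmET\times\bar\rmT)$, which admits honest simplicial maps $p_1$ to $\rmEW{\underset{\rmW}\times}(\ETx\bar\rmT)$ (quotient $\rmEWT\to\rmEW$ together with $\rmET\times\bar\rmT\to\ETx\bar\rmT$) and $p_2$ to $\rmEWT{\underset{\rmWT}\times}\bar\rmT$ (drop the $\rmET$ factor). Both $p_1,p_2$ have $\rmET$-fibers, are cohomologically proper, and satisfy the projection formula, so $p_1^*,p_2^*$ are fully faithful; your argument for the hearts, for the extension to $\rmD^+$, and for $t$-exactness then goes through verbatim with $p_1^*,p_2^*$ in place of $f^*$. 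So the correction is small but essential: replace the single map by a roof.
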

\begin{proof}
 Since there is no direct map between the simplicial varieties $\rmEW{\underset {\rmW} \times} (\ETx \bar \rmT)$ and $\rmEWT{\underset {\rmWT} \times}\bar \rmT$,
we make use of the intermediate simplicial variety: $\rmEWT{\underset {\rmWT} \times} (\rmET \times \bar \rmT)$ which maps to both the above
simplicial varieties. Let 
\be \begin{align}
     \label{p.i}
p_1: \rmEWT{\underset {\rmWT } \times} (ET \times \bar \rmT) &\ra \rmEW{\underset {W} \times} (\ETx \bar \rmT) \mbox{ and}\\
p_2:\rmEWT{\underset {\rmWT } \times} (ET \times \bar \rmT) &\ra \rmEWT{\underset {\rmWT } \times} (\bar \rmT) \notag
\end{align} \ee
denote the  maps defined as follows. $p_1$ is induced by the maps
sending $\rmEWT$ to ${\rm {EW}}$ by taking the quotient by $\rmT$ and ${\rm {ET}} \times \bar \rmT$ to $\ETx \bar \rmT$. 
(Recall $(\rmEWT{\underset {\rmWT } \times} (ET \times \bar \rmT))_n = (\rmWT)^n \times \rmT^{n+1} \times \bar \rmT$, while
$(\rmEW{\underset {\rmW} \times} (\ETx \bar \rmT))_n = \rmW^n \times \rmT^n \times \bar \rmT$, so it should be clear what $p_{1,n}$ is.) The second map $p_2$ is 
induced by the map that drops the factor ${\rm {ET}}$.
\vskip .2cm
One may observe the need to work with the simplicial model of classifying spaces in being able to define the first map $p_1$.
The fibers of both these simplicial maps are $\rmET$ which is acyclic. Moreover, these maps are cohomologically proper and the projection 
formula holds for both of the maps, so that one concludes readily that the natural map $K \ra Rp_{1*}p_1^*(K)$ is a quasi-isomorphism for any
complex $ K \eps \rmD^+_{cart,c}(\rmEW{\underset {\rmW} \times} (\ETx \bar \rmT), \Q)$ and that the natural map
$K \ra Rp_{2*}p_2^*(K)$ is a quasi-isomorphism for any complex $K \eps \rmD^+_{\rmWT,c}(\rmEWT{\underset {\rmWT } \times}\bar \rmT, \Q)$. 
Therefore, it follows that both the functors $p_i^*$ are fully-faithful at the level of the bounded below derived categories. The heart of both 
the above bounded derived categories is the category of
$\rmWT$-equivariant sheaves on $\bar \rmT$. (This is clear for the derived category $\rmD^b_{WT,c}(\rmEWT{\underset {\rmWT } \times}\bar \rmT, \Q)$. To see the same for 
the derived category $\rmD^b_{cart,c}(\rmEW{\underset {\rmW} \times} (\ETx \bar \rmT), \Q)$, one
makes use of Proposition ~\ref{equiv.tech.3}.) Therefore, both the functors $p_i^*$ are equivalences of 
the bounded derived categories, thereby proving the first statement in the lemma. Since both the functors $p_i^*$ are exact, it is clear that they
preserve the standard $t$-structures. It is clear that the maps $p_i$ preserve the stratification on $\bar \rmT$ by $\rmWT$-orbits. Since the map
$(p_1)_n = \rmWT^n \times \rmT^{n+1} \times \bar  \rmT \ra \rmW^n \times \rmT^n \times \bar \rmT$ is given by the identity map on the factor $\bar \rmT$, one may readily 
see that the functor $p_1^*$ commutes with the functors $j_!$, $j^*$, $Rj_*$ and $Rj^!$ associated to the locally closed immersion 
$j:\rmU _i \ra \rmU _{i+1}$ of strata appearing in the stratification of $\bar \rmT$ by $\rmWT$-orbits. The same holds for the functor $p_2^*$. These
observations then show that the above equivalences preserve the $t$-structures obtained by gluing.
\vskip .2cm
To extend this equivalence for the corresponding bounded below derived categories, one proceeds as follows.
Let $\rmD = \rmD^+_{cart,c}(\rmEWT{\underset {\rmWT} \times} (\rmET \times \bar \rmT), \Q)$.
Observe that the  inclusion functor $\rmD^{\le n} \ra \rmD$ has a right adjoint given by the cohomology truncation functor
$\sigma _{\le n}$: here $\rmD^{\le n}$ denotes the full subcategory of $\rmD$ consisting of 
complexes that
have trivial cohomology above degree $n$. Next observe that the functors $Rp_{i*}=p_{i*}$, $i=1, 2$, and hence are exact functors. 
Similarly $p_i^*$, $i=1, 2$, are exact functors. Therefore all of the above functors  commute with
the functors $\sigma_{\le n}$ and  if $K \eps \rmD$, then $p_{i*}(\sigma_{\le n}K)$ are
bounded complexes for each $n$. Since $p_i^*$ is an equivalence at the level of the above bounded derived categories
with inverse $p_{i*}$, it follows that $K = \colimn \sigma _{\le n}(K) \simeq 
\colimn p_i^*(p_{i*} \sigma _{\le n}K) \simeq p_i^*(\colimn p_{i*}(\sigma_{\le n}K))$ and that
$K \simeq \colimn \sigma_{\le n}K {\overset  {\simeq} \ra} \colimn Rp_{i*}p_i^*(\sigma_{\le n} K) \simeq Rp_{i*}p_i^*(\colimn \sigma _{\le n}K) = Rp_{i*}p_i^*(K)$, thereby proving that 
the functors $p_i^*$ also 
induce equivalences of the corresponding bounded below derived categories.
   \end{proof}
Next we consider the subcategory $\rmD^+_{WT,c}(\bar \rmT, R\psi_*(\Q))$. Now $Rp_{1*}(p_2^*(R\psi_*(\Q))$ defines a sheaf of dgas on
$ \rmEW{\underset {\rmW} \times} (\ETx \bar \rmT)$. 
\begin{definition}
 \label{A}
Henceforth, we will denote the sheaf of dgas $Rp_{1*}p_2^*(R\psi_*(\Q))$ on
$ \rmEW{\underset {\rmW} \times} (\ETx \bar \rmT)$
 by $\A$. 
\end{definition}
Recall that $R\psi_*(\Q)$ is a complex of  sheaves on $\rmEWT{\underset {\rmWT } \times} (\bar \rmT)$ and that 
$\A= Rp_{1*}p_2^*(R\psi_*(\Q))$ is a complex of sheaves on 
$ \rmEW{\underset {\rmW} \times} (\ETx \bar \rmT)$. 
\begin{corollary}
\label{equiv.2}
 The equivalence of categories in Lemma ~\ref{equiv.tech.5} extends to an equivalence of categories:
\[\rmD^+_{cart,c}(\rmEW{\underset W \times} (\ETx \bar \rmT), \A) \simeq \rmD^+_{WT,c}(\rmEWT{\underset {\rmWT } \times}\bar \rmT, R\psi_*(\Q) = \rmD^+_{WT,c}(\bar \rmT, R\psi_*(\Q)) \mbox{ and }\]
\[{\rmD}^{+,o}_{cart,c}(\rmEW{\underset {\rmW} \times} (\ETx \bar \rmT), \A) \simeq {\rmD}^{+,o}_{\rmWT,c}(\rmEWT{\underset {\rmWT} \times}\bar \rmT, R\psi_*(\Q) ={\rmD}^{+,o}_{\rmWT,c}(\bar \rmT, R\psi_*(\Q))\]
where ${\rmD}^{+,o}_{cart,c}(\rmEW{\underset {\rmW} \times} (\ETx \bar \rmT), \A)$ is the full subcategory of 
${\rmD}(\rmEW{\underset {\rmW} \times} (\ETx \bar \rmT), \A)$
generated by the functor $Rp_{1*}p_2^*$ applied
to the generators $\{R\psi_*(j_!j^*(\Q_{|\O}))| \O \mbox{ a } \rmWT-\mbox{orbit on } \bar \rmT\}$ of the subcategory 
\newline \noindent
${\rmD}^{+,o}_{\rmWT,c}(\bar \rmT, R\psi_*(\Q))$.
\vskip .2cm
The $t$-structures on the derived categories on the left are obtained by transferring the $t$-structures on the corresponding
derived categories on the right. Moreover,
\[Rp_{1*}p_2^*(R\psi_*(\rmIC^{\tilde \rmN}(\O_{\tilde \rmN}))) = \rmIC^{\rmW, \rmT}(\O_{\rmWT}) \otimes Rp_{1*}p_2^*(R\psi_*(\Q)).\]
where $\rmIC^{\rmW, \rmT}(\O_{\rmWT})$
denotes the same equivariant intersection cohomology complex $\rmIC^{\rmWT}(\O_{\rmWT})$ for the action $\rmWT$, but viewed as an 
object on $\rmEW{\underset {\rmW} \times} (\ETx \bar \rmT)$. 
\end{corollary}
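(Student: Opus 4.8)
The plan is to promote the $\Q$-coefficient equivalences of Lemma ~\ref{equiv.tech.5} to equivalences of the associated categories of dg-modules over the relevant sheaves of dgas, exploiting that the pull-backs $p_i^*$ are strong monoidal while $Rp_{1*}$ satisfies the projection formula. Throughout, write $Y_2 = \rmEWT{\underset {\rmWT} \times}\bar \rmT$, $Y_{12} = \rmEWT{\underset {\rmWT} \times}(\rmET \times \bar \rmT)$ and $Y_1 = \rmEW{\underset {\rmW} \times}(\ETx \bar \rmT)$, so that $p_2\colon Y_{12} \ra Y_2$ and $p_1\colon Y_{12} \ra Y_1$; thus $R\psi_*(\Q)$ lives on $Y_2$, $p_2^*(R\psi_*(\Q))$ on $Y_{12}$, and $\A = Rp_{1*}p_2^*(R\psi_*(\Q))$ on $Y_1$. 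All of the genuinely geometric input --- that the fibres of $p_1$ and $p_2$ are copies of $\rmET$ and hence acyclic, that the $p_i$ are cohomologically proper, and that the projection formula holds for them --- has already been established in the proof of Lemma ~\ref{equiv.tech.5}; what remains here is the bookkeeping that keeps track of the multiplicative structures.

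First I would observe that $p_1^*(\A) \simeq p_2^*(R\psi_*(\Q))$ as sheaves of dgas on $Y_{12}$: the complex $p_2^*(R\psi_*(\Q))$ has cartesian, constructible cohomology sheaves, hence lies in the essential image of the equivalence $p_1^*\colon \rmD^+_{cart,c}(Y_1, \Q) \ra \rmD^+_{cart,c}(Y_{12}, \Q)$ of Lemma ~\ref{equiv.tech.5}, and its preimage is by construction $\A$; since all the functors involved are computed using the Godement resolution $\rmG^{\bullet}$, which is multiplicative, this identification respects the dga structures. Now $p_2^*$, being strong monoidal and an equivalence of the underlying $\Q$-linear triangulated categories, induces an equivalence of the category of dg-$R\psi_*(\Q)$-modules on $Y_2$ with the category of dg-$p_2^*(R\psi_*(\Q))$-modules on $Y_{12}$; and $Rp_{1*}$, being the quasi-inverse of the strong monoidal equivalence $p_1^*$ and satisfying the projection formula, carries dg-$p_1^*(\A)$-modules to dg-$\A$-modules and induces an equivalence of the corresponding module categories. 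Composing gives the desired equivalence
\[ Rp_{1*}p_2^*\colon\ \rmD^+_{WT,c}(\bar \rmT, R\psi_*(\Q)) {\overset {\ \simeq\ } \ra} \rmD^+_{cart,c}(Y_1, \A), \]
with quasi-inverse $Rp_{2*}p_1^*$. Restricting to the subcategory generated by $\{R\psi_*(j_!j^*(\Q_{|\O}))\mid\O\}$ gives the superscript-$o$ version immediately, since ${\rmD}^{+,o}_{cart,c}(Y_1, \A)$ is by definition generated by the images of precisely these generators under $Rp_{1*}p_2^*$.

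For the $t$-structures there is nothing further to prove: $\rmD^+_{cart,c}(Y_1, \A)$ carries no intrinsic $t$-structure, and the $t$-structures appearing in the statement are by definition those transferred along the equivalence $Rp_{1*}p_2^*$ from the right-hand side, which is legitimate by Proposition ~\ref{transf.t.struct}. For the last displayed identity, I would combine Corollary ~\ref{equiv.1}, which gives $R\psi_*(\rmIC^{\tilde \rmN}(\O_{\tilde \rmN})) = \rmIC^{\rmWT}(\O_{\rmWT}) \otimes R\psi_*(\Q)$, with the monoidality of $p_2^*$ and the projection formula for $p_1$: applying $p_2^*$ yields $p_2^*(\rmIC^{\rmWT}(\O_{\rmWT})) \otimes p_2^*(R\psi_*(\Q))$, and since $\rmIC^{\rmWT}(\O_{\rmWT})$ is cartesian and constructible one has $p_2^*(\rmIC^{\rmWT}(\O_{\rmWT})) \simeq p_1^*(\rmIC^{\rmW, \rmT}(\O_{\rmWT}))$, where $\rmIC^{\rmW, \rmT}(\O_{\rmWT}) := Rp_{1*}p_2^*(\rmIC^{\rmWT}(\O_{\rmWT}))$ is exactly ``$\rmIC^{\rmWT}(\O_{\rmWT})$ viewed as an object on $Y_1$''; the projection formula then gives $Rp_{1*}\bigl(p_1^*(\rmIC^{\rmW, \rmT}(\O_{\rmWT})) \otimes p_2^*(R\psi_*(\Q))\bigr) \simeq \rmIC^{\rmW, \rmT}(\O_{\rmWT}) \otimes Rp_{1*}p_2^*(R\psi_*(\Q)) = \rmIC^{\rmW, \rmT}(\O_{\rmWT}) \otimes \A$, as asserted.

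The step I expect to be the main obstacle is not conceptual but one of care: verifying that every derived functor in play respects the pairings, and in particular that $Rp_{1*}$ --- only lax monoidal in general --- is \emph{effectively} strong monoidal on the cartesian--constructible subcategory precisely because there the projection-formula comparison maps are isomorphisms, and that the Godement-resolution conventions of the introduction propagate the dga and dg-module structures consistently through each step. Once this is granted, every assertion of the corollary follows formally from Lemma ~\ref{equiv.tech.5}, Corollary ~\ref{equiv.1} and Proposition ~\ref{transf.t.struct}.
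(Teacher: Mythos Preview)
Your proposal is correct and follows essentially the same route as the paper: promote the equivalences of Lemma~\ref{equiv.tech.5} to the dg-module categories using that $p_i^*$ are monoidal and that the projection formula holds, then derive the IC-complex identity from Corollary~\ref{equiv.1} together with $p_2^*(\rmIC^{\rmWT}(\O_{\rmWT})) \simeq p_1^*(\rmIC^{\rmW,\rmT}(\O_{\rmWT}))$ and the projection formula. The only cosmetic difference is that the paper justifies the latter identification via the concrete decomposition $\rmIC^{\rmWT}(\O_{\rmWT}) = \oplus_{w \in \rmW/\rmW_{\O_{\rmT}}} \rmIC^{\rmT}(w\O_{\rmT})$, whereas you obtain it directly from the equivalence; your monoidal bookkeeping is also more explicit than the paper's one-line dismissal.
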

\begin{proof}
All but the last conclusions are clear in view of the observations above, and also from the observation that the composite functors $Rp_{i*} \circ p_i^*$  and $p_{i}^* \circ Rpi_{i*}$ are still
the identity at the level of the above derived categories of dg-modules. To see the last conclusion, first observe that
\be \begin{equation}
     \label{ICWT.1}
   \rmIC^{\rmWT}(\O_{\rmWT}) = \oplus_{w \eps {\rmW}/{\rmW}_{\O_{\rmT}}} \rmIC^{\rmT}(w\O_{\rmT}),
\end{equation} \ee
where $W_{\O_{\rmT}}$ denotes the stabilizer in $\rmW$ of the $\rmT$-orbit $\O_{\rmT}$, and $\rmIC^{\rmT}(w\O_{\rmT})$ denotes the intersection cohomology complex obtained by extending the constant local system on 
 $w\O_{\rmT}$. (This may be deduced from the fact that the 
equivariant intersection cohomology complexes $\rmIC^{\rmWT}(\O_{\rmWT})$ are  perverse 
extensions of the constant sheaf $\Q_{\O_{\rmWT }}= \oplus _{w \eps \rmW/\rmW_{\O_{\rmT}}} \Q_{\O_{\rmT}}$.) 
Therefore,
$p_2^*(\rmIC^{\rmWT}(\O_{\rmWT}))= p_1^*(\rmIC^{\rmW, \rmT}(\O_{\rmWT}))$: recall $\rmIC^{\rmW, \rmT}(\O_{\rmWT})$
denotes the same equivariant intersection cohomology complex $\rmIC^{\rmWT}(\O_{\rmWT})$ for the action $\rmWT$, but viewed as an 
object on $\rmEW{\underset {\rmW} \times} (\ETx \bar \rmT)$. This is possible, in view of ~\eqref{ICWT.1}.
\vskip .2cm
Next observe from Corollary ~\ref{equiv.1}
that $R\psi_*(\rmIC^{\tilde \rmN}(\O_{\tilde \rmN})) = \rmIC^{\rmWT}(\O_{\rmWT}) \otimes R\psi_*(\Q)$. 
Therefore, the last identification in the corollary follows
from a projection formula.
\end{proof}
\vskip .2cm
Let $\bar \rmT/ \rmT $ denote the space whose points correspond to the $\rmT$-orbits on $\bar \rmT$ equipped with the topology as in the
 introduction.
Let 
\be \begin{equation}
     \label{pi}
\pi: \rmET{\underset {\rmT} \times} \bar \rmT \ra \bar \rmT/\rmT
\end{equation} \ee
denote the obvious map. The fact that the action of $\rmT$ on $\bar \rmT$ is equivariant with respect to the $\rmW$-action shows that
one has an induced action of $\rmW$ on the set of $\rmT$-orbits on $\bar \rmT$.  Denoting this $\rmW$-action on $\bar \rmT/\rmT$ also by $\circ$, observe that
$w\circ [x] = [w\circ x]$, for any $ w\eps \rmW$ and $x \eps \bar \rmT$, where $[x]$ denotes the
$\rmT$-orbit of $x$. Now $\rmW$ acts on both $\rmET{\underset {\rmT} \times} \bar \rmT$ and $\bar \rmT /\rmT$.
\begin{lemma}
 \label{pi.is.equivariant}
Assume the above situation. Then,  with respect to the above actions, the map $\pi$ is $\rmW$-equivariant.
\end{lemma}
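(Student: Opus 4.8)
The plan is to verify $\rmW$-equivariance of $\pi$ degreewise on the simplicial variety $\rmET{\underset {\rmT} \times} \bar \rmT$, where everything reduces to the relation $w\circ(t\circ x)=(w\circ t)\circ(w\circ x)$ of ~\eqref{WT.2}.

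First I would record, in coordinates, the three $\rmW$-actions in play. On $\bar \rmT/\rmT$, relation ~\eqref{WT.2} shows that $w\eps\rmW$ sends the $\rmT$-orbit of $x$ onto the $\rmT$-orbit of $w\circ x$, so that $\rmW$ acts by $w\circ[x]=[w\circ x]$ (and by homeomorphisms, as it permutes $\rmT$-orbit closures), as already noted just before the lemma. On $\rmET{\underset {\rmT} \times} \bar \rmT$, the induced $\rmW$-action of Proposition ~\ref{equiv.tech.3}(ii) is, in simplicial degree $n$ where $(\rmET{\underset {\rmT} \times} \bar \rmT)_n=\rmT^{\times n}\times\bar \rmT$, given by $w\cdot(t_1,\dots,t_n,x)=(w\circ t_1,\dots,w\circ t_n,w\circ x)$; this is compatible with the face and degeneracy operators precisely because conjugation is a group endomorphism of $\rmT$ (for the faces built from group multiplication and the degeneracies inserting $e$) and because the $\rmT$-action on $\bar \rmT$ is $\rmW$-equivariant by ~\eqref{WT.2} (for the face built from that action) --- these being exactly the identities of Lemma ~\ref{equiv.tech.4}. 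Finally, $\pi$ is the morphism whose degree-$n$ component $\rmT^{\times n}\times\bar \rmT\ra\bar \rmT/\rmT$ sends $(t_1,\dots,t_n,x)$ to $[x]$; this commutes with the structure maps of $\rmET{\underset {\rmT} \times}\bar \rmT$ because those alter the $\bar \rmT$-coordinate only within its $\rmT$-orbit.

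With these descriptions, the check is a single line in each simplicial degree:
\[\pi_n\bigl(w\cdot(t_1,\dots,t_n,x)\bigr)=\pi_n(w\circ t_1,\dots,w\circ t_n,w\circ x)=[w\circ x]=w\circ[x]=w\circ\pi_n(t_1,\dots,t_n,x),\]
and since both $\pi$ and the $\rmW$-actions are compatible with all the simplicial structure maps (Definition ~\ref{finite.grp.acts.simpl.spaces}), this exhibits $\pi$ as a $\rmW$-equivariant map, which is exactly what is needed to define the induced map $\rmEW{\underset {\rmW} \times}(\rmET{\underset {\rmT} \times}\bar \rmT) \ra \rmEW{\underset {\rmW} \times}(\bar \rmT/\rmT)$ used in the introduction.

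The argument is elementary: the only point needing care is the bookkeeping of the bar-construction model for $\rmET{\underset {\rmT} \times}\bar \rmT$ and of the $\rmW$-action on it, and that bookkeeping is entirely absorbed by ~\eqref{WT.2} together with Proposition ~\ref{equiv.tech.3}. So I expect the only real obstacle to be notational --- choosing coordinates on the Borel construction in which the $\rmW$-action and the map $\pi$ are both transparent.
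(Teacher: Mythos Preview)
Your proposal is correct and follows essentially the same approach as the paper's own proof: both verify equivariance degreewise via the single-line computation $\pi_n(w\cdot(t_1,\dots,t_n,x))=[w\circ x]=w\circ[x]=w\circ\pi_n(t_1,\dots,t_n,x)$. Your write-up is simply more explicit about the coordinate descriptions of the $\rmW$-actions and about why $\pi$ respects the simplicial structure maps, whereas the paper records only the key displayed identity.
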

\begin{proof}
 Observe that the map $\pi$ sends $(t_1, t_2, \cdots, t_{n-1}, x) \eps (\rmET{\underset {\rmT} \times} \bar \rmT)_n$ to $[x]$. Now one verifies that
\[\pi(w,(t_1, \cdots, t_{n-1}, x)) = \pi( w \circ t_1, \cdots, w \circ t_{n-1}, w \circ x) = [w \circ x] = w \circ [x] = w \circ \pi(t_1, \cdots, t_{n-1}, x).\]
This proves the map $\pi$ is $\rmW$-equivariant. 
\end{proof}
\vskip .1cm
Therefore, we will also let the map 
$ \rmEW{\underset {\rmW} \times} (\ETx \bar \rmT) \ra \rmEW{\underset {\rmW} \times} (\bar \rmT/{\rmT})$ induced by $\pi$ be denoted
 $\pi$. 
\begin{definition}
\label{Rpi*A}
 We define ${\rmD}^{+,o}_{cart,c}(\rmEW{\underset {\rmW} \times} (\bar \rmT/\rmT), R\pi_*(\A))$ to be the full subcategory of
\newline \noindent
$\rmD (\rmEW{\underset {\rmW} \times} (\bar \rmT/{\rmT}), R\pi_*(\A))$ generated by $R\pi_*(G_{\alpha})$, as $G_{\alpha}$ varies
over the generators of  the derived category $\rmD^{+,o}_{cart,c}(\rmEW{\underset {\rmW} \times} (\ETx \bar \rmT), \A)$ as in Corollary
~\ref{equiv.2}.
\end{definition}
\vskip .2cm
One may now define 
\[R\pi_*: \rmD^{+,o}_{cart,c}(\rmEW{\underset {\rmW} \times} (\ETx \bar \rmT), \A) \ra \rmD^{+,o}_{cart,c}(\rmEW{\underset {\rmW} \times} (\bar \rmT/{\rmT}), R\pi_*(\A))\]
by once again making use of the canonical Godement resolutions. A left derived functor
\[L\pi^*: \rmD^{+,o}_{cart,c}(\rmEW{\underset {\rmW} \times} (\bar \rmT/{\rmT}), R\pi_*(\A)) \ra \rmD^{+,o}_{cart,c}(\rmEW{\underset {\rmW} \times} (\ETx \bar \rmT), \A)\]
may be defined by taking flat-resolutions of $M \eps \rmD^+_{cart,c}(\rmEW{\underset {\rmW} \times} (\bar \rmT/{\rmT}), R\pi_*(\A))$ and letting
\[L\pi^*(M) = \A {\overset L {\underset {\pi^{-1}R\pi_*(\A)} \otimes}} \pi^{-1}(M).\]
So defined,  $L\pi^*$ will send objects in $\rmD_{cart,c}(\rmEW{\underset {\rmW} \times} (\bar \rmT/{\rmT}), R\pi_*(\A))$ to
objects in $\rmD_{cart,c}(\rmEW{\underset {\rmW} \times} (\ETx \bar \rmT), \A)$. 
\begin{remark}
 \label{funct.resols}
 Observe that we define $R\pi_*$ making use of the canonical Godement resolution. Therefore $R\pi_*$ will be functorial
at the level of complexes. Similarly, by making use of functorial flat resolutions, one makes the functor $L\pi^*$ also functorial
 at the level of complexes.
\end{remark}
\vskip .1cm
Since $L\pi^*$ will be left adjoint to
a functor $R\pi_*$ (defined at the level of the corresponding unbounded derived categories), one obtains
natural transformations $L\pi^* \circ R\pi_* \ra id $ and $id \ra R\pi_* \circ L\pi^*$. To show these are quasi-isomorphisms,
it suffices to restrict to the hearts of the corresponding derived categories. For this, we
will resort to a variant of the arguments used for toric (and also horospherical)  varieties.
\vskip .2cm
In fact the ${\rmW}$-symmetric toric variety $\bar \rmT$ is a toric variety for the torus $\rmT$ provided with an extra action by the
finite group ${\rmW}$ that permutes the various $\rmT$-orbits of the same type. 
Therefore let $\rmY$ denote a $\rmT$-orbit in $\bar \rmT$
and let $\rmU_{\rmY}$ denote the (unique) affine open $\rmT$-stable neighborhood in $\bar \rmT$ so that $\rmY$ is the only closed $\rmT$-orbit in $\rmU_{\rmY}$.  Similarly, let ${\rmY '}$ denote a $\rmT$-orbit on 
$\bar \rmT$ and  let  $\rmU_{\rmY '}$ denote the  affine open $\rmT$-stable neighborhood of $\rm{\rmY '}$ in $\bar \rmT$ so that 
$\rm{\rmY '}$ is the only closed
$\rmT$-orbit in $\rmU_{\rmY '}$. Assume that either ${\overline \rmY '} - {\rmY '} \supseteq \rmY$ (in which case $\rmU_{\rmY '} \subseteq \rmU_{\rmY}$), or that $\rmY \nsubseteq \rmU_{\rmY '}$.  Observe that under the ${\rmW}$-action on $\bar \rmT$, the 
$\rmT$-orbit $\rmY$ 
($\rm{\rmY '}$)
is sent to another $\rmT$-orbit $w\rmY$ ($w\rm{\rmY '}$, \res) depending on $w \eps {\rmW}$. 
Therefore, as shown in \cite[Proof of Theorem 2.6]{Lu95} 
we observe that the restriction 
\be \begin{equation}
\label{contract.slices.WT.1}
 \rmH^*_{\rmT}(\rmU_{\rmY}, \A) \ra \rmH^*_{\rmT}(\rmU_{\rmY}-\rmU_{\rmY '}, \A) 
 \end{equation} \ee
 is an isomorphism: to see this one may observe that $\rmY$ is a deformation retract of both $\rmU_{\rmY}$ and $ \rmU_{\rmY}-\rmU'_{\rmY}$
 and  $R\psi_*(\Q)$, and therefore $\A$ have constant cohomology sheaves. (See ~\eqref{base.change.isoms} for the latter.)
  Then each $w \eps {\rmW}$ sends this to the restriction-isomorphism: 
  \be \begin{equation}
 \label{contract.slices.WT.2} 
  \rmH^*_T(\itw \rmU_{\rmY}, \A) \ra \rmH^*_T(\itw \rmU_{\rmY}-w\rmU_{\rmY}', \A). 
  \end{equation} \ee
\vskip .2cm
Next let ${\rmY '}$ be as before, but choose $\rmY$ to be another $\rmT$-orbit on $\bar \rmT$ so that $\rmY \nsubseteq \rmU_{\rmY '}$. Let
$j:\rmU_{\rmY '} \ra \bar \rmT$ denote the open immersion  and let $i:\bar \rmT - \rmU_{\rmY '} \ra \rmX$ denote the corresponding closed immersion. 
Let $\pi: \rmET{\underset {\rmT} \times} \bar \rmT \ra \bar \rmT/\rmT$ denote the  map considered in ~\eqref{pi}. We will consider the following commutative diagram
\be \begin{equation}
\label{big.diagm}
\xymatrix{{\rmET{\underset {\rmT} \times}\rmU_{\itw \rm{\rmY '}}} \ar@<1ex>[r]^{j_{\itw \rm{\rmY '}}} \ar@<1ex>[d]^{\pi_{\rmU_{\itw \rm{\rmY '}}}} &  {\rmET{\underset {\rmT} \times}{\bar \rmT}} \ar@<1ex>[d]^{\pi}\\
           {\rmU_{\itw \rm{\rmY '}}/\rmT} \ar@<1ex>[r]^{j_{\itw \rm{\rmY '}}/{\rmT}}  & {{\bar \rmT}/\rmT}}
    \end{equation} \ee
\vskip .2cm \noindent
where the maps $j_{w\rm{\rmY '}}$,  $j_{w\rm{\rmY '}/\rmT}$ and $\pi_{\rmU_{w\rm{\rmY '}}}$ are defined by the above diagram. 
\vskip .2cm
\begin{lemma}
 \label{ext.by.zero}
Assume the above situation. Then,
(i) $R\pi_* j_{w{\rmY '}!} { j}_{w{\rmY '}}^* (\A) \simeq { j}_{w{\rmY '}/{\rmT}!} { j}_{w{\rmY '}/{\rmT}}^*R \pi_*(\A)$.
\vskip .2cm
(ii) The equivariant derived category $\rmD^{+,o}_{cart,c}(\rmEW{\underset {\rmW} \times} (\ETx \bar \rmT), \A)$ is generated
by objects of the form $\oplus _{w \eps {\rmW}/{\rmW}_{{\rmY '}}} j_{w{\rmY '}!}j_{w{\rmY '}}^*(\A)$, where ${\rmW}_{{\rmY '}}$ denotes the stabilizer in
${\rmW}$ of the $\rmT$-orbit ${\rmY '}$.
\end{lemma}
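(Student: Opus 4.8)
The plan is to deduce (i) from base change applied to the Cartesian square ~\eqref{big.diagm}, and to deduce (ii) from the localization triangles on the affine toric charts $\rmU_{\rmY}$ together with a Mayer--Vietoris induction, imitating the toric argument of \cite[Theorem 2.6]{Lu95}.

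For (i), I would first note that the square ~\eqref{big.diagm}, and likewise its $\rmEW{\underset {\rmW} \times}(-)$-version, is Cartesian: $\rmU_{w{\rmY '}}$ is a $\rmT$-stable open subvariety of $\bar \rmT$, so the set of $\rmT$-orbits it meets is exactly $\rmU_{w{\rmY '}}/\rmT$ and is open in $\bar \rmT/\rmT$ (its complement is a union of $\rmT$-orbit closures), and $\pi^{-1}(\rmU_{w{\rmY '}}/\rmT) = \rmET{\underset {\rmT} \times}\rmU_{w{\rmY '}}$. Since the horizontal maps are open immersions, restricting $R\pi_*$ to the open set $\rmU_{w{\rmY '}}/\rmT$ gives a base change isomorphism $j_{w{\rmY '}/{\rmT}}^* R\pi_*(\A) \simeq R(\pi_{\rmU_{w{\rmY '}}})_* j_{w{\rmY '}}^*(\A)$. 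To pass to the $!$-version, apply $R\pi_*$ to the localization triangle $j_{w{\rmY '}!}j_{w{\rmY '}}^*(\A) \ra \A \ra \tilde\imath_*\tilde\imath^*(\A) \ra$, where $\tilde\imath$ is the closed immersion of $\rmET{\underset {\rmT} \times}(\bar \rmT - \rmU_{w{\rmY '}})$; using $R\pi_*\tilde\imath_* = \imath_* R\pi'_*$ (functoriality of pushforward, with $\imath$ the closed immersion of $(\bar \rmT - \rmU_{w{\rmY '}})/\rmT$ and $\pi'$ the restriction of $\pi$) and proper base change along $\imath$ --- a closed, hence proper, map --- the third term is identified with $\imath_*\imath^* R\pi_*(\A)$ and the connecting map with the canonical one. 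Thus the resulting triangle is the localization triangle for $R\pi_*(\A)$ along $\rmU_{w{\rmY '}}/\rmT$, so comparing cones yields $R\pi_* j_{w{\rmY '}!}j_{w{\rmY '}}^*(\A) \simeq j_{w{\rmY '}/{\rmT}!}j_{w{\rmY '}/{\rmT}}^* R\pi_*(\A)$; all functors involved respect the dg-module structure over $\A$, resp. $R\pi_*(\A)$, so this is an isomorphism of sheaves of dg-modules.

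For (ii), I would start from Corollary ~\ref{equiv.2}: the category is generated by the objects $Rp_{1*}p_2^*R\psi_*(j_!j^*(\Q_{|\O}))$ as $\O$ runs over the $\rmWT$-orbits on $\bar \rmT$. Writing such an orbit as $\coprod_{w \eps \rmW/\rmW_{\O_{\rmT}}}w\O_{\rmT}$, and using that $\psi_n$, $p_{1,n}$ and $p_{2,n}$ are each of the shape (an acyclic or surjective map of the group factors) $\times\,\mathrm{id}_{\bar \rmT}$ --- exactly as in the proof of Proposition ~\ref{Rpsi*} --- one checks these three functors commute with extension by zero from an orbit, so the generators may be taken to be $\oplus_{w \eps \rmW/\rmW_{\O_{\rmT}}}j_{w\O_{\rmT}!}j_{w\O_{\rmT}}^*(\A)$, the extensions by zero from the $\rmT$-orbits. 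It then suffices to show these orbit-supported generators and the chart-supported objects $\oplus_{w\eps \rmW/\rmW_{{\rmY '}}}j_{w{\rmY '}!}j_{w{\rmY '}}^*(\A)$ span the same triangulated subcategory. For a $\rmT$-orbit $\rmY$, the orbit $\rmY$ is closed in its affine chart $\rmU_{\rmY}$, and $\rmU_{\rmY} - \rmY = \bigcup\{\rmU_{\rmZ} : \rmU_{\rmZ}\subsetneq \rmU_{\rmY}\}$; pushing the localization triangle on $\rmU_{\rmY}$ (with closed piece $\rmY$ and open complement $\rmU_{\rmY}-\rmY$) forward to $\bar \rmT$ along the exact functor $(j_{\rmU_{\rmY}})_!$, and using $(j_{\rmU_{\rmY}})_!\tilde\imath_* = (j_{\rmU_{\rmY}}\circ\tilde\imath)_!$ together with the fact that $j_{\rmU_{\rmY}}\circ\tilde\imath$ is the immersion of $\rmY$, yields --- after summing over the $\rmW$-orbit of $\rmY$ --- a distinguished triangle
\[j_{(\rmU_{\rmY}-\rmY)!}j_{(\rmU_{\rmY}-\rmY)}^*(\A) \ra j_{\rmU_{\rmY}!}j_{\rmU_{\rmY}}^*(\A) \ra j_{\rmY!}j_{\rmY}^*(\A) \ra\]
(suppressing the $\oplus_w$). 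A Mayer--Vietoris induction on the number of charts in the cover $\rmU_{\rmY}-\rmY = \bigcup \rmU_{\rmZ}$, using that the intersection of two $\rmT$-stable affine charts of $\bar \rmT$ is again such a chart, shows the left-hand term lies in the subcategory generated by the chart objects $j_{\rmU_{\rmZ}!}j_{\rmU_{\rmZ}}^*(\A)$ with $\rmU_{\rmZ}\subsetneq \rmU_{\rmY}$; feeding this into the triangle and inducting on $\dim\rmU_{\rmY}$ (equivalently on the codimension of the closed orbit) shows, in one direction, that every chart object lies in the triangulated span of the orbit objects, and conversely. Hence the two families of generators have the same triangulated closure, which is $\rmD^{+,o}_{cart,c}(\rmEW{\underset {\rmW} \times}(\ETx \bar \rmT), \A)$.

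I expect the main obstacle to be the $\rmW$-equivariant bookkeeping in (ii): every localization and Mayer--Vietoris triangle must be realized on $\rmEW{\underset {\rmW} \times}(-)$ as the sum over a $\rmW$-orbit of the corresponding triangles on $\rmET{\underset {\rmT} \times}(-)$ (so that the correct stabilizers $\rmW_{\rmZ}$ of the faces appear), and one must verify that all the objects produced stay inside the subcategory satisfying the ``$o$''-constraint and that all maps are $\A$-module maps. Part (i), by contrast, is essentially formal once ~\eqref{big.diagm} is seen to be Cartesian.
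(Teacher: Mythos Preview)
Your proof of (ii) is correct and in fact more complete than the paper's: the paper's proof only records that the generators of ${\rmD}^{+,o}_{cart,c}$ arising from Corollary~\ref{equiv.2} split over the $\rmT$-orbits as $\oplus_{w}\,j_{w\rmY'!}j_{w\rmY'}^*(\A)$, and leaves the passage between orbit-supported and chart-supported objects implicit. Your localization/Mayer--Vietoris argument supplies that step cleanly.

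Your proof of (i), however, has a genuine gap. You write that $R\pi_*\tilde\imath_* = \imath_* R\pi'_*$ and then invoke ``proper base change along $\imath$'' to conclude $\imath_* R\pi'_*(\tilde\imath^*\A)\simeq \imath_*\imath^* R\pi_*(\A)$. But proper base change requires the \emph{pushforward} map~$\pi$ to be proper, not the base-change map~$\imath$; and $\pi:\rmET{\underset{\rmT}\times}\bar\rmT\to\bar\rmT/\rmT$ is not proper in any sense that would yield $\imath^*R\pi_*\simeq R\pi'_*\tilde\imath^*$ formally. If you compute stalks, the desired isomorphism at a point $\rmY\in(\bar\rmT-\rmU_{w\rmY'})/\rmT$ reads
\[
\rmH^*_{\rmT}(\rmU_{\rmY},\A)\;\simeq\;\rmH^*_{\rmT}(\rmU_{\rmY}-\rmU_{w\rmY'},\A),
\]
which is precisely the restriction isomorphism~\eqref{contract.slices.WT.1}/\eqref{contract.slices.WT.2}. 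That isomorphism is \emph{not} formal: it uses that the closed $\rmT$-orbit $\rmY$ is an attractive deformation retract of both $\rmU_{\rmY}$ and $\rmU_{\rmY}-\rmU_{w\rmY'}$, together with constancy of the cohomology sheaves of~$\A$. This is exactly the geometric input the paper records just before the lemma, and the paper's one-line proof of (i) consists in observing that this restriction isomorphism forces the vanishing $\rmH^*_{\rmT,\,w\rmU_{\rmY}-w\rmU_{\rmY'}}(w\rmU_{\rmY},\A)=0$, hence the stalk of $R\pi_* j_{w\rmY'!}j_{w\rmY'}^*(\A)$ at any $\rmY\notin \rmU_{w\rmY'}/\rmT$ vanishes. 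So your framework for (i) is fine, but the step you labeled ``formal'' is the whole content; you must replace the proper base change claim by the contraction argument from~\eqref{contract.slices.WT.1}.
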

\begin{proof}
(i) We obtain this from the isomorphism ~\eqref{contract.slices.WT.2}, by observing that, therefore, 
\newline \noindent
$\rmH^*_{T, \itw \rmU_{\rmY}-\itw \rmU_{\rmY}'} (\itw \rmU_{\rmY}, \A) =0. $
\vskip .2cm \noindent
(ii) Recall that the generators of the derived category $\rmD^{+,o}_{cart,c}(\rmEW{\underset {\rmW} \times} (\ETx \bar \rmT), \A)$
are obtained by applying the functor $Rp_{1*}p_2^*$ to the generators of the derived category $\rmD^{+,o}_{\rmWT,c}(\rmEWT{\underset {\rmWT} \times} \bar \rmT, R\psi_*(\Q))$.
The generators of the latter are  $R\psi_*(j_{\O_{\rmWT}!}(\Q_{|\O_{\rmWT}}))$ as one varies over the $\rmWT$-orbits $\O_{\rmWT}$ on 
$\bar \rmT$. But each $\rmWT$-orbit $\O_{\rmWT} = \sqcup_{w \eps {\rmW}/\rmW_{{\rmY '}}} {w{\rmY '}}$, where ${\rmY '}$ is a $\rmT$-orbit on $\bar \rmT$, with
$\rmW_{{\rmY '}}$ denoting the stabilizer of the $\rmT$-orbit ${\rmY '}$ for the action of $\rmW$ on $\bar \rmT/{\rmT}$.
\end{proof}
\begin{proposition}
\label{Rpi*.and.Lpi*}
 The natural transformations $K \ra R \pi_* L\pi^*(K) $, $K \eps  \rmD^{+,o}_{cart,c}(\rmEW{\underset {\rmW} \times} (\ETx \bar \rmT), \A)$ and
 $L\pi^*R\pi_*(L) \ra L$  for any $L \eps \rmD^{+,o}_{cart,c}(\rmEW{\underset {\rmW} \times} (\bar \rmT/{\rmT}), R\pi_*(\A))$ are quasi-isomorphisms. Therefore, the functors $R\pi_*$ and $L\pi^*$ 
 induce an equivalence of categories.
\end{proposition}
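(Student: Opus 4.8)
The plan is to deduce the equivalence formally from Lemma~\ref{ext.by.zero}, into which the geometric input (the toric-variant arguments of \cite[Proof of Theorem 2.6]{Lu95}, via the retractions behind~\eqref{contract.slices.WT.1} and~\eqref{contract.slices.WT.2}) has already been absorbed. The functors $R\pi_*$ and $L\pi^*$ are built from the functorial resolutions of Remark~\ref{funct.resols}, hence are triangulated, and $L\pi^*$ is left adjoint to $R\pi_*$; this yields the unit $\mathrm{id}\to R\pi_*\circ L\pi^*$ and the counit $L\pi^*\circ R\pi_*\to\mathrm{id}$. First I would note that the full subcategory of objects on which a natural transformation of triangulated functors is an isomorphism is a triangulated subcategory closed under isomorphism, so that, by Lemma~\ref{ext.by.zero}(ii), it suffices to show the counit $L\pi^*R\pi_*(K)\to K$ is a quasi-isomorphism for $K$ running over the generators $G_{\rmY'}=\oplus_{w\eps\rmW/\rmW_{\rmY'}}j_{w\rmY'!}j_{w\rmY'}^*(\A)$, $\rmY'$ a $\rmT$-orbit on $\bar \rmT$. (Alternatively, as in the discussion preceding the Proposition, one reduces to the hearts of the transferred $t$-structures by passing to the truncations $\sigma_{\le n}$ and their colimit, exactly as in the last paragraph of the proof of Lemma~\ref{equiv.tech.5}.)

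To treat the counit on such a generator, I would use Lemma~\ref{ext.by.zero}(i) to rewrite
\[R\pi_*(G_{\rmY'})\;\simeq\;\oplus_{w\eps\rmW/\rmW_{\rmY'}}\,j_{w\rmY'/\rmT!}\,j_{w\rmY'/\rmT}^*\,R\pi_*(\A),\]
and then apply $L\pi^*$ summand by summand. The key point, which I would establish next, is that $L\pi^*$ commutes with the extension-by-zero functors $j_{w\rmY'/\rmT!}$ attached to the open immersions in~\eqref{big.diagm}: degreewise $\pi$ is a projection in the $\bar \rmT$-coordinate, so~\eqref{big.diagm} is cartesian once one accounts for the acyclic factor $\rmET$ on the source; extension by zero along an open immersion satisfies the projection formula against the $\A$-module structure; and, for an open immersion, $R\pi_*$ commutes with restriction to the image open set, so that the restriction of $\pi^{-1}R\pi_*(\A)$ to $\rmET{\underset {\rmT} \times} \rmU_{w\rmY'}$ is canonically $\pi^{-1}R\pi_*$ of the restriction of $\A$. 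Granting this, $L\pi^*R\pi_*(j_{w\rmY'!}j_{w\rmY'}^*\A)$ becomes $j_{w\rmY'!}$ applied to $\A\,{\overset L {\underset {\pi^{-1}R\pi_*(\A)} \otimes}}\,\pi^{-1}R\pi_*(\A)$ over $\rmET{\underset {\rmT} \times} \rmU_{w\rmY'}$, which collapses tautologically to $\A$, being the derived tensor product of $\A$ with the free rank-one module $\pi^{-1}R\pi_*(\A)$ over itself. Summing over $w$ gives $L\pi^*R\pi_*(G_{\rmY'})\simeq G_{\rmY'}$, and one checks this isomorphism is the counit, all the identifications used being the canonical ones. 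The diagonal $\rmW$-action and the folding $\oplus_{w\eps\rmW/\rmW_{\rmY'}}$ cause no trouble, since $\pi$ is $\rmW$-equivariant (Lemma~\ref{pi.is.equivariant}) and sends the $\rmWT$-orbit stratification of $\bar \rmT$ to that of $\bar \rmT/\rmT$.

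Once the counit is a quasi-isomorphism on generators, $R\pi_*$ is fully faithful on ${\rmD}^{+,o}_{cart,c}(\rmEW{\underset {\rmW} \times}(\ETx \bar \rmT),\A)$; and since, by Definition~\ref{Rpi*A}, the target ${\rmD}^{+,o}_{cart,c}(\rmEW{\underset {\rmW} \times}(\bar \rmT/\rmT),R\pi_*(\A))$ is precisely the triangulated subcategory generated by the $R\pi_*(G_{\rmY'})$, the functor $R\pi_*$ is essentially surjective, hence an equivalence with quasi-inverse its left adjoint $L\pi^*$; in particular the unit is a quasi-isomorphism as well. I expect the main obstacle to be not any single calculation above --- each is essentially formal --- but the careful verification, in the simplicial framework and in the presence of the dg-module structures, that $L\pi^*$ and $R\pi_*$ genuinely commute with the open immersions $j_{w\rmY'!}$, $j_{w\rmY'/\rmT!}$ and with open restriction, and that they preserve the cartesian/constructible subcategories and the generating families; here the warning of~\ref{sheaves.simp.2} that $R\pi_*$ is a priori a collection of degreewise functors is pertinent, but it is defused by $\pi$ being degreewise a projection in the $\bar \rmT$-variable.
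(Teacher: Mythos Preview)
Your proposal is correct and follows essentially the same approach as the paper: reduce to the generators $G_{\rmY'}=\oplus_{w}j_{w\rmY'!}j_{w\rmY'}^*(\A)$ via Lemma~\ref{ext.by.zero}(ii), use Lemma~\ref{ext.by.zero}(i) to identify $R\pi_*(G_{\rmY'})$ with $\oplus_w j_{w\rmY'/\rmT!}j_{w\rmY'/\rmT}^*R\pi_*(\A)$, and then compute $L\pi^*$ on this to recover $G_{\rmY'}$. The paper's proof is terser and simply asserts that both $L\pi^*\circ R\pi_*\to id$ and $id\to R\pi_*\circ L\pi^*$ are isomorphisms on generators, whereas you spell out the commutation of $L\pi^*$ with $j_{!}$ and then deduce the unit formally from full faithfulness and essential surjectivity; this is a cosmetic difference, not a substantive one.
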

\begin{proof}
 The key observation (see Lemma ~\eqref{ext.by.zero} (ii)) is that the derived category $\rmD^{+,o}_{cart,c}(\rmEW{\underset {\rmW} \times} (\ETx \bar \rmT), \A)$ is generated by 
the objects $\oplus _{w \eps {\rmW}/{\rmW}_{{\rmY '}}} j_{w{\rmY '}!}j_{w{\rmY '}}^*(\A)$ while the derived category $\rmD^{+,o}_{cart,c}(\rmEW{\underset {\rmW} \times} (\bar \rmT/{\rmT}), R\pi_*(\A))$
is generated by the objects $\oplus _{w \eps {\rmW}/{\rmW}_{{\rmY '}}} j_{w{\rmY '}/{\rmT}!}j_{w{\rmY '}/{\rmT}}^*R\pi_*(\A)$ as ${\rmY '}$ varies among the $\rmT$-orbits on $\bar \rmT$. Lemma ~\eqref{ext.by.zero} shows that the functor $R\pi_*$
sends $\oplus _{w \eps {\rmW}/{\rmW}_{{\rmY '}}} j_{w{\rmY '}!}j_{w{\rmY '}}^*(\A)$ to $\oplus _{w \eps {\rmW}/{\rmW}_{{\rmY '}}} j_{w{\rmY '}/{\rmT}!}j_{w{\rmY '}/{\rmT}}^*R\pi_*(\A)$.  One may readily see that
$L\pi^*(\oplus _{w \eps {\rmW}/{\rmW}_{{\rmY '}}} j_{w{\rmY '}/{\rmT}!}j_{w{\rmY '}/{\rmT}}^*R\pi_*(\A)) = \oplus _{w \eps {\rmW}/{\rmW}_{{\rmY '}}} j_{w{\rmY '}!}j_{w{\rmY '}}^*L\pi^* R\pi_*(\A) = \oplus _{w \eps {\rmW}/{\rmW}_{{\rmY '}}} j_{w{\rmY '}!}j_{w{\rmY '}}^*(\A)$.
Therefore,
one readily sees that the natural transformations $L\pi^* \circ R\pi_* \ra id $ and $id \ra R\pi_* \circ L\pi^*$ are isomorphisms
proving the proposition.
\end{proof}
\vskip .2cm \noindent
{\bf Proof of Theorem ~\ref{mainthm.2.1.0}}. Combining Proposition ~\ref{Rpi*.and.Lpi*}, Corollaries ~\ref{equiv.1} and ~\ref{equiv.2}, we obtain Theorem ~\ref{mainthm.2.1.0}.
\vskip .2cm
\section{\bf Proof of Theorem ~\ref{mainthm.2}: Step 4.} 
The remainder of this section is devoted to a proof of the remaining aspects of Theorem ~\ref{mainthm.2}. 
We begin with the following result, which applies to any group action. 
\begin{lemma}
 \label{equiv.struct.maps} Let $\rmG$ denote a linear algebraic group acting on a variety $\rmX$. Then for the action of $\rmG$ on $(\EGx \rmX)_n$ given by $g \circ (g_0, \cdots, g_{n-1}, x) = (gg_0g^{-1}, \cdots, gg_{n_1}g^{-1}, g\circ x)$,
 with $(g_0, \cdots, g_{n-1}, x) \eps \rmG^n \times \rmX$, $g \eps \rmG$, the structure maps of the simplicial variety $\EGx X$ are all $\rmG$-equivariant.
\end{lemma}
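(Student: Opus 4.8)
The plan is to prove the lemma by a direct, coordinatewise verification: one writes down the $\rmG$-action on $(\EGx \rmX)_n = \rmG^{\times n} \times \rmX$ given in the statement, writes down each structure map explicitly as recalled in Subsection~\ref{simpl.der.cat}, and checks that the two compositions agree. Writing a point as $(g_0, \dots, g_{n-1}, x)$, every face map is of one of two elementary types — it either replaces an adjacent pair $(g_{i-1}, g_i)$ by its product $g_{i-1}g_i$ (coming from the group multiplication) for an interior index, or it omits a group coordinate at one end, this omission being either a plain projection or, at the other end, the replacement of the last group coordinate and $x$ by $g_{n-1}\circ x$ (coming from the action $\mu$); and every degeneracy inserts the identity $e$ of $\rmG$ in some slot.

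First I would treat the interior face maps. For $g \eps \rmG$, applying such a $d_i$ to $g \circ (g_0, \dots, g_{n-1}, x)$ replaces the pair $(g g_{i-1} g^{-1},\, g g_i g^{-1})$ by its product; the key point is that $h \mapsto g h g^{-1}$ is a group homomorphism, so this product equals $g(g_{i-1}g_i)g^{-1}$, while the remaining group coordinates stay conjugated termwise and the $\rmX$-coordinate stays $g \circ x$. Hence $d_i(g \circ (g_0, \dots, g_{n-1}, x)) = g \circ d_i(g_0, \dots, g_{n-1}, x)$. Next, for the end face coming from $\mu$: applying it to the conjugated point produces $(g g_{n-1}g^{-1}) \circ (g \circ x)$ in the last slot, and the $\rmG$-action axioms together with $g^{-1}g = e$ give $(g g_{n-1}g^{-1})\circ(g\circ x) = g \circ (g_{n-1}\circ x)$, which is exactly what equivariance demands; the other end face simply omits a conjugated coordinate and so is trivially equivariant. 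Finally, for a degeneracy $s_i$: inserting $e$ into the conjugated tuple and noting $e = g e g^{-1}$ shows at once that $s_i(g \circ (\cdots)) = g \circ s_i(\cdots)$.

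There is no genuine obstacle here; the lemma is pure bookkeeping with the bar-construction formulas, and the only thing worth recording is \emph{why} the action in the statement is the correct one: conjugation on the group factors is exactly what makes the multiplication in the interior faces equivariant and what makes the identity insertion in the degeneracies equivariant, while the cancellation $g^{-1}g = e$ is what reconciles conjugation on the group factors with the action $\mu$ on $\rmX$ in the remaining face. Running these three checks uniformly over all $i$ and all $n$ completes the proof.
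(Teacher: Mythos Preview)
Your proposal is correct and follows essentially the same approach as the paper: the paper verifies the case $n=1$ explicitly (checking $d_0 = pr_2$, $d_1 = \mu$, and $s_0$) and then declares the remaining structure maps ``similar,'' whereas you carry out the general verification for all $n$ by grouping the face maps into the three types (interior multiplication, projection end, action end) together with the degeneracies. Both are direct coordinatewise checks, and your treatment is in fact a slightly more complete version of what the paper sketches.
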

\begin{proof} We first verify this for $n=1$. In this case, the face map $d_0= pr_{2} $, ( which is the projection to the second factor $\rmG \times X \ra \rmX$), and
 $d_1= \mu: \rmG \times \rmX \ra \rmX$ (which is the group action). Then $g\circ d_0(g_0, x) = g \circ x= d_0( gg_0g^{-1}, g\circ x) = 
 d_0(g \circ (g_0, x))$ and $g\circ d_1(g_0, x) = g \circ (g_0 \circ x) =  (g  g_0) \circ x = d_1( gg_0g^{-1}, g\circ x) = 
 d_1(g \circ (g_0, x))$. The face map $s_0$ sends $x \eps \rmX$ to $(e, x) \eps \rmG \times \rmX = (\EGx \rmX)_1$. Now, $g\circ s_0(x) = g\circ (e, x) = (geg^{-1}, g \circ x) = (e, g \circ x) = s_0(g\circ (e, x))$.  The proof that the remaining structure maps are all $\rmG$-equivariant is similar, and is therefore
 skipped.
\end{proof}
\vskip .1cm 

Next we show that 
the category of equivariant sheaves admits a simpler formulation when the group is a discrete group. This seems to be rather well-known: see 
\cite[Chapter I, section 8]{BL94} for related results.
Let $\rmW$ denote a discrete group acting on a variety $\rmX$ and let $\rmR$ denote a commutative
Noetherian ring. We will assume $\rmX$ is provided
 with a (Grothendieck) topology $\Top(\rmX)$. Let $\Sh^{\rmW}(\rmX, \rmR) =\Sh^{\rmW}(\rmX)$ denote the following category. Its objects are sheaves of $F$ of $\rmR$-modules
  on $\Top(\rmX)$ so that one is  provided with a homomorphism $\rmW \ra Aut(\itF)$, where ${\rm Aut(\itF)}$ denotes the automorphism group of $\itF$ as a sheaf of 
  $\rmR$-modules. Morphisms between two such sheaves $F'$ and $F$ are morphisms of sheaves compatible with the given extra structure.

\begin{lemma}
 \label{discrete.grp.action}
 Assume the above situation.
 \begin{enumerate}[\rm(i)]
  \item Then the categories $\Sh^{\rmW}(\rmX)$ and $\Sh^{\rmW}(\EWx \rmX)$ are equivalent, where the category $\Sh^{\rmW}(\EWx \rmX)$
   denotes the full subcategory of {\it cartesian} sheaves of $\rmR$-modules on the simplicial variety $\EWx \rmX$.
  \item If $F=\{F_n|n\} \eps \Sh^W(\EWx \rmX)$, then each
  $F_n \eps \Sh^{\rmW}((\EWx \rmX)_n)$, when $(\EWx \rmX)_n$ is provided with the $\rmW$-action as in Lemma ~\ref{equiv.struct.maps}.
\item
  If $X=\{x_o\}$ denotes a point, then we obtain the equivalence: $\Sh^{\rmW}(\rmBW) \simeq \Sh^{\rmW}(\{x_o\})$, and the latter category is
 the category of $\rmR$-modules provided with $\rmW$-actions.
 \end{enumerate}
\end{lemma}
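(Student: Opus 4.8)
The plan is to identify both categories with descent data for the action groupoid of $\rmW$ on $\rmX$, using crucially that $\rmW$ is discrete. Since $\rmW$ is discrete, for each $n$ the term $(\EWx\rmX)_n = \rmW^n\times\rmX$ decomposes as the coproduct $\coprod_{\mathbf w\in\rmW^n}\rmX_{\mathbf w}$ of copies of $\rmX$ indexed by $\mathbf w = (w_1,\dots,w_n)$, so a sheaf of $\rmR$-modules on it is nothing but a family $\{G_{\mathbf w}\}$ of sheaves of $\rmR$-modules on $\rmX$; moreover every structure map $\alpha\colon(\EWx\rmX)_n\to(\EWx\rmX)_m$ carries $\rmX_{\mathbf w}$ isomorphically onto $\rmX_{\alpha(\mathbf w)}$ via an automorphism of $\rmX$ which is a composite of the translation automorphisms $\ell_w\colon x\mapsto w\cdot x$ ($w\in\rmW$) and the identity, as prescribed by the face and degeneracy maps of Subsection~\ref{simpl.der.cat}. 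I would first record this dictionary; in particular, for $n=1$ the two face maps $d_0,d_1\colon\rmW\times\rmX\to\rmX$ are $pr_2$ and the action $\mu$, with restrictions to $\rmX_w$ equal to $\mathrm{id}_\rmX$ and $\ell_w$ respectively.

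For (i) I would exhibit mutually quasi-inverse functors. One direction sends a cartesian sheaf $F=\{F_n\}$ on $\EWx\rmX$ to $F_0$, endowed with the following extra structure: the cartesian isomorphisms $d_0^*F_0\xrightarrow{\sim}F_1\xleftarrow{\sim}d_1^*F_0$ combine into an isomorphism $\mu^*F_0\xrightarrow{\sim}pr_2^*F_0$ over $\rmW\times\rmX$, whose restriction to the component $\rmX_w$ is an isomorphism $\phi_w$ between $\ell_w^*F_0$ and $F_0$; the compatibility of the cartesian structure over $(\EWx\rmX)_2$ (its three face maps) gives the cocycle relation among the $\phi_w$, and compatibility with the degeneracy $s_0$ gives $\phi_e=\mathrm{id}$, so that $\{\phi_w\}$ is exactly a homomorphism $\rmW\to\mathrm{Aut}(F_0)$ in the sense used to define $\Sh^{\rmW}(\rmX)$, each $\phi_w$ being a lift of the automorphism $\ell_w$. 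Conversely, given $(F_0,\{\phi_w\})$, I would set $F_n$ to be the copy of $F_0$ on each component $\rmX_{\mathbf w}$ of $(\EWx\rmX)_n$, with the transition isomorphism along a structure map $\alpha$ given on the $\mathbf w$-component by the corresponding composite of the $\phi_{w_i}$; here the simplicial identities translate verbatim into the cocycle relations for $\{\phi_w\}$, so $\{F_n\}$ is a well-defined cartesian sheaf. The two composites are canonically isomorphic to the identity functors: one recovers $F_0$ on the nose, and for the other the comparison isomorphism $F_n\xrightarrow{\sim}(\text{copies of }F_0)$ is furnished by the cartesian structure maps of $F$ themselves. This gives (i).

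For (ii), under the equivalence of (i) the term $F_n$ of a cartesian sheaf is the family of copies of $F_0$ just described, and the conjugation action $w\circ(w_1,\dots,w_n,x) = (ww_1w^{-1},\dots,ww_nw^{-1},w\cdot x)$ of Lemma~\ref{equiv.struct.maps} carries $\rmX_{\mathbf w}$ onto $\rmX_{w\mathbf w w^{-1}}$ via $\ell_w$; lifting by $\phi_w$ on the copies of $F_0$ defines an isomorphism $(w\circ)^*F_n\xrightarrow{\sim}F_n$, and the cocycle relations for $\{\phi_w\}$ make these lifts into a $\rmW$-equivariant structure, so $F_n\in\Sh^{\rmW}((\EWx\rmX)_n)$; the construction is manifestly natural in $n$, hence compatible with the ($\rmW$-equivariant) simplicial structure maps. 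Finally, (iii) is the case $\rmX=\{x_o\}$ with trivial $\rmW$-action, where $\EWx\{x_o\} = \rmBW$: part (i) yields $\Sh^{\rmW}(\rmBW)\simeq\Sh^{\rmW}(\{x_o\})$, and a sheaf of $\rmR$-modules on the one-point space together with a homomorphism $\rmW\to\mathrm{Aut}(-)$ is precisely an $\rmR$-module equipped with an $\rmR$-linear $\rmW$-action.

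The step I expect to be the main obstacle is not conceptual but a matter of conventions: fixing, compatibly with the face/degeneracy recipe of Subsection~\ref{simpl.der.cat}, the precise variance of the $\phi_w$ (an isomorphism $\ell_w^*F_0\to F_0$ or its inverse, and left- versus right-translation), and checking that the simplicial identities over $(\EWx\rmX)_2$ reproduce the cocycle relation exactly rather than up to inversion. A secondary point requiring care is the reading of $\mathrm{Aut}(F_0)$ in the definition of $\Sh^{\rmW}(\rmX)$: a ``homomorphism $\rmW\to\mathrm{Aut}(F_0)$'' must be interpreted as a coherent family of isomorphisms lifting the automorphisms $\ell_w$, which is legitimate precisely because $\rmW$ is discrete and acts on $\rmX$ by automorphisms. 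Once these conventions are pinned down, every verification is a routine unwinding of the definition of a sheaf on a simplicial variety.
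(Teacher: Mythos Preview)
Your proposal is correct and follows essentially the same approach as the paper: both identify $\Sh^{\rmW}(\rmX)$ with descent data (a sheaf $F_0$ together with an isomorphism $\mu^*F_0\simeq pr_2^*F_0$ satisfying the cocycle condition), and then identify such descent data with cartesian sheaves on $\EWx\rmX$. The only stylistic difference is that the paper packages the inverse functor more compactly via the formula $F_n=((d_0)^n)^*(F_0)$ rather than spelling out the component-by-component description over the coproduct $\coprod_{\mathbf w\in\rmW^n}\rmX_{\mathbf w}$, and for (ii) simply observes that pulling back the $\rmW$-equivariant sheaf $F_0$ along the $\rmW$-equivariant map $(d_0)^n$ yields a $\rmW$-equivariant sheaf; but this is the same argument in different clothing.
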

\begin{proof} For each $w \eps \rmW$, we let $w: \rmX \ra \rmX$ denote the automorphism induced by $w$. 
Now, one may observe that giving a homomorphism $\rmW \ra Aut(F)$ corresponds to giving for each $w \eps \rmW$, an isomorphism $w^{-1}(F) \ra F$ (or equivalently
  an isomorphism $F \ra w_*(F)$) of sheaves of $\rmR$-modules, and which are compatible as $w$ varies in $\rmW$. 
  In view of the fact that the group $\rmW$ is discrete, one may now readily show that the category $\Sh^W(\rmX)$ is equivalent to the category of
  sheaves $F_0$ on $\Top(\rmX)$ provided with an isomorphism $\phi: \mu^*(F_0) \ra pr_2^*(F_0)$, satisfying a cocycle condition on further pull-back
  to $(\EWx \rmX)_2$, and so that $s_0^*(\phi)$ is the identity. (Here $\mu, pr_2: \rmG \times \rmX \ra \rmX$
  are the group action and projection to the second factor, respectively, while $s_0: \rmX \ra \rmG \times \rmX$
  is the map $x \mapsto (x, e)$, where $e$ denotes the identity of $\rmG$ and $x \eps \rmX$.)
  \vskip .1cm
  It is well-known that this latter category is equivalent to $\Sh^{\rmW}(\EWx \rmX)$.
  In fact, given an $F_0 \eps \Sh^W(\rmX)$, one defines $F = \{F_n|n\} \eps \Sh^{\rmW}(\EWx \rmX)$, by letting $F_n = ((d_0)^n)^*(F_0)$. The inverse of this functor sends
  any such $F=\{F_n|n\}$ to $F_0$. Finally, one may see from the above description that if $F = \{F_n|n\}$ belongs to $\Sh^{\rmW}(\EWx \rmX)$, then
  $F_0 \eps \Sh^{\rmW}(\rmX)$. Since all the structure maps of the simplicial variety $\EWx \rmX$ have been shown to be $\rmW$-equivariant (see
  Lemma ~\ref{equiv.struct.maps}), it follows that $F_n= ((d_0)^n)^*(F_0) \eps \Sh^{\rmW}((\EWx \rmX)_n)$. 
  The last statement is clear since $\EWx \{x_o\} = \rmBW$.
\end{proof}

{\bf Step 4.1}. The {\it first result}
 we proceed to establish in this section is that the dga $\A$, considered in Definition ~\ref{A} and the dga $R\pi_*(\A)$, (considered in Definition ~\ref{Rpi*A}) are formal
 as dgas.
\vskip .1cm

 \begin{definition}
  \label{W.equ.complex}
  A complex of sheaves $K$  of $\Q$-vector spaces on $\EWx\rmX$ is ${\rmW}$-equivariant, if each term $K^n$ in the complex
is a ${\rmW}$-equivariant sheaf and the differentials of $K$ are also ${\rmW}$-equivariant, that is, $K^{\bullet}$
 is a complex in the abelian category $\Sh^{\rmW}(\EWx\rmX, \Q)$.
 \end{definition}
\vskip .2cm
We next consider the commutative diagram:
\be \begin{equation}
\label{key.comm.diagm}
\xymatrix{{\rmE\tilde \rmN{\underset {\tilde \rmN} \times} \bar \rmT} \ar@<1ex>[d]^{\psi} \ar[r]^{\alpha} &  {\rmB \tilde \rmN}\ar@<1ex>[d]^{\bar \psi} \ar[r]^{\beta} &{\rmBW \diagT} \ar@<1ex>[d]^{\tilde \psi}\\
           {\rmEWT {\underset {\rmWT } \times}\bar \rmT}   \ar[r]^{\bar \alpha} &  {\rmBWT} \ar[r]^{\bar \beta} &{\rmBW }.} 
\end{equation} \ee
           
\vskip .1cm
Here  $\tilde \rmN = (\rmT \times \rmT)\diagNT$ and the torus $\rmT$ in $\rmWT$ denotes the quotient torus $  (\rmT \times \rmT)/\diagT$. One may identify this
torus with the sub-torus $(1 \times \rmT) \subseteq \rmT\times \rmT \subseteq (\rmT \times \rmT) {\rm {diag}}{\rm N}({\rm T})$. $\rmW\diagT$ denotes the quotient
$((\rmT \times \rmT)\diagNT/(1 \times T))$. The maps $\alpha$ and $\bar \alpha$ are the obvious maps. The map $\beta$ corresponds to taking the
quotient of $\tilde \rmN$ by $(1 \times \rmT)$ and the map $\bar \beta$ corresponds to taking the quotient of $\rmWT$ by the corresponding torus which also 
identifies with $(1 \times \rmT)$. 
\vskip .1cm
Next, one may observe that the composite map  $\bar \beta \circ \bar \alpha \circ p_2: \rmEWT{\underset {\rmWT } \times} (ET \times \bar \rmT)
\ra \rmEWT{\underset {\rmWT } \times} \bar \rmT \ra \rmBW$ factors
also as the composite map $\phi \circ \pi \circ p_1$, where $\phi: \rmEW{\underset \rmW \times} (\bar \rmT/{\rmT}) \ra \rmBW$ is the map induced sending all of
${\bar \rmT}/\rmT$ to a point, $p_i$, $i=1,2$ are the maps defined
in ~\eqref{p.i} and $\pi$ is the map in ~\eqref{pi}. Therefore,
\be \begin{align}
\label{A.1}
\A &= Rp_{1*}p_2^*(\bar \alpha^* \bar \beta ^*(R\tilde \psi_*(\Q)))\\
    &= Rp_{1*} p_1^* \pi^* \phi^*(R \tilde \psi_*(\Q)) \notag\\
    &= \pi^*\phi^*(R\tilde \psi_*(\Q)) \otimes _{\Q} Rp_{1*}(\Q) \notag\\
    &= \pi^* \phi^*(R\tilde \psi_*(\Q)). \notag
    \end{align}\ee
The last equality follows from the observation that $Rp_{1*}(\Q) \simeq \Q$.
\begin{proposition}\footnote{As pointed out by one of the referees, it is not essential in this Proposition that $\rmW$ be 
the Weyl group $\rmN(T)/\rmT$. The same arguments should work if $\rmW$ is replaced by a finite group (which
we will continue to denote by $\rmW$ for want of a better notation), so that $\rmW \diag(T)$ is the semi-direct
product of $\rmW$ and ${\rm diag(T)}$, with ${\rm diag(T)}$ normal in $\rmW\diag(T)$.}
\label{formal.1}
(i) The sheaf of commutative dgas, $R\tilde \psi_*(\Q)= \tilde \psi_*\rmG^{\bullet}(\Q)$, (where $\rmG^{\bullet}$ is the canonical Godement resolution), 
is $\rmW$-equivariant and formal as a sheaf of commutative dgas and whose dga-structure is compatible with the $\rmW$-action.
\vskip .1cm
(ii) The corresponding statements also hold for $R\psi_*(\Q)$ on ${\rmEWT {\underset {\rmWT } \times}\bar \rmT}$, $\A$ on $\rmEW{\underset {{\rmW}} \times} (\ETx \bar \rmT)$ and $R\pi_*(\A)$ on
 $\rmEW{\underset {\rmW} \times} (\bar \rmT/{\rmT})$.
   \end{proposition}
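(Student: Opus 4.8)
The plan is to deduce all four formality statements from a single one --- the formality of $R\tilde \psi_*(\Q)$ on $\rmBW$ --- together with the elementary fact that pullback along a map, and tensor product, both preserve formality. First I would identify $R\tilde \psi_*(\Q)$. The map $\tilde \psi \colon \rmBW\diagT \to \rmBW$ of \eqref{key.comm.diagm} is the map of simplicial classifying spaces induced by the quotient $\rmW\diagT \to \rmW$ by the normal subgroup $\diagT$; hence it is a fibration whose fibre is $\rmB\diagT \cong \rmB\rmT$, the classifying space of a torus of rank $r = \dim \rmT$, and $\pi_1(\rmBW) = \rmW$ acts on it through the Weyl action on $\diagT \cong \rmT$. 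Consequently the cohomology sheaves of $R\tilde \psi_*(\Q)$ are the cartesian sheaves on $\rmBW$ which, under Lemma ~\ref{discrete.grp.action}, correspond to the graded $\rmW$-module $\rmH^*(\rmB\rmT, \Q) \cong \mathrm{Sym}(M_{\Q})$, where $M_{\Q} := \rmH^2(\rmB\rmT, \Q)$ is the natural (Weyl) $\rmW$-representation of rank $r$, placed in degree $2$. By the same Lemma, $\tilde \psi_* \rmG^{\bullet}(\Q)$, as a complex of $\rmW$-equivariant sheaves on $\rmBW$, is the datum of a $\rmW$-equivariant commutative dga $C$ with $\rmH^*(C) \cong \mathrm{Sym}(M_{\Q})$, and statement (i) reduces to producing a $\rmW$-equivariant quasi-isomorphism of commutative dgas $(\mathrm{Sym}(M_{\Q}), 0) \to C$.

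I would build this quasi-isomorphism by the standard ``intrinsic formality of a free graded-commutative algebra on generators in even degree'', made equivariant. The canonical surjection $Z^2(C) \to \rmH^2(C) = M_{\Q}$ from degree-$2$ cocycles is a morphism of $\Q[\rmW]$-modules; since we are in characteristic $0$ and $\rmW$ is finite, $\Q[\rmW]$ is semisimple (Maschke), so this surjection admits a $\rmW$-equivariant section $s \colon M_{\Q} \to Z^2(C)$. As the generators of $\mathrm{Sym}(M_{\Q})$ sit in even degree and $ds = 0$, the map $s$ extends uniquely to a morphism of commutative dgas $\sigma \colon (\mathrm{Sym}(M_{\Q}), 0) \to C$, and $\sigma$ is automatically $\rmW$-equivariant because $\rmW$ acts by dga-automorphisms on both sides and $\sigma$ restricts to the $\rmW$-equivariant map $s$ on the generators. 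On cohomology, $\sigma$ induces a ring map that is the identity on $M_{\Q}$, and since both $\mathrm{Sym}(M_{\Q})$ and $\rmH^*(C)$ are the polynomial ring on $M_{\Q}$, $\rmH^*(\sigma)$ is an isomorphism; thus $\sigma$ is a quasi-isomorphism. Globalising over $\rmBW$ via Lemma ~\ref{discrete.grp.action} gives (i), the compatibility of the dga-structure with the $\rmW$-action being built in since the cup product on $R\tilde \psi_*(\Q)$ is $\rmW$-equivariant.

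For (ii) I would show that each of $R\psi_*(\Q)$, $\A$ and $R\pi_*(\A)$ is obtained from $R\tilde \psi_*(\Q)$ by $\rmW$-equivariant operations under which formality is preserved. Both squares in \eqref{key.comm.diagm} are cartesian --- each vertical arrow is a fibration obtained by dividing out a sub-torus, and the indicated base changes recover $\rmB\tilde \rmN$ and $\rmE\tilde \rmN{\underset {\tilde \rmN} \times}\bar \rmT$ --- so cohomological base change along the fibrations $\bar \psi$ and $\tilde \psi$ gives $R\bar \psi_*(\Q) \simeq \bar \beta^*R\tilde \psi_*(\Q)$ and hence $R\psi_*(\Q) \simeq \bar \alpha^*\bar \beta^*R\tilde \psi_*(\Q)$ as sheaves of dgas, while $\A$ equals $\pi^*\phi^*R\tilde \psi_*(\Q)$ by \eqref{A.1}. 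The maps $\bar \alpha, \bar \beta, \phi$ respect the relevant $\rmW$-equivariant structures, as does $\pi$ by Lemma ~\ref{pi.is.equivariant}; since pullback is exact and symmetric monoidal, it carries the $\rmW$-equivariant quasi-isomorphism of (i) to $\rmW$-equivariant quasi-isomorphisms of sheaves of dgas, which settles $R\psi_*(\Q)$ and $\A$. For $R\pi_*(\A)$, writing $\A = \pi^*(\phi^*R\tilde \psi_*(\Q))$ and applying the projection formula (all maps in sight being cohomologically proper) gives $R\pi_*(\A) \simeq \phi^*R\tilde \psi_*(\Q) \otimes_{\Q} R\pi_*(\Q)$. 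The first factor is formal and $\rmW$-equivariant by the above; the second has cohomology sheaves with stalks $\rmH^*(\rmB\rmT_x, \Q)$ at the point of $\bar \rmT/\rmT$ corresponding to a $\rmT$-orbit whose stabiliser is the sub-torus $\rmT_x \subseteq \rmT$, hence is formal by the toric-variety argument of \cite{Lu95}, with the $\rmW$-equivariance again supplied by averaging over $\rmW$. As a tensor product over a field of formal dgas is formal (Künneth), $R\pi_*(\A)$ is formal, and manifestly $\rmW$-equivariant.

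The step I expect to be the main obstacle is coordinating the choices so that the formality morphism is genuinely $\rmW$-equivariant, not merely equivariant up to homotopy. One cannot simply average a chosen non-equivariant formality morphism, because averaging is only $\Q$-linear and destroys multiplicativity; the point is to average only the section $s$ of the degree-$2$ generators --- legitimate since $M_{\Q}$ and $Z^2(C)$ are $\Q[\rmW]$-modules --- and then let the unique multiplicative extension carry the equivariance. The same device governs the use of \cite{Lu95} for $R\pi_*(\Q)$: one must work with the $\rmW$-stable systems of local generators over $\bar \rmT/\rmT$ and average those. Everything else is routine transport through exact symmetric monoidal functors, together with the base-change identifications read off from \eqref{key.comm.diagm}.
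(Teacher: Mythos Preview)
Your proposal is correct and follows essentially the same route as the paper: reduce to the stalk over the base point of $\rmBW$ via Lemma~\ref{discrete.grp.action}, prove formality of that $\rmW$-equivariant dga using the polynomial nature of $\rmH^*(\rmB\diagT,\Q)$, then transport through the base-change identifications of \eqref{key.comm.diagm} and \eqref{A.1}, and finally handle $R\pi_*(\A)$ via the projection formula \eqref{A.2} together with the toric argument of \cite{Lu95} for $R\pi_*(\Q)$.

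The one notable difference is in the formality step for the stalk dga. The paper builds a roof $C \leftarrow \B \to \rmH^*(C)$ through an auxiliary free graded-commutative algebra $\B$ on $S\oplus K\oplus Z^2$ (following \cite[(3.5)]{Lu95}), with all three pieces checked to be $\rmW$-stable. Your argument is more direct: you use Maschke to split $Z^2(C)\twoheadrightarrow M_{\Q}$ $\rmW$-equivariantly and then take the unique multiplicative extension $\mathrm{Sym}(M_{\Q})\to C$. This is cleaner and avoids the intermediate object entirely; the paper's version has the mild advantage of not requiring one to think about whether the Godement model is strictly graded-commutative (since both maps in the roof only need target a commutative $\B$), but since the paper itself treats $R\tilde\psi_*(\Q)$ as a commutative dga, this is not a real distinction here. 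Your remark that one must average only the degree-$2$ section and \emph{then} extend multiplicatively---rather than averaging a non-equivariant dga map---is exactly the right caution, and it is implicit in the paper's insistence that $S,K,Z^2$ are individually $\rmW$-stable.
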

\begin{proof}
\vskip .2cm
{\it Step 4.1.1}.  The two  squares in the diagram ~\eqref{key.comm.diagm} are clearly cartesian. Observe that the map 
\[\psi_n: (\rmE\tilde \rmN{\underset {\tilde \rmN} \times} \bar \rmT)_n = \tilde \rmN^{n-1} \times \bar \rmT \ra (\rmWT)^{n-1} \times \bar \rmT = (\rmEWT{\underset {\rmWT } \times}\bar \rmT)_n\]
is a principal bundle with fiber $(\diagT)^{n-1}$. The same holds for each of the maps $\bar\psi_n$ and $\tilde \psi_n$. 
 Therefore, one has base-change, and one obtains the quasi-isomorphisms:
\be \begin{equation}
\label{base.change.isoms}
\bar \alpha ^*\bar \beta^*R\tilde \psi_*(\Q){\overset {\simeq} \ra} \bar \alpha ^*R\bar\psi_*(\beta ^*\Q) = R\psi_*(\alpha ^*\beta^*(\Q)) = R\psi_*(\Q).
\end{equation} \ee
Moreover, the above isomorphisms are compatible with the structure of sheaves of dg-algebras. Therefore, the statements in (ii) for $R\psi_*(\Q)$
 follow from those in (i). Proof of (i) will occupy the remaining part of Step 4.1.1 and Step 4.1.2.  Steps 4.1.3 and 4.1.4
below provide a detailed proof of the statements in (ii) for $\A$ and $R\pi_*(\A)$.
\vskip .2cm
Next observe from Lemma ~\ref{equiv.struct.maps} 
that each $(\rmBW\diag(T))_n$ has a $\rmW$-action (induced from the action of the bigger group $\rmW diag(T)$) and that the structure maps of the simplicial variety $\rmBW\diag(T)$ are 
compatible with these $\rmW$-actions. Therefore, $\rmW$ acts on the simplicial variety $\rmBW\diag(T)$. In fact, the above action identifies
with the action of $\rmW$ induced by the conjugation action of $\rmW$ on $\rmW\diagT$. \footnote{In particular, this also shows
 that one gets an induced action of $\rmW$ on $\rmB\diagT$, (viewed as a sub-simplicial scheme of $\rmBW\diag(T)$), which is in fact induced by 
 the conjugation action of $\rmW$ on $\diagT$.} Moreover, the simplicial map $\tilde \psi: \rmBW\diag(T) \ra BW$ 
is a $\rmW$-equivariant map in each simplicial degree.  Therefore, $R\tilde \psi_*(\Q) 
= \tilde \psi_*\rmG^{\bullet}(\psi^*(\Q))$ is a complex of $\rmW$-equivariant sheaves on $\rmBW$.
\vskip .2cm
One may also observe that 
that the pairing $\tilde \psi^*(\Q) \boxtimes \tilde \psi^*(\Q) \ra \Delta_*(\tilde \psi^*(\Q))$ is compatible with the $\rmW$-actions, when
$\rmW$-acts diagonally on $\rmBW\diag(T) \times \rmBW\diag(T)$. 
\vskip .2cm

Next the commutative diagram
\[\xymatrix{{\rmBW\diag(T)} \ar@<1ex>[r]^(.35){\Delta} \ar@<1ex>[d] & {{\rmBW\diag(T) \times \rmBW\diag(T)}} \ar@<1ex>[d]\\
            {\rmBW} \ar@<1ex>[r]^{\Delta} &{ \rmBW \times \rmBW},}
\]
and the pairing $\tilde \psi^*(\Q) \boxtimes \tilde \psi^*(\Q) \ra \Delta_*(\tilde \psi_*(\Q))$, (along with the fact that 
the Godement resolution $\rmG^{\bullet}$ is
 functorial in its argument and preserves multiplicative pairings), shows that one obtains the pairings 
\be \begin{equation}
     \label{Rtilde.psi.pairing}
  R\tilde \psi_*(\Q) \boxtimes R\tilde \psi_*(\Q) \ra \Delta_*(R\tilde \psi_*(\Q)),   
    \end{equation} \ee
which are also compatible with the
 $\rmW$-actions when $\rmW$ acts diagonally on the left.
 Therefore, it follows that the dga-structure on $R\tilde \psi_*(\Q)$ is compatible with the $\rmW$-action. Since the maps $\bar \alpha$, $\bar \beta$,
 $\phi$ and $\pi$ are all $\rmW$-equivariant maps, it also follows that the dga-structures on
$R\psi_*(\Q)$, $\A$ and on $R\pi_*(\A)$ are also compatible with the $\rmW$-action.
\vskip .2cm
{\it Step 4.1.2}. Next we show the sheaf of commutative dg-algebras $R\psi_*(\Q)$ is formal. 
In view of the fact that the quasi-isomorphisms in ~\eqref{base.change.isoms} are compatible with the dga-structures, 
it suffices to show that $R\tilde \psi_*(\Q)$ is formal as a sheaf of dgas.
\vskip .2cm
 Let $\H(\rm\rmB diag(T)) = R\tilde \psi_*(\Q)_{{\it x}_o}$ where the right-hand-side denotes the stalk at the base point $x_o$ in $\rmBW$. (Observe that
this identification of the stalks is possible  because we are using the simplicial topology as in ~\ref{sheaves.simp.2} and because $(\rmBW)_0 = \{x_o\}$.)
This is a commutative dga provided with an action by $\rmW$ and whose cohomology is $\rmH^*(\rmB diag(T))$. {\it A key point to observe in view of 
 the equivalence of
 categories in Lemma ~\ref{discrete.grp.action}(iii) is that, in order to show the sheaf of dgas $R\tilde \psi_*(\Q)$ on $\rmBW$ is formal as a sheaf of dgas,
 and compatible with the given $\rmW$-action, it suffices to show that dga at the stalk $\H(\rm\rmB diag(T)) = R\tilde \psi_*(\Q)_{{\it x}_o}$ is formal, and compatible with the given $\rmW$-action.}
 \vskip .1cm
We adopt a rather standard argument to do this: see \cite[(3.5) Lemma]{Lu95}.  Key use is made of the fact that the cohomology of this dga is a polynomial ring generated
by elements in degree $2$. Let $Z^2$ denote the cycles in degree $2$ of the dga $\H(\rmB diag(T))$. Let
$K= d^{-1}(Z^2) \subseteq \H(\rmB diag(T))^1$ and $N= ker(d: K \ra Z^2)$. 
Since $\H(\rmB diag(T))$ has no cohomology in degree $1$, one can find a $\rmW$-stable subring $S$ of $\H(\rmB diag(T))^0$
so that $d(S) = N$. (In fact, let $S = d^{-1}(N)$.) Since the dga $\H(\rmB diag(T))$ has an action by $\rmW$, the differentials of the dga
$\H(\rmB diag(T))$ are compatible with the $\rmW$-action and therefore, $K$, $\rmN$ and $S$ all are stable with
respect to the given action of $\rmW$. 
\vskip .2cm
Now one lets $\B$ denote the free graded commutative (or super-commutative) algebra on
$S \oplus K \oplus Z^2$, with the differential $d:S \ra N \subseteq K$ and $d:K\ra Z^2$ defined to be induced by the
differential of $\H(\rm\rmB diag(T))$. Then the  map sending $S$ ($K$, $Z^2$) to $S$ ($K$, $Z^2$) defines a map
of dg-algebras $\B \ra \H(\rmB diag(T))$ which is a quasi-isomorphism. The quotient of $\B$ by the ideal
generated by $S$, $K$ and $d(K)$ will then map isomorphically to the cohomology algebra $\rmH^*(\rmB diag(T))$. Since
all the objects above are stable with respect to the action of $\rmW$, we observe that that the
quasi-isomorphisms $\H(\rmB diag(T)) \leftarrow \B \ra H^*(\rmB diag(T))$ are compatible with the $\rmW$-actions.
 This completes the proof that $R\tilde \psi_*(\Q)$ is 
formal as a dga on $\rmBW$, and that therefore $R\psi_*(\Q)$ is formal as a sheaf of dgas on ${\rmEWT {\underset {\rmWT } \times}\bar \rmT} $, thereby completing
 the proof of (i).
\vskip .2cm
{\it Step 4.1.3}. Recall that $\A$ is defined as $Rp_{1*}p_2^*(R\psi_*(\Q))$ (using the terminology as in the
proof of Lemma ~\ref{equiv.tech.5}). Next we consider the sheaf of commutative dg-algebras $p_2^*(R\psi_*(\Q))$ on $\rmEWT{\underset {\rmWT } \times} (ET \times \bar \rmT)$ which is
formal. Recall this means, there exists a sheaf of commutative dg-algebras  $K \eps \rmD(\rmEWT{\underset {\rmWT } \times} (ET \times \bar \rmT), \Q)$
so that there exists a diagram $p_2^*(R\psi_*(\Q) \leftarrow K \ra {\mathcal H}^*(p_2^*(R\psi_*(\Q)))$ of sheaves of commutative dgas, 
where the maps are all
quasi-isomorphisms of sheaves of dgas and ${\mathcal H}^*(p_2^*(R\psi_*(\Q)))$ is the cohomology algebra of $p_2^*(R\psi_*(\Q)$. Observe that $Rp_{1*}(K)$ is  a sheaf of commutative
dg-algebras on $\rmEW{\underset {W} \times} (\ETx \bar \rmT)$. 
Next we make use of the observation that the fibers of $p_1$ are all ${\rm {ET}}$ and hence acyclic, so that $R^np_{1*}(\Q) = 0$ for $n \ne 0$ and
$= \Q$ for $n=0$.
Since $R^np_{1*}(\Q)=0$ for all $n >0$, it follows that the natural map $p_{1*}({\mathcal H}^*(p_2^*(R\psi_*(\Q)))) \ra
Rp_{1*}({\mathcal H}^*(p_2^*(R\psi_*(\Q))))$ is a quasi-isomorphism.  It follows that we obtain the diagram of sheaves of commutative dgas on
$\rmEW{\underset {\rmW} \times} (\ETx \bar \rmT)$:
\[ Rp_{1*}(p_2^*(R\psi_*(\Q))) {\overset {\simeq} \leftarrow} Rp_{1*}(K) {\overset {\simeq} \rightarrow} Rp_{1*}({\mathcal H}^*(p_2^*(R\psi_*(\Q))))
 {\overset {\simeq} \leftarrow} p_{1*}({\mathcal H}^*(p_2^*(R\psi_*(\Q))))
\]
Clearly the last sheaf of dgas is formal. This completes the proof of the statement that the sheaf of commutative dgas $\A= Rp_{1*}(p_2^*(R\psi_*(\Q))) $
on $\rmEW{\underset {\rmW} \times} (\ETx \bar \rmT)$ is formal. Since $\A$ is formal it is quasi-isomorphic as a sheaf of commutative dgas to
its cohomology algebra, which is ${\mathcal H}^*(\A)$, which is the constant sheaf associated to $H^*(\rmB diag(T), \Q)$. 
\vskip .2cm
{\it Step 4.1.4}.  Since $\bar \rmT$ is a projective
toric variety for the torus $\rmT$, the arguments in \cite[Theorem 3.1]{Lu95} or \cite[section 5]{Gu05} apply verbatim to prove that
$R\pi_{*}({\mathcal H}^*(\A))$ is formal as a sheaf of commutative dgas. 
In fact $R\pi_{.*}({\mathcal H}^*(\A)) = R\pi_{*}(\Q) \otimes _{\Q}{\mathcal H}^*(\bar \A)$, where $\bar \A$ is defined
in the line following ~\eqref{A.2}. Therefore, it suffices to observe that the arguments in \cite[Theorem 3.1]{Lu95} or \cite[section 5]{Gu05} are compatible with the action of $\rmW$. 
Recall these arguments are essentially a sheafified variant of the arguments discussed above in Step 2, sheafified
on the space $\bar \rmT/{\rmT}$. As discussed in ~\eqref{contract.slices.WT.1}, each $\rmT$-orbit $\rmY$ on $\bar \rmT$ has an open $\rmT$-stable neighborhood $\rmU_{\rmY}$ of which
$\rmY$ is a deformation retract and the $\rmW$-action sends the $\rmT$-orbit $\rmY$ to another $\rmT$-orbit $w \rmY$ and
the neighborhood $\rmU_{\rmY}$ to a neighborhood $\rmU_w \rmY$ of $w\rmY$. The sheafified variant of the argument in Step 2 
is carried out by applying the constructions in Step 2 to the sheaf of dg-algebras $R\pi_*(\Q_{|\rmU_{\rmY}})$, which produces
a free graded commutative dga $\B_{\rmU_{\rmY}}$ on the image of $\rmU_{\rmY}$ in $\bar \rmT$ and then by showing that for
a smaller open neighborhood $\rmU_{\rmY '} \subseteq \rmU_{\rmY}$ associated to another $\rmT$-orbit ${\rmY '}$, the
dg-algebra $\B_{\rmU_{\rmY}}$ restricts to $\B_{\rmU_{\rmY '}}$. Since the $\rmW$-action preserves the type of the 
$\rmT$-orbits, it preserves the closure relations among these $\rmT$-orbits, sending the neighborhood
$\rmU_{\rmY '}$ to $\rmU_{w{\rmY '}}$ and $\rmU_{\rmY}$ to $\rmU_{w \rmY}$. Therefore it is clear that the above restrictions
are compatible with the $\rmW$-action. This completes the proof of the proposition.
\end{proof}
\vskip .2cm
\noindent
{\bf Step 4.2}.
As observed in Proposition ~\ref{formal.1}, $R\psi_*(\Q)$ is obtained as the pull-back $\bar \alpha^* (\bar \beta ^*(R\tilde \psi_*(\Q))$.
 Since $R\tilde \psi_*(\Q))$ is formal, we may in fact replace
this by its cohomology. Therefore, by applying the projection formula to ~\eqref{A.1}, one may also observe that 
\be \begin{equation}
     \label{A.2}
R\pi_*(\A) = \phi^* (R\tilde \psi_*(\Q))) \otimes _{\Q} R \pi_*(\Q).
    \end{equation} \ee
\vskip .1cm 
We will denote 
\be \begin{equation}
     \label{A.3}
\phi^* (R\tilde \psi_*(\Q))) \mbox{ by }\bar \A.
\end{equation} \ee
\begin{lemma}
 \label{correspondence.1}
$R\pi_*(\rmIC^{\rmW, \rmT}(\O_{\rmW,T }) \otimes \A) = R\pi_*(\rmIC^{\rmW, \rmT})(\O_{\rmW,T }) \otimes  \bar \A$.
\end{lemma}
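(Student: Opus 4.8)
The plan is to obtain this directly from the projection formula for the map $\pi$ of \eqref{pi}, exploiting the fact that $\A$ is itself a pullback along $\pi$. Indeed, combining \eqref{A.1} with the definition \eqref{A.3}, and using $Rp_{1*}(\Q)\simeq\Q$, one has
\[
\A \;=\; \pi^{*}\phi^{*}\bigl(R\tilde\psi_{*}(\Q)\bigr) \;=\; \pi^{*}(\bar\A)
\]
as sheaves of commutative dgas on $\rmEW{\underset{\rmW}\times}(\ETx\bar\rmT)$. Hence $\rmIC^{\rmW,\rmT}(\O_{\rmW,T})\otimes\A=\rmIC^{\rmW,\rmT}(\O_{\rmW,T})\otimes\pi^{*}(\bar\A)$ as complexes of $\A$-modules, and the assertion of the lemma becomes exactly the projection formula
\[
R\pi_{*}\bigl(\rmIC^{\rmW,\rmT}(\O_{\rmW,T})\otimes\pi^{*}(\bar\A)\bigr)\;\simeq\;R\pi_{*}\bigl(\rmIC^{\rmW,\rmT}(\O_{\rmW,T})\bigr)\otimes\bar\A .
\]

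This is the very same projection formula for $\pi$ that was already invoked in \eqref{A.2} — there with $\rmIC^{\rmW,\rmT}(\O_{\rmW,T})$ replaced by $\Q$, giving $R\pi_{*}(\A)=\bar\A\otimes_{\Q}R\pi_{*}(\Q)$ — now carrying the extra factor $\rmIC^{\rmW,\rmT}(\O_{\rmW,T})$ along. To see that it holds, first note that all coefficients are sheaves of $\Q$-vector spaces, so $\otimes$ is exact and there is no ambiguity with derived tensor products. Next, by Proposition \ref{formal.1} the sheaf of dgas $\bar\A=\phi^{*}(R\tilde\psi_{*}(\Q))$ is formal, with cohomology the constant sheaf associated to $H^{*}(\rmB\,\diag(T),\Q)$, which is finite dimensional in each degree; the formality quasi-isomorphisms being $\rmW$-equivariant, we may replace $\bar\A$ by this constant complex throughout, and for a constant complex with finite-dimensional terms the projection formula reduces to the statement that $R\pi_{*}$ commutes with tensoring by a finite-dimensional graded $\Q$-vector space, i.e.\ with finite direct sums in each cohomological degree. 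Equivalently, one argues one simplicial degree at a time: over each point of $(\rmBW)_{n}$ the complex $(\bar\A)_{n}$ is constant along the fibres of $\pi_{n}$, whence proper base change gives the formula degreewise, and the degreewise isomorphisms assemble by the single-functor formalism of section~\ref{simpl.der.functors}.

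It remains to record that the isomorphism so produced is compatible with the extra structure: it is a morphism of complexes of $R\pi_{*}(\A)$-modules, since the projection formula respects the relevant tensor structures and $R\pi_{*}$ is computed through the functorial Godement resolution (Remark \ref{funct.resols}); and it is $\rmW$-equivariant, since $\pi$ and $\phi$ are $\rmW$-equivariant (Lemma \ref{pi.is.equivariant} together with Step 4.1.1), so every sheaf and map occurring above lies in the category of $\rmW$-equivariant objects. The only genuine point to watch is the legitimacy of the projection formula for the non-proper-looking map $\pi$; but this has effectively already been settled in \eqref{A.2}, and in any case follows from the constancy of $\bar\A$ along the fibres of $\pi$ as explained above. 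I therefore expect the proof to be short, the only real work being the bookkeeping just indicated.
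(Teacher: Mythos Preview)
Your proposal is correct and follows essentially the same approach as the paper: both reduce the claim to the observation that $\A$ (equivalently $\pi^*\bar\A$) may, by formality and constancy, be replaced by a graded $\Q$-vector space, whereupon $R\pi_*$ trivially commutes with the tensor product. The paper's proof is terser and does not explicitly cast the step as a projection formula, but the content is the same.
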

\begin{proof} Observe that $\A$ being formal, we may replace $\A$ by its cohomology sheaves. The arguments in  ~\eqref{A.1} show that
$\A$ is also constant, that is, we may replace $\A$ by a graded $\Q$-vector space. Then the conclusion follows readily.
\end{proof}
\subsubsection{}
\label{ident.dg.algs} 
Recall that we showed in Step 4 (see ~\ref{step4}), that the dgas $\B_{\rmG \times \rmG}(\bar \rmG)$ and ${ {\B}}_{\rmW, \rmT}(\bar \rmT)$ 
are quasi-isomorphic, where the dga ${ {\B}}_{\rmW, \rmT}(\bar \rmT)$ 
is defined in Step 4 (iii) as 
\[\RHom(\oplus _{\O}R\pi_*(\rmIC^{\rmW, \rmT})(\O_{\rmW,T } \otimes \A), \oplus _{\O}R\pi_*(\rmIC^{\rmW, \rmT})(\O_{\rmW,T } \otimes \A).\]
In view of Lemma ~\ref{correspondence.1}, this identifies with 
\[\RHom(\oplus _{\O}R\pi_*(\rmIC^{\rmW, \rmT})(\O_{\rmW,T }) \otimes  \bar \A, \oplus _{\O}R\pi_*(\rmIC^{\rmW, \rmT})(\O_{\rmW,T }) \otimes  \bar \A).\]
\vskip .2cm \noindent
{\bf Step 4.3}. {\it Formality of the dg-algebra ${ {\B}}_{\rmW, \rmT}(\bar \rmT)$}. In order to establish this formality,  we consider projective resolutions
 in Lemma ~\ref{projectives} and Proposition ~\ref{proj.res}, discussed below.
\begin{lemma}
 \label{projectives} Let $\rmC^{+,o}_{cart,c}(\rmEW{\underset \rmW \times} (\ETx \bar \rmT), \A)$ 
 ($\rmC^{+,o}_{cart,c}(\rmEW{\underset \rmW \times} ( \bar \rmT/{\rmT}), R\pi_*(\A))$ denote the category of
 complexes whose associated derived category is $\rmD^{+,o}_{cart,c}(\rmEW{\underset \rmW \times} (\ETx \bar \rmT), \A)$ 
 ($\rmD^{+,o}_{cart,c}(\rmEW{\underset \rmW \times} ( \bar \rmT/{\rmT}), R\pi_*(\A))$, \res). 
 \vskip .2cm 
(i) Let ${\rmY '}$ denote a $\rmT$-orbit on $\bar \rmT$. Let $\rmW_{{\rmY '}}$ denote the stabilizer of ${\rmY '}$ in $\rmW$. Then
\newline \noindent
$P=\oplus _{w \eps \rmW/\rmW_{{\rmY '}}} j_{w{\rmY '}!}j_{w{\rmY '}}^*(\A)$ is a projective object in $\rmC^{+,o}_{cart,c}(\rmEW{\underset \rmW \times} (\ETx \bar \rmT), \A)$ in the
sense that 
\newline \noindent
$Hom_{\A}(P, \quad)$ preserves quasi-isomorphisms in the second argument, where $Hom_{\A}$ denotes the Hom in the category 
$\rmC^{+,o}_{cart,c}(\rmEW{\underset \rmW \times} (\ETx \bar \rmT), \A)$ of
sheaves of $dg$-modules over $\A$. Similarly,
\vskip .1cm 
$Q=\oplus _{w \eps \rmW/\rmW_{{\rmY '}}} j_{w{\rmY '}/{\rmT}!}j_{w{\rmY '}/{\rmT}}^*(R\pi_*(\A))$ is a projective object in $\rmC^{+,o}_{cart,c}(\rmEW{\underset \rmW \times} ( \bar \rmT/{\rmT}), 
R\pi_*(\A))$.
\vskip .2cm
(ii) Every sheaf of dg-modules over ${\mathcal H}^*(\A)$ has a bounded above resolution by projectives as in (i). The same holds for
sheaves of dg-modules over ${\mathcal H}^*(R\pi_*(\A))$.
\end{lemma}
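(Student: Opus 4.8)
The plan is to reduce both assertions to elementary facts about the finite poset of $\rmT$-orbits on $\bar\rmT$, following the toric argument of \cite{Lu95} and handling the finite group $\rmW$ throughout by Maschke's theorem (we work in characteristic $0$).

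\emph{Part (i).} By Lemma \ref{discrete.grp.action}, an object of $\rmC^{+,o}_{cart,c}(\rmEW{\underset {\rmW} \times} (\ETx \bar \rmT), \A)$ (resp.\ of $\rmC^{+,o}_{cart,c}(\rmEW{\underset {\rmW} \times} (\bar \rmT/\rmT), R\pi_*(\A))$) is a $\rmW$-equivariant sheaf of dg-modules on $\ETx\bar\rmT$ (resp.\ on $\bar\rmT/\rmT$), with $\rmW$-equivariant morphisms. With $j=j_{{\rmY '}}$ and $\bar j=j_{{\rmY '}/{\rmT}}$, the objects $P=\oplus_{w\eps\rmW/\rmW_{{\rmY '}}}j_{w{\rmY '}!}j_{w{\rmY '}}^*(\A)$ and $Q=\oplus_{w\eps\rmW/\rmW_{{\rmY '}}}\bar j_{w{\rmY '}!}\bar j_{w{\rmY '}}^*(R\pi_*\A)$ are the $\rmW$-sheaves induced (in the sense of the adjoint to restriction to $\rmW_{{\rmY '}}$-equivariant sheaves) from the $\rmW_{{\rmY '}}$-equivariant objects $j_!j^*\A$, resp.\ $\bar j_!\bar j^*R\pi_*\A$; this is because $\A$ and $R\pi_*\A$ are $\rmW$-equivariant by Proposition \ref{formal.1} and $w\rmU_{{\rmY '}}=\rmU_{w{\rmY '}}$. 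Hence induction--restriction adjunction combined with $\bar j_!\dashv \bar j^*$ yields $Hom_{R\pi_*\A}(Q,N)\cong\bigl(N(\rmEW{\underset {\rmW} \times} (\rmU_{{\rmY '}}/\rmT))\bigr)^{\rmW_{{\rmY '}}}$, and the crux is that this is an exact functor of $N$: in the topology on $\bar\rmT/\rmT$ whose closed sets are the unions of $\rmT$-orbit closures, $\rmU_{{\rmY '}}/\rmT$ is the \emph{minimal} open neighbourhood of the point $[{\rmY '}]$ — a point lies in it exactly when ${\rmY '}$ lies in the closure of the corresponding $\rmT$-orbit — so $N(\rmU_{{\rmY '}}/\rmT)=N_{[{\rmY '}]}$ and $N\mapsto (N_{[{\rmY '}]})^{\rmW_{{\rmY '}}}$ is the composite of the exact stalk functor with the exact-over-$\Q$ functor of $\rmW_{{\rmY '}}$-invariants. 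Thus $Q$ is projective. For $P$ one argues the same way — now $Hom_{\A}(P,M)$ is computed by $\rmW_{{\rmY '}}$-invariant sections over $\rmEW{\underset {\rmW} \times} (\ETx\rmU_{{\rmY '}})$, and since $\bar\rmT$ is a toric variety $\rmU_{{\rmY '}}$ is attracted onto its unique closed $\rmT$-orbit ${\rmY '}$ by a one-parameter subgroup in the relative interior of the defining cone, so the attractive-contraction principle identifies these sections (computed via the Godement resolution, as is our convention) with $\oH^*(\rmB\rmT_{{\rmY '}})\otimes_{\Q}(\text{stalk of }M\text{ along }{\rmY '})$ up to quasi-isomorphism, again exact in $M$ — or, equivalently, one transports the statement for $Q$ across the equivalence of Proposition \ref{Rpi*.and.Lpi*}, under which $P$ corresponds to $Q$ by Lemma \ref{ext.by.zero}(i). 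The identical computation with $\A$, $R\pi_*\A$ replaced by their cohomology algebras ${\mathcal H}^*(\A)$, ${\mathcal H}^*(R\pi_*\A)$ shows $\oplus_w j_{w{\rmY '}!}j_{w{\rmY '}}^*({\mathcal H}^*(\A))$ and $\oplus_w \bar j_{w{\rmY '}!}\bar j_{w{\rmY '}}^*({\mathcal H}^*(R\pi_*\A))$ are projective in the corresponding module categories — this is the form used in part (ii).

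\emph{Part (ii).} For ${\mathcal H}^*(\A)$- and ${\mathcal H}^*(R\pi_*\A)$-modules I would construct the resolutions exactly as in the toric case \cite[proof of Theorem 3.1]{Lu95}: working on the finite poset of $\rmT$-orbits on $\bar\rmT$ ordered by the closure relation, one produces a surjection onto the given module from a finite direct sum of the projective objects of (i) — using, orbit by orbit, a generating set of the corresponding stalk as a module over $\oH^*(\rmB\diag(\rmT),\Q)$ — and then iterates on the kernel, which lives on a strictly smaller union of orbit closures; finiteness of the poset forces the process to terminate in a bounded-above complex of projectives. The only new ingredient is $\rmW$-equivariance: since $\rmW$ is finite and the coefficients are $\Q$, at each step one may choose the generating sets, and the splittings used in the recollement triangle $j_!j^*\to\mathrm{id}\to i_*i^*$ that separates a closed orbit from its open complement, to be $\rmW_{{\rmY '}}$-equivariant (Maschke), so the entire resolution stays inside the equivariant category.

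\emph{Expected main obstacle.} All ideas are already in \cite{Lu95}; the work is bookkeeping on two points. First, verifying that $\rmU_{{\rmY '}}/\rmT$ really is the minimal open neighbourhood of $[{\rmY '}]$, and dually that $\rmT$-equivariant sections over $\rmU_{{\rmY '}}$ are computed by the attractive contraction onto ${\rmY '}$ — a short check using the cone of the affine toric piece, together with the (already used) fact that the equivariant local systems on the orbits are trivial. Second, organizing the poset induction in (ii) so that it is simultaneously $\rmW$-equivariant and produces, at each stage, a map that is surjective onto \emph{all} of the module and not merely near the chosen orbit — handled, as in the toric case, by iterating the recollement along a descending chain of open $\rmWT$-stable subvarieties of $\bar\rmT$. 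Neither point should require ideas beyond those of \cite{Lu95} and the equivariant formalism already set up.
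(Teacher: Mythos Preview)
Your approach is essentially the same as the paper's: for (i) both reduce to the $j_!\dashv j^*$ adjunction and exactness of the stalk functor on $\bar\rmT/\rmT$, and both derive the statement for $P$ from that for $Q$ via the equivalence of Proposition~\ref{Rpi*.and.Lpi*}. The paper simply writes $\Hom_{R\pi_*(\A)}(Q,K)=\oplus_{w}\Hom(j_{w{\rmY '}/\rmT}^*R\pi_*(\A),\,j_{w{\rmY '}/\rmT}^*K)$ and observes that the right-hand side is a sum of stalks, leaving the $\rmW$-equivariance implicit; your use of induction--restriction and Maschke makes this step explicit and is a cleaner way to see why the computation lands in the equivariant category. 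The paper skips the proof of (ii) entirely, so your poset-induction sketch following \cite{Lu95} is more than what the paper provides and is the intended argument.
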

\begin{proof}
In view of the equivalence of categories provided by Proposition ~\ref{Rpi*.and.Lpi*} and Remark ~\ref{funct.resols}, the first assertion in (i) follows from the second assertion in (i).
For the second assertion in (i),  we simply observe that 
\[Hom_{R\pi_*(\A)}(\oplus _{w \eps \rmW/\rmW_{{\rmY '}}} j_{w{\rmY '}/{\rmT}!}j_{w{\rmY '}/{\rmT}}^*(R\pi_*(\A)), K) = \oplus_{w \eps \rmW/\rmW_{{\rmY '}/{\rmT}}} \Hom_{R\pi_*(A)}(j_{w{\rmY '}/{\rmT}}^*(R\pi_*(\A)), j_{w{\rmY '}/{\rmT}}^*(K)). \]
But $j_{w{\rmY '}/{\rmT}}^*(R\pi_*(\A))$ ($j_{w{\rmY '}/{\rmT}}^*(K)$) is the stalk of $R\pi_*(\A)$ ($K$, \res) at the orbit $w{\rmY '}$. This proves the second assertion in (i).  We skip the proof of the assertions in (ii). 
\end{proof}
\vskip .2cm
Let $\O_{\rmWT }$ denote a 
$\rmWT$-orbit on $\bar \rmT$. Then $\O_{\rmWT }$ is a disjoint union of $\rmT$-orbits on $\bar \rmT$ permuted under the action of $\rmW$: therefore,
 we  denote $\O_{\rmWT } = \rmW.\O_{\rmT}$. Observe also that the dg-algebra $R\pi_*(\A)$ being formal can be viewed as a dg-algebra over 
$\rmH^*(BT)$. Recall that $\rmIC^{\rmW, \rmT}(\O_{\rmWT })$ denotes the equivariant intersection cohomology complex
on the $\rmWT$-orbit $\O_{\rmWT}$ for the action of the group $\rmWT$ (and extending the constant sheaf $\Q$ on $\O_{\rmWT}$), but viewed as a complex on
$\rmEW{\underset \rmW \times} (\ETx \bar \rmT)$
\begin{proposition}
 \label{proj.res}
(i)  Let $x \eps \bar \rmT^{\rmT}$ denote a fixed point. Let $\rmW_{\it x}$ denote
the stabilizer of ${\it x}$ in $\rmW$ and $\rmW {\it x}= \rmW/\rmW_{\it x}$ the corresponding $\rmW$-orbit. Then
denoting the sum $\oplus_{w \eps W/W_{\it x}} R\pi_*(\rmIC^{\rmW, \rmT}(\O_{\rmWT })\otimes \A)_{{\it wx}}$ by $R\pi_*(\rmIC^{\rmW, \rmT}(\O_{\rmWT })\otimes \A)_{\rmW {\it x}}$, we obtain:
\be \begin{equation}
\label{stalk.ident.2}
R\pi_*(\rmIC^{\rmW, \rmT}(\O_{\rmWT }) \otimes \A)_{\rmW {\it x}} = \oplus_{ w \eps \rmW /\rmW_{\it x}} R\pi_*(\rmW \rmIC^{\rmT}(\O_{\rmT}))_{{\it wx}} \otimes _{\Q} {\bar \A}
\end{equation} \ee
\vskip .2cm \noindent
where $\rmW\rmIC^{\rmT}(\O_{\rmT})$ denotes a sum of intersection cohomology complexes $\oplus \rmIC^{\rmT}(\O_{\rmT})$ where the sum varies over
all the disjoint $\rmW$-translates of a given $\rmT$-orbit $\O_{\rmT}$. Therefore, the cohomology 
\newline \noindent
$\rmH^*(R\pi_*(\rmIC^{\rmW, \rmT}(\O_{\rmWT })) \otimes \bar \A)_{\rmW {\it x}}$ forms
a projective module over $\rmH^* (\rmB \rmT ) \otimes \rmH^*(\bar \A)$.
\vskip .2cm
(ii) Every object $R\pi_*(\rmIC^{\rmW, \rmT}(\O_{\rmWT })) \otimes \bar \A$ has a projective resolution $\{\cdots \ra P^{-n} \ra P^{-n+1} \ra \cdots P^0\}$
in $\rmD_W(\bar \rmT/{\rmT}, H^*(R\pi_*(\Q) \otimes \bar \A))$ so that the given augmentation $P^0 \ra R\pi_*(\rmIC^{\rmW, \rmT}(\O_{\rmWT })) \otimes \bar \A$ is a quasi-isomorphism
at each stalk of the form $\rmW {\it x}$, $x \eps \bar \rmT^{\rmT}$, it induces a surjection at each stalk $Wx$, $x \eps \bar \rmT$ and each $P^{-i}$ is of the form 
\[\oplus_{ w \eps \rmW /\rmW_{\itx}}j_{\rmU_{{\it wx}}!}j_{\rmU_{{\it wx}}}^*(H^*(R\pi_*(\Q) \otimes \bar \A))[n_{\rmU_{\it wx}}] \]
as $\rmU_{\it wx}$ varies over neighborhoods of points  $wx \eps \bar \rmT/{\rmT}$ and where $n_{U_{\it wx}}$ are integers.
\vskip .2cm
(iii)  The  complexes $P^{i}$, for $i<0$, are supported at points in $\bar \rmT -\bar \rmT^{\rmT}$ and hence
viewed as modules over $\H^*(\rmB \rmT, \Q) \otimes _{\Q} \rmH^*(\bar \A)$  are torsion.
\end{proposition}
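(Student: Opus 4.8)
The plan is to establish the three parts in order: (i) supplies the local structure at $\rmT$-fixed points, (ii) builds the resolution from it, and (iii) is a support argument. For (i), I would first invoke the two facts already proved: that $\A$ is formal (Proposition~\ref{formal.1}) and that, by \eqref{A.1}, its cohomology sheaf is the constant sheaf attached to $\rmH^*(\rmB\,\diag(T),\Q)$, which is precisely $\bar{\A}$ in the notation of \eqref{A.3}. Hence $\A$ may be replaced by the constant sheaf $\bar{\A}$, and since $\bar{\A}$ has finite-dimensional components the projection formula gives $R\pi_*(\rmIC^{\rmW,\rmT}(\O_{\rmWT})\otimes\A)\simeq R\pi_*(\rmIC^{\rmW,\rmT}(\O_{\rmWT}))\otimes_{\Q}\bar{\A}$. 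Decomposing $\rmIC^{\rmW,\rmT}(\O_{\rmWT})$ into its $\rmW$-translates as in \eqref{ICWT.1} and passing to the stalk at the $\rmW$-orbit of a fixed point $x\in\bar{\rmT}^{\rmT}$ then yields \eqref{stalk.ident.2}. What remains is the input imported from the toric case: because $\bar{\rmT}$ is a projective toric variety, the equivariant intersection cohomology stalk $\rmH^*\bigl(R\pi_*(\rmIC^{\rmT}(\O_{\rmT}))_x\bigr)$ at a $\rmT$-fixed point is a \emph{free} $\rmH^*(\rmB\rmT)$-module; this is the equivariant-formality input underlying \cite[Theorem 3.1]{Lu95}. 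Tensoring over $\Q$ with the polynomial algebra $\rmH^*(\bar{\A})$ and summing over $w\in\rmW/\rmW_x$ then exhibits the stalk at $\rmW x$ as a free, hence projective, $\rmH^*(\rmB\rmT)\otimes\rmH^*(\bar{\A})$-module. All of this is $\rmW$-equivariant because $\rmW$ only permutes $\rmT$-orbits of a given type together with their standard affine neighborhoods $\rmU_{\rmY}$, as already exploited in \eqref{contract.slices.WT.1}--\eqref{contract.slices.WT.2}.

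For (ii), I would construct the resolution by descending induction on the partial order of $\rmT$-orbits by closure, carried out $\rmW$-equivariantly by always replacing an orbit by its full $\rmW$-orbit. Using the projective objects of Lemma~\ref{projectives}(i) --- the blocks $\oplus_{w\in\rmW/\rmW_x}j_{\rmU_{wx}!}j_{\rmU_{wx}}^*\bigl(H^*(R\pi_*(\Q)\otimes\bar{\A})\bigr)[n_{\rmU_{wx}}]$ attached to the closed ($0$-dimensional) $\rmT$-orbits $\rmW x$ --- one first chooses $P^0$ as a direct sum of such blocks, with shifts $n_{\rmU_{wx}}$ and multiplicities arranged so that the augmentation $P^0\to R\pi_*(\rmIC^{\rmW,\rmT}(\O_{\rmWT}))\otimes\bar{\A}$ is a quasi-isomorphism at every fixed-point stalk $\rmW x$ (possible by (i), since both sides are free $\rmH^*(\rmB\rmT)\otimes\rmH^*(\bar{\A})$-modules there) and a surjection at every stalk. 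One then repeats with the kernel, whose stalks at the $\rmT$-fixed points vanish by construction; at each step the projectives needed are attached to $\rmT$-orbits of strictly smaller dimension, so the process terminates because $\bar{\rmT}$ has only finitely many orbits, producing the desired bounded-above complex $\{\cdots\to P^{-n}\to\cdots\to P^0\}$ in $\rmD_{\rmW}(\bar{\rmT}/\rmT, H^*(R\pi_*(\Q)\otimes\bar{\A}))$ with the stated properties.

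For (iii), since the augmentation is a stalk-wise quasi-isomorphism at every $\rmT$-fixed point, the kernel of $P^0\to R\pi_*(\rmIC^{\rmW,\rmT}(\O_{\rmWT}))\otimes\bar{\A}$ has vanishing stalks along $\bar{\rmT}^{\rmT}$, hence is supported on the image of $\bar{\rmT}-\bar{\rmT}^{\rmT}$ in $\bar{\rmT}/\rmT$; the same then holds for every $P^{-i}$ with $i\geq 1$, as each is assembled from projectives attached to non-closed $\rmT$-orbits. Finally, a sheaf supported on $\bar{\rmT}-\bar{\rmT}^{\rmT}$ has torsion cohomology over $\rmH^*(\rmB\rmT,\Q)\otimes\rmH^*(\bar{\A})$: for a non-closed $\rmT$-orbit $\rmY$ the stabilizer $\rmT_{\rmY}\subsetneq\rmT$ is a subtorus of positive codimension, so the relevant stalk computes $\rmH^*(\rmB\rmT_{\rmY})\otimes\rmH^*(\bar{\A})$, a proper polynomial quotient of $\rmH^*(\rmB\rmT)\otimes\rmH^*(\bar{\A})$ that is annihilated by a nonzero homogeneous element of $\rmH^*(\rmB\rmT)$; since there are only finitely many orbits, a single nonzero product of characters kills the entire module, so it is torsion. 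The main obstacle is the freeness assertion in (i) --- the equivariant formality of toric intersection cohomology sheaves, which is the genuinely nontrivial input --- together with the bookkeeping needed to preserve the $\rmW$-action and keep the augmentation an isomorphism on fixed-point stalks throughout the inductive construction of (ii); the torsion/support argument of (iii) is then routine.
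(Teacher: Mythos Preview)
Your approach matches the paper's: decompose via \eqref{ICWT.1}, reduce $\A$ to the constant $\bar\A$ via \eqref{A.1} and formality, and import the freeness of equivariant toric intersection cohomology at $\rmT$-fixed points (the paper cites \cite[(4.0.4) Theorem and (4.2.2)]{Lu95} and \cite[Theorem~1.1]{BJ04} for this, rather than Theorem~3.1, and phrases it via the link at $x$), then deduce (ii) and (iii) from (i), Lemma~\ref{projectives}, and the formality of $R\pi_*(\A)$. One slip in your inductive construction for (ii): the projectives at later steps are attached to $\rmT$-orbits of strictly \emph{larger} dimension, not smaller --- the kernel of the augmentation vanishes at the $0$-dimensional fixed points, so the induction moves upward in orbit dimension (equivalently, to smaller neighborhoods $\rmU_{\rmY}$), which is exactly what you use in (iii).
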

\begin{proof}
 (i) is a straight-forward calculation making use Proposition ~\ref{correspondence.1}(iii) and  the following observation. If 
$\O_{\rmT}$ denotes an orbit of $\rmT$ on $\bar \rmT$, and $\rmW.\O_{\rmT}  =\O_{\rmWT }$ denotes the orbit for the corresponding 
$\rmWT$-action,
then as observed in ~\eqref{ICWT.1},
\be \begin{equation}
   \label{decompICWT}
 \rmIC^{\rmW, \rmT}(\O_{\rmWT }) = \oplus_{w \eps \rmW/\rmW_{\O_T}} \rmIC^{\rmT}(\O_{\rmT}), 
\end{equation} \ee 
\newline \noindent
 where $\rmW_{\O_T}$ denotes the stabilizer of the $\O_T$-orbit in 
$\rmW$. Applying $R\pi_*$, and making use of ~\eqref{A.1}, we therefore obtain:
\[R\pi_*(\rmIC^{\rmW, \rmT}(\O_{\rmWT}) \otimes \A) = R\pi_*(\rmW\rmIC^{\rmT}(\O_{\T})) \otimes_{\Q} \bar \A.\]
Then take the stalks at $wx$ and the sum $\oplus_{ w \eps \rmW /\rmW_{\it x}}$  to obtain ~\eqref{stalk.ident.2}.
\vskip .2cm 
Next use the fact that the
global equivariant intersection cohomology of a projective toric variety is a free module over the cohomology
ring of the classifying space of the torus, and the stalk cohomology of the intersection cohomology complex at a
$\rmT$-fixed point on $\bar \rmT$ is isomorphic to the global intersection cohomology of the link at that point: see \cite[(4.0.4) Theorem and 
 (4.2.2)]{Lu95} and also \cite[Theorem 1.1]{BJ04}. This completes the proof of (i).
\vskip .2cm
 Then (ii) is an immediate consequence of (i), Lemma ~\ref{projectives} and the formality of the dgas $R\pi_*(\A) = R\pi_*(\Q) \otimes \bar \A$. 
 (ii) shows that the augmentation $P^0 \ra R\pi_*(\rmIC^{\rmW, \rmT}(\O_{\rmWT })) \otimes \bar \A$
 induces a quasi-isomorphism at every $\rmW {\it x}$, 
for every  $x \eps \bar \rmT^{\rmT}$.
Therefore, it follows that each ${\mathcal H}^i(P^{\bullet})$, for $i<0$, are torsion modules over $\rmH^*(BT, \Q) \otimes _{\Q} \rmH^*(\bar \A)$. 
\end{proof}
Recall that $\bar \rmT$ is the closure in the projective variety $\bar \rmG$ of the torus $\rmT$: therefore, $\bar \rmT$ is projective.
Next recall from ~\eqref{A.1} that $\phi$ denotes the map $\phi: \rmEW{\underset W \times} (\bar \rmT/{\rmT} )\ra \rmBW$ sending $\bar \rmT/\rmT$ to the base point in $\rmBW$.
\begin{lemma}
   \label{key.stalk.id}
The stalk of 
\[R\phi_*(\RHom_{R\pi_*(\Q)}(\oplus R\pi_*(\rmIC^{\rmW, \rmT}(\O_{\rmWT }), \oplus R\pi_*(\rmIC^{\rmW, \rmT}(\O_{\rmWT } ) )  ))\] 
at the base point in $\rmBW$, where the sum varies over all the $\rmWT$-orbits in $\bar \rmT$,  identifies with the dg-algebra $\B_{\rmT}(\bar \rmT)$, which is the dg-algebra
 considered in Theorem ~\ref{mainthm.2} for the projective toric variety $\bar \rmT$, viewed as an imbedding of the torus $\rmT$. In particular,
 its cohomology is a torsion-free module over $H^*(\rmB\rmT, \Q)$.
\end{lemma}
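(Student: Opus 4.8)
\emph{Plan of proof.} The idea is to compute the stalk by identifying the (simplicial) fibre of $\phi$ over the base point and then reducing to the toric case treated in \cite{Lu95}. First I would observe that, in the simplicial model, the fibre of $\phi:\rmEW{\underset {\rmW} \times}(\bar \rmT/\rmT)\ra\rmBW$ over the base point $x_o$ is the constant simplicial space $\bar \rmT/\rmT$: indeed $(\rmEW{\underset {\rmW} \times}(\bar \rmT/\rmT))_n=\rmW^n\times(\bar \rmT/\rmT)$, the map $\phi_n$ is the projection onto $\rmW^n$, and the image of $x_o$ under the degeneracies is $(e,\dots,e)$. By the treatment of single derived push-forwards of simplicial maps (see ~\ref{sheaves.simp.2}, ~\ref{simpl.der.functors} and \cite{Jo02}), the stalk at $x_o$ of $R\phi_*$ of any complex $\mathcal G$ on $\rmEW{\underset {\rmW} \times}(\bar \rmT/\rmT)$ is the derived global sections of the restriction of $\mathcal G$ to this fibre; under the equivalence of Lemma ~\ref{discrete.grp.action} this is exactly the operation of ``forgetting the $\rmW$-action''. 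Likewise, the fibre over $x_o$ of $\rmEW{\underset {\rmW} \times}(\ETx \bar \rmT)\ra\rmBW$ is $\ETx \bar \rmT$, and the restriction of $\pi$ to these fibres is precisely the map $\pi:\rmET{\underset {\rmT} \times}\bar \rmT\ra\bar \rmT/\rmT$ attached to $\bar \rmT$ regarded as a $\rmT$-variety; after this restriction $R\pi_*(\Q)$ becomes the sheaf of dg-algebras on $\bar \rmT/\rmT$ used in \cite{Lu95} for the toric variety $\bar \rmT$.

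Next I would identify the restrictions of the modules. By ~\eqref{ICWT.1} one has $\rmIC^{\rmW,\rmT}(\O_{\rmWT})=\oplus_{w\eps\rmW/\rmW_{\O_{\rmT}}}\rmIC^{\rmT}(w\O_{\rmT})$, and as $\O_{\rmWT}$ ranges over the $\rmWT$-orbits on $\bar \rmT$ the $\rmT$-orbits $w\O_{\rmT}$ range exactly once over all $\rmT$-orbits on $\bar \rmT$. Hence the restriction of $\oplus_{\O}R\pi_*(\rmIC^{\rmW,\rmT}(\O_{\rmWT}))$ to the fibre over $x_o$ is $\oplus_{\O_{\rmT}}R\pi_*(\rmIC^{\rmT}(\O_{\rmT}))=R\pi_*(L)$, where $L=\oplus_{\O_{\rmT}}\rmIC^{\rmT}(\O_{\rmT})$ is the sum of the $\rmT$-equivariant intersection cohomology complexes on $\bar \rmT$ appearing in Theorem ~\ref{mainthm.1} for $\bar \rmT$ as an imbedding of $\rmT$ — the residual $\rmW$-equivariance structure being exactly what the stalk at $x_o$ discards. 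Since $R\phi_*$ and the internal $\RHom_{R\pi_*(\Q)}$ are built from the functorial Godement (and flat) resolutions, which preserve the multiplicative structures, the stalk in question is, as a dg-algebra,
\[
R\Gamma\bigl(\bar \rmT/\rmT,\ \cRHom_{R\pi_*(\Q)}(R\pi_*(L),R\pi_*(L))\bigr),
\]
and this is the dg-algebra $\B_{\rmT}(\bar \rmT)$ of Theorem ~\ref{mainthm.2} for the projective toric variety $\bar \rmT$ (using that $R\pi_*$ is an equivalence in the toric case, \cite{Lu95}, to pass freely between the Hom-complex computed on $\rmET{\underset {\rmT} \times}\bar \rmT$ and the one computed on $\bar \rmT/\rmT$). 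Finally, the last assertion — that $\rmH^*(\B_{\rmT}(\bar \rmT))$ is torsion-free over $\rmH^*(\rmB\rmT,\Q)$ — follows from the facts that the global $\rmT$-equivariant intersection cohomology of the projective toric variety $\bar \rmT$, and the stalk cohomology of $\rmIC^{\rmT}$ at a $\rmT$-fixed point, are free modules over $\rmH^*(\rmB\rmT,\Q)$: see \cite[(4.0.4) Theorem and (4.2.2)]{Lu95}, compare Proposition ~\ref{proj.res}(i).

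The step I expect to be the main obstacle is the first one: verifying rigorously that the stalk of the single derived functor $R\phi_*$ at the base point of $\rmBW$ is computed by the simplicial fibre $\bar \rmT/\rmT$, and that the restriction to this fibre of $\pi$, of the sheaf of dg-algebras $R\pi_*(\Q)$, and of the complexes $R\pi_*(\rmIC^{\rmW,\rmT}(\O_{\rmWT}))$, is compatible with the internal $\RHom$ and all the dg-structures. This is precisely the situation for which the simplicial topology of \cite{Jo02} was set up (computing the cohomology of the fibres of a simplicial map as a simplicial scheme); once it is in place, the remaining identifications are routine bookkeeping with ~\eqref{ICWT.1}, the equivalences of Corollary ~\ref{equiv.2} and Proposition ~\ref{Rpi*.and.Lpi*}, and the toric results of \cite{Lu95}.
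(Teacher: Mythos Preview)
Your proposal is correct and follows essentially the same route as the paper: decompose $\oplus_{\O}\rmIC^{\rmW,\rmT}(\O_{\rmWT})$ into $\rmT$-equivariant intersection cohomology complexes via ~\eqref{ICWT.1}, compute the stalk of $R\phi_*$ at $x_o$ using the simplicial topology of \cite{Jo02} to reduce to $\RHom_{R\pi_{o*}(\Q)}(\oplus R\pi_{o*}(\rmIC^{\rmT}(\O_{\rmT})),\oplus R\pi_{o*}(\rmIC^{\rmT}(\O_{\rmT})))$ on $\bar\rmT/\rmT$, and then invoke \cite{Lu95} both for the identification with $\B_{\rmT}(\bar\rmT)$ and for the torsion-freeness of its cohomology over $\rmH^*(\rmB\rmT,\Q)$. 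The paper's proof is terser but identical in content; for the last assertion it cites \cite[(4.0.3) Theorem]{Lu95} rather than (4.0.4) and (4.2.2), but the underlying input is the same freeness of equivariant intersection cohomology of projective toric varieties.
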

\begin{proof}
 The decomposition of the $\rmWT$-orbits into the corresponding $\rmT$-orbits and therefore, the induced
 decomposition as in ~\eqref{decompICWT} shows that the sum $\oplus R\pi_*(\rmIC^{\rmW, \rmT}(\O_{\rmWT })$  in fact, runs over all the $\rmT$-equivariant 
 intersection cohomology complexes on the $\rmT$-orbits on $\bar \rmT$. Then the stalk considered
 above (using the simplicial topology as in ~\ref{sheaves.simp.2}) identifies with the dg-algebra $RHom_{R\pi_{o*}(\Q)}(\oplus R\pi_{o*}(\rmIC^{\rmT}(\O_{\rmT })), \oplus R\pi_{o*}(\rmIC^{\rmT}(\O_{\rmT } )))$,
 where $\pi_o: \ETx \bar \rmT \ra \bar \rmT /\rmT$ is the quotient map. By \cite[the proof of (0.1.1) Theorem]{Lu95} this
 identifies with the dg-algebra $\B_{\rmT }(\bar \rmT)$. The last statement is then deduced as in
 \cite[(4.0.3) Theorem]{Lu95} from the 
 observation that the global equivariant intersection cohomology complex of a projective toric variety is a free 
 module over the cohomology ring of the classifying space of the torus as observed earlier. One applies this observation to
 the projective toric varieties forming the closures of the  $\rmT$-orbits on $\bar \rmT$.
\end{proof}
\begin{remark} In keeping with the terminology used in Theorem ~\ref{mainthm.2}, the dg -algebra $\B_{\rmT}(\bar \rmT)$ should be denoted $\B_{\rmT \times \rmT}(\bar \rmT)$. However, since in this case $\rmT$ is commutative, we adopt the simpler notation as above.
\end{remark}
\begin{theorem}
 \label{main.thm.5} Assume next that the toroidal imbedding $\bar \rmG$ and therefore, the toric variety $\bar \rmT$ is projective. 
 Then (i) the dg algebra ${ {\B}}_{\rmW, \rmT}(\bar \rmT)$ is formal and (ii)   $\B_{\rmW, \rmT}(\bar \rmT) = (\B_{\rmT }(\bar \rmT) \otimes R\tilde\psi _*(\Q))^{\rmW}$.
\end{theorem}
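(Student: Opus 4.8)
The plan is to deduce both parts of Theorem~\ref{main.thm.5} from the projective-resolution technology set up in Lemma~\ref{projectives} and Proposition~\ref{proj.res}, following the template of the toric case in \cite{Lu95}, but keeping track of the $\rmW$-action throughout. The starting point is the identification in \ref{ident.dg.algs}: it suffices to show that
\[
\B_{\rmW,\rmT}(\bar\rmT) = \RHom\bigl(\oplus_{\O}R\pi_*(\rmIC^{\rmW,\rmT}(\O_{\rmWT}))\otimes\bar\A,\ \oplus_{\O}R\pi_*(\rmIC^{\rmW,\rmT}(\O_{\rmWT}))\otimes\bar\A\bigr)
\]
is formal and equals $(\B_\rmT(\bar\rmT)\otimes R\tilde\psi_*(\Q))^{\rmW}$. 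First I would compute this $\RHom$ using the projective resolution $P^{\bullet}\to\oplus_{\O}R\pi_*(\rmIC^{\rmW,\rmT}(\O_{\rmWT}))\otimes\bar\A$ from Proposition~\ref{proj.res}(ii), all of whose terms are sums $\oplus_{w\in\rmW/\rmW_{\it x}}j_{\rmU_{\it wx}!}j_{\rmU_{\it wx}}^*(\rmH^*(R\pi_*(\Q)\otimes\bar\A))[n_{\rmU_{\it wx}}]$ built from the projective generators of Lemma~\ref{projectives}. Since $\Hom$ out of such a generator is just the stalk at the relevant point (Lemma~\ref{projectives}(i) proof), the complex computing $\RHom$ is quasi-isomorphic to the Hom-complex built from these stalks, and by Proposition~\ref{proj.res}(iii) the terms $P^i$ with $i<0$ are supported away from $\bar\rmT^{\rmT}$ and are torsion over $\rmH^*(\rmB\rmT)\otimes\rmH^*(\bar\A)$.

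Next I would invoke the torsion-freeness input: by Lemma~\ref{key.stalk.id}, the stalk at the base point of $R\phi_*(\RHom_{R\pi_*(\Q)}(\oplus R\pi_*\rmIC,\oplus R\pi_*\rmIC))$ is $\B_\rmT(\bar\rmT)$, whose cohomology is torsion-free over $\rmH^*(\rmB\rmT,\Q)$. The torsion terms coming from $P^{i}$, $i<0$, therefore cannot contribute to cohomology in the $\RHom$ — exactly as in \cite[proof of (0.1.1)/(4.0.3)]{Lu95} — so $\RHom(\oplus R\pi_*\rmIC\otimes\bar\A, \oplus R\pi_*\rmIC\otimes\bar\A)$ is quasi-isomorphic to $\Hom$ of the degree-zero parts, i.e.\ to a Hom-complex over the formal dga $\rmH^*(R\pi_*(\Q))\otimes\rmH^*(\bar\A)$. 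Since $\bar\A=\phi^*(R\tilde\psi_*(\Q))$ is formal and constant by Step~4.1.3–4.1.4, and $R\pi_*(\Q)\otimes\bar\A = R\pi_*(\A)$ is formal with $\rmW$-compatible dga structure by Proposition~\ref{formal.1}, the resulting Hom-complex is a Hom-complex over a formal dga, hence formal; this gives part~(i). Here the resolution $P^{\bullet}$ must be chosen $\rmW$-equivariantly, which is possible because the generators come in $\rmW$-orbits $\oplus_{w\in\rmW/\rmW_{\it x}}$ and the $\rmW$-action permutes the $\rmT$-orbits preserving their type, so all constructions in Proposition~\ref{proj.res} can be done $\rmW$-equivariantly.

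For part~(ii), I would use the decomposition $\rmIC^{\rmW,\rmT}(\O_{\rmWT})=\oplus_{w\in\rmW/\rmW_{\O_\rmT}}\rmIC^{\rmT}(\O_{\rmT})$ of \eqref{ICWT.1}/\eqref{decompICWT}: summing over $\rmWT$-orbits $\O_{\rmWT}$ is the same as summing over $\rmT$-orbits $\O_{\rmT}$ with $\rmW$ acting by permutation. Thus $L:=\oplus_{\O_{\rmWT}}R\pi_*(\rmIC^{\rmW,\rmT}(\O_{\rmWT}))\otimes\bar\A \cong \bigl(\oplus_{\O_{\rmT}}R\pi_*(\rmIC^{\rmT}(\O_{\rmT}))\bigr)\otimes\bar\A$ as a complex with $\rmW$-action, where $\rmW$ acts by permuting the $\rmT$-orbit summands and simultaneously on $\bar\A=\phi^*R\tilde\psi_*(\Q)$. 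Taking $\RHom$ and passing to the stalk at the base point of $\rmBW$ via $R\phi_*$: on the one hand this stalk is by definition (and Lemma~\ref{discrete.grp.action}(iii), since $\rmW$ is discrete) the $\rmW$-invariants of the $\rmW$-equivariant dga given by the same construction over $\rmEWT\times_{\rmWT}(\rmET\times\bar\rmT)$ before taking invariants; on the other hand, $\RHom$ commutes with the tensor factor $R\tilde\psi_*(\Q)$ (using $\bar\A$ constant/formal, Lemma~\ref{correspondence.1}), and the $\rmT$-orbit part gives precisely $\B_\rmT(\bar\rmT)$ by Lemma~\ref{key.stalk.id}. Combining, $\B_{\rmW,\rmT}(\bar\rmT)=(\B_\rmT(\bar\rmT)\otimes R\tilde\psi_*(\Q))^{\rmW}$, with $\rmW$ acting diagonally via its permutation action on the $\rmT$-orbits of $\bar\rmT$ and its action on $R\tilde\psi_*(\Q)$ from Proposition~\ref{formal.1}(i).

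The main obstacle I anticipate is the bookkeeping in part~(ii): one must verify that the $\rmW$-action on $\B_\rmT(\bar\rmT)$ induced from the permutation of $\rmT$-orbits matches, under the identification of Lemma~\ref{key.stalk.id}, the $\rmW$-action obtained from the stalk computation of $R\phi_*\RHom(L,L)$, and that taking $\rmW$-invariants genuinely commutes with passing from the simplicial stalk to the $\RHom$-dga — this uses that $\rmW$ is finite (so invariants are exact with $\Q$-coefficients) together with the discrete-group formalism of Lemma~\ref{discrete.grp.action}. The formality statement~(i) is comparatively routine once the torsion terms are discarded, since it reduces to formality of $R\pi_*(\A)$ already proved in Proposition~\ref{formal.1}, but care is needed to ensure the quasi-isomorphisms killing the torsion contributions can be chosen $\rmW$-equivariantly, which again follows from the $\rmW$-orbit structure of the projective generators.
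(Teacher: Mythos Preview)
Your proposal is correct and follows essentially the same route as the paper: the projective resolution of Proposition~\ref{proj.res}, the torsion/torsion-free dichotomy via Lemma~\ref{key.stalk.id} to conclude that the cohomology of $\mathcal{H}om(P^{\bullet},P^{\bullet})$ is concentrated in degree~$0$, and then formality by truncation $\sigma_{\le 0}$ followed by taking $\rmW$-invariants; part~(ii) is obtained exactly as you outline via the decomposition~\eqref{decompICWT}, the stalk identification of Lemma~\ref{key.stalk.id}, and pulling out the $R\tilde\psi_*(\Q)$ factor. One small sharpening: your sentence ``Hom-complex over a formal dga, hence formal'' is not the operative reason---what actually gives formality is the vanishing of cohomology outside degree~$0$ (from the torsion argument), which then yields the zigzag $\mathcal{H}^0\leftarrow\sigma_{\le 0}\to\widetilde\B_{\rmW,\rmT}(\bar\rmT)$ of dgas, exactly as the paper makes explicit.
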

\begin{proof}
Let $P^{\bullet} \ra \oplus  R\pi_*(\rmIC^{\rmW, \rmT}(\O_{\rmWT })) \otimes \bar \A$ denote a projective resolution, where each $P^{-i}$ is a sum of terms of the
form as in Proposition ~\ref{proj.res}(ii) and where the sum varies over all
the $\rmWT$-orbits in $\bar \rmT$. Therefore $P = \oplus_iP^{-i}[i] \ra \oplus  R\pi_*(\rmIC^{\rmW, \rmT}(\O_{\rmWT })) \otimes \bar \A$
is a quasi-isomorphism with $P$ a projective object in $\rmC_W(\bar \rmT/{\rmT}, H^*(R\pi_*(\Q) \otimes \bar \A))$.
\vskip .1cm
Observe next that the dga $\bar \A$ is formal, and since each $P^{-i}$ is of the form given in
Proposition ~\ref{proj.res}(ii),  the differentials of each $P^{-i}$ are in fact trivial. Therefore, 
 the spectral sequence for the total complex for ${\mathcal H}om(P^{\bullet}, P^{\bullet})$ degenerates. Therefore, now ${ \B}_{\rmW, \rmT}(\bar \rmT)$ identifies with the total complex of $Hom(P^{\bullet}, P^{\bullet})$, where the differentials
of $P^{\bullet}$ (that is, $\{d:P^n \ra P^{n+1}|n\}$) provide the structure of a 
chain-complex on $Hom(P^{\bullet}, P^{\bullet})$. 
\vskip .2cm
We will presently provide two somewhat different proofs to show that the dg algebra ${ \B}_{\rmW, \rmT}(\bar \rmT)$ is formal.
The first starts with the  observation that the dg-algebra ${ \B}_{\rmW, \rmT}(\bar \rmT)= R\Gamma (\rmBW, \widetilde \B_{\rmW, \rmT}(\bar \rmT)) = 
(\widetilde \B_{\rmW, \rmT}(\bar \rmT))^W$ for the sheaf of dgas $\widetilde \B_{\rmW, \rmT}(\bar \rmT) = {\mathcal H}om(P^{\bullet}, P^{\bullet})$ on $\rmBW$. 
\vskip .2cm
A {\it key observation} we make is that the stalk of ${\mathcal H}^*(\widetilde \B_{\rmW, \rmT}(\bar \rmT))$ (at the base point in $\rmBW$) 
is a torsion-free
module over $\rmH^*(\rmB \rmT, \Q) \otimes_{\Q} \rmH^*(R\tilde \psi_*(\Q))$. When $\bar \rmG$ and hence $\bar \rmT$ are projective, this is clear in view of the identification 
of the stalks of 
\[R\phi_*(\RHom_{R\pi_*(\Q)}(\oplus R\pi_*(\rmIC^{\rmW, \rmT}(\O_{\rmWT })), \oplus R\pi_*(\rmIC^{\rmW, \rmT}(\O_{\rmWT } ) )  ))\] 
at the
 base point in $\rmBW$ with the dg-algebra $\B_{\rmT}(\bar \rmT)$  as shown in Lemma ~\ref{key.stalk.id}, along with the
  observation that $\bar \A= \phi^*(R\tilde \psi_*(\Q))$. (In fact, see ~\eqref{BWT} below.)
  Therefore, Proposition ~\ref{proj.res} shows that the cohomology of the sheaf of dg-algebras
$\widetilde \B_{\rmW, \rmT}(\bar \rmT) $, which identifies with the cohomology of the complex ${\mathcal H}om(P^{\bullet}, P^{\bullet})$, vanishes
in all degrees except $ 0$. (Here, we again make use of the observation that the dga $\bar \A$ is formal, and since each $P^{-i}$ is of the form given in
Proposition ~\ref{proj.res}(ii),  the differentials of each $P^{-i}$ are in fact trivial.)
\vskip .1cm
Now the diagram of dgas
\[{\mathcal H}^0(\widetilde \B_{\rmW, \rmT}(\bar \rmT)) \leftarrow \sigma_{\le 0}(\widetilde \B_{\rmW, \rmT}(\bar \rmT)) \ra \widetilde \B_{\rmW, \rmT}(\bar \rmT)\]
(where $\sigma_{\le 0}$ is the functor that kills the above cohomology in negative degrees) shows that the sheaf of 
dg-algebras $\widetilde \B_{\rmW, \rmT}(\bar \rmT)$ is formal on $\rmBW$. Finally one simply takes $\rmW$-invariants, observing that taking $\rmW$-invariants is
an exact functor since we are working with rational coefficients. This proves (i).
\vskip .2cm
We will next provide a proof (ii). Now one may observe that with $\bar \A= \phi^*(R\tilde \psi_*(\Q))$,
\be \begin{align}
     \label{BWT}
{\B}_{\rmW, \rmT}(\bar \rmT) &= R\Gamma (\rmBW, R\phi_*(\RHom_{R\pi_*(\Q) \otimes \bar \A }(\oplus R\pi_*(\rmIC^{\rmW, \rmT}(\O_{\rmWT }))\otimes _{\Q}\bar \A, \oplus R\pi_*(\rmIC^{\rmW, \rmT}(\O_{\rmWT })) \otimes _{\Q}\bar \A )))\\
         &= R\Gamma (\rmBW, R\phi_*(\RHom_{R\pi_*(\Q)}(\oplus R\pi_*(\rmIC^{\rmW, \rmT}(\O_{\rmWT })), \oplus R\pi_*(\rmIC^{\rmW, \rmT}(\O_{\rmWT })) \otimes_{\Q} \bar \A ))) \notag\\
         &= R\Gamma (\rmBW, R\phi_*(\RHom_{R\pi_*(\Q)}(\oplus R\pi_*(\rmIC^{\rmW, \rmT}(\O_{\rmWT }) ), \oplus R\pi_*(\rmIC^{\rmW, \rmT}(\O_{\rmWT } ) ) ) \otimes_{\Q} R\tilde \psi_*(\Q))). \notag
    \end{align} \ee
The last equality follows from the projection formula as well as the observation that $\bar \A$ is formal and constant (see 
~\eqref{A.2}), while the one above follows from standard properties of $\RHom$.
Here the sum $\oplus$ varies over all $\rmWT$-orbits in $\bar \rmT$.
Now observe that 
\be \begin{equation}
     \label{toric.stalks}
R\phi_*(\RHom_{R\pi_*(\Q)}(\oplus R\pi_*(\rmIC^{\rmW, \rmT}(\O_{\rmWT }) ), \oplus R\pi_*(\rmIC^{\rmW, \rmT}(\O_{\rmWT } ) ) ))
\end{equation} \ee
 is a complex of $\rmW$-equivariant  
sheaves on $\rmBW$. Lemma ~\ref{key.stalk.id} shows that  its stalk at the base point identifies with the dga 
$\B_{\rmT}(\bar \rmT)$ associated to $\bar \rmT$, viewed as a toric variety for $\rmT$. Therefore, we obtain (ii):
\[\mbox{ that is, the dga }  \B_{\rmW, \rmT}(\bar \rmT) = (\B_{\rmT }(\bar \rmT) \otimes R\tilde\psi _*(\Q))^W.\]
\vskip .2cm
Finally we will provide a second proof of (i). 
Since $R\tilde \psi_*(\Q)$ was already observed to be formal as a dga on $\rmBW$, (that is, as a dga with $\rmW$-action), it suffices to show that 
$ \B_{\rmT}(\bar \rmT)$ is formal as a dga with $\rmW$-action. The arguments for the formality of $\B_{\rmT }(\bar \rmT)$ (see for example, the proof of \cite[Proposition 4.1.2]{Lu95}) show that $H^i(\B_{\rmT}(\bar \rmT)) =0$
for all $i \ne 0$, where $H^i$ denotes the cohomology of the corresponding complex $\Hom(P^{\bullet}, P^{\bullet})$.
\vskip .2cm
Next let $\sigma_{\le 0}  \B_{\rmT }(\bar \rmT)$ denote the truncation functor that kills the cohomology above degree $0$. Then it is clear that we
obtain maps of  dgas
\[\rmH^0( \B_{ \rmT }(\bar \rmT)) \leftarrow \sigma_{\le 0}( \B_{\rmT }(\bar \rmT)) \ra  \B_{ \rmT }(\bar \rmT)\]
which are both quasi-isomorphisms and compatible with the given $\rmW$-actions. (The compatibility with the
$\rmW$-action should be clear from ~\eqref{decompICWT}, which shows that the $\rmT$-orbits on $\bar \rmT$ may be
grouped into $\rmWT$-orbits.) Therefore, the induced maps of dgas
\[\rmH^0( \B_{ \rmT }(\bar \rmT)) \otimes_{\Q} \rmH^*(R\tilde \psi_*(\Q))\leftarrow \sigma_{\le 0}( \B_{\rmT }(\bar \rmT))\otimes_{\Q} \rmH^*(R\tilde \psi_*(\Q)) \ra \B_{ \rmT }(\bar \rmT)\otimes_{\Q} H^*(R\tilde \psi_*(\Q))\]
are also quasi-isomorphisms and compatible with the $\rmW$-actions. Since $R\tilde \psi_*(\Q)$ was observed to be formal as a dga on $\rmBW$, one may now
replace $\rmH^*(R\tilde \psi_*(\Q))$ by $R\tilde \psi_*(\Q))$ and obtain the same conclusions. Therefore, we see that ${ {\B}}_{\rmW, \rmT}(\bar \rmT) = ( \B_{\rmT } (\bar \rmT)\otimes_{\Q} R\tilde \psi_*(\Q))^{\rmW}$
is formal as a dga.
\end{proof}
In view of the arguments in ~\ref{ident.dg.algs}, this completes the proof of Theorem ~\ref{mainthm.2}.

\section{\bf A General  Obstruction Theory for Formality of the Dg-algebra $\B$ and Conclusions for scs varieties}
We start with the observation that the dg-algebra $\B_{\rmG}(\rmX)$ in Theorem ~\ref{mainthm.1} is only an $A_{\infty}$-dg-algebra (or only an associative dg-algebra), because the
multiplicative structure is given by composition and hence not commutative in general. (Recall that an $A_{\infty}$-dg algebra means
a dg-algebra where the multiplication is coherently homotopy associative.)
A well-known result of Kadeishvili (see \cite{Ka80})
shows that, nevertheless for any $A_{\infty}$ dg-algebra $\B$ (over $\Q$) there exists an $A_{\infty}$-structure on the cohomology algebra $H^*(\B)$, so
 that $\B$ is quasi-isomorphic as an $A_{\infty}$-dg-algebra to $H^*(\B)$ with the above $A^{\infty}$-structure.
We follow the exposition of \cite[section 2]{LPWZ09} for this. Since we are working over $\Q$, we obtain a (non-canonical) decomposition
of each $\Q$-vector space $\B^n$ as
\begin{equation}
\label{obs.1}
\B^n=Z^n\oplus L^n=C^n\oplus H^n\oplus L^n.
\end{equation}
Here $C^n$ denotes the co-boundaries, $Z^n$ the co-cycles and $H^n = Z^n/C^n$. We will identify $\oplus_{n} H^n(\B)$ as imbedded in  $\B$ 
by co-cycle-sections $H^n\subset \B^n$. {\it Clearly there are many different 
choices of $H^n$ and $L^n$.} Let $p: \B \to \B$ be a projection to $H:=\oplus_n H^n$, and 
let $G: \B \to \B$ be a homotopy from $id_{\B}$ to $p$. Hence 
we have $id_{\B}-p=\partial G+G\partial$.  We will define the map $\rmG$  as follows:
for every $n$, $G^n: \B^n\to \B^{n-1}$ is the map which satisfies
\begin{itemize}
\item 
$G^n=0$ when restricted to $L^n$ and $H^n$, and 
\item
$G^n=(\partial^{n-1}|_{L^{n-1}})^{-1}$ when restricted to $C^n$.
\end{itemize}
(Observe that $\partial^{n-1}_{|L^{n-1}}: L^{n-1} \ra C^n$ is a bijection.) Therefore, the image of $G^n$ is $L^{n-1}$. 
It follows that $G^{n+1}\partial^n=Pr_{L^n}$ and 
$\partial^{n-1}G^n=Pr_{C^n}$, where $Pr_{L^n}: \B^n \ra L^n$ and $Pr_{C^n}: \B^n \ra C^n$ are the projections.
\vskip .2cm
Next we define a sequence of linear maps $\lambda_n: \B^{\otimes n}\to \B$ of
degree $2-n$ as follows.  There is no map $\lambda_{1}$, but we
formally define the ``composite'' $G \lambda_{1}$ by $G \lambda_{1} =
-id_{\B}$.  $\lambda_2$ is the multiplication of $\B$, namely, 
$\lambda_2(a_1\otimes a_2)=a_1\cdot a_2$.
For $n\geq 3$,
$\lambda_n$ is defined by the recursive formula
\begin{equation}
\label{obs.2}
\lambda_n=\sum_{\substack{s+t=n, \\ s,t\geq 1}} (-1)^{s+1} 
\lambda_2 \circ(G\lambda_s\otimes G\lambda_t).
\end{equation}
Using $p$ to denote both the map $\B
\rightarrow \B$ and also (since the image of $p$ is $H^*(\B)$) the map
$\B \rightarrow H^*(\B)$; we also use $\lambda_{i}$ both for the map
$\B^{\otimes i} \rightarrow \B$ and for its restriction $(H^*(\B))^{\otimes
i} \rightarrow H^*(\B) \ra \B$. Then the above mentioned result of Kadeishvili can be stated as follows:

\begin{theorem} (See \cite[Theorem 2.2, Proposition 2.3]{LPWZ09}.)
\label{thm2.2} 
(i) Let $m_i=p\lambda_i$. Then $(H^*(\B), m_2,m_3, \cdots)$ is an 
$A_\infty$-algebra.
\vskip .2cm
(ii) Let $\{\lambda_n\}$ be defined as above. For $i \geq 1$ let $f_i=-G 
\lambda_i: (H^*(\B))^{\otimes i}\to \B$. Then $f:=\{f_i\}$ is a map of $A_{\infty}$ dg-algebras so that $f_1$ is a quasi-isomorphism.
\end{theorem}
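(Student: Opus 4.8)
The plan is to prove Theorem~\ref{thm2.2} by the homotopy transfer principle: realise $\lambda_n$, $m_n$ and $f_n$ as the word-length-$n$ components of a transferred square-zero coderivation, respectively a transferred coalgebra morphism, on the relevant bar constructions, and then match the resulting tree sums with the recursion~\eqref{obs.2}. First I would record the side conditions satisfied by the data of~\eqref{obs.1} and the explicit $G$:
\[\partial G + G\partial = \mathrm{id}_{\B} - p,\qquad p^2 = p,\qquad pG = Gp = 0,\qquad G^2 = 0,\qquad p\partial = \partial p = 0,\]
the last two because $\partial$ lands in the space of coboundaries, which $p$ annihilates, while $p$ lands in the space $H^{\bullet}$ of cocycle representatives. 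Writing $\iota\colon H^*(\B)\hookrightarrow\B$ for the inclusion and $p'\colon\B\to H^*(\B)$ for $p$ corestricted, these identities say precisely that $(p',\iota,G)$ is a special deformation retract of $(\B,\partial)$ onto $(H^*(\B),0)$ (up to the sign of the homotopy).

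Next I would pass to bar constructions. The dg-algebra structure of $\B$ gives a square-zero coderivation $b = b_1 + b_2$ on $T^c(\B[1])$, with $b_1$ induced by $\partial$ and $b_2$ by the multiplication $\lambda_2$. The tensor trick extends $(p',\iota,G)$ to a contraction $(P,I,\mathcal G)$ of $(T^c(\B[1]),b_1)$ onto $(T^c(H^*(\B)[1]),0)$, and $b_2$ is a perturbation that strictly lowers the word-length filtration, hence is locally nilpotent against $\mathcal G$. The Basic Perturbation Lemma then yields a contraction of $(T^c(\B[1]),b_1+b_2)$ onto $(T^c(H^*(\B)[1]),\delta)$ with
\[\delta = P\,\Bigl(\textstyle\sum_{k\ge 1} b_2(\mathcal G b_2)^{k-1}\Bigr)\,I,\qquad I_\infty = \Bigl(\textstyle\sum_{k\ge 0}(\mathcal G b_2)^{k}\Bigr)\,I.\]
Since $\delta$ is automatically a coderivation squaring to zero, its components define an $A_\infty$-structure on $H^*(\B)$; since $I_\infty$ is a morphism of dg-coalgebras with linear term $\iota$, its components define an $A_\infty$-morphism $H^*(\B)\to\B$ whose first component is the quasi-isomorphism $\iota$. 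Reading off the word-length-$n$ parts and unwinding the tensor-trick formula for $\mathcal G$ gives, after the standard Koszul-sign reshuffle, exactly $m_n = p\lambda_n$ and $f_n = -G\lambda_n$ with $\lambda_n$ as in~\eqref{obs.2}; combined with the coderivation (resp.\ coalgebra-morphism) identities above this delivers (i) and (ii).

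Alternatively — and this is closer in spirit to the cited reference — one proves the same conclusion by a direct simultaneous induction on $n$: one shows that the family $\{\lambda_n\}$ satisfies a single master identity, obtained by differentiating~\eqref{obs.2} and repeatedly inserting $\partial G + G\partial = \mathrm{id}_{\B}-p$, whose ``$p$-component'' is precisely the $A_\infty$-associativity relation for $(H^*(\B),m_2,m_3,\dots)$ and whose ``$G$-component'' is precisely the $A_\infty$-morphism relation for $f = \{f_n\}$; the cases $n=2,3$ are immediate from associativity of $\lambda_2$ together with $G^2 = pG = Gp = 0$. In either route I expect the real obstacle to be bookkeeping rather than conceptual: pinning down all the Koszul and permutation signs so that the telescoping in the inductive step — equivalently, the tree sum produced by the perturbation lemma — collapses exactly to~\eqref{obs.2}. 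Once the side conditions above are in place the homological algebra is forced, so the write-up should concentrate on a careful sign convention at the outset and then verify the base cases and the inductive collapse.
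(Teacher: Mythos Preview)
The paper does not give its own proof of this statement: it is quoted as a result of Kadeishvili, with the specific formulation cited from \cite[Theorem 2.2, Proposition 2.3]{LPWZ09}, and is used as a black box to set up the obstruction-theoretic discussion of Theorem~\ref{mainthm.4}. Your sketch via the tensor trick and the Basic Perturbation Lemma (equivalently, the direct induction on $n$ you outline) is the standard route to this result and is correct in outline; since the paper offers no argument to compare against, there is nothing further to match.
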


We define the $A_{\infty}$-dg-algebra $\B$ to be  {\it formal}, if all the $m_i=0$ for $i>2$. Now we 
conclude this discussion with the 
\vskip .2cm \noindent
{\bf Proof of Theorem ~\ref{mainthm.4}}.
\vskip .2cm \noindent
 (i) follows readily from the fact that the $m_i =p \lambda _i$ and that the map $\lambda _i : \B^{\otimes i} \ra \B$ has degree $2-i$ which is
odd if $i$ is odd. Now the map $m_i$ is the composition 
$H^*(\B) ^{\otimes i} \ra \B^{\otimes i} {\overset {\lambda _i} \ra} \B {\overset p \ra} \H^*(\B)$. Therefore, when $H^i(\B)=0$ for all odd $i$,
one may readily see that the  map $m_i$ is  trivial for all odd $i$.
\vskip .2cm \noindent
(ii) Let $\rmX$ denote a projective $\rmG$-spherical variety as in (ii). Then it was shown in \cite[Lemma 3.6]{BJ04} and \cite[Theorem 4]{BJ01} that the only $\rmG$-equivariant local systems on
the $\rmG$-orbits $\O$ on $\rmX$ are constant, that the odd dimensional intersection cohomology sheaves ${\mathcal H}^i(\rmIC(\Q_{| \O}))$, as well as 
the odd dimensional intersection cohomology groups ${\rm {IH}}^i(\bar \O)$ vanish. (Here
$\Q_{|\O}$ denotes the constant sheaf $\Q$ on the $\rmG$-orbit $\O$ in $\rmX$.) A spectral sequence argument now shows that $H^i(\B_{\rmG}(\rmX))=0$ for all odd $i$, for
the corresponding dg-algebra $\B_{\rmG}(\rmX)$. Therefore, the result in (i) applies. \qed
\begin{remark}
 One may interpret the conclusion of the last theorem as saying that {\it half the obstructions for the formality of the dg-algebra $\B_{\rmG}$ vanish}
for the spherical varieties considered there. Clearly, this includes a large class of spherical varieties. 
\end{remark}

\section{\bf  Comparison of Equivariant Derived Categories}
\label{equiv.der.cats}
The equivariant derived categories associated to the action of a group $\rmG$ on a space are usually defined as certain full subcategories of the
derived category on the Borel construction associated to the group action.  Different models for the Borel construction, therefore provide different models for the equivariant 
derived categories. 
The geometric model which had been introduced in \cite{BL94} in the topological framework and in the scheme-theoretic framework in \cite{To99} and \cite{MV99} complements the simplicial model
which was discussed in \cite{De74},  \cite{Fr83} and \cite{Jo93}, each with its own advantages and dis-advantages. For example, the discussion of
the Weyl-group action in section 3 (see Lemma ~\ref{equiv.tech.5}) as well as the need to handle modules over sheaves of dgas that are only bounded below,
  seems to require the simplicial model. In fact the situation considered in 
Proposition ~\ref{Rpsi*} and  Lemma ~\ref{equiv.tech.5} is a special case of a more general situation where one needs to consider actions of two groups $\rmG$ and $\rmH$ provided with
a surjective map from $\rmG$ to $\rmH$ acting on spaces compatibly and then relate the corresponding equivariant derived categories. In this case, the
simplicial model seems to be able to handle the situation easily, while the approach making use of 
geometric classifying spaces simply does not work, since one has to find representations of both $\rmG$ and $\rmH$
and relate them suitably.
We encounter several instances of such situations in section 2, which is the reason we have chosen to work with the simplicial model. 
\vskip .2cm
However, since the geometric model  is perhaps more suited for
handling properties like the weight filtration (used in \cite{Jo17}), and also more commonly used in the literature dealing with equivariant derived categories, 
we felt it important to provide a comparison between the two models, which is what is done in the rest of this section. 
   In view of the various applications, we have decided to make the discussion in 
this section general enough
so that it applies to actions of linear algebraic groups defined over fields that are perfect and of finite $\ell$ cohomological dimension for some $\ell \ne char (k)$.

\subsection{Equivariant Derived Categories: Version I}
Presently we proceed to define a  model for the equivariant derived category that is valid in all characteristics, making use
of an algebraic model for the classifying space $\BG$ for a linear algebraic group as in \cite{MV99} and \cite{To99}. (The construction in
 \cite{BL94} is similar in spirit, and may have served as a motivation for the constructions in \cite{MV99} and \cite{To99}, but applies mostly to actions of topological groups on topological spaces.)

\begin{definition}
\label{geom.class}
We will 
often use $\rmEG^{\gm, {\it m}}$ to denote the $m$-th term of an admissible gadget $\{\rmU_m|m \}$: the superscript ${gm}$ stands for {\it geometric}. This is 
discussed in more detail in Definition ~\ref{defn:Adm-Gad}. (Recall this means $\rmU_{\it m}=\rmEG ^{\gm, {\it m}}$  is an open $\rmG$-stable
subvariety of a representation $\rmW_{\it m}$ of $\rmG$, so that (i) $\rmG$ acts freely on $\rmU_{\it m}$ and a geometric quotient
$\rmU_{\it m}/\rmG$ exists as a variety and
(ii) so that in the family $\{(\rmW_{\it m}, \rmU_{\it m})|{\it m} \eps {\mathbb N}\}$, the codimension of $\rmW_{\it m} - \rmU_{\it m}$ in $\rmW_{\it m}$ goes to $\infty$ as 
$m$ approaches $\infty$.)
\end{definition}
Making use of $\rmEG^{\gm, {\it m}}$ we may now
define a characteristic free algebraic model for the equivariant derived category.

\vskip .2cm
\subsubsection{\bf Convention} Henceforth, we will adopt the following conventions. 
If $\rmX$ is a variety defined over $k$, we will consider $\ell$-adic sheaves on $\rmX_{et}$, which denotes 
the \'etale site of $\rmX$. (One may also consider sheaves of ${\mathbb Z}/\ell^{\nu}$-modules, or sheaves of $R$-modules for a commutative 
Noetherian ring that is torsion with torsion prime to $char(k)$. This way, it becomes possible to
 consider $\ell$-adic sheaves where $\Q_{\ell}$ is replaced by $\bar \Q_{\ell}$, which is the algebraic closure
  of $\Q_{\ell}$: see \cite[2.2.18]{BBD82}.)
If $\rmX$ is a variety defined over the
complex numbers, we may consider sheaves of $\Q$-vector spaces on the transcendental site of $\rmX(\Cl)$ or $\ell$-adic sheaves on $\rmX_{et}$. We will denote by $\rmD(\rmX)$ ($\rmD^+(\rmX)$, $\rmD^b(\rmX)$) the unbounded derived category  
(the bounded below derived category, the bounded derived category, \res) of complexes of sheaves of $\Q$-vector spaces on $\rmX(\Cl)$ or $\ell$-adic sheaves on $\rmX_{et}$ depending on the context.
\vskip .1cm
 Observe that, if $k$ is algebraically closed, 
\[\rmH^i_{et}(EG^{\gm, {\it m}}, \Q_{\ell}) =0 \mbox{ for all } 0<i \le 2m-2 \mbox{ and } \rmH^0_{et}(EG^{\gm, {\it m}}, \Q_{\ell}) =\Q_{\ell}. \]
This follows from the fact that  $\rmEG^{\gm, {\it m}} = \rmU_m$ which is an open $\rmG$-stable subvariety of codimension at least $m>1$.
(It may be deduced from the hypothesis in the definition of the admissible gadgets in Definition ~\eqref{defn:Adm-Gad} that 
$\codim_{\rmW_m}\left(\rmW_m \setminus \rmU_m\right) =
m (\codim_{\rmW}(\rmZ))$.)  The corresponding results also hold with ${\mathbb Z}$ or $\Q$-coefficients over an algebraically closed field of characteristic $0$.
(Here we apply Lemma ~\ref{acyclic.n} with $c=m$.) Therefore, for each fixed finite interval $\rmI=[a, b]$ of integers, $a \le b$, and each integer $m \ge 0$, 
we now define
\[\rmD^{\rmI }(\rmEG^{\gm, {\it m}}{\underset {G} \times \rmX}) = \{K \eps \rmD(\rmEG^{\gm, {\it m}}{\underset {G} \times \rmX}))| {\mathcal H}^i(K) =0, i \notin \rmI \}.\]
Now we let, for each $\rmI$, with $2m-2  \ge |\rmI|=b-a$,
\be \begin{multline}
     \begin{split}
\label{eqdercat.1}
\rmD^{\rmI, gm}_{\rmG, {\it m}}(\rmX) = \mbox{ the full subcategory of } \rmD^{\rmI}(\rmEG^{\gm, {\it m}}{\underset {\rmG} \times \rmX}) \mbox { consisting of those } K\\ 
\mbox{ such that there exists 
an } L \eps \rmD(\rmX) \mbox{ so that } \pi_m^*(K) {\overset {\simeq} \leftarrow} p_{2,m}^*(L).
\end{split}
\end{multline} \ee
Here $\pi_m: \rmEG^{\gm, {\it m}} \times \rmX \ra \rmEG^{\gm, {\it m}} {\underset {G} \times} \rmX$ is the quotient map and
 $\rmp_{2,m}: \rmEG^{\gm, {\it m}} \times \rmX  \ra \rmX$ is the projection.
In case we need to clarify the choice of the geometric classifying spaces, we will denote $\rmD^{\rmI, gm}_{G, {\it m}}(\rmX)$ by
$\rmD^{\rmI}_{\rmG}(E\rmG^{\gm, {\it m}} {\underset {\rmG} \times}\rmX)$. One observes that if $\rmI \subseteq \rmJ$, then one obtains a fully faithful
imbedding $\rmD^{\rmI,gm}_{\rmG, {\it m}}(\rmX) \ra \rmD^{\rmJ, gm}_{\rmG, {\it m}}(\rmX)$, so that varying $\rmI$, one obtains a filtration of 
$\rmD^{b,gm}_{\rmG, m}(\rmX)$, which is defined as above except the vanishing of the cohomology sheaves ${\mathcal H}^i(K)$ is for
all $i $ outside of some finite interval $\rmI$ depending on $K$.
\vskip .2cm
Next let $i_{\it m}^{\rmG} =i_{\it m} {\underset {\rmG} \times} id_{\rmX}: \rmEG^{\gm, {\it m}} {\underset {\rmG} \times} \rmX \ra \rmEG^{\gm, {\it m+1}} {\underset {\rmG} \times} \rmX$
denote the map induced by the regular immersion $i_{\it m}:\rmEG^{\gm, {\it m }} \ra \rmEG^{\gm, {\it m+1}}$, which is the map
defined in Definition ~\ref{defn:Adm-Gad}(5). Pullback along this map defines the inverse system 
$\{\rmD^{\rmI, gm}_{G,m}(\rmX)|m\}$. We proceed to take the limit of this inverse system of categories: we discuss
this in a more general setting as follows.
 Given an inverse system of categories $\{\bC_i|i \eps {\mathbb N}\}$ and functors
$\F _{i-1,i} : \bC_i \ra \bC_{i-1}$ for each $ i \eps {\mathbb N}$, we define the inverse limit category $\limi \bC_i$ to be the
following category: 
\begin{enumerate}
 \item The objects are pairs $(\{C_i|i \} , \{\phi_{ i-1,i}|i \} )$ where $C_i \eps \bC_i$ for each $i \eps {\mathbb N}$ and
$\phi_{i-1,i} : \F _{i-1,i} (C_i ) {\overset {\cong} \ra }C_{ i-1}$  for any $ i \ge 2$.
\item
 A morphism $f$ between two objects $(\{C_i| i \}, \{\phi_{i-1,i }|i \})$, $(\{D_i|i  \} , \{\psi_{ i-1,i } |i\} )$
is a set of arrows $\{f_i : C_i \ra D_i|i \}$  so that
the  diagram
\vskip .2cm \indent
\xymatrix{{\F_{i-1, i}(C_i)} \ar@<1ex>[r]^{\phi_{i-1,i}} \ar@<1ex>[d]^{\bF_{i-1, i}(f_i)} & {C_{i-1}} \ar@<1ex>[d]^{f_{i-1}}\\
          {\F_{i-1, i}(D_i)} \ar@<1ex>[r]^{\psi_{i-1,i}}  & {D_{i-1}}}          
\vskip .2cm \noindent
commutes.  Composition of morphisms is component-wise.
\end{enumerate}
\vskip .2cm
Similarly if $\{ \F_{\alpha, \beta}: \bC_{\alpha} \ra \bC_{\beta}| \alpha, \beta \eps \rmJ \}$ is a sequence of functors indexed by the filtered category $\rmJ$, the category $\colimJ \bC_{\alpha}$ may be described as follows: objects have the form $(C,\alpha)$ for some $\alpha \eps \rmJ$ and $C \eps Ob(\bC_{\alpha})$. 
The set of morphisms from $(C, \alpha)$ to $(D,  \beta)$ is given by
${\underset {\gamma \ge \alpha, \beta} {\rm {colim}}} Hom_{\bC_{\gamma}}(\F_{\alpha, \gamma}(C), \F_{\beta, \gamma}(D))$.
\vskip .2cm

\vskip .2cm
Let $\rmI =[a, b]$, $a <b$ denote a finite interval of integers.  Now one defines
\be \begin{equation}
     \label{der.cat.1}
 \rmD^{\rmI, gm}_{\rmG}(X) = \limm \{\rmD^{\rmI, gm}_{G, {\it m}}(\rmX)|m\} \mbox{ and } \rmD^{b, gm}_{\rmG}(X)  = \colimI  \rmD^{\rmI, gm}_{\rmG}(X) 
   \end{equation} \ee
where the last colimit is over all finite intervals $\rmI=[a, b]$. Making use of the above discussion, we define for each prime $\ell$ different from the 
residue characteristics, and each 
finite interval $\rmI=[a, b]$ of integers
\be \begin{equation}
     \label{eqdercat.2}
\rmD^{\rmI, gm}_{\rmG}(\rmX, {\mathbb Z}_{\ell }) = \limnu \{\rmD^{\rmI,gm}_{\rmG}(\rmX, {\mathbb Z}/\ell^{\nu})|\nu\} \mbox{ and } \rmD^{\rmI, gm}_{\rmG}(\rmX, \Q_{\ell}) = \rmD^{\rmI, gm}_{\rmG}(\rmX, {\mathbb Z}_{\ell}) \otimes \Q_{\ell}.
\end{equation} \ee
One also defines the $\ell$-adic derived categories
\be \begin{equation}
\label{eqdercat.2}
\rmD^{b, gm}_{\rmG, {\it m}}(\rmX, \Q_{\ell}) = \colimI \{\rmD^{\rmI,gm}_{\rmG, {\it m} }(\rmX, \Q_{\ell})|\rmI\}. 
\end{equation} \ee
\vskip .2cm \noindent
When everything is defined over the complex numbers, one may define $\rmD^{\rmI, gm}_{\rmG}(\rmX, \Q)$ and $\rmD^{b, gm}_{\rmG, m}(\rmX, \Q)$ similarly.

\begin{proposition} 
\label{stability.1}
Let $R$ denote a commutative Noetherian ring that is torsion, with torsion prime to $char (k)$.
\vskip .2cm
(i) Let $\{{\widetilde {\rmE \rmG }}^{\gm, {\it m}}|m\}$ denote the geometric classifying space defined with respect to the choice of another admissible 
gadget. Then, for each fixed $\rmI=[a, b]$ with $2m-2 \ge |\rmI | =b-a$, the projections 
\[pr_1:(\rmE\rmG ^{\gm, {\it m}} \times {\widetilde {\rmE\rmG }}^{\gm, {\it m}}){\underset {\rmG} \times}\rmX \ra \rmE\rmG^{gm,m} {\underset {\rmG} \times}\rmX \mbox{ and } {\it pr}_2:(\rmE\rmG ^{\gm, {\it m}} \times {\widetilde {\rmE\rmG }}^{\gm, {\it m}}){\underset {\rmG} \times}\rmX \ra {\widetilde {\rmE\rmG }}^{\gm, {\it m}} {\underset {\rmG} \times}\rmX \] 
induce equivalences 
\[pr_1^*:\rmD^{\rmI}_{\rmG}(\rmE\rmG^{\gm, {\it m}} {\underset {\rmG} \times} \rmX, R) \ra \rmD^{\rmI}_{\rmG}((\rmE\rmG ^{gm,m} \times {\widetilde {\rmE\rmG }}^{\gm, {\it m}}){\underset {\rmG} \times}\rmX, R) \mbox{ and }\]
\[pr_2^*:\rmD^{\rmI}_{\rmG}({\widetilde {\rmE\rmG}}^{\gm, {\it m}} {\underset {\rmG} \times} \rmX, R) \ra \rmD^{\rmI}_{\rmG}((\rmE\rmG ^{\gm, {\it m}} \times {\widetilde {\rmE\rmG }}^{\gm, {\it m}}){\underset {\rmG} \times}\rmX, R).\]
\vskip .2cm
(ii)  For a fixed $\rmI =[a,b]$, with $2m-2 \ge |\rmI| =b-a$, the induced map $i_m^{G,*}: \rmD^{\rmI,gm}_{G,{\it m+1}}(\rmX, R) \ra 
\rmD^{\rmI, gm}_{G, {\it m}}(\rmX, R)$ is an equivalence of categories.
\end{proposition}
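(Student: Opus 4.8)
The plan is to establish (i) by the familiar ``highly acyclic fibre'' argument and then deduce (ii) from (i) by a graph trick, so that the regular immersion $i_{\it m}$ itself never has to be analysed directly. For (i): the morphism $pr_1: (\rmE\rmG^{\gm, {\it m}}\times{\widetilde {\rmE\rmG}}^{\gm, {\it m}}){\underset {\rmG} \times}\rmX \to \rmE\rmG^{\gm, {\it m}}{\underset {\rmG} \times}\rmX$ is the fibre bundle associated to the principal $\rmG$-bundle $\rmE\rmG^{\gm, {\it m}}\times\rmX \to \rmE\rmG^{\gm, {\it m}}{\underset {\rmG} \times}\rmX$ with fibre ${\widetilde {\rmE\rmG}}^{\gm, {\it m}}$; in particular it is locally trivial in the \'etale (resp.\ complex) topology, hence cohomologically proper and subject to the projection formula, exactly as for the fibrations appearing in Lemmas~\ref{equiv.tech.5} and~\ref{phiK.acyclic.fibers}. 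By Lemma~\ref{acyclic.n} (applied with $c=m$) the fibre satisfies ${\mathcal H}^0 = R$ and ${\mathcal H}^i = 0$ for $0 < i \le 2m-2$, so base change gives ${\mathcal H}^0(Rpr_{1*}R) = R$ (the constant sheaf) and ${\mathcal H}^i(Rpr_{1*}R) = 0$ for $0 < i \le 2m-2$. Feeding this into the projection formula $Rpr_{1*}pr_1^*(K) \simeq K{\overset L {\underset R \otimes}}Rpr_{1*}(R)$ and using $2m-2 \ge |\rmI| = b-a$, one sees that for every $K \eps \rmD^{\rmI}_{\rmG}(\rmE\rmG^{\gm, {\it m}}{\underset {\rmG} \times}\rmX)$ the unit $K \to Rpr_{1*}pr_1^*(K)$ is an isomorphism after the truncation $\tau_{\le b}$; since $pr_1^*$ is conservative, this already shows $pr_1^*$ is fully faithful on $\rmD^{\rmI}_{\rmG}$.

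For essential surjectivity, observe that any $K'$ in $\rmD^{\rmI}_{\rmG}$ of the product has, by the very definition of this subcategory (the analogue of \eqref{eqdercat.1}), cohomology sheaves whose pull-back to $(\rmE\rmG^{\gm, {\it m}}\times{\widetilde {\rmE\rmG}}^{\gm, {\it m}})\times\rmX$ comes from $\rmX$; hence their restriction to each fibre of $pr_1$ is constant, and consequently the counit $pr_1^*(\tau_{[a,b]}Rpr_{1*}(K')) \to K'$ is an isomorphism on cohomology in the range $[a,b]$, so that $K' \simeq pr_1^*(\tau_{[a,b]}Rpr_{1*}(K'))$. One checks along the way, by pulling back along the $\rmG$-torsor $\rmE\rmG^{\gm, {\it m}}\times\rmX \to \rmE\rmG^{\gm, {\it m}}{\underset {\rmG} \times}\rmX$ and invoking base change, that $\tau_{[a,b]}Rpr_{1*}(K')$ again lies in $\rmD^{\rmI}_{\rmG}(\rmE\rmG^{\gm, {\it m}}{\underset {\rmG} \times}\rmX)$. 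The assertion for $pr_2^*$ is the same argument with the two factors interchanged: the fibre of $pr_2$ is $\rmE\rmG^{\gm, {\it m}}$, acyclic in exactly the same range $[1, 2m-2]$.

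For (ii): put $P := (\rmE\rmG^{\gm, {\it m}}\times\rmE\rmG^{\gm, {\it m+1}}){\underset {\rmG} \times}\rmX$, with its two projections $q_1: P \to \rmE\rmG^{\gm, {\it m}}{\underset {\rmG} \times}\rmX$ and $q_2: P \to \rmE\rmG^{\gm, {\it m+1}}{\underset {\rmG} \times}\rmX$. The fibre of $q_1$ is $\rmE\rmG^{\gm, {\it m+1}}$ (acyclic in degrees $1$ through $2m$) and the fibre of $q_2$ is $\rmE\rmG^{\gm, {\it m}}$ (acyclic in degrees $1$ through $2m-2$), so the argument of (i) applies verbatim --- it used only the acyclicity of the fibre in a range $\ge |\rmI|$ --- and shows that both $q_1^*$ and $q_2^*$ are equivalences onto $\rmD^{\rmI}_{\rmG}(P)$. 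Now the graph of the regular immersion $i_{\it m}: \rmE\rmG^{\gm, {\it m}} \to \rmE\rmG^{\gm, {\it m+1}}$ is a $\rmG$-stable closed subvariety of $\rmE\rmG^{\gm, {\it m}}\times\rmE\rmG^{\gm, {\it m+1}}$ isomorphic to $\rmE\rmG^{\gm, {\it m}}$, hence defines a closed immersion $\gamma: \rmE\rmG^{\gm, {\it m}}{\underset {\rmG} \times}\rmX \to P$ with $q_1\circ\gamma = {\rm id}$ and $q_2\circ\gamma = i_{\it m}^{\rmG}$. Since $\gamma$ is a closed immersion, $\gamma^*$ is exact and preserves the cohomological bounds and the equivariance condition, so it restricts to a functor $\rmD^{\rmI}_{\rmG}(P) \to \rmD^{\rmI}_{\rmG}(\rmE\rmG^{\gm, {\it m}}{\underset {\rmG} \times}\rmX)$; from $\gamma^*\circ q_1^* = {\rm id}$ together with the fact that $q_1^*$ is an equivalence it follows that $\gamma^*$ is a quasi-inverse of $q_1^*$, in particular an equivalence. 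Therefore $i_{\it m}^{\rmG, *} = \gamma^*\circ q_2^*$ is a composite of two equivalences, which is (ii).

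The main obstacle is the full-faithfulness and essential-surjectivity analysis in (i). The delicate points are that $pr_1$ (which is neither proper nor affine) must genuinely be treated as ``cohomologically proper'' --- this is exactly where local triviality, the high acyclicity of the fibre, and the resulting base change and projection formula enter, in the spirit of the arguments already used in Lemmas~\ref{equiv.tech.5} and~\ref{phiK.acyclic.fibers} --- and that the defining condition for membership in $\rmD^{\rmI}_{\rmG}$ (descent, i.e.\ pull-back from $\rmX$ after passing to the $\rmG$-torsor) is stable under all the functors that occur and forces the complexes in question to be fibrewise constant for $pr_1$ and $pr_2$. Once (i) is in place, (ii) is purely formal via the graph construction.
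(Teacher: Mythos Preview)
Your argument is correct and follows essentially the same route as the paper: for (i) you use the high acyclicity of the fibres together with the projection formula to exhibit $\tau_{\le b}\,Rpr_{i*}$ as an inverse to $pr_i^*$, and for (ii) you specialise (i) to the product $\rmE\rmG^{\gm, m}\times\rmE\rmG^{\gm, m+1}$ and factor $i_m^{\rmG}$ through the graph section of $q_1$ --- exactly the map the paper denotes $\Delta$. The only cosmetic difference is that the paper states (i) with both factors at level $m$ and then silently uses the asymmetric case in (ii), whereas you make the asymmetry explicit; and your remark that $pr_1^*$ is conservative is true but not needed, since the adjunction identity $\mathrm{Hom}(K,K')\cong\mathrm{Hom}(K,\tau_{\le b}Rpr_{1*}pr_1^*K')$ already gives full faithfulness directly.
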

\begin{proof}
 (i) follows readily since both $\rmE G^{\gm, {\it m}}$ and ${\widetilde {\rmE\rmG}}^{\gm, {\it m}}$ are $2m-2$-connected. In view of this,
the functor $pr_i^*$ (for $i=1,2$) has an inverse given by $\tau_{\le b}Rpr_{i*}$, where $\tau_{\le b}$ denotes the cohomology truncation to degrees
$\le b$. Therefore, the functors $pr_i^*$ are fully-faithful. Now to see these functors are   equivalences of derived categories, it suffices to observe that all
complexes in the above equivariant derived categories are quasi-isomorphic to complexes obtained by pull-back from $\rmX$.
\vskip .2cm
Next we consider (ii). One  may take
${\widetilde {{\rmE\rmG}}}^{\gm, {\it m}} = \rmE\rmG^{\gm, {\it m+1}}$ which show the projections
$pr_1:(\rmE\rmG^{\gm, {\it m}} \times ({ {\rmE\rmG}})^{\gm, {\it m+1}}){\underset {\rmG} \times} \rmX \ra \rmE\rmG^{\gm, {\it m}}{\underset {\rmG} \times} \rmX$ and 
${pr}_2: (\rmE\rmG^{\gm, {\it m}} \times ({ {\rmE\rmG}})^{\gm, {\it m+1}}){\underset {\rmG} \times} \rmX \ra \rmE\rmG^{\gm, {\it m+1}}{\underset {\rmG} \times} \rmX$
both induce equivalences:
 \[pr_1^*:\rmD^{\rmI}_{\rmG}(\rmE\rmG^{\gm, {\it m}} {\underset {\rmG} \times} \rmX, R) \ra \rmD^{\rmI}_{\rmG}((\rmE\rmG ^{\gm, {\it m}} \times { {\rmE\rmG }}^{\gm, {\it m+1}}){\underset {\rmG} \times}\rmX, R ) \mbox{ and}  \]
\[pr_2^*:\rmD^{\rmI}_{\rmG}({ {\rmE\rmG}^{\gm, {\it m+1}}} {\underset {\rmG} \times} \rmX, R) \ra \rmD^{\rmI}_{\rmG}((\rmE\rmG ^{\gm, {\it m}} \times { {\rmE\rmG }}^{\gm, {\it m+1}}){\underset {\rmG} \times}\rmX, R).\]
If $\Delta: \rmE\rmG^{\gm, {\it m}}{\underset {\rmG} \times} \rmX \ra (\rmE\rmG^{\gm, {\it m}} \times ({ {\rmE\rmG}})^{\gm, {\it m+1}}){\underset {\rmG} \times} \rmX $ is the diagonal
imbedding, then $\Delta$ is a section to $pr_1$ and $ i_m^{\rmG} = pr_2 \circ \Delta $. Since $pr_1^*$ is an equivalence, so is $\Delta^*$; since
$\Delta^*$ and $pr_2^*$ are equivalences, it follows so is $i_m^{G*}$. 
(Here, it may be worthwhile to observe that for every complex $K \eps \rmD^{\rmI}_{\rmG}(\rmE\rmG^{\gm, {\it m+1}}{\underset {\rmG} \times}\rmX, R)$ 
 there exists a complex $L \eps
\rmD^{\rmI}(\rmX, R)$ so that $\pi_{m+1}^*(K)  {\overset {\simeq} \leftarrow} p_{2, m+1}^*(L)$ following the notations as in ~\eqref{eqdercat.1}.)  
\end{proof}
\begin{remark} Observe that, for each fixed $m$, and each fixed finite interval $\rmI$, one obtains the equivalences:
\[\rmD^{\rmI,gm}_{\rmG}(\rmX, \Q_{\ell}) = \limm \rmD^{\rmI, gm}_{G,{\it m} }(\rmX, \Q_{\ell}) = \limm (\limnu \rmD^{\rmI,gm}_{G, {\it m}}(\rmX, {\mathbb Z}/\ell^{\nu}))\otimes \Q_{\ell})\]
\[ \cong (\limm  \limnu \rmD^{\rmI, gm}_{G, {\it m}}(\rmX, {\mathbb Z}/\ell^{\nu})) \otimes \Q_{\ell} \cong (  \limnu \limm \rmD^{\rmI, gm}_{G, {\it m}}(\rmX, {\mathbb Z}/\ell^{\nu})) \otimes \Q_{\ell}.\]
All  but the next-to-last equivalence are clear from the definition, and this equivalence follows from 
Proposition ~\ref{stability.1}(ii).
\end{remark}

\subsection{The role of stratifications}
\label{strats}
Next we will consider the role of stratifications. A $\rmG$-stratification of the variety $\rmX$ is a decomposition of $\rmX$ into finitely many disjoint 
locally closed smooth equi-dimensional and $\rmG$-stable subvarieties called {\it strata}. Let the stratification  be denoted $\S=\{S_{\alpha}| \alpha \}$. Since the Borel construction is functorial, 
and $\rmE\rmG^{\gm, {\it m}}/\rmG$ is smooth, such a stratification of $\rmX$ defines a stratification $\{\rmE\rmG^{\gm, {\it m}}{\underset {\rmG} \times}S_{\alpha}| \alpha \}$ of the Borel construction
$\rmE\rmG^{\gm, {\it m}} {\underset {\rmG} \times}\rmX$ for each $m$. This stratification of $\rmE\rmG^{\gm, {\it m}}{\underset {\rmG} \times}\rmX$ will be
denoted $\S_{\rmG,m}$. These are evidently compatible as $m$ varies. 
\vskip .2cm
Given  an interval $\rmI$ of the integers, we let $\rmD^{\rmI,gm}_{\rmG, {\it m}}(\rmX, \S_{\rmG, m})$ denote the full subcategory of 
\newline \noindent
$\rmD^{\rmI, gm}(E\rmG^{\gm, {\it m}}{\underset {\rmG} \times}\rmX, \S_{\rmG, {\it m}})$ consisting
 of complexes $K$ whose cohomology sheaves are $\rmG$-equivariant. 
\newline \noindent
$\rmD^{b, gm}_{\rmG, {\it m}}(\rmX, \S_{\rmG, {\it m}})$ will denote the full
subcategory of complexes $K$ that belong to $\rmD^{\rmI, gm}(E\rmG^{\gm, {\it m}}{\underset {\rmG} \times}\rmX, \S_{\rmG, {\it m}})$ for some $I$. 
One  takes the $2$-limit of the  categories $\{\rmD^{\rmI,gm}_{\rmG, {\it m}}(E\rmG^{\gm, {\it m}}{\underset {\rmG} \times}\rmX, \S_{\rmG, {\it m}})|m\}$ as $m \ra \infty$ to define $\rmD^{\rmI, gm}_{\rmG}(\rmX, \S)$. 
\vskip .2cm
Next we discuss how the $t$-structures on  the equivariant derived category $\rm \rmD_{\rmG, m}^{b,gm}(\rmX, \S_{\rmG, m})$ behave as one varies $m$ and also  the stratifications.
\vskip .2cm
First observe that the map $i_{\it m}:\rmEG^{\gm, {\it m}} \ra \rmEG^{\gm, {\it m+1}}$ is a regular closed immersion, for each $m$. Therefore so is the induced map
$i_m^{\rmG}: \rmEG^{\gm, {\it m}}{\underset G \times} \rmX \ra \rmEG^{\gm, {\it m+1}}{\underset G \times} \rmX $. Given a $\rmG$-stratification $\S =\{S_{\alpha}|\alpha \}$ of $\rmX$, 
let $i^{(m)}_{S_{\alpha}}= id {\underset {\rmG} \times} i: {{\rmE{\rm G}^{\gm, {\it m}}{\underset {\rm G} \times}}} S_{\alpha} \ra {{\rmE{\rm G}^{\gm, {\it m}}{\underset {\rm G} \times}}} \rmX$ denote the induced closed immersion. 
\vskip .2cm
A perversity function $p$ defined on a stratified variety $\rmY$ will be defined as a non-decreasing function on codimension of the strata, so 
that the value on the open stratum will be $0$. We will view $p$ as defined on the strata themselves. 
(For the most part, we will only consider the middle perversity, which is defined by $m(\rmS ) =$ the codimension of $\rmS$ in $\rmY$.)  
Recall that the standard $t$-structure on a derived category of complexes with bounded cohomology is one whose heart consists of complexes that
have non-trivial cohomology only in degree $0$.
Then one  may start with standard $t$-structures defined on each of the strata $\rmS$ shifted by the perversity $p(\rmS)$ and obtain a non-standard $t$-structure
on the bounded derived category, $\rmD^b( \rmY)$ by {\it gluing} as in \cite[Definition 2.1.2]{BBD82}.
\begin{proposition}
\label{key.props.eq.der.cats}
 Let $\S_{\rmG}$ denote a fixed $\rmG$-stable stratification of the $\rmG$-variety $\rmX$ and let $\S_{\rmG, m}$ denote the
induced stratification on $\rmEG^{\gm, {\it m}}{\underset G \times}\rmX$ for the action of $\rmG$ on $\rmX$.
\item [(i)] Let the derived categories $\rmD^{b,gm}_{G,{\it m+1}}(\rmX, \S_{\rmG, m+1})$ and  $ \rmD^{b, gm}_{G, {\it m}}(\rmX, \S_{\rmG, m})$ be provided with 
the $t$-structures obtained by gluing with respect to a fixed perversity function and with respect to
a $\rmG$-stable stratification of $\rmX$. Then the functor $i_m^{G,*}$ preserves the above $t$-structures.
\item[(ii)] Let $\sT_{\rmG}= \{\sT_{\beta}|\beta\}$ denote a $\rmG$-stable stratification that is a refinement of the stratification 
$\S_{\rmG}$. Then
any complex in $\rmD^{b, gm}_{G, {\it m}}(\rmX)$ whose cohomology sheaves are local systems on each stratum of $\S_{\rmG, m}$ clearly belongs $\rmD^{b,gm}(\rmEG^{\gm, {\it m}}{\underset G \times}\rmX, \sT_{\rmG,m})$. This induces the inclusion functor
\[\rmD^{b,gm}_{G, {\it m} }(\rmX, \S_{\rmG, {\it m}}) \ra \rmD^{b, gm}_{G, {\it m} }(\rmX, \sT_{\rmG, {\it m}}).\]
This functor preserves the $t$-structures on either side obtained by gluing.
\end{proposition}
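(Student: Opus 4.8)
The plan is to reduce both assertions to the recollement description of a glued $t$-structure --- a complex lies in ${}^p\rmD^{\le 0}$ iff its $*$-restriction to every stratum lies in the $\le 0$ part of the (shifted) standard $t$-structure there, and in ${}^p\rmD^{\ge 0}$ iff its $!$-restriction to every stratum lies in the $\ge 0$ part (\cite[1.4.10]{BBD82}, \cite[2.1.2]{BBD82}) --- and then to exploit that the relevant restriction maps \emph{between strata} are regular closed immersions of \emph{smooth} varieties, so that purity is available even though the ambient Borel constructions $\rmEG^{\gm, {\it m}}{\underset {G} \times}\rmX$ may be singular when $\rmX$ is.

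First I would record the geometric input. Since $\rmEG^{\gm, {\it m}}/\rmG$ is smooth and the Borel construction is a Zariski-locally trivial fibre bundle over it, the $\rmG$-stratification $\S=\{S_{\alpha}\}$ induces a stratification $\S_{\rmG, {\it m}}=\{Z^{({\it m})}_{\alpha}:=\rmEG^{\gm, {\it m}}{\underset {G} \times}S_{\alpha}\}$ of $\rmEG^{\gm, {\it m}}{\underset {G} \times}\rmX$ in which each $Z^{({\it m})}_{\alpha}$ is smooth and $\codim Z^{({\it m})}_{\alpha}=\codim S_{\alpha}$, \emph{independently of $m$}. As $i_{\it m}$ is a regular closed immersion of some codimension $c=c({\it m})$ --- and is \'etale-locally on $\rmEG^{\gm, {\it m}+1}$ a zero section of a trivial vector bundle --- the same holds for its Borel construction $i_{\it m}^{\rmG}\colon Y^{({\it m})}:=\rmEG^{\gm, {\it m}}{\underset {G} \times}\rmX \ra Y^{({\it m}+1)}:=\rmEG^{\gm, {\it m}+1}{\underset {G} \times}\rmX$, and for each $\alpha$ the square with horizontal maps $i_{\it m}^{\rmG}$ and $\iota_{\it m}\colon Z^{({\it m})}_{\alpha}\ra Z^{({\it m}+1)}_{\alpha}$ and vertical stratum inclusions $j^{({\it m})}_{\alpha}$, $j^{({\it m}+1)}_{\alpha}$ is Cartesian, with $\iota_{\it m}$ again a regular closed immersion of codimension $c$, this time of \emph{smooth} varieties. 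Note also that for a complex constructible with respect to the stratification, its $*$- and $!$-restrictions to any single stratum have locally constant cohomology sheaves, and that $*$-pull-back along a morphism of smooth varieties carries a complex with locally constant cohomology to one with locally constant cohomology concentrated in exactly the same degrees.

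For (i), take $K\eps \rmD^{b,gm}_{G, {\it m}+1}(\rmX, \S_{\rmG, {\it m}+1})$; then $(i_{\it m}^{\rmG})^{*}K$ is $\S_{\rmG, {\it m}}$-constructible, so it suffices to compare stratum conditions. By functoriality of pull-back applied to the commutative square, $(j^{({\it m})}_{\alpha})^{*}(i_{\it m}^{\rmG})^{*}K \simeq \iota_{\it m}^{*}(j^{({\it m}+1)}_{\alpha})^{*}K$, and since $\iota_{\it m}^{*}$ preserves the degree range of the locally constant complex $(j^{({\it m}+1)}_{\alpha})^{*}K$ and the perversity shift on $Z^{({\it m})}_{\alpha}$ agrees with that on $Z^{({\it m}+1)}_{\alpha}$ (equal codimensions), $(i_{\it m}^{\rmG})^{*}K$ satisfies the $\le 0$ condition on $Z^{({\it m})}_{\alpha}$ iff $K$ does on $Z^{({\it m}+1)}_{\alpha}$. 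For the $!$-condition, purity for the regular closed immersions $i_{\it m}^{\rmG}$ and $\iota_{\it m}$ gives $(i_{\it m}^{\rmG})^{*}\simeq(i_{\it m}^{\rmG})^{!}[2c](c)$ and $\iota_{\it m}^{!}\simeq\iota_{\it m}^{*}[-2c](-c)$, whence (again by functoriality of $!$)
\[ (j^{({\it m})}_{\alpha})^{!}(i_{\it m}^{\rmG})^{*}K \;\simeq\; \iota_{\it m}^{!}(j^{({\it m}+1)}_{\alpha})^{!}K\,[2c](c)\;\simeq\; \iota_{\it m}^{*}(j^{({\it m}+1)}_{\alpha})^{!}K, \]
the shifts and twists cancelling precisely because $i_{\it m}^{\rmG}$ and $\iota_{\it m}$ have the same codimension $c$; as before this has the same degree range as $(j^{({\it m}+1)}_{\alpha})^{!}K$, so the $\ge 0$ condition is preserved in both directions as well. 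Hence $i_{\it m}^{\rmG,*}$ is $t$-exact. (A purity-free variant: one checks by functoriality alone that $i_{\it m}^{\rmG,*}$ sends ${}^p\rmD^{\le 0}$ onto ${}^p\rmD^{\le 0}$ and then invokes Proposition~\ref{stability.1}(ii) together with Proposition~\ref{transf.t.struct}, at the cost of the cohomological-amplitude caveat built into Proposition~\ref{stability.1}.) For (ii), I would pass to the fixed variety $Y=\rmEG^{\gm, {\it m}}{\underset {G} \times}\rmX$ with the stratification $\S_{\rmG, {\it m}}$ and its refinement $\sT_{\rmG, {\it m}}$ (a refinement because $\sT$ refines $\S$ and the Borel construction is functorial, compatible with the perversity since the latter is a non-decreasing function of codimension and codimensions are preserved); the claim is then the independence of the glued $t$-structure under refinement on the subcategory of $\S_{\rmG, {\it m}}$-constructible complexes, as in \cite[2.1.14]{BBD82}, which I would re-derive by the mechanism of (i): for a $\sT_{\rmG, {\it m}}$-stratum $W_{\beta}\subseteq Z_{\alpha}$ with inclusion $a\colon W_{\beta}\hookrightarrow Z_{\alpha}$ locally closed, smooth, of codimension $e_{\beta}=\codim W_{\beta}-\codim Z_{\alpha}$, an $\S_{\rmG, {\it m}}$-constructible $K$ has $(j^{Z_{\alpha}})^{*}K$, $(j^{Z_{\alpha}})^{!}K$ locally constant on $Z_{\alpha}$, so $(j^{W_{\beta}})^{*}K=a^{*}(j^{Z_{\alpha}})^{*}K$ lies in the same degrees while purity gives $(j^{W_{\beta}})^{!}K\simeq a^{*}(j^{Z_{\alpha}})^{!}K\,[-2e_{\beta}](-e_{\beta})$, i.e. the same degrees shifted up by $2e_{\beta}$; since $\codim W_{\beta}\ge\codim Z_{\alpha}$, with equality exactly when $W_{\beta}$ is open dense in $Z_{\alpha}$ (so $e_{\beta}=0$ and the degree ranges on all of $Z_{\alpha}$ are already visible there), comparing the support and cosupport inequalities for the middle perversity (and any perversity growing by at most $2$ per codimension step) shows the $\S_{\rmG, {\it m}}$- and $\sT_{\rmG, {\it m}}$-glued $t$-structures agree on such $K$, so the inclusion functor is $t$-exact.

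The step I expect to require the most care is the bookkeeping in (i): one must establish the \'etale-local zero-section form of $i_{\it m}^{\rmG}$ so that purity applies to it despite the possibly singular total spaces $Y^{({\it m})}$, $Y^{({\it m}+1)}$, and then verify that its codimension matches that of each restriction $\iota_{\it m}$ so that the two purity shifts cancel exactly; the delicate counterpart in (ii) is the handling of the open dense substrata, which is what forces the degree-range comparison to go through in \emph{both} directions and hence upgrades the statement from a one-sided inclusion of aisles to a genuine coincidence of the two glued $t$-structures on $\S_{\rmG, {\it m}}$-constructible complexes.
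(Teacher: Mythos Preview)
Your argument is correct, but it takes a genuinely different route from the paper's proof of (i). The paper does not invoke purity for $i_m^{\rmG}$ at all; instead, for the $\ge 0$ direction it exploits the product structure $i_S^{(m)} = id_{\rmEG^{\gm,m}}\times_{\rmG} i_S$, $i_S^{(m+1)} = id_{\rmEG^{\gm,m+1}}\times_{\rmG} i_S$, $i_m^{\rmG} = i'\times_{\rmG} id$ together with the defining feature of the equivariant derived category in ~\eqref{eqdercat.1} that every complex is, after pulling back along $\pi_{m+1}$, of the form $p_{2,m+1}^*(L)$ for some $L$ on $\rmX$. This lets one compute $R(i_S^{(m)})^{!}\circ (i_m^{\rmG})^*$ directly in terms of $Ri_S^{!}$ on $\rmX$, bypassing any smoothness or purity hypothesis on the Borel constructions. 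Your approach via purity is cleaner conceptually and more self-contained, but it requires the extra verification you flag --- that $i_m^{\rmG}$ is \'etale-locally a zero section so that $i_m^{\rmG,!}\simeq i_m^{\rmG,*}[-2c](-c)$ holds despite the possible singularity of $\rmEG^{\gm,m+1}\times_{\rmG}\rmX$ --- whereas the paper's argument uses only that complexes come from $\rmX$. For (ii) the paper simply cites \cite[Proposition 2.1.14]{BBD82}, while you re-derive the content of that result via purity on the smooth strata; this is fine, and your observation that one needs open dense substrata to get the implication in both directions is exactly what makes that citation go through.
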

\begin{proof} 
 We will first consider (i). Let $i$ denote generically the closed immersions ${{\rmE{\rm G}^{\gm, {\it m}}{\underset {\rm G} \times}}} \rmS \ra \EGm+1 \rmS$ and   ${{\rmE{\rm G}^{\gm, {\it m}}{\underset {\rm G} \times}}} \rmX \ra \EGm+1 \rmX$.
Recall the $t$-structure obtained by gluing on $ \rmD^{b, gm}_{G, {\it m}}(\rmX)$ is such that
\be \begin{align}
     {\rmD^{b, gm}_{G, {\it m}}(\rmX, \S_{\rmG, {\it m}})} ^{\le 0} &=\{K \eps \rmD^{b, gm}_{G, {\it m}}(\rmX, \S_{\rmG, {\it m}})| {\mathcal H}^i(i^{(m)}_S)^*(K)) =0, i > p(S)\} \mbox{ and } \\
     {\rmD^{b, gm}_{G, {\it m}}(\rmX, \S_{\rmG, {\it m}})} ^{\ge 0} &=\{K \eps \rmD^{b, gm}_{G, {\it m}}(\rmX, \S_{\rmG, {\it m}})| {\mathcal H}^i(Ri^{(m)}_S)^!(K)) =0, i < p(S)\}
    \end{align} \ee
Here $i^{(m)}_S: \rmE\rmG^{\gm, {\it m}}{\underset {\rmG} \times} \rmS \ra \rmE\rmG^{\gm, {\it m}}{\underset {\rmG} \times} \rmX$ is the closed immersion corresponding to
a stratum $\rmS$. 
Since $i^{(m)*}_{\rmS} i^* = i^* i^{(m+1)*}_{\rmS}$, it follows readily that $i^*$ sends  $\rmD^{I, gm}_{G, {\it m+1}}(\rmX, \S_{\rmG, {\it m}}) ^{\le 0}$ to $ \rmD^{I, gm}_{G,{\it m}}(\rmX, \S_{\rmG, {\it m}}) ^{\le 0}$.
To see that $i^*$ sends $\rmD^{I, gm}_{G, {\it m+1}}(\rmX, \S_{\rmG, m}) ^{\ge 0}$ to $ \rmD^{I, gm}_{G, {\it m}}(\rmX, \S_{\rmG, {\it m}}) ^{\ge 0}$, one needs to observe that $i^{(m)}_S= id_{{\rmE\rmG}^{\gm, {\it m}}} \times_{\rm G} i_{\rmS}$,
$i^{(m+1)}_S= id_{{E\rmG}^{gm, {\it m+1}}} \times_{\rm G} i_S$ and that $i= i' \times _{\rmG} id$, where $i':\rmE\rmG^{\gm, {\it m}} \ra \rmE\rmG^{\gm, {\it m+1}}$ is the closed
immersion. One  also needs the fact that for every complex $K \eps \rmD^{\rmI, gm}_{G, {\it m}}(\rmX, \S_{\rmG, {\it m}})$ 
 there exists a 
complex $L \eps \rmD^{\rmI}(\rmX)$ so that $p_2^*(L) \simeq \pi_m^*(K)$. 
\vskip .2cm
(ii) follows from \cite[Proposition 2.1.14]{BBD82}.
\end{proof}
\begin{proposition}
 Assume in addition to the above hypotheses that $\rmG$ acts with finitely many orbits on the variety $\rmX$. Let $\S$ denote the
stratification of $\rmX$ by the $\rmG$-orbits. Then $\rm \rmD_{\rmG, {\it m}}^{b, gm}(\rmX, \S_{\rmG, {\it m}}) = \rmD^{b, gm}_{\rmG, {\it m}}(\rmX)$, for every $m$.
\end{proposition}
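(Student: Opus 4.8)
The plan is to prove the two inclusions separately. Recall that $\rmD^{b,gm}_{\rmG,m}(\rmX)$ consists of the bounded complexes $K$ on the Borel construction $\rmEG^{\gm,m}{\underset {\rmG} \times}\rmX$ satisfying the descent condition of \eqref{eqdercat.1}, namely $\pi_m^*(K)\simeq p_{2,m}^*(L)$ for some $L\eps\rmD(\rmX)$ — so that, applying the exact functor $\pi_m^*$ term by term, each $\mathcal H^i(K)$ is a $\rmG$-equivariant sheaf on $\rmX$ — whereas $\rmD^{b,gm}_{\rmG,m}(\rmX,\S_{\rmG,m})$ consists of the bounded complexes on $\rmEG^{\gm,m}{\underset {\rmG} \times}\rmX$ that are constructible with respect to $\S_{\rmG,m}$ and have $\rmG$-equivariant cohomology sheaves; in both cases the cohomology is concentrated in an interval of length at most $2m-2$. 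So the proposition amounts to the assertion that, when there are only finitely many orbits, these two supplementary conditions cut out the same subcategory.

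For the inclusion $\rmD^{b,gm}_{\rmG,m}(\rmX)\subseteq\rmD^{b,gm}_{\rmG,m}(\rmX,\S_{\rmG,m})$, I would use that a $\rmG$-equivariant sheaf $F$ on $\rmX$ is automatically constructible for the orbit stratification: for an orbit $\O\cong\rmG/\rmH$ the restriction $F_{|\O}$ is a $\rmG$-equivariant sheaf on a homogeneous space and is therefore locally constant by homogeneity, of finite rank since $F$ is constructible on $\rmX$; its pullback along the smooth projection $\rmEG^{\gm,m}{\underset {\rmG} \times}\O\to\O$ — the corresponding stratum of $\S_{\rmG,m}$ — stays a local system. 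As there are finitely many orbits, a bounded complex on $\rmEG^{\gm,m}{\underset {\rmG} \times}\rmX$ with $\rmG$-equivariant cohomology sheaves is thus $\S_{\rmG,m}$-constructible, which gives the inclusion.

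For the reverse inclusion I would argue by d\'evissage along the orbit stratification. Ordering the finitely many orbits compatibly with the closure order and filtering $K\eps\rmD^{b,gm}_{\rmG,m}(\rmX,\S_{\rmG,m})$ by the Borel constructions of the open unions of strata, one obtains a finite filtration of $K$ whose graded pieces are $(i_\O)_!(i_\O)^*K$, with $(i_\O)^*K$ a bounded complex of $\rmG$-equivariant local systems on the stratum $\rmEG^{\gm,m}{\underset {\rmG} \times}\O$. Each such graded piece lies in $\rmD^{b,gm}_{\rmG,m}(\rmX)$: on the torsor $\rmEG^{\gm,m}\times\O$ its pullback is, again by the connectivity of $\rmEG^{\gm,m}$, a pullback from $\O$, hence of the form $p_{2,m}^*$ of the extension by zero of a complex in $\rmD(\rmX)$ supported on $\O$. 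One then propagates the descent condition up the finite filtration by induction: in a distinguished triangle $K'\to K\to K''$ with $\pi_m^*K'\simeq p_{2,m}^*L'$ and $\pi_m^*K''\simeq p_{2,m}^*L''$, the connecting morphism $p_{2,m}^*L''\to p_{2,m}^*L'[1]$ is the $p_{2,m}^*$-image of a morphism $L''\to L'[1]$ in $\rmD(\rmX)$, and the cone of the latter provides the required $L$ with $\pi_m^*K\simeq p_{2,m}^*L$.

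The main obstacle is precisely the descent of morphisms just used, i.e. the full-faithfulness of $p_{2,m}^*$ (equivalently of $\pi_m^*$) on complexes whose cohomology lives in a range of length at most $2m-2$. This is exactly the input that the geometric approximation $\rmEG^{\gm,m}$ is $(2m-2)$-connected — the vanishing $\rmH^i_{et}(\rmEG^{\gm,m},\Q_\ell)=0$ for $0<i\le 2m-2$ recorded just before \eqref{eqdercat.1} — combined with the projection formula $Rp_{2,m*}p_{2,m}^*(-)\simeq(-)\otimes Rp_{2,m*}\Q$, exactly as in the proof of Proposition \ref{stability.1}. Everything else is formal; one should only be a little careful to phrase the homogeneity argument (an equivariant sheaf on an orbit is a local system) for the constructible sheaves actually in play, namely sheaves of $\Q$-vector spaces, or $\ell$-adic sheaves, on the relevant sites.
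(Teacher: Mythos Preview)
Your argument is correct, and its core observation---that a $\rmG$-equivariant sheaf restricts to a local system on each orbit---is exactly the paper's entire proof, which reads in full: ``This is clear since any $\rmG$-equivariant sheaf is a local system on each orbit.'' Your d\'evissage for the reverse inclusion, together with the full-faithfulness of $p_{2,m}^*$ via the $(2m-2)$-connectivity of $\rmEG^{\gm,m}$, supplies details the paper leaves implicit: the author evidently treats the descent condition of \eqref{eqdercat.1} and the condition ``cohomology sheaves are $\rmG$-equivariant'' as interchangeable in this range, so that only the $\S_{\rmG,m}$-constructibility needs comment.
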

\begin{proof} This is clear since any $\rmG$  equivariant sheaf is a local system  on each orbit.
\end{proof}

\vskip .2cm
\subsection{\bf Equivariant Derived Categories: Version II} This is based on the simplicial model of the Borel construction and
we discussed this already in section ~\ref{simpl.der.cat}. Therefore,  we proceed to discuss the proof of Theorem ~\ref{mainthm.5}, next. 
\vskip .2cm \noindent
{\bf Proof of Theorem ~\ref{mainthm.5}.}
\label{equiv.dercat.3}
 (i) The fact that $p_1^*$  is fully faithful follows from the observation that the geometric fibers of the map $p_1$ are
$\rmEG^{\gm, {\it m}}$ (base extended to the algebraic closure of the base field) which is acyclic in degrees $1$ through $2m-2$. (See Lemma ~\ref{acyclic.n}.) Its inverse is the functor $\tau_{\le b}Rp_{1*}$, where
$\tau_{\le b}$ is the functor killing cohomology above degree $b$. Now the fact that $p_1^*$
is an equivalence follows from observing that it induces an  equivalence  on the hearts of the corresponding derived categories provided with the
usual $t$-structures, where the heart consists of complexes with trivial cohomology in all degrees except $0$. Both $p_1$ and $p_2$ are simplicial maps which are smooth in each degree: therefore, both $p_1^*$ and $p_2^*$ send
complexes that are mixed and pure to complexes that are mixed and pure.
\vskip .2cm
The geometric fibers of the map $p_2$ identify with the simplicial variety $\rmEG$ (base extended to the algebraic closure of the base field) which is acyclic. The equivalence in 
~\eqref{eq.cats.1} and 
\cite[Theorem 4.2]{Jo02}, show readily that the functor $p_2^*$ is fully-faithful. That it is also an 
equivalence now follows by observing
that $p_2^*$ induces an equivalence on the hearts of the corresponding derived categories provided with the
usual $t$-structures, where the heart consists of complexes with trivial cohomology in all degrees 
except $0$. These observations complete the proof of the statements in (i). 
\vskip .2cm
(ii) We make use of the construction of the geometric classifying spaces through the admissible gadgets discussed in ~\eqref{adm.gadget.1}, but first over
 the algebraic closure of the base field. 
In the  terminology there, if we are working over the complex numbers, then with the
complex topology, we now observe that  ${\rm U}_{m+1}$ is in fact the {\it join} of ${\rm U}_m$ with ${\rm U}$, where the join ${\rm U}_m* {\rm U}$ denotes the
homotopy pushout 
\be \begin{equation}
\label{join.1}
     \xymatrix{{{\rm U}_m \times {\rm U}} \ar@<1ex>[r] \ar@<1ex>[d] & {{\rm U}} \ar@<1ex>[d]\\
                {{\rm U}_m} \ar@<1ex>[r] & {{\rm U}_m*{\rm U}}}
    \end{equation} \ee
\vskip .2cm \noindent
In the \'etale framework, the corresponding statement holds when ${\rm U}_m$ (${\rm U}$) is replaced by the completed \'etale topological types, with the
completion at a prime $\ell$ different from the characteristic. Thus, in case the base field is the complex numbers, ${\rm U}_m$ is the iterated join 
of ${\rm U}$
$m$-times and in positive characteristics, $({\rm U}_m)_{et}\compl_{\ell} \simeq ({\rm U}_{et} \compl_{\ell}) * \cdots * ({\rm U}_{et} \compl_{\ell})$ (the $m$-fold join).
Therefore $E\rmG^{\gm, {\it m}}$ ($E\rmG^{\gm, {\it m}}_{et} \compl_{\ell}$) is highly  connected in the first case 
(the second case respectively) in the sense that the homotopy groups are all trivial through a sufficiently
 high degree. 
(A point to observe here is that the join of any two connected simplicial sets is simply connected. Completion preserves a simplicial set being 
connected. Therefore, since the join is already simply connected, one may check if the join $({\rm U}_m)_{et}\compl_{\ell} * ({\rm U}_{et})\compl_{\ell}$ is $\ell$-complete 
 on homology with $Z/l^{\nu}$-coefficients. Therefore, the join of two 
$\ell$-complete simplicial sets  is $\ell$-complete.)
\vskip .2cm
Next observe that the map $p_1$ is a map of simplicial varieties, which in degree $n$ is given by the map
\[p_{1,n}: \rmG^{n-1}\times (\rmEG^{gm.m} \times \rmX) \ra \rmG^{n-1} \times \rmX.\]
Therefore, taking the completed \'etale topological types provides an inverse system of maps of simplicial sets for each fixed $n$:
\be \begin{equation}
\label{fib.n}
p_{1,n,et}: {(\rmG)_{et}\compl_{\ell}}^{n-1} \times (\rmEG^{\gm, {\it m}}_{et}\compl_{\ell}) \times \rmX_{et}\compl_{\ell} \ra {(\rmG)_{et}\compl_{\ell}}^{n-1} \times \rmX_{et}\compl_{\ell}
\end{equation}\ee
This is simply the projection to all factors except $\rmEG^{\gm, {\it m}}_{et}\compl_{\ell}$, so that the fibers identify with 
$\rmEG^{\gm, {\it m}}_{et}\compl_{\ell}$.
Varying  $n$, this defines an inverse system of maps of bisimplicial sets 
\be \begin{equation}
 \label{fibrations}
\{\rmE_{\bullet, \bullet}^{\alpha} \ra \rmB_{\bullet, \bullet}^{\alpha}|\alpha\}
\end{equation} \ee
\vskip .2cm \noindent
with the second index corresponding to the $n$ in ~\eqref{fib.n}. 
Since all the varieties are connected, it follows that for each fixed $n$ and $\alpha$, the simplicial set  $\rmB_{\bullet, n}^{\alpha}$
is connected. Therefore, one may invoke \cite[Lemma 5.2]{Wa78} to conclude that on taking the diagonals, we obtain 
an inverse system of fibrations $\{\Delta (\rmE_{\bullet, \bullet})^{\alpha} \ra \Delta (\rmB_{\bullet, \bullet})^{\alpha}|\alpha\}$. The fibers
are clearly $\rmEG^{\gm, {\it m}}_{et}\compl_{\ell}$, which we have observed above are highly connected if $m$ is sufficiently large. Therefore
it follows that the map $p_1$ induces an isomorphism on the fundamental groups completed at $\ell$, when everything is defined over the
algebraic closure of $k$.
\vskip .2cm
Observe that the map $\rmEG^{\gm, {\it m}} \times \rmX \ra \rmEG^{\gm, {\it m}}{\underset {G} \times}\rmX$ is a locally trivial principal fibration in the \'etale
topology. Therefore, a similar analysis as for the case of  the map $p_1$, will prove that the map $p_2$ will induce an isomorphism
on the fundamental groups completed at $\ell$, again when everything is defined over the algebraic closure of $k$.
\vskip .2cm
Next we consider the general case, where the base field $k$ is no longer required to be algebraically closed. 
In this case we will let $\bar k$ denote the algebraic closure of $k$.
For this discussion we will denote all objects defined over the base field $k$ with a subscript $o$, while objects over $\bar k$ will be denoted
as before. It suffices
to show that local systems on $\rmE {\rmG}_o{\underset {\rmG_o} \times}\rmX_o$ and $\rmEG_o^{\gm, {\it m}} {\underset {\rmG_o} \times}\rmX_o$ correspond $1-1$ if $m>>0$.
Observe a local system on either of these defines, by pull-back a local system on $\rmEG_o {\underset {\rmG_o} \times}(\rmEG_o^{\gm, {\it m}} \times \rmX_o)$
and by further pull-back a local system on $\EGx (\rmEG^{\gm, {\it m}} \times \rmX)$. Observe that the group of automorphisms of the latter over
$\rmEG_o {\underset {G_o} \times}(\rmEG_o^{\gm, {\it m}} \times \rmX_o)$ is the Galois group $Gal_k(\bar k)$. By what we just proved in the paragraphs
above,  local systems on $\EGx (\rmEG^{\gm, {\it m}} \times \rmX)$ correspond $1-1$ with local systems on $\EGx \rmX$ as well as local systems 
on $\rmEG^{\gm, {\it m}} {\underset {G} \times}\rmX$ which are also
equivariant for the action of $Gal_{k}(\bar k)$. Therefore, by $Gal_k(\bar k)$-equivariance, these local systems descend to local systems
on $\rmE{\rmG}_o{\underset {\rmG_o} \times}\rmX_o$ and $\rmEG_o^{\gm, {\it m}} {\underset {\rmG_o} \times}\rmX_o$.
 This completes the proof of the second statement and hence of the theorem.
 \vskip .2cm
 \subsection{\bf Derived functors for maps of simplicial varieties}
 \label{simpl.der.functors}
 \vskip .2cm
 It is shown in \cite{Jo02} and also (\cite{Jo93}), 
 that associated to any given Grothendieck topology on varieties, there is another Grothendieck topology
that one can put on any simplicial variety $\rmX_{\bullet}$. As we show below, this topology plays a key role in
being able to define derived functors of the direct image functor for maps between simplicial varieties, and therefore often comes in handy.
 Therefore, we proceed to summarize the main features of this construction. Let $\Top$ denote
a Grothendieck topology defined on algebraic varieties over the given field $k$. Let $\rmX_{\bullet}$ denote a simplicial variety over $k$. 
Then we define the objects of the topology, $\STop( \rmX_{\bullet})$, to consist of all maps 
$u_{\bullet}:\rmU_{\bullet} \ra \rmX_{\bullet}$ of simplicial varieties so that 
 each $u_n: \rmU_n \ra \rmX_n$ belongs to the topology 
$\Top(\rmX_n)$. Morphisms between two such objects will be defined to be
commutative triangles. One defines
a family of maps $\{{v_{\bullet}}_{\alpha}:{\rmV_{\bullet}}_{\alpha} \ra \rmU_{\bullet}|\alpha\}$ to be a covering if each 
$\{v_{n_{\alpha}}:\rmV_{n_\alpha} \ra \rmU_n| \alpha \}$ is a {\it covering} in $\Top(\rmX_n)$. 
We will often call this {\it the simplicial topology  associated to the
given topology $\Top$}. It is shown in \cite[section 1]{Jo02} that the topology $\STop$ and the associated topos of sheaves on it 
inherits all the good properties from the given topology $\Top$: for example, the category underlying $\STop(\rmX_{\bullet})$ is closed
under finite inverse limits and  there
are enough points on the site $\STop(\rmX_{\bullet})$ if  each $\Top(\rmX_n)$ has the corresponding property.
\vskip .2cm
Observe that for each fixed integer $n \ge 0$, there is a map of sites $\eta_n:\Top(\rmX_n) \ra \STop(\rmX_{\bullet})$ defined by sending the
(simplicial) object $\rmU_{\bullet}$ in $\STop(\rmX_{\bullet})$ to $\rmU_n$. 
Let $\rmR$ denote a commutative Noetherian ring and let $\rmC^+(\Sh(\Top(\rmX_{\bullet}), \rmR))$ ($\rmC^+(\Sh(\STop(\rmX_{\bullet}), \rmR))$) denote the category of all
complexes of sheaves of $\rmR$-modules on $\Top(\rmX_{\bullet})$ ($\STop(\rmX_{\bullet})$, \res),  that are bounded below.
Then one defines a functor $\eta: \rmC^+(\Sh(\Top(\rmX_{\bullet}), \rmR)) \ra \rmC^+(\Sh(\STop(\rmX_{\bullet}), \rmR))$ by sending
a complex of sheaves $\{K_n|n\}$ on $\Top(\rmX_{\bullet})$ to the total complex of the double complex $\{\eta_{n*}(K_n)|n\}$. One verifies
readily the following properties of this functor (see \cite[sections 1 and 3]{Jo02}):
\begin{enumerate}[\rm(i)]
\label{sheaves.simp.3}
 \item If $f: \rmX_{\bullet} \ra \rmY_{\bullet}$ is a map of simplicial varieties, then $\eta(f_*(K)) = {}_sf_*(\eta (K))$, $K \eps \rmC^+(\Sh(\STop(\rmX_{\bullet}), \rmR))$
and $\eta(f^*(L)) = {}_sf^*(\eta (L))$, $L \eps \rmC^+(\Sh(\STop(\rmY_{\bullet}), \rmR))$.  Here ${}_sf_*$ (${}_sf^*$) denotes the
push-forward functor (the pull-back functor) associated to $f$ on the simplicial topology.
Both of these statements may be checked at
the stalks and may be deduced from the basic properties of stalks of sheaves computed on the simplicial topology as in \cite[section 3]{Jo02}.
\item If $j: \rmU_{\bullet} \ra \rmX_{\bullet}$ is an open immersion in each degree, then $\eta(j_!(K)) = {}_sj_!(\eta(K))$, \\$K \eps
\rmC^+(\Sh(\STop(\rmU_{\bullet}), \rmR))$ and ${}_sj_{!}$ is the corresponding functor on the simplicial topology.
\item If $f: \rmX_{\bullet} \ra \rmY_{\bullet}$ is a map of simplicial varieties and $K \eps \rmC^+(\Sh(\STop(\rmX_{\bullet}), \rmR))$
so that $(K_n, f_n)$ is cohomologically proper, that is, satisfies the conclusions of the proper base-change theorem, then the cohomology of
the stalks of $Rf_*(K)$ computed on the simplicial topology $\STop(\rmY_{\bullet})$ identify with the cohomology of the
fibers of the simplicial map $f$ with respect to $K$. (See \cite[(4.2) Theorem]{Jo02}.)
\end{enumerate}
\vskip .2cm
 Next let $\rmG$ denote a linear algebraic group acting on a variety $\rmX$. Then it is shown in \cite[(3.8.2) Definition, (3.10) Corollary]{Jo02}
that one may define the notion of sheaves with descent (or cartesian sheaves)  on the site $\STop(\EGx \rmX)$ and that if 
$ \rmD_{\rmG}(\STop(\EGx \rmX))$ denotes the full subcategory of
$\rmD(\STop(\EGx \rmX))$ consisting of complexes whose cohomology sheaves have descent, then the functor $\eta$ induces a  {\it fully-faithful}
functor (where the superscript $+$ denotes
 the bounded below derived categories):
\be \begin{equation}
     \label{eq.cats.1}
\rmD_{\rmG}^+(\rmX) \ra \rmD_{\rmG}^+(\STop(\EGx \rmX)).
    \end{equation} \ee
\vskip .2cm
\begin{remark}
 \label{simpl.eq.der.cat}
Therefore, if $X$ is a variety for which the derived category $\rmD_{\rmG}^+(\rmX)$ is  generated by a collection of
complexes $\{K_{\alpha}|\alpha\}$, by taking the full subcategory of $\rmD_{\rmG}^+(\STop(\EGx \rmX))$ generated by the images
of these complexes, one obtains an equivalence of categories. Therefore, we will henceforth denote this full subcategory by $\rmD_{\rmG}^+(\STop(\EGx \rmX))$
which enables us to work with the simplicial topology throughout.
\end{remark}
\vskip .3cm

\vskip .3cm
\section{\bf Appendix:  Background material}
Next we consider the following background material needed for the definition of the geometric classifying spaces
in section 2.
We start with the following lemma.
\begin{lemma}
 \label{acyclic.n}
 Let $\rmV$ denote a representation of the linear algebraic group $\rmG$, all defined over a perfect field $k$ of finite $\ell$-cohomological dimension for some
prime $\ell \ne char(k)$.
Let ${\rm U} \subseteq \rmV$ denote an open 
$\rmG$-stable subvariety so that the complement $\rmV-{\rm U}$ has codimension $c>1$ in $\rmV$. 
\vskip .1cm 
(i) Then denoting by $\bar k$ the algebraic closure of $k$, 
\[ \rmH^n_{et}({\rm U}{\underset {Spec \, k} \times} Spec \,\bar k, {\mathbb Z}/\ell^{\nu}) =0 \mbox{ for all } 0<n <2c-1 \mbox{ and } \rmH^0_{et}({\rm U}{\underset {Spec \, k} \times} Spec \,\bar k, {\mathbb Z}/\ell^{\nu}) = {\mathbb Z}/\ell^{\nu}.\]
\vskip .1cm
(ii) For any variety $\rmX$, 
\[R^nf_*({\mathbb Z}/\ell^{\nu}) =0 \mbox{ for all } 0< n <2c-1,\mbox{ and } R^0f_*({\mathbb Z}/\ell^{\nu}) ={\mathbb Z}/\ell^{\nu}\]
 where
$f: {\rm U}\times \rmX \ra \rmX$ denotes the  projection.
\vskip .1cm
(iii) In case the field $k= \Cl$, the corresponding results also hold for ${\mathbb Z}$ and $\Q$ in the place of ${\mathbb Z}/\ell^{\nu}$.
\end{lemma}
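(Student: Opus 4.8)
The plan is to reduce everything to the statement (i), i.e.\ the computation of the low-degree \'etale cohomology of $\mathrm U\times_{\Speck}\Spec\bar k$, and then deduce (ii) and (iii) by standard formal arguments. For (i), first I would pass to the algebraic closure, so that without loss of generality $k=\bar k$. The key input is the purity/Gysin sequence for the closed immersion $Z:=\rmV\setminus\mathrm U\hookrightarrow \rmV$ with open complement $\mathrm U$. Since $\rmV$ is (an affine space, hence) smooth, and $Z$ is closed of codimension $\ge c$, one stratifies $Z$ by its smooth locus and its singular locus and applies absolute cohomological purity (Thom--Gabber) to each stratum: this gives that the local cohomology sheaves $\mathcal H^j_Z({\mathbb Z}/\ell^\nu)$ on $\rmV$ vanish for $j<2c$. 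The long exact sequence
\be \begin{equation}
\cdots \ra \rmH^n_Z(\rmV,{\mathbb Z}/\ell^\nu) \ra \rmH^n_{et}(\rmV,{\mathbb Z}/\ell^\nu) \ra \rmH^n_{et}(\mathrm U,{\mathbb Z}/\ell^\nu) \ra \rmH^{n+1}_Z(\rmV,{\mathbb Z}/\ell^\nu) \ra \cdots
\end{equation} \ee
together with $\rmH^*_{et}(\rmV,{\mathbb Z}/\ell^\nu)=\rmH^*_{et}(\Spec\bar k,{\mathbb Z}/\ell^\nu)$ (homotopy invariance of \'etale cohomology for affine space) and the vanishing $\rmH^n_Z(\rmV,{\mathbb Z}/\ell^\nu)=0$ for $n<2c$ (from the vanishing of the local cohomology sheaves and a spectral sequence, using $\mathrm{cd}_\ell(k)<\infty$ only to ensure convergence) then forces $\rmH^n_{et}(\mathrm U,{\mathbb Z}/\ell^\nu)=0$ for $0<n<2c-1$ and $\rmH^0=\mathbb Z/\ell^\nu$. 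I should be a little careful about the exact degree in which the argument breaks: one loses one degree passing from $\rmH^n_Z$ to $\rmH^n_{et}(\mathrm U)$, which is exactly why the bound is $2c-1$ rather than $2c$; and if $Z$ is not irreducible or not equidimensional one runs the purity argument on the (finitely many, since everything is of finite type) irreducible components, the least codimension being $\ge c$.

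For (ii): the projection $f\colon \mathrm U\times\rmX\ra\rmX$ is a base change of $\mathrm U\ra\Spec k$ along $\rmX\ra\Spec k$. Here I would invoke the smooth (or rather the relative-affine-space) base change: since $\mathrm U$ is an open subvariety of the representation $\rmV$, which is an affine space over $k$, the formation of $Rf_*({\mathbb Z}/\ell^\nu)$ commutes with arbitrary base change, and the stalk of $R^nf_*({\mathbb Z}/\ell^\nu)$ at a geometric point $\bar x\to\rmX$ is $\rmH^n_{et}(\mathrm U\times_{\Speck}\kappa(\bar x),{\mathbb Z}/\ell^\nu)$; applying part (i) over the algebraically closed field $\kappa(\bar x)$ gives the desired vanishing for $0<n<2c-1$ and $R^0f_*={\mathbb Z}/\ell^\nu$. (In the generality of the statement the relevant base-change theorem is smooth base change, valid over perfect $k$ of finite $\ell$-cohomological dimension; the finiteness of the $\ell$-cohomological dimension also guarantees that the relevant hypercohomology spectral sequences converge.)

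For (iii), when $k=\Cl$: for the complex topology one simply notes that $\mathrm U(\Cl)$ is the complement in the affine space $\rmV(\Cl)\cong\mathbb C^N$ of a closed analytic (in fact algebraic) subset of complex codimension $\ge c$, hence of real codimension $\ge 2c$; a standard transversality/general-position argument (or the long exact sequence of the pair together with the fact that $\mathbb C^N$ is contractible) shows $\mathrm U(\Cl)$ is $(2c-2)$-connected, so its singular cohomology with $\mathbb Z$- and $\mathbb Q$-coefficients vanishes in degrees $0<n<2c-1$ and is the ground ring in degree $0$; the statement with $\mathbb Z/\ell^\nu$ then follows by the comparison theorem, and the $\mathbb Q$-statement likewise. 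The main obstacle in all of this is purely a matter of bookkeeping the degrees correctly in the purity argument of part (i) --- making sure the codimension hypothesis $c>1$ is used exactly where needed and that the loss of one degree is tracked --- but conceptually there is no difficulty: everything is a direct application of absolute purity, homotopy invariance, and smooth base change.
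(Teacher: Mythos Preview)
Your proposal is correct and follows essentially the same approach as the paper: the long exact sequence for cohomology with supports in $\rmZ = \rmV \setminus {\rm U}$, together with homotopy invariance for $\rmV$ and the semi-purity vanishing $\rmH^i_{\rmZ}(\rmV, \mathbb{Z}/\ell^\nu) = 0$ for $i < 2c$, the latter established by stratifying $\rmZ$ into smooth pieces. The paper carries out the semi-purity step via an explicit induction on $\dim \rmZ$ (using ordinary purity on the smooth open stratum) rather than citing Thom--Gabber, and for (iii) simply remarks that the same argument works rather than switching to a topological connectivity argument as you do, but these are cosmetic differences.
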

\begin{proof} (i) It suffices to consider the case  $k$ is algebraically closed. Then (i) follows from the long-exact sequence
 \[ \cdots \ra \rmH^n_{et, \rmV-{\rm U}}(\rmV, {\mathbb Z}/\ell^{\nu}) \ra \rmH^n_{et}(\rmV, {\mathbb Z}/\ell^{\nu}) \ra \rmH^n_{et}({\rm U}, {\mathbb Z}/\ell^{\nu}) \ra \rmH^{n+1}_{et, \rmV-{\rm U}}(\rmV, {\mathbb Z}/\ell^{\nu}) \cdots \ra \]
and the fact that $\rmH^i_{et, \rmV-{\rm U}}(\rmV, {\mathbb Z}/\ell^{\nu}) =0$ for all $i <2c$ while $\rmH^i_{et}(\rmV, {\mathbb Z}/\ell^{\nu})=0$ for all $i>0$,
$\rmH^0_{et}(\rmV, {\mathbb Z}/\ell^{\nu})={\mathbb Z}/\ell^{\nu}$.  
 These complete the
proof of (i).
\vskip .2cm
The assertion $\rmH^i_{et, \rmV-{\rm U}}(\rmV, {\mathbb Z}/\ell^{\nu}) =0$ for all $i <2c$ is a 
cohomological semi-purity statement. We provide a short proof of this statement due to the lack of an adequate reference. Since the base field is assumed to be perfect, one may find an  open subvariety $\rmV_0$ of $\rmV$ so that
$\rmY_0 = (\rmV-{\rm U})\cap \rmV_0$ is smooth and nonempty. Now one has a long-exact sequence in cohomology:
\[\cdots \ra \rmH^i_{et, Y_1}(\rmV, {\mathbb Z}/\ell^{\nu}) \ra \rmH^i_{et, Y}(\rmV, {\mathbb Z}/\ell^{\nu}) \ra \rmH^i_{et, Y_0}(\rmV_0,{\mathbb Z}/\ell^{\nu}) \ra \rmH^{i+1}_{et, Y_1}(\rmV, {\mathbb Z}/\ell^{\nu}) \ra \cdots\]
where $\rmY= \rmV-{\rm U}$, $\rmY_1 = Y - Y_0$. We may also assume without loss of generality that $\rmY$ is  irreducible. 
By cohomological purity, $\rmH^i_{et, Y_0}(\rmV_0, {\mathbb Z}/\ell^{\nu}) =0$ for all $ i <2c$ 
(in fact, for all $i \ne 2c$). Since $\rmY_1$ is of dimension strictly less than the dimension of
$\rmY$, an ascending induction on the dimension of $\rmY$ enables one to assume $\rmH^i_{et, Y_1}(\rmV, {\mathbb Z}/\ell^{\nu}) =0$ for all $i <2\codim _{Y_1}(\rmV)$. (One may start the induction when
$\dim (Y)=0$, since in that case $\rmY$ is smooth.) Since
$\codim _Y(\rmV) < \codim _{Y_1}(\rmV)$, the long exact sequence above now proves $ \rmH^i_{et, Y}(\rmV, {\mathbb Z}/\ell^{\nu}) =0$ for all $i < 2\codim _Y(\rmV)$.
\vskip .2cm
(ii) follows readily from (i). We skip the proof of (iii) which follows
 along the same lines as the proofs of (i) and (ii).
\end{proof}
Since different choices are possible for such geometric classifying spaces, we proceed to consider this in the
more general framework of {\it admissible gadgets} as defined in \cite[section 4.2]{MV99}. The following definition is a variation
of the above definition in \cite{MV99}.
\subsection{Admissible gadgets associated to a given $\rmG$-variety}
\label{subsubsection:adm.gadgets}
 We shall say that a pair $(\rmW,{\rm U})$ of smooth varieties over $k$
is a {\sl good pair} for $\rmG$ if $\rmW$ is a $k$-rational representation of $\rmG$ and
${\rm U} \subsetneq \rmW$ is a $\rmG$-invariant non-empty open subset on which $\rmG$ acts freely and so that ${\rm U}/\rmG$ is a variety. 
It is known ({\sl cf.} \cite[Remark~1.4]{To99}) that a 
good pair for $\rmG$ always exists.
\begin{definition}
\label{defn:Adm-Gad}
A sequence of pairs $ \{ (\rmW_m, {\rm U}_m)|m \ge 1\}$ of smooth varieties
over $k$ is called an {\sl admissible gadget} for $\rmG$, if there exists a
good pair $(\rmW,{\rm U})$ for $\rmG$ such that $\rmW_m = \rmW^{\times^ m}$ and ${\rm U}_m \subsetneq \rmW_m$
is a $\rmG$-invariant open subset such that the following hold for each $m \ge 1$.
\begin{enumerate}
\item
$\left({\rm U}_m \times W\right) \cup \left(W \times {\rm U}_m\right)
\subseteq {\rm U}_{m+1}$ as $\rmG$-invariant open subvarieties.
\item
$\{\codim _{{\rm U}_{m+1}}\left({\rm U}_{m+1} \setminus \left({\rm U}_{m} \times W\right)\right)|m\}$ is 
a strictly increasing sequence,  
\newline \noindent
that is, $\codim_{{\rm U}_{m+2}}\left({\rm U}_{m+2} \setminus 
\left({\rm U}_{m+1} \times W\right)\right) > 
\codim_{{\rm U}_{m+1}}\left({\rm U}_{m+1} \setminus \left({\rm U}_{m} \times W\right)\right)$.
\item
$\{\codim_{W_m}\left(W_m \setminus {\rm U}_m\right)|m\}$ is a strictly increasing sequence, that is, $\codim_{W_{m+1}}\left(W_{m+1} \setminus {\rm U}_{m+1}\right)
> \codim_{W_m}\left(W_m \setminus {\rm U}_m\right)$.
\item
${\rm U}_m$ has a free $\rmG$-action, the quotient ${\rm U}_m/G$ is  a smooth quasi-projective
variety over $k$ and ${\rm U}_m \ra {\rm U}_m/G$ is a principal $\rmG$-bundle.
\end{enumerate}
\end{definition}
\begin{lemma}
\label{smooth.quotients.1} Let ${\rm U}$ denote a smooth quasi-projective variety over a field $\oK$ with a free action by the linear algebraic group 
$\rmG$ so that the quotient ${\rm U}/\rmG$ exists as a smooth quasi-projective variety over $\oK$. Then if $\rmX$ is any smooth quasi-projective  variety
over $\oK$, the quotient ${\rm U}{\underset {\rmG} \times}\rmX \cong ({\rm U} {\underset {\oSpec \, \oK} \times}\rmX)/\rmG$ (for the diagonal action of $\rmG$) exists  as a  scheme
over $\oK$. 
\end{lemma}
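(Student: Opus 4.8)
\textbf{Proof strategy for Lemma~\ref{smooth.quotients.1}.} The statement is that for a free $\rmG$-action on a smooth quasi-projective $\oK$-variety ${\rm U}$ with a quasi-projective quotient ${\rm U}/\rmG$, and for any smooth quasi-projective $\rmX$, the diagonal quotient $({\rm U}\times_{\oSpec\,\oK}\rmX)/\rmG$ exists as a scheme over $\oK$. The plan is to reduce this to the existence of descent along the given principal bundle ${\rm U}\to{\rm U}/\rmG$. First I would observe that, since $\rmG$ acts freely on ${\rm U}$ and ${\rm U}\to{\rm U}/\rmG$ is a principal $\rmG$-bundle (hence faithfully flat, and locally trivial in the \'etale or fppf topology), the diagonal action of $\rmG$ on ${\rm U}\times\rmX$ is again free, because the projection $\rmG\times({\rm U}\times\rmX)\to{\rm U}\times\rmX$, $(g,(u,x))\mapsto(gu,gx)$ is already injective on the first coordinate. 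So the real content is \emph{effectivity} of the quotient, not freeness.

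Next I would exhibit the quotient concretely. Over ${\rm U}/\rmG$, choose an \'etale (or even Zariski, after refining) cover $\{V_i\to{\rm U}/\rmG\}$ trivializing the bundle, so that ${\rm U}\times_{{\rm U}/\rmG}V_i\cong\rmG\times V_i$ equivariantly. Then ${\rm U}\times\rmX$ restricted over $V_i$ becomes $\rmG\times V_i\times\rmX$ with $\rmG$ acting by left translation on the first factor and diagonally; the quotient of this is manifestly $V_i\times\rmX$. These local quotients glue along the overlaps via the transition cocycle of the bundle (twisted by the $\rmG$-action on $\rmX$), yielding a scheme $\rmY$ over ${\rm U}/\rmG$, \'etale-locally isomorphic to $\rmX$, together with a $\rmG$-invariant morphism ${\rm U}\times\rmX\to\rmY$ which is a principal $\rmG$-bundle. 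One then checks the universal property: any $\rmG$-invariant morphism ${\rm U}\times\rmX\to\rmZ$ descends uniquely through $\rmY$, which follows from faithfully flat descent of morphisms along the fppf cover ${\rm U}\times\rmX\to\rmY$. This identifies $\rmY$ with the categorical (indeed geometric) quotient, and we write $\rmY={\rm U}\times_{\rmG}\rmX$.

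Finally I would address the quasi-projectivity bookkeeping needed so that $\rmY$ is honestly a scheme (and not merely an algebraic space). Since ${\rm U}/\rmG$ is quasi-projective and $\rmY\to{\rm U}/\rmG$ is a fiber bundle with fiber the quasi-projective $\rmX$ and structure group $\rmG$ acting linearly through some representation, one can realize $\rmY$ inside a projective bundle over ${\rm U}/\rmG$ by the standard associated-bundle construction: embed $\rmX$ $\rmG$-equivariantly into ${\mathbb P}(V)$ for a suitable $\rmG$-representation $V$ (possible as $\rmX$ is quasi-projective with a linearizable action, or after the reduction in the cited references), form ${\rm U}\times_{\rmG}{\mathbb P}(V)$ which is a Zariski-locally-trivial ${\mathbb P}(V)$-bundle over the quasi-projective base ${\rm U}/\rmG$ and hence quasi-projective, and take the locally closed image of $\rmY$. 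The main obstacle I expect is precisely this last point---ensuring that the associated bundle stays in the category of schemes rather than merely algebraic spaces---which is handled exactly as in \cite[Remark~1.4]{To99} and \cite[section~4.2]{MV99}; everything else is formal descent. Since in all our applications ${\rm U}={\rm U}_m=\rmEG^{\gm,m}$ is part of an admissible gadget and $\rmX$ is a quasi-projective $\rmG$-variety with the standard linearization, the hypotheses of that argument are met, which completes the proof.
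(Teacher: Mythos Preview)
Your descent argument is correct and gives an explicit construction, but the paper's own proof is a single sentence: it invokes \cite[Proposition~7.1]{MFK94} (Mumford--Fogarty--Kirwan, \emph{Geometric Invariant Theory}) and says nothing further. That proposition is precisely the statement that when $\rmG$ acts freely on a quasi-projective scheme with quasi-projective quotient, the associated fiber bundle with any quasi-projective $\rmG$-fiber exists as a scheme; your argument essentially reproves it by hand.

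The trade-off: the paper's citation is cleaner and offloads the delicate point (scheme versus algebraic space, linearization of the action on $\rmX$) to a standard reference, whereas your version is self-contained and makes the mechanism visible, at the cost of having to justify the $\rmG$-equivariant projective embedding of $\rmX$---which you flag but do not fully prove, and which is exactly what the GIT reference handles. For the purposes of this paper, where the lemma is background infrastructure, the one-line citation is the right call; your longer argument would be appropriate if the reader were not expected to have GIT at hand.
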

\begin{proof} 
This follows, for
example, from \cite[Proposition 7.1]{MFK94}.
\end{proof}
\vskip .2cm
 An {\it example} of an admissible gadget for $\rmG$ can be constructed as follows: start  with a good pair $(\rmW, {\rm U})$ for $\rmG$. 
The choice of such a good pair will vary depending on $\rmG$. 
 Choose a faithful $k$-rational representation ${\rm R}$ of $\rmG$ of dimension $n$, that is, $\rmG$ admits a closed
immersion into $\GL(R)$. Then 
$\rmG$ acts freely on an open subset ${\rm U}$ of ${\rm W}= {\rm R}^{\oplus n} = {\rm {End}}({\rm R})$ so that ${\rm U}/\rmG$ is a variety. (For e.g.  ${\rm U}=\GL(R)$.)
Let $\rmZ = \rmW \setminus {\rm U}$.
\vskip .2cm
Given a good pair $(\rmW, {\rm U})$, we now let
\be \begin{equation}
\label{adm.gadget.1}
 \rmW_m = \rmW^{\times m}, {\rm U}_1 = {\rm U} \mbox{ and } {\rm U}_{m+1} = \left({\rm U}_m \times \rmW \right) \cup
\left(\rmW \times {\rm U}_m\right) \mbox{ for }m \ge 1.
\end{equation} \ee
Setting 
$\rmZ_1 = \rmZ$ and $\rmZ_{m+1} = {\rm U}_{m+1} \setminus \left({\rm U}_m \times \rmW\right)$ for 
$m \ge 1$, one checks that $W_m \setminus {\rm U}_m = \rmZ^m$ and
$\rmZ_{m+1} = \rmZ^m \times {\rm U}$.
In particular, $\codim _{W_m}\left(W_m \setminus {\rm U}_m\right) =
m (\codim_{\rmW}(\rmZ))$ and
$\codim _{{\rm U}_{m+1}}\left(\rmZ_{m+1}\right) = (m+1)d - m(\dim(\rmZ))- d =  m (\codim _{\rmW}(\rmZ))$,
where $d = \dim (\rmW)$. Moreover, ${\rm U}_m \to {{\rm U}_m}/\rmG$ is a principal $\rmG$-bundle and the quotient $\rmV_m={\rm U}_m/\rmG$ exists 
 as a smooth quasi-projective scheme. 
\vskip .2cm
We conclude with the following Proposition that shows how $t$-structures may be transferred under an equivalence of derived categories.
Though this result is fairly well-known, we could not find a suitable reference: that is the reason for including it here. But we skip its proof.
\begin{proposition}
 \label{transf.t.struct}
 Let $\T$ and $\T'$ denote two triangulated categories, with $F: \T' \ra \T$ an equivalence 
 of categories together with a left-adjoint $G: \T \ra \T'$, so that the natural transformations $id_{\T} \ra F \circ G$ and
 $G \circ F \ra id_{\T'}$ are natural isomorphisms. Let $(\T'^{\le 0}, \T'^{\ge 0})$ denote a $t$-structure on
 $\bT'$. 
 \vskip .2cm
 Then $(\T^{\le 0}, \T^{\ge 0})$ is a $t$-structure on $\bT$ where $\T^{\le 0}$ ($ \T^{\ge 0}$)
 is defined as the strict full subcategory of $\T$ whose objects are isomorphic to objects of the form $F(A)$, $A$ an object of $\T'^{\le 0}$ ($A$ an
 object of $\T'^{\ge 0}$, \res).
\end{proposition}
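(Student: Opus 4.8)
The plan is to verify directly the three axioms of a $t$-structure, in the form of \cite[1.3.1]{BBD82}, for the pair $(\T^{\le 0}, \T^{\ge 0})$, transporting each axiom from $\T'$ along $F$. Before doing so I would record the two standing facts that make this work. First, $F$ and $G$ are exact functors of triangulated categories: in every situation where this Proposition is invoked ($F = R\psi_*$ in Proposition~\ref{Rpsi*}, $F = p_i^*$ in Lemma~\ref{equiv.tech.5}, $F = R\pi_*$ in Proposition~\ref{Rpi*.and.Lpi*}, etc.) the functors are honest derived functors, hence exact, so $F$ commutes with the shift $[1]$ and carries distinguished triangles to distinguished triangles. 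Second, since the unit $id_{\T} \to F \circ G$ and counit $G \circ F \to id_{\T'}$ are natural isomorphisms, $F$ is fully faithful and $G$ is a strict quasi-inverse to $F$.

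Granting this, the closure of $\T^{\le 0}$ and $\T^{\ge 0}$ under isomorphism is built into their definition as strict full subcategories. For the shift axiom: if $X \cong F(A)$ with $A \in \T'^{\le 0}$, then $X[1] \cong F(A[1])$ with $A[1] \in \T'^{\le 0}$ (as $\T'^{\le 0}[1] \subseteq \T'^{\le 0}$), so $X[1] \in \T^{\le 0}$; dually $\T^{\ge 0}[-1] \subseteq \T^{\ge 0}$. For the orthogonality axiom: given $X \cong F(A) \in \T^{\le 0}$ and $Y \cong F(B) \in \T^{\ge 1} = \T^{\ge 0}[-1]$ — so $A \in \T'^{\le 0}$ and $B \in \T'^{\ge 1}$, where exactness of $F$ is used to identify $F(\T'^{\ge 0})[-1]$ with $F(\T'^{\ge 1})$ up to isomorphism — full faithfulness of $F$ gives $\operatorname{Hom}_{\T}(X, Y) \cong \operatorname{Hom}_{\T'}(A, B) = 0$ by the $t$-structure on $\T'$.

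The remaining axiom, existence of truncation triangles, is where the quasi-inverse $G$ enters. Given an arbitrary $X \in \T$, I would set $A := G(X)$, so $X \cong F(A)$, apply the truncation triangle of the $t$-structure on $\T'$ to $A$ to obtain a distinguished triangle whose left term lies in $\T'^{\le 0}$ and whose right term lies in $\T'^{\ge 1}$, and then apply the exact functor $F$. The resulting distinguished triangle in $\T$ has left term in $\T^{\le 0}$ and right term in $\T^{\ge 1}$ by the very definition of these subcategories, and middle term isomorphic to $X$; this is exactly the triangle required. Hence $(\T^{\le 0}, \T^{\ge 0})$ is a $t$-structure on $\T$.

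I do not expect any genuinely hard step. The only point requiring attention — and the reason the hypothesis is stated with the explicit left adjoint $G$ and the two invertible natural transformations, rather than merely "an equivalence of categories" — is that one needs $G$ to be a strict quasi-inverse in order to produce, for an arbitrary $X$, a preimage $A = G(X)$ whose $\T'$-truncation triangle transports to a truncation triangle on $X$, and one needs $F$ to be \emph{exact} (not just an additive equivalence) so that the transported triangle is again distinguished. Both are automatic in the intended applications, which is why I would present the argument in the clean form above and leave these two remarks as the substance of the proof.
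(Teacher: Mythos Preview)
Your proof is correct and complete; the paper itself skips the proof entirely (it states ``But we skip its proof'' after remarking the result is fairly well-known), so there is no argument in the paper to compare against. Your careful remark that $F$ must be an \emph{exact} functor of triangulated categories --- not merely an equivalence of the underlying categories --- is well placed, since the proposition as stated does not make this hypothesis explicit, and it is precisely what is needed both for the shift axiom and for the transported truncation triangle to be distinguished.
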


\end{document}